\titleformat{\subsection}[hang]{\normalfont\bfseries}{\thesubsection}{1em}{}
\titlespacing\section{0pt}{3.5ex plus 0.5ex minus .2ex}{0.3ex plus .2ex}
\titlespacing\subsection{0pt}{2.5ex plus 0.5ex minus .2ex}{0.3ex plus .2ex}
\titlespacing\subsubsection{0pt}{2.5ex plus 0.5ex minus .2ex}{0.3ex plus .2ex}
\numberwithin{equation}{subsection}
\newcommand{\<}{\left\langle}
\renewcommand{\>}{\right\rangle}
\newcommand{\bC}{\mathbb{C}}
\newcommand{\bF}{\mathbb{F}}
\newcommand{\bG}{\mathbb{G}}
\newcommand{\bH}{\mathbb{H}}
\newcommand{\bN}{\mathbb{N}}
\newcommand{\bR}{\mathbb{R}}
\newcommand{\bZ}{\mathbb{Z}}
\newcommand{\cO}{\mathcal{O}}
\newcommand{\cP}{\mathcal{P}}
\newcommand{\fg}{\mathfrak{g}}
\newcommand{\fh}{\mathfrak{h}}
\newcommand{\ft}{\mathfrak{t}}
\newcommand{\sA}{\mathscr{A}}
\newcommand{\sB}{\mathscr{B}}
\newcommand{\sG}{\mathscr{G}}
\newcommand{\sS}{\mathscr{S}}
\newcommand{\sT}{\mathscr{T}}
\DeclareMathAlphabet{\mathpzc}{OT1}{pzc}{m}{it}
\newcommand{\isoarrow}{\stackrel{\sim}{\longrightarrow}}
\newcommand{\ra}{\rightarrow}
\newcommand{\wt}{\widetilde}
\providecommand{\abs}[1]{\left\lvert#1\right\rvert}
\DeclareMathOperator{\Hom}{Hom}			%
\DeclareMathOperator{\depth}{depth}		%
\DeclareMathOperator{\PGL}{PGL}			%
\DeclareMathOperator{\Id}{Id}		  	%
\DeclareMathOperator{\GL}{GL}		  	%
\DeclareMathOperator{\SL}{SL}		  	%
\DeclareMathOperator{\SO}{SO} 			%
\DeclareMathOperator{\SU}{SU} 			%
\DeclareMathOperator{\Gal}{Gal}			%
\DeclareMathOperator{\Aut}{Aut}			%
\DeclareMathOperator{\Out}{Out}			%
\DeclareMathOperator{\End}{End}			%
\DeclareMathOperator{\Ind}{Ind}			%
\DeclareMathOperator{\Sp}{Sp}		  	%
\DeclareMathOperator{\ad}{ad}	  		%
\DeclareMathOperator{\Ad}{Ad}	  		%
\DeclareMathOperator{\Lie}{Lie}  	  %
\DeclareMathOperator{\cind}{c-ind}  	  %
\DeclareMathOperator{\Res}{Res}  	  %
\DeclareMathOperator{\Sym}{Sym}  	  %
\DeclareMathOperator{\val}{val}  	  %
\setlist[enumerate]{label=(\alph*)}
\newcommand\smat[4]{\big[\begin{smallmatrix}#1&#2\\#3&#4\end{smallmatrix}\big]} %
\newcommand\defeq{:=}
\newcommand\mathdef\textsf
\newcommand\scr\mathscr
\newcommand\tn\textnormal
\renewcommand\cal{\mathcal}
\newcommand{\bbC}{\mathbb{C}}
\newcommand{\bbF}{\mathbb{F}}
\newcommand{\bbG}{\mathbb{G}}
\newcommand{\bbH}{\mathbb{H}}
\newcommand{\bbR}{\mathbb{R}}
\newcommand{\bbZ}{\mathbb{Z}}
\newcommand{\sfB}{\mathsf{B}}
\newcommand{\sfG}{\mathsf{G}}
\newcommand{\sfP}{\mathsf{P}}
\newcommand{\sfS}{\mathsf{S}}
\newcommand{\sfT}{\mathsf{T}}
\newcommand{\sfU}{\mathsf{U}}
\newcommand{\sfN}{\mathsf{N}}
\newcommand{\sfV}{\mathsf{V}}
\newcommand{\sfW}{\mathsf{W}}
\newcommand{\der}{\tn{der}}
\renewcommand{\sc}{\tn{sc}}
\newcommand{\sep}{\tn{sep}}
\newcommand{\wtG}{\widetilde{G}}
\newcommand{\wtK}{\widetilde{K}}
\newcommand\Alt{\operatorname{Alt}}
\DeclareMathOperator\fdeg{fdeg}
\DeclareMathOperator\Irr{Irr}
\DeclareMathOperator\Nm{Nm}
\DeclareMathOperator\vol{vol}
\renewcommand\O{\operatorname{O}}
\DeclareMathOperator\Spin{Spin}
\theoremstyle{definition}
\newtheorem{construction}[equation]{Construction}
	\crefname{construction}{construction}{constructions}
\newtheorem{definition}[equation]{Definition}
	\newlist{defenum}{enumerate}{1}
	\setlist[defenum]{label=(\alph*), ref=\theequation(\alph*)}
\newtheorem{notation}[equation]{Notation}
\newtheorem{example}[equation]{Example}
\newtheorem{remark}[equation]{Remark}
\newcommand\GEE{\hyperlink{GE2}{(GE2)}}
\theoremstyle{plain}
\newtheorem{corollary}[equation]{Corollary}
	\newlist{corollaryenum}{enumerate}{1}
	\setlist[corollaryenum]{label=(\alph*), ref=\theequation(\alph*)}
\newtheorem{fact}[equation]{Fact}
	\crefname{fact}{fact}{facts}
	\newlist{factenum}{enumerate}{1}
	\setlist[factenum]{label=(\alph*), ref=\theequation(\alph*)}
\newtheorem{lemma}[equation]{Lemma}
	\newlist{lemmaenum}{enumerate}{1}
	\setlist[lemmaenum]{label=(\alph*), ref=\theequation(\alph*)}
\newtheorem{proposition}[equation]{Proposition}
\newtheorem{theorem}[equation]{Theorem}
	\newlist{theoremenum}{enumerate}{1}
	\setlist[theoremenum]{label=(\alph*), ref=\theequation(\alph*)}
\newtheorem{theoremx}{Theorem} %
	\crefname{theoremx}{theorem}{theorems}
\renewcommand{\descriptionlabel}%
[1]{\hspace{\labelsep}##1:}
\newcommand\AutZfix{\Aut_{Z}}
\newcommand\GLR[2]{\GL(#1,#2)}
\newcommand\PGLR[2]{\PGL(#1,#2)}
\newcommand\WeilRep{\omega}		%
\newcommand\minus{-}       		%
\newcommand\HeisLift[1]{0\times#1}	%
\newcommand\Kplus{K^+}  	%
\begin{document}
\author{Jessica Fintzen and David Schwein}
\title{Construction of tame supercuspidal representations \\ in arbitrary residue characteristic }
\date{}%
\maketitle

\begin{abstract}
\noindent
	Let $F$ be a nonarchimedean local field whose residue field has at least four elements. Let $G$ be a connected reductive group over $F$ that splits over a tamely ramified field extension of $F$. We provide a construction of supercuspidal representations of $G(F)$ via compact induction that contains, among others, all the supercuspidal representations constructed by Yu in 2001 (\cite{Yu}), but that also works in residual characteristic two. The input for our construction is described uniformly for all residual characteristics and is analogous to Yu's input except that we do not require our input to satisfy the second genericity condition \GEE\ that Yu imposes.
\end{abstract}

{
	\renewcommand{\thefootnote}{}  %
	\footnotetext{MSC2020: 22E50, 11F27} 
	\footnotetext{Keywords: representations of reductive groups over non-archimedean local fields, supercuspidal representations, $p$-adic groups, Heisenberg--Weil representations}
	\footnotetext{JF was partially supported by NSF Grant DMS-1802234 / DMS-2055230 and a Sloan Research Fellowship. Both authors were partially supported by a Royal Society University Research Fellowship of JF and the European Research Council (ERC) under the European Union's Horizon 2020 research and innovation programme (Grant agreement No.\ 950326).}
}

\clearpage
\tableofcontents
\clearpage

\hspace*{1cm}
\begin{quote}\renewcommand{\thefootnote}{\fnsymbol{footnote}}
\small{\itshape
\textsc{Remark.} It is precisely here that our assumption of $p$ odd makes its impact.
We are dealing with the representation theory
of a $2$-step nilpotent $p$-group. The extra complications in this theory
that arise when $p = 2$ could be handled, but at the expense of a long digression.}
\\\phantom{a}\hfill Roger Howe, 1977\footnote{\cite[p.~447]{Howe},
in a paper constructing supercuspidal representations of $\GL_n(F)$.}
\end{quote}

\section{Introduction}
The construction of supercuspidal representations of $p$-adic groups plays a central role in the representation theory of $p$-adic groups and beyond.
While for $\GL_n$ and its inner forms such a construction is known in full generality (\cite{BK,Secherre-Stevens}), for other families of reductive groups, including classical groups, the existing general constructions \cite{Adler,Yu,Stevens} assume that $p \neq 2$. In the present paper we provide a construction of supercuspidal representations of general tame $p$-adic groups for all $p$. Our construction generalizes Yu's construction (\cite{Yu}) by allowing $p=2$ and, in addition, relaxing a genericity condition imposed by Yu on the input for the construction. In particular, we recover as a special case the supercuspidal representations constructed by Yu, which are all supercuspidal representations if $p$ does not divide the order of the absolute Weyl group of $G$. 
Even in this already known setting, our proof contains new elements. In particular, we do not rely on Gérardin's delicate analysis of the Weil representation in \cite[Theorem~2.4(b)]{Gerardin}.

The reasons that previous authors required $p\neq2$ are subtle and depend on the setting.
Stevens' work for classical groups assumed $p\neq2$ because the Glauberman correspondence (see \cite[(2.1)~Theorem]{Stevens01}) does not apply to involutions of pro-$2$-groups. Yu's work for more general tame $p$-adic groups assumed $p\neq 2$ since he crucially relied on the theory of Heisenberg--Weil representations (see \cite[Section~10]{Yu}).
At the simplest level, this theory does not immediately extend to the case $p=2$ because a factor of $1/2$ appears in Yu's definition of the Heisenberg group over~$\bbF_p$.
But as we will see below, there are much more serious obstructions, which we address in this paper.

To describe our results in more detail, let $F$ be a nonarchimedean local field whose residue field has characteristic~$p$ and cardinality~$q$, and let $G$ be a connected reductive group over $F$
that splits over a tamely ramified extension of~$F$.
Let $\Upsilon = ((G_i)_{1\leq i\leq n+1},x,(r_i)_{1\leq i\leq n},\rho,(\phi_i)_{1\leq i\leq n})$
be an input for Yu's construction,
but where we allow $p=2$ and only require each $\phi_i$ to satisfy the first out of the two genericity conditions that Yu imposes in \cite{Yu}
(see \Cref{sec:input} for more details).

\begin{theoremx}[cf.~\Cref{thmseveralsc}] \label{thm69} 
If $q>2$, then the input $\Upsilon$ gives rise to a finite set of supercuspidal representations,
each of which is a compact induction $\cind_{\widetilde K}^{G(F)}(\sigma)$ of an irreducible representation $\sigma$ from an open, compact-mod-center subgroup $\wt K$ of $G(F)$.
\end{theoremx}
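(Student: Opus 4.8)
The plan is to follow the architecture of Yu's construction \cite{Yu}, isolating the two points at which Yu uses $p\neq 2$ or the second genericity hypothesis \GEE, and replacing each of them by an argument that is uniform in the residue characteristic and uses only the first genericity hypothesis. Concretely, I would first attach to the datum $\Upsilon$ an increasing tower $\wt K^0 \subseteq \wt K^1 \subseteq \dots \subseteq \wt K^n \eqdef \wt K$ of open subgroups of $G(F)$: here $\wt K^0$ is the compact-mod-center subgroup of $G_1(F)$ that, together with $\rho$, cuts out a depth-zero supercuspidal representation of $G_1(F)$, and $\wt K^i$ is built from $\wt K^{i-1}$ by adjoining appropriate Moy--Prasad filtration subgroups of $G_{i+1}(F)$ at depths $r_i$ and $r_i/2$. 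By construction $\wt K$ is open in $G(F)$ and compact modulo $Z(G)(F)$, and the representations $\sigma$ produced below will be genuine representations of $\wt K$, so that the output has the shape asserted in the theorem.

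The technical heart is the representation attached to the $i$-th layer. The character $\phi_i$ (suitably extended) together with the commutator pairing endows a certain elementary abelian subquotient $V_i$ of $\wt K^i$ with an alternating $\bF_p$-form; when $p=2$ one cannot recover a quadratic refinement from this form via the normalization involving $\tfrac12$ that Yu uses, so instead I would work with the central extension $\cH_i$ of $V_i$ by $\mu_p \subseteq \bC^\times$ that this pairing determines intrinsically, take its unique irreducible representation $\tau_i$ with the prescribed central character (Stone--von Neumann), and extend $\tau_i$ on the same space to a representation of $\wt K^i$ inflating $\phi_i$. For $p$ odd the Weil representation provides a canonical such extension, and Yu's construction is recovered; for $p=2$ there is no canonical choice, but there are only finitely many extensions, and running over them is exactly what produces the finite set of representations in the statement. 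Tensoring $\rho$, inflated to $\wt K$, with the representations obtained from the successive layers yields, for each choice, a smooth finite-dimensional representation $\sigma$ of $\wt K$ (finite-dimensional because $\wt K$ is compact modulo center and $\sigma$ is trivial on a small enough open subgroup); its irreducibility I would establish by a Clifford-theoretic argument, restricting $\sigma$ to the normal subgroup on which $\cH_i$ acts and matching isotypic pieces via Stone--von Neumann, layer by layer, as in \cite{Yu} but with the abstract Heisenberg groups in place of Yu's.

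To see that $\pi \eqdef \cind_{\wt K}^{G(F)}\sigma$ is irreducible I would use the standard intertwining criterion: this holds if and only if $\Hom_{{}^{g}\wt K \cap \wt K}({}^{g}\sigma,\sigma) = 0$ for all $g \in G(F)\setminus \wt K$. Using a Cartan-type decomposition of $G(F)$ relative to $\wt K$ and filtering the intertwining space along the tower $\wt K^0 \subseteq \dots \subseteq \wt K^n$, the computation splits into a depth-zero contribution, controlled as in Moy--Prasad's and Yu's depth-zero theory by the cuspidality of $\rho$ at the reductive quotient, and the positive-depth contributions. For the latter, $G_{i+1}$-genericity of $\phi_i$ in the weak sense (only the first of Yu's two conditions) already forces an intertwining $g$ to respect the twisted Levi structure up to bounded distance in the building; what remains is the contribution of the Heisenberg--Weil factors, and here, rather than appealing to Gérardin's explicit description of the Weil representation on certain subgroups \cite[Theorem~2.4(b)]{Gerardin}, I would argue directly, observing that any nonzero element of the relevant $\Hom$ space induces an intertwiner between two Heisenberg representations whose central characters are matched by $g$, so that Stone--von Neumann together with a dimension count forces $g \in \wt K$. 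Making this last step work uniformly over all $p$ --- in particular at $p=2$, where the alternating forms degenerate, the Weil representation is noncanonical, and \GEE\ is unavailable --- is the main obstacle and the technical core of the whole argument; it is what replaces Yu's use of Gérardin and of the second genericity hypothesis.

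Finally, once $\pi$ is irreducible it is automatically admissible, and because $\wt K$ is open and compact modulo $Z(G)(F)$ with $\sigma$ finite-dimensional, $\pi$ has a nonzero matrix coefficient supported on $\wt K$, hence compactly supported modulo center, so $\pi$ is supercuspidal by the Harish-Chandra--Jacquet criterion. Letting the finitely many Heisenberg--Weil extension choices vary gives the finite set of supercuspidal representations asserted in \Cref{thm69} (equivalently \Cref{thmseveralsc}). The hypothesis $q>2$ enters to ensure that the finite reductive quotients in the depth-zero layer and the symplectic/Heisenberg modules in the positive-depth layers are large enough for the cuspidality conditions on $\rho$ and for the $p=2$ Heisenberg--Weil intertwining estimates to go through.
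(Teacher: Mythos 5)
Your proposal has two genuine gaps, and one of them is fatal to the stated strategy. First, you induce from the wrong group: your $\wt K$ (built purely from Moy--Prasad pieces and the depth-zero stabilizer) is the group the paper calls $K$, and you propose to verify the intertwining criterion for $\rho$ tensored with the layered Heisenberg--Weil factors on it. But when some $\phi_i$ fails \GEE\ --- exactly the new situations the theorem is meant to cover --- there are elements outside $K$ that intertwine $\rho\otimes\kappa$: the intertwining set is only contained in $K\,\wt G_{n+1}(F)\,K$ for a possibly disconnected group $\wt G_{n+1}$ with nontrivial $p$-group component group (\Cref{thm75,thm80}), so $\cind_{K}^{G(F)}(\rho\otimes\kappa)$ is reducible in general; \Cref{sec:ge2example} (see \Cref{thm66}) gives a concrete $\Spin_8$ example where the relevant intertwining algebra is noncommutative and $\dim(\sigma)>\dim(\rho\otimes\kappa)$. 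The construction therefore cannot stop at your $\wt K$: one must pass to $\wt K \defeq N_{\Kplus}(\rho\otimes\kappa)$ and choose $\sigma\in\Irr(\wt K,K,\rho\otimes\kappa)$ by Clifford theory, and it is this choice (not a choice of Weil extension) that produces the finite set in \Cref{thm69}. Relatedly, the step you yourself flag as "the main obstacle" --- forcing an intertwiner into the inducing group without G\'erardin and without \GEE\ --- is not supplied; in the paper it requires the disconnected-group versions of Yu's Lemmas (\Cref{thm75,thm80}), the passage to simply-connected covers where the characters $\phi_j$ die (\Cref{cor:trivialchar}), and the commutator lemmas \Cref{thm26,thm27} combined with the partial polarization (\Cref{thm15,thm16}) to kill the action of $\sfU(k_F)$ on the invariant space and contradict cuspidality of $\rho$.

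Second, your treatment of the $p=2$ Heisenberg layer is not correct as stated. The central extension of $V_i$ by $\mu_2$ is \emph{not} determined by the commutator pairing alone: for $p=2$ there are two isomorphism classes (positive and negative type, distinguished by the quadratic form $x\mapsto x^2$, see \Cref{thm31,thm29}), and both genuinely occur (\Cref{thm68}); one must take the actual subquotient $\sfV_i^\natural$ of the $p$-adic group, not an extension reconstructed from the pairing. More seriously, the existence of an extension of the Heisenberg representation to the acting group is the real problem at $p=2$: the projective Weil representation does not linearize on $\AutZfix(P)$ at all (\Cref{thm52}), and the sequence $1\to\sfV\to\AutZfix(P)\to\O(\sfV)\to1$ is nonsplit in general, so "there are only finitely many extensions, run over them" presupposes what must be proved. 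The paper's solution is to cut the depth-zero stabilizer down to $(G_{n+1})^\minus_{[x]}$ (killing the prime-to-parahoric $2$-part), arrange via a choice of $\lambda$ that a Sylow $2$-subgroup lands in a linearizing subgroup $\cal P(\HeisLift{\sfV^+})$ (\Cref{thm14,thm16,thm12}), and then pin the extension down \emph{uniquely} for $q>2$ using the real/quaternionic structure of the Heisenberg representation together with the absence of order-two characters (\Cref{thm82,thm120}); the remaining finiteness comes from Clifford theory for $K/K^\minus$ and $\wt K/K$, not from varying Weil extensions. Without these ingredients, both the construction of $\kappa^\minus$ and the uniform intertwining estimate you need do not get off the ground.
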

See \Cref{thmseveralsc} for a more precise statement\footnote{In fact, \Cref{thmseveralsc} assumes $q>3$, ultimately because of \Cref{thm27}.
	When $q=3$, although our analysis of the Heisenberg--Weil representation is insufficient to treat this case,
	one can combine \cite{Yu} and \cite{Fi-Yu-works} %
	to construct supercuspidal representations from our slightly more general input~$\Upsilon$.}.
For the reader familiar with Yu's construction, let us mention that the open, compact-mod-center subgroup $\wt K$ lies between the following two subgroups that are built out of Moy--Prasad filtration subgroups of the reductive groups appearing in the input~$\Upsilon$ (the notation is explained in \Cref{sec:notation}): 
\begin{align*}
\Kplus &\defeq G_1(F)_{x,r_1/2}\cdot G_2(F)_{x,r_2/2}\cdots
G_n(F)_{x,r_n/2}\cdot N_G(G_1,G_2,\dots,G_n, G_{n+1})(F)_{[x]} \, ,\\
K &\defeq G_1(F)_{x,r_1/2}\cdot G_2(F)_{x,r_2/2}\cdots
G_n(F)_{x,r_n/2}\cdot G_{n+1}(F)_{[x]} \, .
\end{align*}

The construction of the representation $\sigma$ proceeds in two steps. The first step produces from the given input $\Upsilon$ a unique representation, called $\rho \otimes \kappa^-$, of a subgroup $K^-$ of $K$ with $K/K^-$ being a finite abelian 2-group; see \eqref{thm67} for the precise definition of $K^-$ and \Cref{sec:overview} for an overview of the construction of the representation. The second step produces a representation $\sigma$ of $\wt K$ using Clifford theory, which allows the reader to make choices that we discuss and parameterize in \Cref{sec:choices}. If $G=\GL_n$, or if $G$ is a classical group and $p \neq 2$, or if we are in Yu's setting, i.e., if $p \neq 2$ and each $\phi_i$ satisfies the additional second genericity condition \GEE\ of \cite{Yu}, then $\wt K = K^-=K$ and no choices are required.

Let us give more details of the two steps.

The first step generalizes Yu's construction using the theory of Heisenberg--Weil representations. The challenge is that the theory of Heisenberg--Weil representations as used by Yu is not available if $p=2$.  

Already the Heisenberg $\bF_p$-group itself shows an exceptional feature for $p=2$. While for $p>2$ all Heisenberg $\bF_p$-groups of the same cardinality are isomorphic, for $p=2$ there are two isomorphisms classes of Heisenberg groups of cardinality $2^{1+2n}$ for any $n>1$.
Both classes of groups arise in the construction of supercuspidal representations (see \Cref{thm68}).
However, this quirk is not an obstacle for the construction of supercuspidal representations, as the theory of Heisenberg representations carries over to the setting of $p=2$. That theory has also been already used in the case of the the general linear group; see, for example, \cite[Section~II]{Waldspurger-GLN} and \cite[(7.2.4)~Proposition]{BK}.

On the other hand, the theory of Weil representations, which is crucial in the construction of supercuspidal representations, does not work for $p=2$ in the same way as for $p>2$.
A key difference consists in the structure of the group of automorphisms $\AutZfix(H)$ of $H$ that act trivially on the center of $H$,
where $H$ is a Heisenberg $\bbF_p$-group of order~$p^{1+2n}$.
When $p\neq2$, the group $\AutZfix(H)$ decomposes as a semi-direct product $\AutZfix(H)\simeq \bbF_p^{2n}\rtimes\Sp_{2n}(\bbF_p)$,
and the projective Weil representation of $\Sp_{2n}(\bbF_p)$ admits a linearization,
which Yu used to construct supercuspidal representations.
When $p=2$ such a decomposition as semi-direct product does not exist in general. Instead we have short exact sequence
$1 \to \bbF_2^{2n} \to \AutZfix(H) \to \O_{2n}(\bbF_2) \to 1$,
which is nonsplit if $n\geq3$.
Because $\AutZfix(H)$ contains the group $\bbF_2^{2n}$ of inner automorphisms,
its projective Weil representation does not linearize (see \Cref{thm52}) if $n>0$. 

So when $p=2$, there is no natural ambient group, like $\Sp_{2n}(\bF_p)$ when $p\neq2$,
whose Weil representation we can use to construct supercuspidal representations.
Instead, we prove in \Cref{thm14} that the projective Weil representation linearizes over certain large subgroups of the automorphism group $\AutZfix(H)$,
and we show that we can arrange for all the relevant groups that appear in the construction of supercuspidal representations to map to such linearizing subgroups (\Cref{thm16} and \Cref{thm12}).

A priori two possible linearizations of the (restriction of the) projective Weil representations differ by a character of the linearizing subgroup. Contrary to $\Sp_{2n}(\bF_p)$ with $p\neq2$, whose characters are trivial unless $(n,p)=(1,3)$, our linearizing subgroups do admit non-trivial characters in general. 
In order to obtain a unique representation $\kappa^-$ of $K^-$ using our theory of Heisenberg--Weil representations, we observe that if $p=2$, the Heisenberg representation is self-dual and thus carries an additional real or quaternionic structure. Requiring the Weil representation to preserve this structure pins down the Weil representation up to a character of order two, and pins down $\kappa^-$ uniquely if $q>2$ (\Cref{thm120}).

The second step in the construction of $\sigma$ involves Clifford theory (see \Cref{sec:construction} for details). When $p=2$, this step is needed in some cases to extend a Heisenberg--Weil representation to the full normalizer of a parahoric, i.e., to pass from $K^-$ to $K$, and in general, when $p$ is small but possibly odd, this step is needed when Yu's second genericity condition~\GEE\ fails to hold, i.e., to pass from $K$ to $\wt K$.

Condition \GEE\ is closely related to a natural question in the theory of reductive groups: Given a reductive group $\bG$ over a field~$k$ and a semisimple element $X\in\Lie^*(\bG)(k)$ of the dual Lie algebra, is the centralizer $Z_\bG(X)$ connected? Steinberg studied this question in detail in \cite{Steinberg-torsion}, and showed that the centralizer is always connected unless possibly the characteristic of~$k$ is small, what is called a \mathdef{torsion prime} for the dual root datum of~$\bG$.
(See \Cref{thm90} for a review of this notion.)
In our application, the group $\bG$ is a twisted Levi subgroup of~$G$
and the field $k$ is the residue field of~$F$.
Thus \GEE\ can fail, or equivalently, a certain centralizer can be disconnected, when $p$ is small, for example, when $G=\SL_n$ and $p\mid n$.
This disconnectedness is what creates the difference between $K$ and~$\widetilde K$. 

The Clifford theory featuring in the second step allows the reader to make a choice in the construction, and the supercuspidal representations in the finite set mentioned in \Cref{thm69} correspond to different choices. These additional choices can only be described after $\kappa^-$ is constructed and we therefore found it unnatural to record them as part of the input $\Upsilon$ (see \Cref{thm70}).  Although the quotient $\wt K/K$ (and hence also $\wt K/K^-$) is not always abelian (see \Cref{sec:ge2example}), we prove that $\wt K/K^-$ is a $p$-group (see \eqref{thm67} and \Cref{thm02b}).

Once the construction of $(\wt K, \sigma)$ is achieved, the proof that the resulting representation $\cind_{\widetilde K}^{G(F)}(\sigma)$ is irreducible and supercuspidal follows in rough terms the arguments of \cite{Yu} and \cite{Fi-Yu-works}, though the details require some new ideas.

First, we can no longer rely on  Gérardin's analysis of the Weil representation, in particular \cite[Theorem~2.4(b)]{Gerardin}, as he only covers the Weil representations of $\Sp_{2n}(\bF_p)$ that appear for $p \neq 2$. Consequently, our arguments reprove without using \cite[Theorem~2.4(b)]{Gerardin} that Yu's original construction yields supercuspidal representations.

Second, once we remove the condition that the characters in the input $\Upsilon$ satisfy Yu's second genericity condition \GEE, the proof of supercuspidality requires more complicated arguments.
When \GEE\ fails, the resulting disconnectedness requires us at certain points to replace the reductive groups $G_i$ of the input $\Upsilon$ by disconnected groups with identity component~$G_i$. The problem is that the character $\phi_{i-1}$ of~$G_i(F)$ in the input~$\Upsilon$ need not extend to this disconnected group, invalidating one key step in the old arguments for supercuspidality. We compensate by carefully passing certain results about intertwiners to simply-connected covers, referring the reader to the proof of \Cref{thm02} for details.

\subsection*{Structure of the paper}
After summarizing notation in \Cref{sec:notation},
the main body of the paper is \Cref{sec:heisweil},
which develops the theory of Heisenberg--Weil representations
with a view towards our applications to $p$-adic groups,
and \Cref{sec:construction}, which proves \Cref{thm69}.
In addition, for the convenience of the reader,
we have added to the end of the paper four appendices,
an index of selected notation, and an index of selected terminology.

\subsection*{Acknowledgments}
The authors thank Alexis Langlois-Rémillard and Kazuma Ohara for feedback on an earlier version of this paper.

\section{Notation and conventions} \label{sec:notation}
Let $F$ be a nonarchimedean local field of residue characteristic~$p$
with discrete valuation $\val\colon F \twoheadrightarrow \bbZ\cup\{\infty\}$
and ring of integers~$\cal O_F$.
We denote by $k_F$ the residue field of~$F$ and by $q=|k_F|$ the cardinality of $k_F$.
We fix a separable closure $F^\tn{sep}$ of~$F$ and take all finite separable field extensions of $F$ to lie inside $F^\tn{sep}$.
We denote by $\bar k_F$ the residue field of $F^\tn{sep}$, an algebraic closure of~$k_F$.

Given a group $A$, we denote by $Z(A)$ the center of~$A$ and by $\Irr(A)$ the set of isomorphism classes
of irreducible representations of~$A$.
We write $[a,b] \defeq aba^{-1}b^{-1}$ for the commutator of $a$ and $b$.
Given a subgroup $B$ of~$A$,
let $N_A(B)$ be the normalizer of $B$ in~$A$.
More generally, given subgroups $B_1,\dots,B_n$,
let $N_A(B_1,\dots,B_n) \defeq \bigcap_{i=1}^n N_A(B)$.
We write ${}^aB\defeq aBa^{-1}$ and
given a representation $\pi$ of~$B$,
we write ${}^a\pi$ for the representation $x\mapsto\pi(a^{-1}xa)$ of~${}^aB$.
If in addition $B$ is normal in~$A$, then we denote by
$N_A(\pi)$ the set of $a\in A$ such that ${}^a\pi\simeq \pi$
and by $\Irr(A,B,\pi)$ the set of $\sigma\in\Irr(A)$
such that $\sigma|_B$ contains $\pi$.
Given a set $X$ and an action of $A$ on~$X$,
we write $X^A$ for the set of elements of~$X$ fixed by~$A$,
and given in addition an element $x\in X$,
we write $Z_A(x)$ for the set of $a\in A$ such that $a(x) = x$.

Suppose $k$ is an arbitrary field.
We write $\Gal(\ell/k)$ for the Galois group of
a separable algebraic field extension $\ell/k$.
Given a linear algebraic $k$-group~$H$,
we write $H^\circ$ for the connected component of $H$ containing the identity
and $\pi_0(H)\defeq H/H^\circ$ for the component group of~$H$, a finite algebraic $k$-group.

All reductive groups in this paper are required to be connected unless explicitly stated otherwise.
Given a reductive $k$-group~$G$,
we write $G^\der$ for the derived subgroup of~$G$ and $G^\sc$ for the simply connected cover of $G^\der$. We denote the adjoint quotient of $G$ by $G^{\textnormal{ad}}$ and the image of $G^\sc(k)$ in $G(k)$ by $G(k)^\natural$.\index[notation]{_natural@${}^\natural$}
We denote the Lie algebra of~$G$ either by $\Lie(G)$
or by using lowercase Fraktur letters,
so that $\frak g$ is the Lie algebra of~$G$, for example,
and we denote the dual Lie algebra by $\Lie^*(G)$.
We write $\widehat{G}$ for the Langlands dual group of $G$.
A subgroup $H$ of~$G$ is a \mathdef{twisted Levi subgroup}\index[terminology]{twisted Levi subgroup}
if $H_\ell:=H \times_k \ell$ is a Levi subgroup of a parabolic subgroup of~$G$ for some finite, separable field extension $\ell/k$.
A twisted Levi subgroup $H$ of $G$ is \mathdef{elliptic} if $Z(H)/Z(G)$ is anisotropic.

Given a $k$-torus $T$, we denote by $X^*(T)$ the set of characters of
$T_{k^\tn{sep}} \defeq T\times_k {k^\tn{sep}}$
with action of the absolute Galois group $\Gal(k^\tn{sep}/k)$,
where $k^\tn{sep}$ is a separable closure of~$k$.
For an algebraic group $P$ containing $T$ we write $\Phi(P,T)$ for the set of non-zero weights of $T$ acting on the Lie algebra of $P$, equipped with the action of~$\Gal(k^\tn{sep}/k)$.
In particular, if $T$ is a maximal torus of $G$, then $\Phi(G,T)\subset X^*(T)$
is the absolute root system of $G$ with respect to $T$, equipped with Galois action.
When $T$ is a maximal torus of~$G$,
we write $H_\alpha\defeq d\alpha^\vee(1)\in\Lie(T)(k^\tn{sep})$ for $\alpha\in\Phi(G,T)$, 
and we write $W(G,T)\defeq N_G(T)/T$ for the Weyl group,
a finite algebraic $k$-group.
When $T$ is a maximal split torus of~$G$,
so that $\Phi(G,T)$ is the relative root system,
we denote by $U_\alpha$ the root group for
the set of positive-integer multiples of~$\alpha\in\Phi(G,T)$.
So if $2\alpha\in\Phi(G,T)$, then $U_\alpha$ is nonabelian.
Given a finite field extension $\ell/k$,
we denote by $\Nm_{\ell/k}\colon T(\ell)\to T(k)$ the corresponding norm map.

Let $\wt\bbR\defeq\bbR\cup\{r+ \mid r\in\bbR\}\cup\{\infty\}$ with its usual order,
as in \cite[Section~1.6]{Kaletha-Prasad-BTbook}.
We write $\sB(G,F)$ for the enlarged Bruhat--Tits building of~$G$ over~$F$.
For $r\in\wt\bbR$ and $x\in\sB(G,F)$,
we denote by $\frak g(F)_{x,r}$, $\frak g(F)^*_{x,r}$, $U_\alpha(F)_{x,r}$, and $G(F)_{x,r}$
the respective depth-$r$ Moy--Prasad subgroups at~$x$
of the $F$-points of the Lie algebra~$\frak g$,
its linear dual~$\frak g^*$, the root group~$U_\alpha$,
and the group $G$, where in the last case we assume $r\geq0$.
If $F$ is clear from the context, we might omit it from the notation, e.g., we write $\frak g_{x,r}$ instead of $\frak g(F)_{x,r}$ and $G_{x,r}$ instead of $G(F)_{x,r}$.
If $G^\tn{der}$ is anisotropic, for example, if $G=T$ is a torus,
then we may suppress $x$ from the notation and write
$\frak g_r$, $\frak g^*_r$, and $G(F)_r$.
Let $G(F)^\natural_{x,r}\defeq G(F)^\natural\cap G(F)_{x,r}$.
Given $x\in\sB(G,F)$, we write $[x]$ for the image of~$x$ in the reduced building of~$G$. If a group $H$ acts on the reduced building of $G$, then we denote by $H_{[x]}$ the stabilizer of~${[x]}$ in~$H(F)$.

If a twisted Levi subgroup $H$ splits over a tame extension~$E$,
then there is an admissible embedding of buildings $\sB(H,F)\to\sB(G,F)$ (\cite[Section~14.2]{Kaletha-Prasad-BTbook}).
In general, this embedding is only well-defined up to translation,
but all translations have the same image.
In this paper we will identify $\sB(H,F)$ with its image in $\sB(G,F)$
for some fixed choice of embedding,
and all constructions are independent of this choice.
If $H$ is elliptic and $x\in\sB(H,F)$,
then the images of $x$ in the reduced buildings of $H$ and~$G$
have the same stabilizer in~$H(F)$,
so that the notation $H(F)_{[x]}$ has the same meaning
whether $[x]$ is interpreted with respect to~$H$ or~$G$.

In this paper, the word ``representation'' with no additional modifiers
refers to a smooth complex representation.
However, we will sometimes work with ``$R$-representations'' or ``$R$-linear representations''
for $R\in\{\bbR,\bbC,\bbH\}$, referring to $R$-linear representations on $R$-modules.
See \cpageref{Rreps} for a discussion of these notions,
which can also be viewed as extra structure on an underlying complex representation.
We write $\cind$ for compact induction.
Given an irreducible representation~$\pi$ of~$G(F)$,
we denote by $\depth(\pi)$ the depth of~$\pi$.

Let $\sfV$ be a vector space over a field~$k$.
Given a quadratic form $Q$ on~$\sfV$,
we write $\O(\sfV,Q)$ for the usual orthogonal group,
the elements of $\GL(\sfV)$ stabilizing~$Q$.
We define the subgroup $\SO(\sfV,Q)$ of $\O(\sfV,Q)$
as the kernel of the determinant if $\tn{char}(k)\neq2$
or the kernel of the Dickson invariant if $\tn{char}(k)=2$,
so that $[\O(\sfV,Q):\SO(\sfV,Q)]=2$ if $\sfV\neq0$.
Similarly, given an alternating form $\omega$ on~$\sfV$,
we write $\Sp(\sfV,\omega)$ for the usual symplectic group,
the elements of $\GL(\sfV)$ stabilizing~$\omega$.
We will often drop $Q$ or $\omega$ from the notation
in $\O(\sfV,Q)$, $\SO(\sfV,Q)$ and $\Sp(\sfV,\omega)$
when their presence is clear from context.

\section{Heisenberg--Weil representations} \label{sec:heisweil}

Let $k$ be a field.
The reader is welcome to take $k=\bbF_p$
since this is the only case that will be needed in the construction of supercuspidal representations.
Nonetheless, we allow $k$ to be a general field, or sometimes, a finite field,
because it is no harder to state the results in that setting.
Recall that $p$ is a prime number, including possibly $p=2$,
and $q$ is a positive integer power of~$p$.

\subsection{Heisenberg groups} \label{sec:heisenberg}
In this subsection we explain how to extend the definition of
the Heisenberg group over~$\bbF_p$ for odd $p$ (cf.~Example~\ref{thm28}) to the case $p=2$.
The resulting group has extremely explicit models
(\Cref{thm30} and \Cref{thm28,thm29}),
but to find this group within a $p$-adic group,
we will also characterize it by intrinsic properties
(\Cref{thm39}).

Recall that the \mathdef{exponent}
\index[terminology]{exponent}
of a finite group is the least common multiple
of the orders of its elements.
The following class of finite $p$-groups %
already appeared in the work of
 Hall and Higman (\cite[Section~2.3]{hall_higman56}),
and has been extensively studied and used as a tool by finite group theorists;
see \cite[p.~183 and Chapter~5.5]{Gorenstein} for a textbook treatment. 

\begin{definition}%
A finite $p$-group $P$ is an \mathdef{extraspecial $p$-group}
\index[terminology]{extraspecial $p$-group}
if its center $Z(P)$ has order~$p$ and $P/Z(P)$ is abelian of exponent~$p$.
\end{definition}

Specializing the definition of extraspecial $p$-group very slightly
and allowing the degenerate case $\bbZ/p\bbZ$
yields the version of the Heisenberg group relevant
to the construction of supercuspidal representations.

\begin{definition} \label{thm39}
A \mathdef{Heisenberg $\bbF_p$-group} \index[terminology]{Heisenberg $\bbF_p$-group}
is a finite group $P$ whose center $Z(P)$ has order $p$ and for which $P/Z(P)$ is abelian of exponent at most $p$ and if $p\neq 2$ then also $P$ is of exponent at most $p$.	
\end{definition}

In other words, a Heisenberg $\bbF_p$-group is either $\bbZ/p\bbZ$
or an extraspecial $p$-group, which is in addition required
to have exponent at most~$p$ when $p\neq 2$.
The case $p=2$ requires special care:
In this case we cannot require $P$ to have exponent~$2$ because that would force~$P$ to be abelian,
and hence we would only obtain the group $P =Z(P) \simeq \bZ/2\bZ$.

The ``$\bbF_p$'' appearing in our terminology reflects the fact
that there is a general construction of the Heisenberg group over a field~$k$,
specializing to \Cref{thm39} when $k=\bbF_p$.
We recall this construction to help with computations and comparison with the literature,
though we will more often take the intrinsic viewpoint of \Cref{thm39}.

\begin{construction}[Heisenberg $k$-groups] \label{thm30}
		\addtocounter{equation}{-1}
	\begin{subequations} 
\index[terminology]{Heisenberg $k$-group}
Let $k$ be a field and $\sfV$ a finite-dimensional $k$-vector space.
Given a bilinear form $B\colon \sfV\otimes_k \sfV\to k$,
we can interpret $B$ as an element of $Z^2(\sfV,k)$
and define the resulting extension $\sfV^\sharp_B$
\index[notation]{VsharpB@$\sfV^\sharp_B$} of~$\sfV$ by~$k$.
In other words, $\sfV^\sharp_B$
is the group with underlying set $k\times \sfV$ and multiplication
\[
(a,v)\cdot(b,w) = (a+b+B(v,w),v+w).
\]
The group $\sfV^\sharp_B$ is a \mathdef{Heisenberg $k$-group}
if $Z(\sfV^\sharp_B) = k$, where we identify $k$ with $k \times \{0\}$ from now on, or equivalently,
if the associated alternating form below is nondegenerate:
\begin{equation}\label{thm37}\index[notation]{wB@$\omega_B$}
\omega_B(v,w) \defeq B(v,w) - B(w,v).
\end{equation}
\end{subequations}
\end{construction}

If $q=p^d$ with $d\geq2$, then a Heisenberg $\bbF_q$-group
is a $p$-group but not a Heisenberg $\bbF_p$-group because the center is too large.
Relatedly, our construction of supercuspidal representations will ultimately use Heisenberg $\bbF_p$-groups,
even though the residue field of~$F$ may be larger than~$\bbF_p$.
However, the two notions agree for $k=\bbF_p$.

\begin{lemma} \label{thm51}
Every Heisenberg $\bbF_p$-group is obtained from \Cref{thm30} with $k=\bbF_p$.
\end{lemma}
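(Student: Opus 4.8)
The plan is to prove this by the classification of the group $P$ up to the choice of a nondegenerate alternating form on $P/Z(P)$, together with a splitting of the underlying set. Let $P$ be a Heisenberg $\bbF_p$-group. If $P = \bbZ/p\bbZ$, then we take $\sfV = 0$ and $B = 0$ in \Cref{thm30}, so $\sfV^\sharp_B = \bbF_p = Z(\sfV^\sharp_B)$, which is the degenerate case. So assume $P$ is extraspecial. Write $Z \defeq Z(P)$, fix an isomorphism $Z \simeq \bbF_p$, and set $\sfV \defeq P/Z$, a finite-dimensional $\bbF_p$-vector space since $\sfV$ is abelian of exponent at most $p$. The commutator pairing $P \times P \to Z$, $(x,y) \mapsto [x,y]$, factors through $\sfV \times \sfV$ and defines an alternating bilinear form $\omega$ on $\sfV$ valued in $Z \simeq \bbF_p$; it is nondegenerate precisely because $Z(P) = Z$, i.e. the only elements of $P$ commuting with all of $P$ lie in $Z$.

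The main step is to choose a set-theoretic section $s\colon \sfV \to P$ of the quotient map with $s(0) = 1$, and then to define $B\colon \sfV \otimes_{\bbF_p} \sfV \to \bbF_p$ by $s(v)\,s(w) = B(v,w)\,s(v+w)$, where we have identified $B(v,w) \in Z \simeq \bbF_p$. Associativity of multiplication in $P$ forces $B$ to be a $2$-cocycle, i.e. $B \in Z^2(\sfV, \bbF_p)$, and the resulting extension is by construction isomorphic to $P$ via $(a, v) \mapsto a \cdot s(v)$ (using additive notation in $Z$); this is exactly the group $\sfV^\sharp_B$ of \Cref{thm30}. To finish, I must check that $B$ is bilinear, not merely a cocycle, so that it is a legitimate input to \Cref{thm30}. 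Here the section must be chosen with care: when $p \neq 2$ one uses that $P$ has exponent $p$, and when $p = 2$ one must instead exploit the fact that $P/Z$ has exponent $2$ together with a judicious choice of $s$ on a basis of $\sfV$, extending multiplicatively. Concretely, pick an $\bbF_p$-basis $e_1, \dots, e_{2n}$ of $\sfV$ in which $\omega$ is in standard symplectic form, lift each $e_i$ to $x_i \in P$, and for $v = \sum c_i e_i$ set $s(v) \defeq x_1^{c_1} \cdots x_{2n}^{c_{2n}}$ with $0 \le c_i < p$. One then verifies directly from the commutation relations $[x_i, x_j] \in Z$ that the associated $B$ is bilinear: moving one generator past another picks up only a central factor linear in the exponents, and when $p = 2$ the square $x_i^2 \in Z$ contributes a diagonal term that is also bilinear (indeed $c_i \mapsto c_i^2 = c_i$ over $\bbF_2$). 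Finally, nondegeneracy of $\omega_B$ in the sense of \eqref{thm37} matches nondegeneracy of the commutator form, so $\sfV^\sharp_B$ has center exactly $\bbF_p$ and is a Heisenberg $\bbF_p$-group isomorphic to $P$.

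I expect the bilinearity of $B$ — equivalently, finding the right section $s$ — to be the only real obstacle, and it is genuinely $p$-dependent: a naive section need only give a cocycle, and the point of \Cref{thm30} is that for the models we want, $B$ should be honestly bilinear. The case $p = 2$ is where, in the words quoted at the start of the paper, the $2$-step nilpotent $2$-group misbehaves, since one cannot arrange $s(v)^2 = 1$; but the diagonal correction is still bilinear over $\bbF_2$, so the argument goes through. Everything else — the identification $\sfV = P/Z$, the passage from the commutator to $\omega$, and the reconstruction of $P$ from $(a,v)\mapsto a\cdot s(v)$ — is routine bookkeeping with extensions of abelian groups.
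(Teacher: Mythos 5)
Your proof is correct, but it takes a genuinely different route from the paper. The paper quotes Winter's explicit presentation of extraspecial $p$-groups (generators $x_1,\dots,x_{2n},z$ with the relations listed there, splitting $p=2$ into the two cases $x_{2n-1}^2=x_{2n}^2=1$ or $=z$), writes down a bilinear form $B$ by hand in each case, and checks that sending $x_i\mapsto(0,e_i)$, $z\mapsto(1,0)$ gives a surjection onto $\sfV^\sharp_B$ which is an isomorphism by counting orders. You instead avoid citing any structure theory: you take $\sfV=P/Z(P)$, lift a basis to elements $x_i\in P$, use the normalized monomial section $s(\sum c_ie_i)=x_1^{c_1}\cdots x_{2n}^{c_{2n}}$, and verify that the resulting cocycle $B$ defined by $s(v)s(w)=B(v,w)\,s(v+w)$ is honestly bilinear. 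Your identification of the one delicate point is the right one: reordering contributes central factors with exponents $\lambda_{ji}c_jd_i$ (bilinear), and the only danger is the carry in reducing exponents modulo $p$, which vanishes for odd $p$ because the definition forces $x_i^p=1$, and for $p=2$ equals $\bigl\lfloor(c_i+d_i)/2\bigr\rfloor=c_id_i$, which is bilinear — this is exactly why the argument survives at $p=2$ while the naive carry is only a cocycle for odd $p$. (Your parenthetical ``$c_i^2=c_i$'' is slightly beside the point; the relevant fact is the bilinearity of the carry, but the surrounding sentence makes the correct claim.) Two cosmetic remarks: the symplectic normalization of the basis is unnecessary — any basis works, since nondegeneracy of $\omega_B$ is inherited at the end from $|Z(P)|=p$ — and the degenerate case $P\simeq\bbZ/p\bbZ$ is handled the same way in both proofs. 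What your approach buys is self-containedness (no appeal to the classification of extraspecial $p$-groups); what the paper's buys is brevity, since after citing the presentation the isomorphism is immediate.
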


\begin{proof}
	Let $P$ be a Heisenberg $\bF_p$-group. If $P$ has order $p$, then $P \simeq \sfV^\sharp_B$ for $\sfV$ being a zero-dimensional $\bF_P$-vector space. Hence, we assume the order of $P$ is $p^{2n+1}$ with $n \geq 1$, which implies that $P$ is an extraspecial $p$-group. Therefore $P$ has the following explicit presentation (see, e.g. \cite[p.~160]{Winter}): $P$ has generators $x_1, x_2, \hdots, x_{2n}$ and relations

\hfill	\begin{tabular}{l}
	$x_ix_jx_i^{-1}x_j^{-1}=\begin{cases} z & \textnormal{ if } (i,j)=(2d-1,2d) \textnormal{ for } 1 \leq d \leq n, \\
		  z^{-1} & \textnormal{ if } (i, j)=(2d,2d-1) \textnormal{ for } 1 \leq d \leq n,\\ 
		  1 & \textnormal{ otherwise, } \end{cases}
		  $\\
	$zx_i=x_iz$ for $1 \leq i \leq 2n$, $z^p=1$, 
	and $x_i^p=1$ for $1 \leq i \leq 2n-2$, \\
	 if $p \neq 2$, then $x_{2n-1}^p=x_{2n}^p=1$, and \\
	 if $p=2$, then either (case a) $x_{2n-1}^2=x_{2n}^2=1$  or (case b) $x_{2n-1}^2=x_{2n}^2=z$. 
	 \end{tabular}\hfill\,

Let $\sfV=\bbF_p^{2n}$ with standard basis $\{e_i : 1\leq i\leq 2n\}$.
When $p\neq 2$, or $p=2$ and $P$ satisfies the relations in case a above, then
we define $B$ by
\[
B(e_i,e_j) = \begin{cases}
1 & \tn{if $(i, j)=(2d-1, 2d)$ for some $d$,} \\
0 & \tn{otherwise}.
\end{cases}
\]
When $p=2$ and $P$ satisfies the relations in case b above,
then define $B$ by setting
$B(e_{2d-1},e_{2d}) = 1$ for $1 \leq d < n$,
\[
B(e_{2n-1},e_{2n-1})
= B(e_{2n},e_{2n})
= B(e_{2n-1},e_{2n}) = 1,
\]
and $B(e_i,e_j) = 0$ otherwise.
Sending $x_i$ to $(0, e_i)$ and $z$ to $(1,0)$ defines a surjective group homomorphism $P \twoheadrightarrow V^\sharp_B$, which is an isomorphism since both groups have the same order.
\end{proof}

Although there are many possible bilinear forms $B$ for which $\omega_B$ is nondegenerate,
the classification of Heisenberg $k$-groups is rather simple.

\begin{lemma} \label{thm31}
In the setting of \Cref{thm30},
let $B,B'\colon \sfV\otimes_k \sfV\to k$ be two bilinear forms
whose associated alternating forms $\omega_B$ and $\omega_{B'}$ are nondegenerate.
\begin{lemmaenum}
\item \label{thm31a}
Suppose $\tn{char}(k)\neq 2$. Then $\sfV_B^\sharp\simeq \sfV^\sharp_{\omega_B} \simeq \sfV_{B'}^\sharp$.

\item \label{thm31b}
Suppose $\tn{char}(k)=2$. 
If the quadratic forms $B(v,v)$ and $B'(v,v)$ are $\GL(\sfV)$-conjugate,
then $\sfV_B^\sharp\simeq \sfV_{B'}^\sharp$.
The converse holds if $k=\bbF_2$.
\end{lemmaenum}
\end{lemma}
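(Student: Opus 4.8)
The plan is to split into the two characteristic cases and, in each, reduce the isomorphism question for the groups $\sfV^\sharp_B$ to a linear-algebraic statement about the bilinear (or quadratic) form $B$. The basic observation is that any isomorphism $\sfV^\sharp_B \isoarrow \sfV^\sharp_{B'}$ must carry the center to the center, hence induces an isomorphism $k \to k$ on centers and an isomorphism $\sfV \to \sfV$ on the quotients; conversely, given a suitable identification of forms, one writes down the isomorphism explicitly. So the heart of the argument is understanding when a change of coordinates on $\sfV$ (together with possibly a modification by a symmetric "coboundary") carries $B$ to $B'$.

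For part \ref{thm31a}, when $\tn{char}(k)\neq2$, the key point is that one can symmetrize: replacing $B$ by $B - \frac12(B - B^{\mathrm{op}})^{\mathrm{op}}$, i.e.\ using the factor of $1/2$ available in $k$, one sees that $B$ and $\frac12\omega_B$ differ by a symmetric bilinear form, and a symmetric bilinear form $S$ gives a coboundary in $Z^2(\sfV,k)$ (via $v \mapsto$ the associated quadratic-form datum), so that $\sfV^\sharp_B \simeq \sfV^\sharp_{\omega_B}$ — here the explicit isomorphism is $(a,v)\mapsto(a + \text{(quadratic correction in }v),v)$. This gives $\sfV^\sharp_B \simeq \sfV^\sharp_{\omega_B}$ and likewise $\sfV^\sharp_{B'}\simeq\sfV^\sharp_{\omega_{B'}}$, so it remains to see $\sfV^\sharp_{\omega_B}\simeq\sfV^\sharp_{\omega_{B'}}$; but $\omega_B$ and $\omega_{B'}$ are both nondegenerate alternating forms on $\sfV$, hence $\GL(\sfV)$-conjugate by the standard symplectic classification, and conjugating the form conjugates the group.

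For part \ref{thm31b}, when $\tn{char}(k)=2$, symmetrization is no longer available, and the new invariant is the quadratic form $v\mapsto B(v,v)$ (well-defined since in characteristic $2$ the diagonal of a bilinear form is genuinely quadratic, and the associated alternating form $\omega_B$ is forced to be the polarization). First direction: if $v\mapsto B(v,v)$ and $v\mapsto B'(v,v)$ are $\GL(\sfV)$-conjugate, I would first reduce (by applying that change of coordinates) to the case $B(v,v)=B'(v,v)$ for all $v$; then $B-B'$ is an alternating bilinear form, and I claim two bilinear forms with the same diagonal quadratic part yield isomorphic Heisenberg groups — the point being that $B$ and $B'$ then have the same $\omega$ (so $\sfV^\sharp_B$ and $\sfV^\sharp_{B'}$ have "the same commutator structure"), and one checks directly that the set-theoretic identity map, or a small correction of it, is a group isomorphism because the squaring map $v\mapsto 2B(v,v)$... — more carefully, one verifies the relations in the explicit presentation from the proof of \Cref{thm51} match up, since those relations only see $\omega_B$ and the diagonal $B(e_i,e_i)$. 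Converse direction, $k=\bbF_2$: here $\sfV^\sharp_B$ is an extraspecial $2$-group (or $\bbZ/2\bbZ$), and by the Hall--Higman / Winter classification recalled in \Cref{thm51} there are exactly two isomorphism classes of extraspecial $2$-groups of each order $2^{1+2n}$ ($n\geq1$), distinguished by whether the squaring map $P\to Z(P)$ is identically trivial on a maximal isotropic complement, equivalently by the Arf invariant (Dickson invariant) of the quadratic form $v\mapsto B(v,v)$; since over $\bbF_2$ two nondegenerate quadratic forms with the same polarization are $\GL(\sfV)$-conjugate iff they have the same Arf invariant, the two isomorphism classes of groups correspond exactly to the two $\GL(\sfV)$-orbits of forms, giving the converse.

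The main obstacle I expect is the first direction of \ref{thm31b}: checking that altering $B$ by an alternating form does not change the isomorphism type of $\sfV^\sharp_B$ in characteristic $2$. One must be careful that an alternating bilinear form need not be a coboundary in $Z^2(\sfV,k)$ over $\bbF_2$ in the naive way (the relevant cohomology $H^2(\sfV,k)$ with $\sfV$ an elementary abelian $2$-group is exactly the space of quadratic forms, and alternating forms with zero diagonal are coboundaries, but one must track the squaring data); so I would either argue via the explicit presentation — matching generators and relations as in \Cref{thm51}, since those relations are governed solely by $\omega_B$ and the diagonal values $B(e_i,e_i)$ — or construct the isomorphism $(a,v)\mapsto(a+f(v),v)$ for an explicit (possibly non-linear) function $f\colon\sfV\to k$ and verify multiplicativity by a short cocycle computation. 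The other steps (center-goes-to-center, symplectic classification, Arf-invariant classification over $\bbF_2$) are standard and I would cite or invoke them briefly rather than reprove them.
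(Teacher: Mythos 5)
Your proposal is correct, and for part~(a) and the forward direction of part~(b) it is essentially the paper's argument: the paper also builds the isomorphism $(a,v)\mapsto(a+f(v),\sigma v)$ and observes that the ``coboundary'' $f(v+w)-f(v)-f(w)$ of a quadratic form $f$ realizes an arbitrary symmetric bilinear form when $\tn{char}(k)\neq2$ and an arbitrary alternating form when $\tn{char}(k)=2$, which settles exactly the step you flag as the main obstacle (your fallback via the presentation of \Cref{thm51} would only apply for $k=\bbF_p$, since for infinite $k$ the group is not finite, but your cocycle route works in general; also your symmetrization formula $B-\tfrac12(B-B^{\mathrm{op}})^{\mathrm{op}}$ is off --- the symmetric form is $\tfrac12(B+B^{\mathrm{op}})=B-\tfrac12\omega_B$ --- though the slip is harmless because the symplectic classification absorbs the scalar discrepancy between $\omega_B$ and $\tfrac12\omega_B$). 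Where you genuinely diverge is the converse over $\bbF_2$: you invoke the classification of extraspecial $2$-groups (exactly two isomorphism classes in each order $2^{1+2n}$, separated by the type of the squaring form --- and note this is the Arf invariant, not the Dickson invariant) together with the two-orbit classification of nondegenerate quadratic forms over $\bbF_2$, and conclude by matching the two two-element sets, using \Cref{thm51} for surjectivity. This is legitimate, with two caveats: you must cite the extraspecial classification from the finite-group literature (Hall--Higman, Gorenstein), not from the paper, because the paper's assertion that there are exactly two classes (\Cref{thm29}) is itself deduced from \Cref{thm31b}, so quoting it here would be circular; and the cited classification contains essentially the content you are asked to prove. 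The paper's own converse is shorter and self-contained: in $\sfV^\sharp_B$ one has $(a,v)^2=(B(v,v),0)$, so the squaring map computes $Q_B$ with values in the center, whence any abstract isomorphism transports $B(v,v)$ to $B'(v,v)$ along the induced automorphism of $\sfV$, which over $\bbF_2$ is automatically linear.
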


We refer the reader to \Cref{sec:forms} for a review of the definition
and properties of alternating and quadratic forms in characteristic~$2$.

\begin{proof}
Given a function $f\colon \sfV\to k$ and a linear automorphism $\sigma\in\GL(\sfV)$, the map 
\[
(a,v)\mapsto (a + f(v),\sigma v)
\]
defines an isomorphism $\sfV_B^\sharp\to \sfV_{B'}^\sharp$
as long as the following identity holds:
\[
f(v+w) - f(v) - f(w) = B'(\sigma v,\sigma w) - B(v,w),
\qquad v,w\in \sfV.
\]
If in addition $f\in\Sym^2(\sfV^*)$ is a quadratic form,
then the lefthand side of this expression
is a general symmetric bilinear form when $\tn{char}(k)\neq2$
and a general alternating form when $\tn{char}(k)=2$ (see \Cref{sec:forms}).

When $\tn{char}(k)\neq2$, since the form
\[
(2B-\omega_B)(v,w) = B(v,w) + B(w,v)
\]
is symmetric, $\sfV^\sharp_B\simeq \sfV^\sharp_{2B}\simeq \sfV^\sharp_{\omega_B}$.
But then $\sfV_B^\sharp\simeq \sfV^\sharp_{\omega_B} \simeq \sfV^\sharp_{\omega_{B'}} \simeq \sfV_{B'}^\sharp$
because any two nondegenerate alternating forms on~$\sfV$
are $\GL(\sfV)$-conjugate.

When $\tn{char}(k)=2$,
if $B'(v,v)$ is conjugated to $B(v,v)$ by $\sigma\in\GL(\sfV)$,
then the form $B'(\sigma v,\sigma w) - B(v,w)$ is alternating 
and thus $\sfV_B^\sharp \simeq \sfV_{B'}^\sharp$.
Note that
\(
(a,v)^2 = (B(v,v),0)
\) for any $a \in k$.
Hence,  
if $\tau\colon \sfV^\sharp_B\simeq \sfV^\sharp_{B'}$ is an isomorphism of abstract groups,
then the automorphism of~$\sfV$ induced by~$\tau$
takes the quadratic form~$B(v,v)$ to the quadratic form~$B'(v,v)$.
When $k=\bbF_2$, this induced automorphism of~$\sfV$ is automatically $k$-linear.
\end{proof}

Explicitly, \Cref{thm31} gives the following description of Heisenberg $\bbF_p$-groups.

\begin{example}[Heisenberg $\bbF_p$-groups, odd~$p$] \label{thm28}
Suppose $p\neq 2$.
By \Cref{thm31a},
for every $n\geq1$ there is a unique (up to isomorphism) Heisenberg $\bbF_p$-group
of order $p^{2n+1}$, constructed as follows.
Given a symplectic $\bbF_p$-vector space $(\sfV,\omega)$ of dimension~$2n$,
the group $\sfV_{\omega/2}^\sharp$ is the set-theoretic product $\bbF_p \times \sfV$
with multiplication
\[
(a,v)\cdot(b, w) = (a + b + \tfrac12\omega(v,w),v + w).
\]
\end{example}

\begin{example}[Heisenberg $\bbF_2$-groups] \label{thm29}
Let $(\sfV,Q)$ be a finite-dimensional quadratic space over~$\bbF_2$ of even dimension with $Q$ non-degenerate.
Up to isomorphism, there are two possible isomorphism classes of
$(\sfV,Q)$ when $\dim(\sfV)\geq2$:
the split space, isomorphic to $k^{2n}$ with $Q$ given by \eqref{thm32},
and the nonsplit space, isomorphic to $k^{2n-2}\oplus\ell$, where $\ell/k$ is a quadratic field extension, with $Q$ given by \eqref{thm17}.
By \Cref{thm31b}, there are two isomorphism classes of Heisenberg $\bbF_2$-groups
of every fixed order~$2^{2n+1}$:
one of ``positive type'' for the split form and one of ``negative type'' for the nonsplit form.
In the simplest nontrivial case, order~$8$,
the positive-type group is the dihedral group~$D_8$ of order 8
and the negative-type group is the quaternion group~$Q_8$.
\Cref{thm68} shows that both types of groups are needed
in the construction of supercuspidal representations.

In the finite group theory literature,
extraspecial $2$-groups are described as built up from $D_8$ and~$Q_8$ as follows.
Given Heisenberg $\bbF_2$-groups $P$ and~$Q$, identify $Z(P)$ and $Z(Q)$ with~$\bbF_2$
and define the \mathdef{central product} 
$P\circ Q$\index[notation]{_circ@${\underline{\phantom{P}}}\circ\underline{\phantom{P}}$}
as the quotient $(P\times Q)/Z$
where $Z$ is the kernel of the multiplication map $Z(P)\times Z(Q)\to\bbF_2$.
Then every Heisenberg $\bbF_2$-group $P$ of order $2^{2n+1}$
is isomorphic to a central product
\[
P_1\circ P_2 \circ \cdots \circ P_n
\]
where each $P_i$ is either $D_8$ or~$Q_8$.
Two such groups are isomorphic
if and only if the number of their quaternionic factors has the same parity,
since $D_8\circ D_8\simeq Q_8\circ Q_8$.
The positive-type group is $D_8\circ\cdots\circ D_8$
and the negative-type group is $Q_8\circ D_8\circ\cdots\circ D_8$.
\end{example}

\subsection{$R$-representations}
In this subsection we describe a certain structure of an ``$R$-representation''
carried by every self-dual irreducible representation of a finite group.
This structure plays a key role in linearizing projective Weil representations,
as we explain in more detail in \Cref{sec:weil}.

Let $A$ be a finite group.
Let $(\pi,V)$ be an irreducible complex representation of~$A$
and let $(\pi^*,V^*)$ be the dual representation.
Let $\bbH$ denote the ring of quaternions over~$\bbR$.

Suppose that $\pi$ is irreducible.
Following Serre (\cite[Section~13.2]{Serre77}),
there are the following three mutually exclusive possible situations, indexed by a ring $R\in\{\bbR,\bbC,\bbH\}$
that we call the \mathdef{Frobenius--Schur type} of~$\pi$.%
\index[terminology]{Frobenius--Schur type}%
\footnote{This terminology is nonstandard
but is inspired by the \mathdef{Frobenius--Schur indicator},
which equals $0$, $+1$, or~$-1$ for an irreducible representation
of Frobenius--Schur type $\bbC$, $\bbR$, or~$\bbH$,
respectively.}

\begin{enumerate}[(1)]
\item
$\pi$ is \mathdef{complex}: $\pi\not\simeq \pi^*$,
or equivalently, the character of $\pi$ is not real-valued.
\end{enumerate}
In the remaining two cases $\pi\simeq \pi^*$,
but there are two ways that this can happen,
depending on the sign of the form $V\otimes_\bbC V\to\bbC$
resulting from the isomorphism $\pi\simeq \pi^*$.
\begin{enumerate}[(1),resume]
\item
$\pi$ is \mathdef{real}: the form is symmetric,
or equivalently, there is a representation
defined over~$\bbR$ whose extension of scalars to~$\bbC$ is~$\pi$.

\item
$\pi$ is \mathdef{quaternionic}: 
the form is alternating, or equivalently,
there is a structure of a right $\bbH$-module on~$V$
for which the action of $G$ on~$V$ is $\bbH$-linear.
\end{enumerate}

We index the Frobenius--Schur type of $\pi$ by a ring $R$
because $\pi$ can be repackaged as an $R$-module, as follows.
Given $R\in\{\bbR,\bbC,\bbH\}$,
an \mathdef{$R$-representation}\index[terminology]{R-representation@$R$-representation}
\phantomsection \label{Rreps}
of~$A$ is a right $R$-module~$W$
together with an $R$-linear action of $A$ on~$W$,
or in other words, a homomorphism~$\rho$ from $A$
to the group $\GLR WR$\index[notation]{GzL@$\GLR WR$}
of $R$-linear automorphisms of~$W$.
Then to each $R$-representation $(\rho,W)$ of $A$
we associate as follows a complex representation~$(\pi,V)$ of~$A$:
\begin{enumerate}[(1)]
\item
If $R=\bbC$, then $\pi=\rho$.

\item
If $R=\bbR$, then $V = \bbC\otimes_\bbR W$
with $\pi$ the base change of~$\rho$.

\item
If $R=\bbH$, then $V=W$ with $\bbC$-module structure
pulled back along an $\bbR$-algebra embedding $\bbC\hookrightarrow\bbH$
and $\pi$ is the composition of $\rho$
and $\GLR WR\to\GL(V)$.

\end{enumerate}
If the complex representation $\pi$ is irreducible, then it has Frobenius--Schur type~$R$.

\begin{lemma} \label{LemmaRrep}
	Let $(\rho,W)$ be an irreducible $R$-representation of~$A$
	with associated complex representation $(\pi,V)$.
	Suppose $(\pi,V)$ is irreducible.
	Then the isomorphism class of $\rho$ as an $R$-representation
	is uniquely determined by the isomorphism class of $\pi$ as a complex representation.
\end{lemma}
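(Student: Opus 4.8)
The plan is to reduce the statement to the standard fact that $\pi$ determines its Frobenius--Schur type, together with a Schur's-lemma computation of the automorphism ring of the $R$-representation. First I would observe that since $\pi$ is irreducible, its Frobenius--Schur type $R$ is intrinsic to the isomorphism class of $\pi$; and by the dictionary recorded just above, an $R$-representation whose associated complex representation is $\pi$ necessarily has the property that $R$ is that intrinsic ring. So suppose $(\rho,W)$ and $(\rho',W')$ are two irreducible $R$-representations with associated complex representations both isomorphic to $\pi$. I need to produce an $R$-linear $A$-equivariant isomorphism $W\to W'$.

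The mechanism is to first fix a complex $A$-linear isomorphism $\Phi\colon V\to V'$ between the underlying complex representations (which exists and is unique up to a nonzero scalar by Schur's lemma over $\bbC$), and then to modify $\Phi$ so that it becomes $R$-linear. For $R=\bbC$ there is nothing to do. For $R=\bbR$, the two real structures on $V$ given by $W$ and $W'$ correspond to two antilinear involutions $j,j'$ on $V,V'$ commuting with $A$; then $\Phi\i j' \Phi$ and $j$ are both antilinear $A$-equivariant involutions of $V$, so $\Phi\i j'\Phi \cdot j\i$ is a $\bbC$-linear $A$-automorphism of $V$, hence a scalar $\lambda$ by Schur, and one checks $|\lambda|=1$; replacing $\Phi$ by $\mu\Phi$ for a suitable unit scalar $\mu$ (solving $\mu/\bar\mu = \lambda$, which is possible since $|\lambda|=1$) makes $\Phi$ intertwine $j$ and $j'$, hence restrict to an $\bbR$-linear isomorphism $W\to W'$. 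For $R=\bbH$, the quaternionic structures amount to antilinear $A$-equivariant maps $J,J'$ with $J^2=-\id$ (giving the action of $j\in\bbH$); the same Schur argument gives $\Phi\i J'\Phi = \lambda J$ for a scalar $\lambda$, and comparing squares forces $|\lambda|=1$, so again rescaling $\Phi$ arranges $\Phi\i J'\Phi = J$, whence $\Phi$ is $\bbH$-linear. In all cases the resulting map is an isomorphism of $R$-representations.

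The main obstacle — really the only non-formal point — is the bookkeeping that an $R$-representation structure on $W$ with given associated complex $\pi$ is exactly the data of the extra antilinear operator ($j$ for $\bbR$, $J$ for $\bbH$) on $V$ commuting with the $A$-action, and that isomorphisms of $R$-representations correspond to complex $A$-isomorphisms commuting with this operator. Once that equivalence is set up cleanly, the uniqueness is the Schur-scalar computation above, the one subtlety being to verify $|\lambda|=1$ in each case (from $j$, $j'$ being involutions, resp.\ $J^2=J'^2=-\id$) so that the rescaling equation $\mu/\bar\mu=\lambda$ is solvable. I would present the $\bbR$ and $\bbH$ cases in parallel, since the argument is identical up to the sign in $J^2=-\id$ versus $j^2=+\id$, and note that the complex case is trivial.
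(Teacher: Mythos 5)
Your argument is correct, but it takes a genuinely different route from the paper. You translate the $R$-structure on the underlying complex space into an antilinear $A$-equivariant operator ($j$ with $j^2=\id$ for $R=\bbR$, $J$ with $J^2=-\id$ for $R=\bbH$), fix a Schur isomorphism $\Phi$ of the complex representations, observe $\Phi\i j'\Phi=\lambda j$ with $|\lambda|=1$ (the square of the antilinear operator forces $\lambda\bar\lambda=1$), and rescale $\Phi$ by $\mu$ with $\mu/\bar\mu=\lambda$ so that it intertwines the structures; the same computation handles $\bbH$, and the bookkeeping equivalence between $R$-structures (resp.\ $R$-isomorphisms) and antilinear operators (resp.\ complex isomorphisms commuting with them) is standard and you rightly flag it as the only setup needed. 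The paper instead argues by Galois descent: it packages $\GLR WR\subseteq\GL(V)$ as the real points of a reductive $\bbR$-group, views the transporter $\tn{Transp}(\rho_1,\rho_2)$ as a torsor under $Z_{\GL(V)}(\rho_1)\simeq\bbC^\times$ (Schur again), and concludes from $H^1(\Gal(\bbC/\bbR),\bbC^\times)=1$ (Hilbert~90) that the torsor has a real point, i.e.\ the conjugating element can be chosen in $\GLR WR$. The two proofs are cognate at bottom — your solvability of $\mu/\bar\mu=\lambda$ for $|\lambda|=1$ is exactly the explicit form of Hilbert~90 for $\bbC/\bbR$ — but yours is the elementary, hands-on version: it avoids torsors and nonabelian cohomology and makes the norm-one verification concrete, at the cost of a case split and the antilinear-structure dictionary. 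The paper's descent argument treats $\bbR$ and $\bbH$ uniformly without ever choosing $j$ or $J$, and is the formulation that generalizes most readily to other descent problems, but it invokes more machinery for what is, in this instance, a very small cohomological fact.
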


\begin{proof}
If $R=\bbC$, then there is nothing to prove, so assume $R\in\{\bbR,\bbH\}$.
Then $\GLR WR\subseteq\GL(V,\bC)=\GL(V)$ and it suffices to show that
if two homomorphisms $\rho_1,\rho_2\colon A\to\GLR WR$
whose associated complex representation is irreducible become conjugate in~$\GL(V)$,
then they were already conjugate in $\GLR WR$.

For this, we use Galois descent.
There is a reductive $\bbR$-group $G$
such that $G(\bbR) = \GLR WR$ and $G(\bbC)\simeq \GL(V)$:
if $R=\bbR$, then $G$ is isomorphic to $\GL_{n,\bbR}$ where $n=\dim(V)$,
and if $R=\bbH$, then $G$ is the nonsplit inner form
of a general linear group.

Let $\tn{Transp}(\rho_1,\rho_2)$
be the elements of $\GL(V)$ that conjugate $\rho_1$ to~$\rho_2$.
To complete the proof,
we need to show that this set has a $\Gal(\bbC/\bbR)$-fixed point.
The centralizer $Z_{\GL(V)}(\rho_1)$ of $\rho_1$ in~$\GL(V)$
acts on $\tn{Transp}(\rho_1,\rho_2)$
by right multiplication,
and this action turns the $\Gal(\bbC/\bbR)$-set $\tn{Transp}(\rho_1,\rho_2)$ into a
$Z_{\GL(V)}(\rho_1)$-torsor
in the sense of \cite[Chapter~1, Section~5.2]{Serre02}.
Such torsors are classified by the cohomology set
$H^1(\Gal(\bbC/\bbR),Z_{\GL(V)}(\rho_1))$.
But $Z_{\GL(V)}(\rho_1)=Z(\GL(V)) \simeq \bbC^\times$ by Schur's Lemma, since the complex representation associated to $\rho_1$ is irreducible,
and $\Gal(\bbC/\bbR)$ acts on this group in the usual way,
by complex conjugation.
So the set $H^1(\Gal(\bbC/\bbR),Z_{\GL(V)}) = H^1(\Gal(\bbC/\bbR),\bC^\times)$
is trivial by Hilbert's Theorem~90 and thus the torsor is trivial,
implying that it has a $\Gal(\bbC/\bbR)$-fixed point.
\end{proof}

Next we turn to projective representations.
Let $R\in\{\bbR,\bbC,\bbH\}$ and let $W$ be a finite-dimensional right $R$-module.
Then
\[
Z(\GLR WR) = Z(R)^\times \simeq \begin{cases}
\bbC^\times & \tn{if $R=\bbC$,} \\
\bbR^\times & \tn{if $R\in\{\bbR,\bbH\}$,}
\end{cases}
\]
and we write $\PGLR WR\defeq\GLR WR/Z(R)^\times$.
We define a \mathdef{projective $R$-representation}%
\index[terminology]{projective $R$-representation}
of a finite group~$A$ to be a group homomorphism $\bar\rho\colon A\to\PGLR WR$,
and an \mathdef{$R$-linearization}\index[terminology]{$R$-linearization}
of~$\bar\rho$ to be a group homomorphism $\rho\colon A\to\GLR WR$ lifting~$\bar\rho$.
If $\rho$ and $\rho'$ are two $R$-linearizations of~$\bar\rho$,
then there is a character $\chi\colon A\to Z(R)^\times$
such that $\rho' = \chi\otimes\rho$.
Since $A$ is finite, the character $\chi$
takes values in the maximal compact subgroup
\[
Z(R)^\times_\tn{c} = \begin{cases}
\{ z \in \bC \, | \abs{z}=1\}     & \tn{if $R=\bbC$,} \\
\{\pm1\} & \tn{if $R\in\{\bbR,\bbH\}$.}
\end{cases}
\]
Consequently, linearizing
real or quaternionic projective representations involves less of a choice than 
linearizing complex projective representations. In the first case the linearization is unique
if and only if $A$ has no characters of order two,
but in the second case the linearization is unique
if and only if $A$ is perfect,
which is a much stronger condition.

\begin{lemma} \label{lemmaRlinearization}
Let $R\in\{\bbR,\bbH\}$,
let $A$ be a finite group, let $A_p\subseteq A$ be a Sylow $2$-subgroup,
and let $\bar\rho$ be a projective $R$-representation.
Then $\bar\rho$ has an $R$-linearization if and only if $\bar\rho|_{A_p}$ has an $R$-linearization.
\end{lemma}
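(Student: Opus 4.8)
The plan is to convert the existence of an $R$-linearization into the vanishing of a class in $H^2(A,Z(R)^\times)$, and then to observe that for $R\in\{\bbR,\bbH\}$ this cohomology group is $2$-torsion, so that the question is detected on a Sylow $2$-subgroup. One implication is trivial: restricting an $R$-linearization $\rho\colon A\to\GLR WR$ of~$\bar\rho$ to~$A_p$ produces an $R$-linearization of $\bar\rho|_{A_p}$.

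\textbf{Cohomological reformulation.} For the converse, I would pull back the central extension $1\to Z(R)^\times\to\GLR WR\to\PGLR WR\to1$ along $\bar\rho$ to obtain a central extension $1\to Z(R)^\times\to E\to A\to1$ (central because $Z(R)^\times$ is the center of $\GLR WR$). By the usual dictionary this extension is classified by a class $c(\bar\rho)\in H^2(A,Z(R)^\times)$ for the trivial action; an $R$-linearization of $\bar\rho$ is exactly a group-theoretic splitting $A\to E$; hence $\bar\rho$ has an $R$-linearization if and only if $c(\bar\rho)=0$, and likewise $c(\bar\rho|_{A_p})=\Res^A_{A_p}c(\bar\rho)$ by naturality. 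Now for $R\in\{\bbR,\bbH\}$ we have $Z(R)^\times=\bbR^\times$, and the decomposition $\bbR^\times\simeq\{\pm1\}\times\bbR_{>0}$ of abelian groups (hence of trivial $A$-modules) gives $H^2(A,\bbR^\times)\simeq H^2(A,\{\pm1\})\oplus H^2(A,\bbR_{>0})$. Since $\bbR_{>0}\cong(\bbR,+)$ is uniquely divisible and $A$ is finite, $H^i(A,\bbR_{>0})=0$ for $i>0$, so $H^2(A,Z(R)^\times)\simeq H^2(A,\{\pm1\})$, a group killed by~$2$.

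\textbf{Reduction to the Sylow $2$-subgroup and conclusion.} Finally I would invoke the standard fact that for a finite group~$A$ with Sylow $2$-subgroup~$A_p$ the restriction $\Res^A_{A_p}\colon H^2(A,\{\pm1\})\to H^2(A_p,\{\pm1\})$ is injective, since $\mathrm{cor}^A_{A_p}\circ\Res^A_{A_p}$ is multiplication by the odd integer $[A:A_p]$, which is invertible on a $2$-torsion group. Putting the pieces together: if $\bar\rho|_{A_p}$ has an $R$-linearization, then $\Res^A_{A_p}c(\bar\rho)=c(\bar\rho|_{A_p})=0$, so $c(\bar\rho)=0$ by injectivity, so $\bar\rho$ has an $R$-linearization. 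I do not expect a genuine obstacle; the only points needing a little care are the identification of $R$-linearizations with splittings of~$E$ and the bookkeeping that makes the coefficient module $2$-torsion — and it is precisely this last step that fails for $R=\bbC$, since $H^2(A,\bbC^\times)$ can have elements of odd order, consistent with the lemma being restricted to $R\in\{\bbR,\bbH\}$.
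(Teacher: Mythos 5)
Your proposal is correct and follows essentially the same route as the paper's proof: pull back the central extension $1\to\bbR^\times\to\GLR WR\to\PGLR WR\to1$ along $\bar\rho$, identify $R$-linearizations with splittings, use $H^2(A,\bbR^\times)=H^2(A,\{\pm1\})$ for finite $A$, and conclude via injectivity of restriction to a Sylow $2$-subgroup (which the paper cites from Serre and you justify by the standard corestriction argument). No gaps.
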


This result is similar to \cite[Lemma~1.5]{Gerardin}, but much more general:
For our \crefname{lemmaRlinearization}\JFxxx{JF at DS: I want to just write ``For our lemma there is...'' but not sure how to do this with cref} there is no need to assume that $A$ or $\pi$ have a particular form.

\begin{proof}
The pullback of the short exact sequence 
\[
	\begin{tikzcd}
		1 \rar & \bR^\times \rar & \GLR VR \rar & \PGLR VR \rar & 1
	\end{tikzcd}
\]
via $\bar\rho\colon A\to\PGLR VR$ yields an extension of $A$ by $\bR^\times$ which is split if and only if $\bar\rho$ has an $R$-linearization. Let $c \in H^2(A,\bR^\times)$ be the cocycle class attached to this extension, which is trivial if and only if the extension splits. Note that since $A$ is a finite group, $H^2(A,\bR^\times)=H^2(A,\{\pm 1 \})\oplus H^2(A,\bR^\times_{>0})=H^2(A,\{\pm 1 \})$. Since the restriction map $H^2(A,\{\pm1\})\to H^2(A_p,\{\pm1\})$ is injective (\cite[Chapter~IX, Theorem~4]{Serre79}), we can detect the vanishing of~$c$ by restricting to~$A_p$, and hence if $\bar\rho|_{A_p}$ has an $R$-linearization, then so does~$\bar\rho$.
\end{proof}

\subsection{Heisenberg representations} \label{sec:Heisenberg}

In this subsection we recall the representation theory of Heisenberg $\bbF_p$-groups,
paying special attention to the Frobenius--Schur type (real, complex, or quaternionic) of the Heisenberg representation.
Let $P$ be a Heisenberg $\bbF_p$-group,
let $\psi\colon Z(P)\simeq\bbF_p\to\bbC^\times$ be a nontrivial character,
and let $\sfV_P\defeq P/Z(P)$,\index[notation]{VxP@$\sfV_P$} an $\bbF_p$-vector space.

\begin{lemma}[Stone--von Neumann theorem] \label{thm45}
There is (up to equivalence) a unique irreducible representation $\WeilRep_\psi$ of~$P$
whose restriction to~$Z(P)$ is $\psi$-isotypic.
Moreover, $\dim(\WeilRep_\psi) = \sqrt{|\sfV_P|}$.
\end{lemma}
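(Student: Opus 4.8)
The plan is to prove the Stone--von Neumann theorem for Heisenberg $\bbF_p$-groups in a characteristic-free way, treating the degenerate case $P = \bbZ/p\bbZ$ separately (where the statement is trivial, since then $P = Z(P)$ and $\WeilRep_\psi = \psi$) and then assuming $P$ is extraspecial of order $p^{2n+1}$. Existence will come from an explicit induction construction: pick a maximal abelian subgroup $M \subseteq P$ containing $Z(P)$. Since $P$ is extraspecial, $|M| = p^{n+1}$, the quotient $M/Z(P)$ is a maximal isotropic subspace of $\sfV_P$ for the (well-defined, characteristic-free) alternating commutator form, and $[P:M] = p^n$. Extend $\psi$ to a character $\widetilde\psi$ of $M$ (possible since $M$ is abelian and $\psi$ is a character of a subgroup of an abelian group) and set $\WeilRep_\psi \defeq \Ind_M^P \widetilde\psi$, which has dimension $p^n = \sqrt{|\sfV_P|}$.

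Next I would show $\WeilRep_\psi$ is irreducible and that its isomorphism class does not depend on the choices of $M$ or $\widetilde\psi$. For irreducibility the cleanest route is a character computation: using the formula for the character of an induced representation, $\chi_{\WeilRep_\psi}$ is supported on $Z(P)$ (because any $g \notin Z(P)$ is not conjugate into $M$ in a way compatible with $\widetilde\psi$ — more precisely, for $g \in P \setminus Z(P)$ the commutator map $x \mapsto [g,x]$ is a nontrivial homomorphism $P \to Z(P)$, forcing the relevant character sum to vanish), and on $Z(P)$ it equals $p^n \psi$. Hence $\langle \chi_{\WeilRep_\psi}, \chi_{\WeilRep_\psi} \rangle = \frac{1}{|P|}\sum_{z \in Z(P)} |p^n \psi(z)|^2 = \frac{p \cdot p^{2n}}{p^{2n+1}} = 1$, so $\WeilRep_\psi$ is irreducible. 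Independence of choices then follows because any irreducible representation of $P$ with $\psi$-isotypic central restriction has this same character (the computation above only used that the restriction to $Z(P)$ is $\psi$-isotypic together with a dimension bound), giving uniqueness simultaneously.

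For uniqueness in general, I would argue directly: if $(\pi, U)$ is any irreducible representation of $P$ with $\pi|_{Z(P)}$ equal to a multiple of $\psi$, restrict $\pi$ to the maximal abelian subgroup $M$; some character $\mu$ of $M$ lying over $\psi$ occurs in $\pi|_M$, so by Frobenius reciprocity $\pi$ is a quotient of $\Ind_M^P \mu$, which we have just shown is irreducible of dimension $p^n$; therefore $\pi \simeq \Ind_M^P \mu \simeq \WeilRep_\psi$. This also re-confirms $\dim \WeilRep_\psi = p^n = \sqrt{|\sfV_P|}$.

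The main obstacle is making sure every step is genuinely insensitive to whether $p = 2$. The classical proofs often use $\tfrac12$ (e.g. to split the form or to choose a preferred extension of $\psi$), and that must be avoided. The key points to verify carefully in characteristic $2$ are: (i) that $P$ extraspecial forces $M/Z(P)$ to be a maximal isotropic subspace and $[P:M] = p^n$ — this is purely group-theoretic and uses only the commutator pairing, so it is fine; (ii) that for $g \in P \setminus Z(P)$ the map $x \mapsto [g,x]$ is a surjective homomorphism onto $Z(P)$, which holds because the commutator form on $\sfV_P$ is nondegenerate (this is exactly the defining condition of a Heisenberg group, valid for all $p$); and (iii) that the character orthogonality argument needs no division by $2$. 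None of these create trouble, so the proof goes through uniformly; the $p=2$ subtleties that the paper warns about enter only later, in the Weil (not Heisenberg) representation.
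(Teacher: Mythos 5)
Your proof is correct, but it takes a genuinely different route from the paper. The paper disposes of this lemma in one line: it invokes \Cref{thm51} to identify an abstract Heisenberg $\bbF_p$-group with the explicit model $\sfV^\sharp_B$ of \Cref{thm30}, and then cites \cite[Lemma~1.2]{Gerardin} for the Stone--von Neumann statement for such groups. You instead work intrinsically from \Cref{thm39}: choose a maximal abelian subgroup $M\supseteq Z(P)$ (equivalently, the preimage of a Lagrangian of the commutator form $\omega_P$, which is nondegenerate precisely because $Z(P)$ has order~$p$), extend $\psi$ to $M$, induce, and verify irreducibility and uniqueness by the character computation plus Frobenius reciprocity. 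All the steps you flag as potentially characteristic-sensitive are indeed fine for $p=2$: commutativity in $P$ is governed by the alternating form $\omega_P$ alone (the quadratic form $Q_P$ is irrelevant here), the map $x\mapsto[g,x]$ is a surjective homomorphism onto $Z(P)$ for $g\notin Z(P)$ by nondegeneracy, and no factor of $\tfrac12$ enters; one should just note that the uniqueness claim rests on the displayed argument (restriction to $M$ and irreducibility of $\Ind_M^P\mu$ for \emph{any} extension $\mu$ of $\psi$, all of which share the character $p^n\psi$ supported on $Z(P)$), rather than on the looser parenthetical remark about arbitrary irreducibles. What each approach buys: the paper's citation is shorter and reuses \Cref{thm51}, which it needs elsewhere anyway, while your argument is self-contained, avoids the classification/presentation of extraspecial groups entirely, and makes the characteristic-independence of the Heisenberg (as opposed to Weil) theory explicit — which is exactly the point the paper wants the reader to take away.
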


\begin{proof}
This follows from \cite[Lemma~1.2]{Gerardin} and \Cref{thm51}.
\end{proof}

\begin{definition} \label{DefHeisenbergrep}
	We call the representation $\omega_\psi$ of \Cref{thm45} the \mathdef{Heisenberg representation} of $P$ corresponding to $\psi$.\index[terminology]{Heisenberg representation}\index[notation]{wx@$\WeilRep_\psi$}
\end{definition}

In order to relate the Heisenberg representations of $P$ to the Heisenberg representations of appropriate subgroups of $P$ that are themselves Heisenberg $\bF_p$-groups, and to compute their Frobenius--Schur type, we first introduce some additional notation and make a few observations.

By \Cref{thm51}, a direct calculation, and identifying $Z(P)$ with $\bF_p$, the formulas
\index[notation]{wP@$\omega_P$}
\index[notation]{QP@$Q_P$}
\begin{equation} \label{thm41}
\omega_P(xZ(P), yZ(P)) \defeq [x,z],
\qquad 
Q_P(x Z(P)) \defeq x^2\quad(p=2)
\end{equation}
define a symplectic form on~$\sfV_P$
and, when $p=2$, a nondegenerate quadratic form on~$\sfV_P$. 
The nondegeneracy of those forms follows from observing that if $P=\sfV^\sharp_B$, then under the identification of $\sfV_P$ with $\sfV$, we have $\omega_P=\omega_B$, and, if $p=2$, then $Q_P(v)=B(v,v)$ for $v \in \sfV_P=\sfV$ and $\omega_B=B_{Q_P}$ using the notation of \Cref{sec:forms}. 
Following \Cref{sec:forms}, 
we extend the notions of nondegenerate subspace,
isotropic subspace,\index[terminology]{subspace!isotropic}
polarization, and partial polarization\index[terminology]{partial polarization@(partial) polarization} to~$\sfV_P$,
taking these notions with respect to the nondegenerate alternating form~$\omega_P$
when $p\neq2$ and with respect to the nondegenerate quadratic form~$Q_P$ when $p=2$.

Let $\sfW$ be a subspace of~$\sfV_P$.
A \mathdef{splitting} \index[terminology]{splitting (of $\sfW$)}
of~$\sfW$ (in~$P$) is a subgroup
$\HeisLift{\sfW}$ of~$P$ for which the natural projection
$P\twoheadrightarrow P/Z(P)=\sfV_P$ induces an isomorphism $\HeisLift{\sfW}\isoarrow \sfW$.
The subspace $\sfW$ admits a splitting if and only if $\sfW$ is isotropic,
and all splittings are conjugate under the inner automorphism group $\sfV_P$ of~$P$.
At the opposite extreme, the preimage of~$\sfW$ in~$P$
is a Heisenberg $\bbF_p$-group if and only if $\sfW$ is a nondegenerate subspace.

Let $\sfV_P = \sfV^+\oplus \sfV_0\oplus \sfV^-$ be a partial polarization,
and let $P_0$ be the preimage of~$\sfV_0$ in~$P$.
Let $\WeilRep_\psi$ and $\WeilRep_{0,\psi}$
be the Heisenberg representations of~$P$ and~$P_0$, respectively.
Choose a splitting $\HeisLift{\sfV^+}$ of~$\sfV^+$ in~$P$
and let $\sfV^+\times P_0$ be the internal direct product \phantomsection\label{page:splitting:Heisenberg}
of $\HeisLift{\sfV^+}$ and~$P_0$ in~$P$.
Let $\tn{triv}\boxtimes\WeilRep_{0,\psi}$ denote the inflation
of $\WeilRep_{0,\psi}$ along the resulting projection map $\sfV^+\times P_0 \to P_0$.

\begin{lemma} \label{thm15}
Let $P$ be a Heisenberg $\bbF_p$-group.
With the notation of the paragraph above,
\begin{multicols}{2}
\begin{lemmaenum}
\item\label{thm15a}
$\WeilRep_\psi \simeq \Ind_{\sfV^+\times P_0}^P
(\tn{triv}\boxtimes\WeilRep_{0,\psi})$

\item\label{thm15b}
$(\WeilRep_\psi)^{\HeisLift{\sfV^+}} \simeq \WeilRep_{0,\psi}$.
\end{lemmaenum}
\end{multicols}
\end{lemma}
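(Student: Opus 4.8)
The plan is to prove (a) directly from the Stone–von Neumann theorem (\Cref{thm45}) by computing the central character of the induced representation and checking its dimension, and then deduce (b) from (a) by Frobenius reciprocity together with a matching of dimensions.

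First I would set up notation carefully: write $\dim\sfV^+=\dim\sfV^-=m$ and $\dim\sfV_0=2n_0$ (the last being even because $\sfV_0$ carries a nondegenerate form of the appropriate kind, symplectic if $p\neq2$ or quadratic if $p=2$, by the definition of partial polarization). Then $|\sfV_P|=p^{2m+2n_0}$, so $\dim\WeilRep_\psi=p^{m+n_0}$ by \Cref{thm45}, while $\dim\WeilRep_{0,\psi}=p^{n_0}$. The subgroup $\sfV^+\times P_0$ has order $p^m\cdot p^{2n_0+1}=p^{m+2n_0+1}$, so its index in $P$ (of order $p^{2m+2n_0+1}$) is $p^m$. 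Hence $\dim\Ind_{\sfV^+\times P_0}^P(\tn{triv}\boxtimes\WeilRep_{0,\psi})=p^m\cdot p^{n_0}=p^{m+n_0}=\dim\WeilRep_\psi$, so the dimensions match.

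To prove (a), it then suffices to show the induced representation is irreducible with restriction to $Z(P)$ being $\psi$-isotypic, and to invoke the uniqueness in \Cref{thm45}. The central character is immediate: $Z(P)\subseteq P_0$ and the inflation $\tn{triv}\boxtimes\WeilRep_{0,\psi}$ restricts to $\psi$ on $Z(P)$ (the splitting factor $\HeisLift{\sfV^+}$ meets $Z(P)$ trivially), so every conjugate does too, hence the induced representation is $\psi$-isotypic on $Z(P)$. For irreducibility I would compute $\dim\Hom_P(\Ind(\tn{triv}\boxtimes\WeilRep_{0,\psi}),\Ind(\tn{triv}\boxtimes\WeilRep_{0,\psi}))$ via Mackey's formula: this is $\sum_{g}\dim\Hom_{{}^g(\sfV^+\times P_0)\cap(\sfV^+\times P_0)}({}^g(\tn{triv}\boxtimes\WeilRep_{0,\psi}),\tn{triv}\boxtimes\WeilRep_{0,\psi})$ over double-coset representatives $g$. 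The key point is that for $g$ not in $\sfV^+\times P_0$, the two relevant representations have different central characters on the (nontrivial) intersection $Z(P)\cdot(\text{something})$ — more precisely, one checks that a conjugate of the inflated representation fails to agree on the part of the intersection lying in $\sfV^+$-type directions, because $\omega_P$ pairs $\sfV^+$ nontrivially with $\sfV^-$. So only the double coset of the identity contributes, giving $\dim\Hom=\dim\End_{\sfV^+\times P_0}(\tn{triv}\boxtimes\WeilRep_{0,\psi})=1$ by irreducibility of $\WeilRep_{0,\psi}$. \textbf{This Mackey computation is the main obstacle}: one must carefully identify the intersections ${}^g(\sfV^+\times P_0)\cap(\sfV^+\times P_0)$ inside the nonabelian group $P$ and verify the character mismatch uniformly in $p$ (including $p=2$), where the quadratic-form bookkeeping replaces the alternating-form bookkeeping.

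Finally, for (b): by Frobenius reciprocity, $(\WeilRep_\psi)^{\HeisLift{\sfV^+}}=\Hom_{\HeisLift{\sfV^+}}(\tn{triv},\WeilRep_\psi|_{\HeisLift{\sfV^+}})$, and restricting the formula in (a) along $\HeisLift{\sfV^+}$ via Mackey again, the $\HeisLift{\sfV^+}$-fixed vectors come only from the identity double coset (the same character-mismatch argument), yielding $(\WeilRep_\psi)^{\HeisLift{\sfV^+}}\simeq\WeilRep_{0,\psi}$ as a $P_0$-representation, once one checks that the residual $P_0$-action on this space of fixed vectors is exactly the Heisenberg action — which again follows by dimension count ($\dim=p^{n_0}$) and the fact that the fixed space is $\psi$-isotypic over $Z(P)$, so irreducibility plus \Cref{thm45} for $P_0$ finishes it. Alternatively, (b) follows more cleanly by noting $\HeisLift{\sfV^+}$ is normalized appropriately and using that $P_0$ is the centralizer-mod-center structure compatible with the splitting; I would present whichever is shorter after checking details.
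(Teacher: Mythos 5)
Your proposal is correct, but it is heavier than the paper's argument on both halves. For \ref{thm15a} the paper runs no Mackey computation at all: since the induced representation is $\psi$-isotypic on $Z(P)$, the Stone--von Neumann theorem (\Cref{thm45}) forces every irreducible constituent to be $\WeilRep_\psi$, so your dimension count $\dim\Ind_{\sfV^+\times P_0}^P(\tn{triv}\boxtimes\WeilRep_{0,\psi})=\sqrt{|\sfV_0|}\cdot|\sfV^-|=\sqrt{|\sfV_P|}=\dim\WeilRep_\psi$ already gives irreducibility and the isomorphism; the step you flag as the ``main obstacle'' is therefore unnecessary. (It is, however, workable: $\sfV^+\times P_0$ contains $Z(P)$ and hence is normal in $P$, so every Mackey intersection is the whole subgroup, and for a representative of a nontrivial coset the conjugated representation restricts to $\HeisLift{\sfV^+}$ as the nontrivial character $\psi(\omega_P(\cdot\,,v^-))$, because $\omega_P$ pairs $\sfV^+$ with $\sfV^-$ perfectly -- this works uniformly in $p$, including $p=2$.) For \ref{thm15b} the paper instead writes the induced representation explicitly as functions $f\colon\sfV^-\to V_{\WeilRep_{0,\psi}}$ with a concrete action formula, observes that $\HeisLift{\sfV^+}$-invariance forces $f$ to be supported at $0$, and takes $f\mapsto f(0)$; your alternative -- invariants come only from the identity coset, the fixed space is $P_0$-stable (as $P_0$ commutes with $\HeisLift{\sfV^+}$ by construction of the internal direct product), is $\psi$-isotypic on $Z(P)$ and has dimension $\sqrt{|\sfV_0|}$, hence is $\WeilRep_{0,\psi}$ by \Cref{thm45} applied to the Heisenberg group $P_0$ -- is also valid and avoids choosing an explicit model. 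In short, the paper's route buys brevity, while yours trades the explicit coordinates of \Cref{thm30} for double-coset and fixed-space bookkeeping that, once the shortcut for \ref{thm15a} is noticed, is mostly avoidable.
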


\begin{proof}
For the first part, by \Cref{thm45}, we know that $\dim(\WeilRep_\psi) = \sqrt{|\sfV_P|}$.
Since
\[
\dim\bigl(\Ind_{\sfV^+\times P_0}^P
(\tn{triv}\boxtimes\WeilRep_{0,\psi})\bigr)
= \sqrt{|\sfV_0|}\cdot|\sfV^-| = \sqrt{|\sfV_P|} = \dim(\WeilRep_\psi)
\]
and this induced representation has central character~$\psi$,
it must be the Heisenberg representation.
For the second part,
we use an identification of $P$ with $\sfV^\sharp_B$ for some $B$ as in \Cref{thm30} that sends $\HeisLift{\sfV^+}$ to $\{0\} \times \sfV^+$.
Then $\{0\} \times \sfV^-$, which we identify with $\sfV^-$ via $(0, v) \mapsto v$, forms a set of coset representatives for $P/(\sfV^+\times P_0)$, and by the first part we can describe $\WeilRep_\psi$
as the space of functions $f\colon \sfV^-\to \sfV_{\WeilRep_{0,\psi}}$ on which $\sfV^\sharp_B$ acts as follows
\[
((a,v^+ + v_0 + v^-)f)(x) = \WeilRep_{0,\psi}(a,v_0) \psi(\omega_B(v^+,x)) f(x + v^-)
\]
where $x\in \sfV^-$, $(a,v_0)\in \sfV_0^\sharp$, $v^+\in \sfV^+$, and $v^-\in \sfV^-$.
Such an $f$ is fixed by $\HeisLift{\sfV^+}$ if and only if $f(x) = 0$ for all $x\neq 0$.
The assignment $f\mapsto f(0)$ is the desired isomorphism.
\end{proof}

If the partial polarization is a polarization,
then \Cref{thm15a} gives a construction of the Heisenberg representation.
Indeed, in this case $\sfV_0=0$ and $P_0=\bbF_p$ if $p\neq 2$ or $P$ has positive type,
and $\dim(\sfV_0) = 2$ and $P_0=Q_8$ if $P$ has negative type,
meaning we can easily construct $\WeilRep_{0,\psi}$ by hand.
We will now use this observation to compute
the Frobenius--Schur type of the Heisenberg representation,
which will ultimately allow us to reduce the number of choices in the construction of supercuspidal representations (see \Cref{thm120}).

\begin{lemma}
The Heisenberg representation is complex if $p\neq 2$,
real if $p=2$ and $P$ is of positive type, and
quaternionic if $p=2$ and $P$ is of negative type.
\end{lemma}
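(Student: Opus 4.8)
The plan is to reduce to the three base cases $P_0 = \bbF_p$ (odd $p$), $P_0 = D_8$, and $P_0 = Q_8$ via \Cref{thm15a}, since the Frobenius--Schur type is inherited in a controlled way under induction from a polarization. More precisely, fix a polarization $\sfV_P = \sfV^+ \oplus \sfV_0 \oplus \sfV^-$: take $\sfV_0 = 0$ when $p \neq 2$ or $P$ has positive type, and take $\sfV_0$ a nondegenerate $2$-dimensional anisotropic subspace (so $P_0 \simeq Q_8$) when $p = 2$ and $P$ has negative type. Then $\WeilRep_\psi \simeq \Ind_{\sfV^+ \times P_0}^P(\tn{triv}\boxtimes\WeilRep_{0,\psi})$. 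By construction $\sfV^+ \times P_0$ is a subgroup of the pro-$p$ group $P$ with the complement $\sfV^-$ an isotropic subgroup, so in particular $P$ is generated by $\sfV^+ \times P_0$ together with a splitting of $\sfV^-$; the key point is that a real (resp.\ quaternionic) structure on $\WeilRep_{0,\psi}$ induces a real (resp.\ quaternionic) structure on the induced representation, because induction from a subgroup preserves the existence of a nondegenerate $A$-invariant symmetric (resp.\ alternating) bilinear form when that form exists on the inducing representation and the index is such that no sign flip occurs. I would verify this by exhibiting the invariant form on $\Ind$ explicitly as $\langle f_1, f_2\rangle = \sum_{x \in \sfV^-} \langle f_1(x), f_2(x)\rangle_0$ using the function-space model from the proof of \Cref{thm15}, and checking $P$-invariance directly from the action formula $((a,v^+ + v_0 + v^-)f)(x) = \WeilRep_{0,\psi}(a,v_0)\psi(\omega_B(v^+,x))f(x+v^-)$: the $\sfV^+$-part contributes a unitary scalar $\psi(\omega_B(v^+,x))$ which pairs against its conjugate, and the $\sfV^-$-part just permutes the summation index, so the pairing on $\WeilRep_{0,\psi}$ is all that matters for symmetry versus alternating.

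The base cases are then hands-on. When $p \neq 2$, $P_0 = \bbZ/p\bbZ = Z(P)$ and $\WeilRep_{0,\psi} = \psi$ is a nontrivial character of a group of odd order; it is not self-dual (its character is not real-valued since $\psi \neq \psi^{-1}$ for $p > 2$), hence complex, and the induced representation $\WeilRep_\psi$ is likewise complex — to see this I would note that $\WeilRep_\psi^* \simeq \WeilRep_{\psi^{-1}} \not\simeq \WeilRep_\psi$ by uniqueness in the Stone--von Neumann theorem (\Cref{thm45}), since the two have distinct central characters. When $p = 2$ and $P$ has positive type, I reduce all the way to $P_0 = \bbF_2$, with $\WeilRep_{0,\psi} = \psi$ the nontrivial character of $\bbZ/2\bbZ$, which is real (it is the sign character, visibly defined over $\bbR$); then the explicit invariant form above is symmetric, so $\WeilRep_\psi$ is real. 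When $p = 2$ and $P$ has negative type, $P_0 \simeq Q_8$ and $\WeilRep_{0,\psi}$ is the unique $2$-dimensional irreducible representation of $Q_8$, which is the standard quaternionic representation — $Q_8 \subset \bbH^\times$ acting on $\bbH \simeq \bbC^2$ by left multiplication — and is well known to be quaternionic (Frobenius--Schur indicator $-1$); the invariant form is then alternating, so $\WeilRep_\psi$ is quaternionic.

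The main obstacle I anticipate is the bookkeeping in the inheritance step: one must make sure the invariant bilinear form on $\Ind$ really is $P$-invariant and nondegenerate, and that its symmetry type genuinely matches that of $\WeilRep_{0,\psi}$ rather than acquiring a sign from the geometry of the coset space $\sfV^-$. Since $\sfV^-$ here is an $\bbF_p$-vector space and the action of $P$ on it (through the quotient to $\sfV_P$, restricted to $\sfV^-$-translations) is by \emph{translations}, not by a linear action with a determinant, there is no such sign — this is why the type is preserved on the nose rather than twisted by the sign of a permutation representation. I would also double-check the negative-type reduction: one needs that \emph{some} polarization of $\sfV_P$ has its anisotropic part equal to a single hyperbolic-complement copy of the nonsplit plane, which follows from the normal form for nondegenerate quadratic spaces over $\bbF_2$ recalled in \Cref{thm29} (nonsplit $= k^{2n-2} \oplus \ell$), so that $P \simeq D_8 \circ \cdots \circ D_8 \circ Q_8$ and $P_0$ can be taken to be the $Q_8$ factor with $\sfV^{\pm}$ built from the hyperbolic planes. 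The remaining verifications — that $\psi$ on $\bbZ/2\bbZ$ is real and that the $2$-dimensional representation of $Q_8$ is quaternionic — are standard and I would simply cite them.
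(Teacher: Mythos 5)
Your proposal is correct and follows essentially the same route as the paper: handle $p\neq2$ via $\WeilRep_\psi^*\simeq\WeilRep_{\psi^{-1}}\not\simeq\WeilRep_\psi$, and for $p=2$ reduce via \Cref{thm15a} applied to a polarization to the base cases $P_0=\bbF_2$ and $P_0=Q_8$, using that induction preserves the Frobenius--Schur type — a fact the paper simply cites and you verify with the explicit form $\sum_{x\in\sfV^-}\langle f_1(x),f_2(x)\rangle_0$. The only small imprecision is the phrase about the scalar $\psi(\omega_B(v^+,x))$ ``pairing against its conjugate'': for a bilinear (not Hermitian) form it enters squared, which is harmless here since $\psi$ is $\{\pm1\}$-valued when $p=2$.
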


\begin{proof}
Let $\omega_\psi$ denote the Heisenberg representation.
If $p\neq2$, then $\omega_\psi$ is complex because
$\omega_\psi^* \simeq \omega_{\psi^{-1}} \not\simeq \omega_\psi$.
Now suppose $p=2$.
In general, if a complex representation of a subgroup of a finite group is self-dual,
then its induced representation is self-dual of the same Frobenius--Schur type as the original representation.
Using \Cref{thm15a} for a polarization of $\sfV_P$,
we may therefore assume that $P=\bbF_2$ or $P=Q_8$.
Now in the positive type case the Heisenberg representation
$\bbF_2\hookrightarrow \{\pm1 \} \subset \bbC^\times$ is visibly real,
and in the negative type case the Heisenberg representation
can be identified with the tautological embedding
$Q_8\hookrightarrow\bbH^\times=\GL(\bH)$, which is visibly quaternionic.
\end{proof}

\begin{definition}
Let $R\in\{\bbR,\bbC,\bbH\}$ be the Frobenius--Schur type of the Heisenberg
representation $\WeilRep_\psi$ corresponding to~$\psi$.
The \mathdef{Heisenberg $R$-representation}%
\index[terminology]{Heisenberg zR-representation@Heisenberg $R$-representation}
corresponding to~$\psi$ is the irreducible $R$-representation of~$P$
whose associated complex representation is~$\WeilRep_\psi$.
\end{definition}

\subsection{Weil representations} \label{sec:weil}
We remain in the setting of the previous section, i.e., $P$ denotes a Heisenberg $\bF_p$-group, $\psi$ is a nontrivial character of $Z(P)$, and $\WeilRep_\psi$ the corresponding Heisenberg representation. To simplify notation we write $\sfV:=\sfV_P=P/Z(P)$. 

We write $\AutZfix(P)$\index[notation]{AutZ@$\AutZfix(\underline{\phantom{P}})$} for the group of automorphisms of $P$ that act trivially on the center $Z(P)$ of $P$.
(If $p=2$ and $P$ is of positive type, then $\AutZfix(P)$ is isomorphic to Weil's
pseudosymplectic group defined in \cite[Section~31]{Weil64}.)
Given a subgroup $A$ of~$\AutZfix(P)$,
since $\WeilRep_\psi$ is the unique irreducible representation of~$P$
with central character~$\psi$,
the action of $A$ preserves $\WeilRep_{\psi}$
up to isomorphism and thereby gives rise to
 a projective representation of~$A$ on the space underlying the Heisenberg representation,
which we call the \mathdef{projective Weil representation} of~$A$.
\index[terminology]{projective Weil representation}

Our construction of supercuspidal representations
requires us to linearize the projective Weil representation for certain subgroups~$A \subseteq \AutZfix(P)$.
If $\dim V >0$, then a linearization on all of $\AutZfix(P)$ is not possible (see \Cref{thm52} below), so it is crucial to restrict to appropriate subgroups $A$. However, without additional constraints, there is ambiguity in the choice of linearization when we restrict the projective Weil representation to $A$:
The character group of~$A$ acts transitively, by twisting, on the set of linearizations.
Since $A$ is often abelian in our applications, this ambiguity is quite dire. 

For $p>2$, the group $A$ in our application is contained in the image of a splitting of the quotient $\Sp(\sfV,\omega_P)$ of $\AutZfix(P)$,
and the projective Weil representation can be linearized on it by the theory of Weil representations. A choice for a preferred linearization has been made in
\cite[Theorem~2.4(a)]{Gerardin} (see also \Cref{rmk-traditional-Weil-rep}).

For $p=2$, the analogous short exact sequence for $\AutZfix(P)$ in \Cref{thm53} does not split if $\dim V>2$. Instead, to linearize the projective Weil representation and pin down a specific linearization,
we use a special feature of the Heisenberg representation present only when $p=2$:
the structure of an $R$-representation, for $R\in\{\bbR,\bbH\}$.
Linearization is accomplished using a criterion for $R$-linearizability,
\Cref{thm14}, which builds on the abstract criterion \Cref{lemmaRlinearization}.
To pin down a specific linearization, we use that
an $R$-linearization is unique up to a character of order two,
rather than an arbitrary complex character,
combined with the fact that in our application, \Cref{thm120},
the relevant subgroup of $\AutZfix(P)$ does not admit any character of order two. 

Note that the subgroup of inner automorphisms of $P$ fixes the center and is isomorphic to $\sfV=P/Z(P)$. Moreover, given an element of $\AutZfix(P)$, the induced automorphism of $\sfV$ preserves the symplectic form $\omega_P$ and, when $p=2$, also the nondegenerate quadratic form $Q_P$ defined in \eqref{thm41}.
 This leads to the following short exact sequences.

\begin{fact}[{\cite[Theorem~1]{Winter}, \cite[Theorem~1]{Griess73}, see also \cite[Section~1.3]{Blasco}}] \label{thm53}
	\begin{factenum}
	\item[]
		\item \label{thm53a}
		If $p\neq2$, then we have a (noncanonically) split exact sequence
		$$ 1 \ra \sfV \ra \AutZfix(P) \ra \Sp(\sfV, \omega_P) \ra 1.$$
		
		\item \label{thm53b}
		If $p=2$, then we have a short exact sequence  
		$$ 1 \ra \sfV \ra \AutZfix(P) \ra \O(\sfV, Q_P) \ra 1 $$
		that is split if and only if $\dim(\sfV)\leq 2$.
	\end{factenum}
\end{fact}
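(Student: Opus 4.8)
The plan is to realize $\AutZfix(P)$ inside an explicit model, read off both exact sequences from a single homomorphism, and then treat the two splitting assertions separately. I may assume $P$ is nonabelian, the case $P\cong\bbZ/p\bbZ$ being trivial. By \Cref{thm51} I identify $P=\sfV^\sharp_B$ for a bilinear form $B$ chosen so that $\omega_B=\omega_P$ and, when $p=2$, also $B(v,v)=Q_P(v)$ for all $v$; such a $B$ is furnished by the proof of \Cref{thm51}. Any $\phi\in\AutZfix(P)$ fixes the central element $(1,0)$, hence every $(a,0)$, so it is determined by its restriction to $\{0\}\times\sfV$; writing $\phi(0,v)=(f(v),\sigma(v))$ one finds that $\sigma\in\GL(\sfV)$ is the induced automorphism of $\sfV=P/Z(P)$ and that $\phi(a,v)=(a+f(v),\sigma v)$, and a direct computation shows that such a $\phi$ is a group homomorphism exactly when
\[
\partial f(v,w)\defeq f(v+w)-f(v)-f(w)=B(\sigma v,\sigma w)-B(v,w)
\qquad(v,w\in\sfV);
\]
conversely every pair $(f,\sigma)$ with $\sigma\in\GL(\sfV)$ satisfying this identity defines an element of $\AutZfix(P)$ (note the identity forces $f(0)=0$). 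Since composition multiplies the $\sigma$-components, $\phi\mapsto\sigma$ is a homomorphism $\Theta\colon\AutZfix(P)\to\GL(\sfV)$.

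Next I compute $\Ker\Theta$ and $\image\Theta$. An element of the kernel has $\sigma=\Id$ and $\partial f=0$; the latter means $f$ is additive, hence $\bbF_p$-linear, and computing conjugation in the model shows such an automorphism is inner — it is conjugation by $(0,g)$ precisely when $f=\omega_P(g,-)$ — so by nondegeneracy of $\omega_P$ we get $\Ker\Theta=\Inn(P)$, identified with $\sfV=P/Z(P)$ compatibly with the natural linear actions. For the image, put $\Delta_\sigma(v,w)\defeq B(\sigma v,\sigma w)-B(v,w)$, a bilinear form. For any function $f$ the form $\partial f$ is symmetric when $p\neq2$ and alternating when $p=2$ (the latter since $v+v=0$ forces $\partial f(v,v)=f(0)=0$); conversely, by the linear algebra in the proof of \Cref{thm31} and in \Cref{sec:forms}, every symmetric bilinear form (resp.\ every alternating form) equals $\partial f$ for some quadratic $f$. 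Hence the displayed identity is solvable in $f$ iff $\Delta_\sigma$ is symmetric (resp.\ alternating). As $\Delta_\sigma(v,w)-\Delta_\sigma(w,v)=\omega_P(\sigma v,\sigma w)-\omega_P(v,w)$ and $\Delta_\sigma(v,v)=Q_P(\sigma v)-Q_P(v)$, this is the condition $\sigma\in\Sp(\sfV,\omega_P)$ when $p\neq2$ and $\sigma\in\O(\sfV,Q_P)$ when $p=2$ (preservation of $\omega_P$ being automatic in the latter case, since $\omega_P$ is the polar form of $Q_P$). Thus $\Theta$ exhibits $\AutZfix(P)$ as the asserted extension in each case.

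For the splitting in \Cref{thm53a} I would take $B=\tfrac12\omega_P$, which is legitimate since $p\neq2$; this $B$ is alternating and satisfies $B(\sigma v,\sigma w)=\tfrac12\omega_P(\sigma v,\sigma w)=\tfrac12\omega_P(v,w)=B(v,w)$ for $\sigma\in\Sp(\sfV,\omega_P)$, so $\Delta_\sigma=0$ and $\sigma\mapsto\bigl[(a,v)\mapsto(a,\sigma v)\bigr]$ is a homomorphic section. For \Cref{thm53b} with $\dim\sfV\leq2$ the claim is a short finite computation: $\dim\sfV=0$ is trivial, and for $\dim\sfV=2$ one has $P\cong D_8$ (with $\AutZfix(P)\cong D_8$ and $\O(\sfV,Q_P)\cong\bbZ/2$) or $P\cong Q_8$ (with $\AutZfix(P)\cong S_4$ and $\O(\sfV,Q_P)\cong S_3$), and in each case a complement to $\Inn(P)$ is exhibited directly.

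The main obstacle is the non-splitting in \Cref{thm53b} when $p=2$ and $\dim\sfV>2$ — equivalently, the non-vanishing of the extension class in $H^2(\O(\sfV,Q_P),\sfV)$. Here I would first reduce to $\dim\sfV=4$: for a nondegenerate four-dimensional subspace $H\subseteq\sfV$ one has $\sfV=H\oplus H^\perp$ as a module over $\O(H,Q_P|_H)\subseteq\O(\sfV,Q_P)$ with $H^\perp$ acted on trivially, and restricting the extension to $\O(H,Q_P|_H)$ and then pushing it out along $\sfV\twoheadrightarrow H$ yields the extension $1\to H\to\AutZfix(P_H)\to\O(H,Q_P|_H)\to1$ for the preimage $P_H$ of $H$ in $P$; so a splitting of the original extension would produce one of this four-dimensional extension. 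For $\dim\sfV=4$ the statement is the concrete assertion that $\AutZfix(P)$, of order $2^4\cdot|\O_4^{\pm}(\bbF_2)|$, has no complement to its normal subgroup $\Inn(P)\cong\bbF_2^4$, which is exactly the cohomology computation carried out by Griess (\cite{Griess73}) and Winter (\cite{Winter}). I do not expect to circumvent that computation: the obvious strategy of lifting the reflections of $\O(\sfV,Q_P)$ to involutions in $\AutZfix(P)$ does not detect the obstruction, because — as one checks directly in the parametrization above — every reflection does lift to an involution; the failure to split is a genuine degree-two phenomenon caused by the incompatibility of these lifts.
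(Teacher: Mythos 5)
The paper offers no proof of \Cref{thm53} — it is quoted from Winter and Griess — so the comparison is with the literature. Your derivation of the two exact sequences (the model $\sfV^\sharp_B$ with $\omega_B=\omega_P$ and, for $p=2$, $B(v,v)=Q_P(v)$; the parametrization of $\AutZfix(P)$ by pairs $(f,\sigma)$ subject to $\partial f=\Delta_\sigma$; kernel equal to the inner automorphisms $\simeq\sfV$; image $\Sp(\sfV,\omega_P)$ resp.\ $\O(\sfV,Q_P)$), your section for $p\neq2$ via $B=\omega_P/2$, and your direct verification for $\dim\sfV\leq2$ are all correct and are essentially Winter's argument. Your restrict-and-push-out reduction of the splitting question to a nondegenerate subspace $H$ is also a sound mechanism: with a block-diagonal choice of $B$ adapted to $\sfV=H\oplus H^{\perp}$ one checks that the resulting extension of $\O(H,Q_P|_H)$ by $H$ is indeed the one attached to $P_H$.

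The gap is the final step. The non-splitting in dimension four that you attribute to Griess and Winter is not what they prove, and it is in fact false: for $\dim\sfV=4$ both extensions split, because $H^2(\O(\sfV,Q_P),\sfV)=0$. For the minus type, $\O_4^-(\bbF_2)\simeq S_5$ and $\sfV$ is the heart of the permutation module $\bbF_2^5\simeq\bbF_2\oplus\sfV$; Shapiro's lemma gives $\dim H^2(S_5,\bbF_2^5)=\dim H^2(S_4,\bbF_2)=2$, while $\dim H^2(S_5,\bbF_2)=2$, whence $H^2(S_5,\sfV)=0$. For the plus type, $\sfV$ restricted to $\Omega_4^+(\bbF_2)\simeq S_3\times S_3$ is the tensor product of the two natural $2$-dimensional modules, and $H^*(S_3,\bbF_2^2)=0$ (restriction to a Sylow $2$-subgroup lands in a free module), so the K\"unneth formula and the Hochschild--Serre sequence for the index-two subgroup give $H^*(\O_4^+(\bbF_2),\sfV)=0$. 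Griess's nonvanishing results begin at width $3$, i.e.\ $\dim\sfV\geq6$, which is also what the introduction of this paper asserts (``nonsplit if $n\geq3$''). So your reduction to dimension four lands on a case where the desired conclusion fails, and the citation cannot close it; to prove the true non-splitting statement you should reduce to a nondegenerate six-dimensional subspace (your push-out argument applies verbatim) and invoke Griess there. This also means the ``if and only if $\dim(\sfV)\leq2$'' in \Cref{thm53b} as printed is inconsistent with the paper's own introduction and with the literature — the splitting threshold is $\dim\sfV\leq4$ — so the direction you were attempting to establish at $\dim\sfV=4$ is not provable as stated.
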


\begin{construction}\label{rmk-traditional-Weil-rep}
	If $p \neq 2$ and $P=V^\sharp_{\omega_P/2}$, then we have a splitting of $\Sp(\sfV, \omega_P)$ into $\AutZfix(P)$ given by $g \in \Sp(\sfV, \omega_P)$ acting via $(a,v) \mapsto (a, g(v))$, using the presentation described in \Cref{thm28}. The \mathdef{Weil representation} of $\Sp(\sfV, \omega_P)$ was defined in \cite[Theorem~2.4(a)]{Gerardin} as a linearization of the projective Weil representation of this splitting. This linearization is unique unless $\Sp(\sfV,\omega_P)=\Sp_2(\bF_3)$, in which case there are three linearizations and Gérardin singles out one of them. The resulting representation of $\Sp(\sfV, \omega_P) \ltimes V^\sharp_{\omega_P/2}$ that restricts to the Weil representation on the first factor and to the Heisenberg representation on the second factor is called the \mathdef{Heisenberg--Weil representation} of $\Sp(\sfV, \omega_P) \ltimes V^\sharp_{\omega_P/2}$.
\end{construction}

\begin{remark} \label{thm52} 
If $\dim \sfV>0$, then
the projective Weil representation is not linearizable on $\AutZfix(P)$.
If it were, then its restriction to the subgroup of inner automorphisms
 would be linearizable as well.
In other words, 
there would exist a representation
$\pi\colon\sfV\ltimes P \to\GL(V_{\WeilRep_\psi})$
extending the Heisenberg representation $P\to\GL(V_{\WeilRep_\psi})$.
Since $\WeilRep_\psi$ is irreducible,
for every $v\in\sfV$, there would then be a scalar
$c(v)\in\bbC^\times$ such that $\pi(v) = c(v)\WeilRep_\psi(0,v)$.
The fact that $\pi$ and $\WeilRep_\psi$ are homomorphisms
forces the function $c$ to satisfy a certain identity,
which we can use to show that $c(v)$ is contained in the $p$th roots of unity $\mu_p$
and that $c(v)$ gives rise to a splitting of the homomorphism $P\to\sfV$.
This is a contradiction as no splitting exists.
\end{remark}

Let $A$ be a subgroup of $\AutZfix(P)$
and let $R\in\{\bbR,\bbC,\bbH\}$ be the Frobenius--Schur type
of the Heisenberg representation~$\WeilRep_\psi$ of~$P$.
Then for every $a \in A$, by Lemma \ref{LemmaRrep}, the $a$-twist of the Heisenberg $R$-representation is isomorphic to the Heisenberg $R$-representation, and the intertwiner between these two representations is unique up to scaling by $Z(R)^\times$. Hence we obtain a projective $R$-representation of~$A$, which we call
the \mathdef{projective Weil $R$-representation}.%
\index[terminology]{projective Weil zR-representation@projective Weil $R$-representation}

In order to transfer the notion of a Weil representation (see \Cref{rmk-traditional-Weil-rep}) from the Heisenberg group $\sfV_{\omega_P/2}^\sharp$ to an abstract Heisenberg group $P$ in the case of $p \neq 2$, we drop $\omega_P$ from the notation, and use a special isomorphism.
\begin{definition}[{cf.~\cite[Section~10]{Yu}}]
We call an isomorphism $P \ra \sfV^\sharp$ a \mathdef{special isomorphism} if the induced morphism $P/Z(P) \ra \sfV^\sharp/\bF_p=\sfV$ is the identity.
\end{definition}

\begin{definition} \label{defnewHeisenbergWeil}
	Let $A$ be a subgroup of~$\AutZfix(P)$,
	let $R$ be the Frobenius--Schur type of the Heisenberg representation of~$P$,
	and let $\psi$ be a non-trivial character of $Z(P)$.
	\begin{enumerate}
		\item For $p \neq 2$, if there exists a special isomorphism $i: P \ra \sfV^\sharp$ that extends to a group homomorphism
		$$ A \ltimes P \ra \Sp(\sfV) \ltimes \sfV^\sharp $$
		whose restriction to the first factor is the projection $A \ltimes \{1\} \ra \Sp(\sfV) \ltimes \{1\}$ of \Cref{thm53a}, then 
		we call the composition of this morphism with the Heisenberg--Weil representation of $\Sp(\sfV) \ltimes \sfV^\sharp$ attached to the character $\psi \circ (i|_{Z(\sfV^\sharp)})^{-1}$  a \mathdef{Heisenberg--Weil $\bC$-representation} of $A \ltimes P$.
		\item For $p = 2$, we call an $R$-representation of $A \ltimes P$ whose restriction to $P$ is the Heisenberg representation $\WeilRep_\psi$ (if such a representation exists) a \mathdef{Heisenberg--Weil $R$-representation} of $A \ltimes P$.  
	\end{enumerate}
	We call the restriction to~$A$ of a Heisenberg--Weil $R$-representation of $A \ltimes P$ a \mathdef{Weil $R$-representation} of $A$, and we call the complex representation associated to a (Heisenberg--)Weil $R$-representation a \mathdef{(Heisenberg--)Weil representation}\index[terminology]{Heisenberg--Weil representation}\index[terminology]{Weil representation}.
\end{definition}
Note that a Weil-$R$ representation is an $R$-linearization of the projective Weil $R$-representation, and a Weil representation is a linearization of the projective Weil representation. Hence any two Heisenberg--Weil representations of a given group $A \ltimes P$ for a given character $\psi$ differ (up to isomorphism) at most by twisting by an $R$-character of $A$. The following results provide us with a setting in which the Heisenberg--Weil representation is unique.

\begin{lemma} \label{lemma-special-iso-uniqueness}
Suppose $p\neq2$ and $\dim\sfV>0$.  Let $A$ be a subgroup of~$\AutZfix(P)$ and
let $f_1, f_2 : A \ltimes P \ra \Sp(\sfV) \ltimes \sfV^\sharp $ be two group homomorphisms whose restriction to $A \ltimes \{1\}$ factors through $\Sp(\sfV) \ltimes \{1\}$ as the projection from \Cref{thm53a} and whose restriction to $\{1 \} \ltimes P$ provides a special isomorphism with $\{1\} \ltimes \sfV^\sharp$. Then there exists $h \in \{1 \} \ltimes P$ such that $f_2$ is the composition of conjugation by $h$ with $f_1$.
\end{lemma}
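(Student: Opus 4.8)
Write $f_1, f_2 : A \ltimes P \to \Sp(\sfV) \ltimes \sfV^\sharp$ for the two given homomorphisms. The plan is to first compare the two special isomorphisms $P \to \sfV^\sharp$ that the $f_j$ restrict to on $\{1\} \ltimes P$, then upgrade this comparison to one of the full maps by exploiting that both maps project to the \emph{same} splitting of $\Sp(\sfV)$ on the $A$-factor. Denote by $i_j \defeq f_j|_{\{1\}\ltimes P} : P \isoarrow \sfV^\sharp$ the induced special isomorphisms. Then $i_2 \circ i_1^{-1}$ is an automorphism of $\sfV^\sharp$ inducing the identity on $\sfV^\sharp/\bF_p = \sfV$; by the cocycle computation in the proof of \Cref{thm31} (with $\sigma = \id$), such an automorphism has the form $(a, v) \mapsto (a + \ell(v), v)$ for a \emph{linear} functional $\ell \colon \sfV \to \bF_p$, because the associated symmetric-bilinear-form correction $f(v+w)-f(v)-f(w)$ must vanish, forcing $f$ to be additive, i.e.\ $p$-linear.

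\textbf{Realizing the discrepancy by an inner automorphism.} Since $\omega_P$ is nondegenerate (as $\dim\sfV>0$ and $P$ is a Heisenberg group), the functional $\ell$ equals $\omega_P(\,\cdot\,, v_0)$ for a unique $v_0 \in \sfV$. Pick $h \in P$ lifting $v_0$ under $P \twoheadrightarrow \sfV$; conjugation by $h$ on $P$ is the inner automorphism $(a,v) \mapsto (a + \omega_P(v_0, v)\cdot(\text{sign}), v)$ in the model $P = \sfV^\sharp_{\omega_P/2}$, which after transport along $i_1$ agrees, up to the irrelevant sign convention that I will fix, with $i_2 \circ i_1^{-1}$. (I will absorb the sign by choosing $h$ to lift $\pm v_0$ as needed.) Thus, replacing $f_1$ by its composition with conjugation by the element $h \in \{1\}\ltimes P \subset A \ltimes P$ — note conjugation by $h$ on the target $\Sp(\sfV)\ltimes\sfV^\sharp$ is the inner automorphism by $i_1(h) \in \sfV^\sharp$, which induces the corresponding inner automorphism on $P$ through the special isomorphism — we may assume from now on that $i_1 = i_2$; call this common special isomorphism $i$.

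\textbf{Comparing the full maps.} Now $f_1$ and $f_2$ agree on $\{1\}\ltimes P$, and on $A \ltimes \{1\}$ they are both equal to the fixed splitting $s\colon A \to \Sp(\sfV)$ from \Cref{thm53a} composed with the inclusion $\Sp(\sfV)\ltimes\{1\}\hookrightarrow \Sp(\sfV)\ltimes\sfV^\sharp$. Consider $g \defeq f_2 f_1^{-1}$ — more precisely, for $a \in A$ write $f_j(a) = (s(a), c_j(a))$ with $c_j(a) \in \sfV^\sharp$, using that the $\Sp(\sfV)$-components coincide. The homomorphism property of each $f_j$, together with the fact that $f_j$ restricted to $P$ is the fixed special isomorphism $i$ and is equivariant for the $A$-action (i.e.\ $f_j(a\, x\, a^{-1}) = s(a)\cdot f_j(x)\cdot s(a)^{-1}$ for $x \in P$), forces $c_1(a) = c_2(a)$: indeed the map $a \mapsto c_j(a) \in \sfV^\sharp$ is a crossed homomorphism for the $A$-action on $\sfV^\sharp$ via $s$, and compatibility with the common map $i$ on $P$ pins it down. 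Concretely, the element $c_2(a) c_1(a)^{-1} \in \sfV^\sharp$ centralizes the image $i(P) = \sfV^\sharp$, hence lies in $Z(\sfV^\sharp) = \bF_p$, and lies in the kernel of $\sfV^\sharp \to \sfV$; being central it is fixed by $s(a)$ and defines a homomorphism $A \to \bF_p$, but compatibility of the splitting $A \ltimes \{1\} \to \Sp(\sfV)\ltimes\{1\}$ (no $\sfV^\sharp$-component) forces it to be trivial. Therefore $f_1 = f_2$, completing the proof.

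\textbf{Anticipated main obstacle.} The delicate point is the bookkeeping in the second step: correctly identifying which inner automorphism of the \emph{target} $\Sp(\sfV)\ltimes\sfV^\sharp$ corresponds, via the special isomorphism, to the automorphism $i_2 \circ i_1^{-1}$ of $\sfV^\sharp$, and checking that conjugation by a single element $h \in \{1\}\ltimes P$ simultaneously fixes the $\Sp(\sfV)$-behavior (it does, since $h$ is in the normal factor $P$ and $s(a)$ normalizes it, so conjugation by $h$ does not disturb the projection to $\Sp(\sfV)$) while producing exactly the needed correction $\ell = \omega_P(\,\cdot\,, v_0)$ on the $\sfV^\sharp$-side — including nailing down the sign/factor-of-$2$ conventions inherent in the model $\sfV^\sharp_{\omega_P/2}$ of \Cref{thm28}. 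Once that dictionary is set up cleanly, the rest is the routine crossed-homomorphism / centralizer argument above.
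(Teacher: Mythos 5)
Your overall strategy is the same as the paper's (identify the discrepancy between the two special isomorphisms as an inner automorphism by a lift $h$ of some $v_0\in\sfV$, then deal with the $A$-part), but as written there is a genuine gap at the reduction step. When you replace $f_1$ by its composite with conjugation by $h\in\{1\}\ltimes P$, the restriction of the new map to $A\ltimes\{1\}$ is in general \emph{no longer} the projection to $\Sp(\sfV)\ltimes\{1\}$: writing $s\colon A\to\Sp(\sfV)$ for the projection of \Cref{thm53a}, one has $h\,a\,h^{-1}=a\cdot\bigl(a^{-1}(h)h^{-1}\bigr)$ in $A\ltimes P$, so the modified map sends $a$ to $\bigl(s(a),\,i_1(a^{-1}(h)h^{-1})\bigr)$, and the image of this $\sfV^\sharp$-component in $\sfV$ is $s(a)^{-1}v_0-v_0$, which is nonzero unless $v_0$ is fixed by $s(a)$. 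Your parenthetical remark only checks that the projection to $\Sp(\sfV)$ is undisturbed, which is true but beside the point. Consequently the opening assertion of your last step --- that after the reduction both maps restrict on $A\ltimes\{1\}$ to the splitting with no $\sfV^\sharp$-component --- is unjustified (and if it were available, you would be done at once, since the maps would then agree on both factors and the crossed-homomorphism discussion would be superfluous). The missing ingredient is exactly the nontrivial computation in the paper's proof: that $v_0$ (the paper's $w$) is fixed by $s(A)$. The paper gets this by writing the discrepancy automorphism as $(a,v)\mapsto(a+\omega_P(w,v),v)$ and using that it commutes with the $s(A)$-action (because both $f_j$ carry $A\ltimes\{1\}$ into $\Sp(\sfV)\ltimes\{1\}$), which forces $\omega_P(w,v)=\omega_P(w,gv)$ for all $v$, hence $gw=w$; only then does conjugation by a single $h$ also behave correctly on the $A$-part.

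Your centralizer observation does contain the seed of a repair: applied to the genuinely modified map rather than to the idealized one, it shows that the $\sfV^\sharp$-component $i_1(a^{-1}(h)h^{-1})$ is central in $\sfV^\sharp$, hence has trivial image in $\sfV$, which is precisely $s(a)v_0=v_0$; and since $i_1$ intertwines the $A$-action on $P$ with the action $(c,v)\mapsto(c,s(a)v)$ on $\sfV^\sharp$, this then gives $a(h)=h$ on the nose, so the residual central discrepancy vanishes and the two maps agree. But your written argument does not carry this out: it starts from the premise that both restrictions to $A\ltimes\{1\}$ already equal the splitting, and its closing appeal to ``compatibility of the splitting'' to kill the central character is circular in the modified situation. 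The equivariance statement $gw=w$ (equivalently $a(h)=h$) must be proved, not assumed; with it supplied, your outline becomes essentially the paper's proof.
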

\begin{proof}
	The composition $f_1 \circ f_2^{-1}$ is an automorphism of $f_2(A) \ltimes \sfV^\sharp$ that induces the identity on the quotient $\sfV$ and hence fixes $\{1 \} \ltimes Z(\sfV^\sharp)=[\{1 \}  \ltimes \sfV^\sharp, \{1 \}  \ltimes \sfV^\sharp]$. Therefore there exists $w \in \sfV$ such that $(f_1 \circ f_2^{-1})(1,a,v)=(1,a+\omega_P(w,v), v)=(1,0,w)(1,a,v)(1,0,w)^{-1}$ for $(a, v) \in \sfV^\sharp$. Let $g \in \Sp(\sfV)$ be in the image $A$. Then for $(a, v) \in \sfV^\sharp$, we have
	\begin{eqnarray*}
		(1,a+\omega_P(w,gv),gv)&=& (f_1 \circ  f_2^{-1})(1, a, gv) =  (f_1 \circ  f_2^{-1})\left((g,1,1)(1, a, v)(g^{-1},1,1)\right) \\
		&=& (g,1,1)\bigl(( f_1 \circ  f_2^{-1})(1, a, v)\bigr)(g^{-1},1,1) \\
		&=&(g,1,1)(1,a+\omega_P(w,v),v)(g^{-1},1,1)=(1,a+\omega_P(w,v),gv).
\end{eqnarray*}
Hence $\omega_P(w,v)=\omega_P(w,gv)=\omega_P(g^{-1}w,v)$ for all $v$, which implies that $gw=w$. Therefore $f_1$ is the composition of $f_2$ with the conjugation by $(1,0,w) \in \Sp(\sfV) \ltimes \sfV$. Therefore $h=({f_2}|_{\{1\} \ltimes P})^{-1}\big((1,0,w)^{-1}\big) \in \{1\} \ltimes P$ satisfies the desired property.
\end{proof}

\begin{corollary}\label{cor-weil-uniqueness}
	Let $A$ be a subgroup of~$\AutZfix(P)$  and suppose that $A \ltimes P$ admits a Heisenberg--Weil representation $\WeilRep_\psi$ attached to a character $\psi$. If $p=2$, we also assume that $A$ has no characters of order two. Then $\WeilRep_\psi$ is unique.
\end{corollary}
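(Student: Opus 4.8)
The plan is to split into the two cases $p\neq2$ and $p=2$ and argue that the ambiguity in linearizing the projective Weil representation is killed in each case. The general principle, already noted right after \Cref{defnewHeisenbergWeil}, is that any two Heisenberg--Weil representations of $A\ltimes P$ attached to the same $\psi$ differ by twisting by an $R$-character of $A$, where $R$ is the Frobenius--Schur type of $\WeilRep_\psi$; so in both cases it suffices to show that the relevant group of $R$-characters that can occur is trivial. The subtlety is that the $R$-characters showing up are not arbitrary characters of $A$, but only those arising from the genuine freedom in the construction, so I will first identify that freedom precisely.

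\textbf{Case $p\neq2$.} Here $R=\bbC$, and a Heisenberg--Weil $\bbC$-representation of $A\ltimes P$ is, by definition, obtained from a choice of special isomorphism $i\colon P\to\sfV^\sharp$ extending to a homomorphism $A\ltimes P\to\Sp(\sfV)\ltimes\sfV^\sharp$ as in \Cref{defnewHeisenbergWeil}(1), composed with the Heisenberg--Weil representation of $\Sp(\sfV)\ltimes\sfV^\sharp$. By \Cref{lemma-special-iso-uniqueness}, any two such homomorphisms $f_1,f_2$ differ by conjugation by an element $h\in\{1\}\ltimes P$. Conjugation by $h$ is an inner automorphism of $\Sp(\sfV)\ltimes\sfV^\sharp$, so composing $f_1$ versus $f_2$ with the (fixed, by \Cref{rmk-traditional-Weil-rep}) Heisenberg--Weil representation of $\Sp(\sfV)\ltimes\sfV^\sharp$ yields isomorphic representations of $A\ltimes P$. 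Hence the Heisenberg--Weil representation is independent of the choice of $i$, so it is unique. (One should note that \Cref{rmk-traditional-Weil-rep}'s linearization of the Weil representation of $\Sp(\sfV)$ is itself unique unless $\Sp(\sfV)=\Sp_2(\bbF_3)$; I would either assume $\dim\sfV>0$ is large enough, or simply observe that Gérardin's choice pins down a canonical linearization in all cases, so no residual ambiguity remains.)

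\textbf{Case $p=2$.} Here $R\in\{\bbR,\bbH\}$ and a Heisenberg--Weil $R$-representation is, by \Cref{defnewHeisenbergWeil}(2), just an $R$-linear extension of the Heisenberg $R$-representation $\WeilRep_\psi$ from $P$ to $A\ltimes P$. Given two such extensions $\rho_1,\rho_2$, their restrictions to $P$ agree; since $\WeilRep_\psi$ is irreducible as a complex representation and $\rho_1,\rho_2$ are both $R$-linear, the discussion preceding \Cref{lemmaRlinearization} shows that $\rho_2=\chi\otimes\rho_1$ for a character $\chi\colon A\to Z(R)^\times_{\mathrm c}=\{\pm1\}$. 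By hypothesis $A$ has no characters of order two, so $\chi$ is trivial and $\rho_1\simeq\rho_2$. (To be careful about conjugacy versus equality of the extensions one invokes \Cref{LemmaRrep}: the two $R$-linearizations of the projective Weil $R$-representation are conjugate in $\GL(V)$ hence, by that lemma applied to $A\ltimes P$, conjugate as $R$-representations, and then differ by a character as above.)

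\textbf{Main obstacle.} The essential work in the $p\neq2$ case is \Cref{lemma-special-iso-uniqueness}, which is already proved; the remaining step is bookkeeping about inner automorphisms acting trivially after composing with the fixed Heisenberg--Weil representation of $\Sp(\sfV)\ltimes\sfV^\sharp$. In the $p=2$ case the crux is the reduction "two $R$-linearizations differ by an $\{\pm1\}$-valued character," which is the content of the paragraph before \Cref{lemmaRlinearization} together with \Cref{LemmaRrep}; the hypothesis that $A$ has no order-two characters then does the rest. I expect the only genuinely delicate point to be making sure, in the $p\neq2$ case, that the choice of character $\psi\circ(i|_{Z(\sfV^\sharp)})^{-1}$ used to define the Heisenberg--Weil representation of $\Sp(\sfV)\ltimes\sfV^\sharp$ is unaffected by changing $i$ by the conjugation $h$ from \Cref{lemma-special-iso-uniqueness} — but since $h\in\{1\}\ltimes P$ acts trivially on $Z(\sfV^\sharp)$, this character is unchanged, so the argument goes through.
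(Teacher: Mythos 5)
Your proof is correct and takes essentially the same route as the paper: for $p\neq2$ it combines \Cref{lemma-special-iso-uniqueness} with \Cref{defnewHeisenbergWeil}, your conjugation and center-compatibility bookkeeping being exactly what the paper leaves implicit, and for $p=2$ it uses that two Heisenberg--Weil $R$-representations differ by an $R$-character of $A$, which is $\{\pm1\}$-valued since $R\in\{\bbR,\bbH\}$ and hence trivial under the no-order-two-characters hypothesis.
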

\begin{proof}
	Since any two Heisenberg--Weil representations differ (up to isomorphism) only by twisting by an $R$-character of $A$, and since $R\in \{ \bR, \bH\}$ if $p=2$, the assumption implies that $\WeilRep_\psi$ is unique if $p=2$. If $p \neq 2$, then the result follows from combining \Cref{lemma-special-iso-uniqueness} and \Cref{defnewHeisenbergWeil}. 
\end{proof}

The assumption of the above \namecref{cor-weil-uniqueness}
 will be satisfied in our application to representations of $p$-adic groups (see \Cref{thm120}).

To finish this subsection,
we give a criterion for when the projective Weil $R$-representation admits an $R$-linearization.
We recall from \Cref{sec:heisenberg}, page~\pageref{page:splitting:Heisenberg},
that if $\sfV = \sfV^+\oplus \sfV_0 \oplus \sfV^-$ is a partial polarization,
the preimage $P_0$ of $\sfV_0$ in $P$ is a Heisenberg $\bF_p$-group. If $\HeisLift{\sfV^+}$ is a splitting of~$\sfV^+$ in $P$, then we write $\sfV^+\times P_0$  for the internal direct product of $\HeisLift{\sfV^+}$ and $P$, which is the preimage of $\sfV^+\oplus \sfV_0$ in $P$.

\begin{definition} \label{thm20}
Let $\sfV^+\subseteq \sfV$ be an isotropic subspace
and $\HeisLift{\sfV^+}$ a splitting of~$\sfV^+$ in $P$.
Define $\cal P(\HeisLift{\sfV^+})$\index[notation]{P@$\cal P$} to be the subgroup of $\AutZfix(P)$
consisting of the elements $g$ such that
\begin{enumerate}
\item
$g(\HeisLift{\sfV^+}) = \HeisLift{\sfV^+}$, and

\item
$g(x)\cdot x^{-1}\in\HeisLift{\sfV^+}$ for all
$x\in \sfV^+\times P_0$.
\end{enumerate}
\end{definition}

\begin{lemma} \label{thm14} 
Let $\psi\colon Z(P) \to\bbC^\times$ be a nontrivial character,
let $R\in\{\bbR,\bbC,\bbH\}$ be the Frobenius--Schur type of the Heisenberg representation of~$P$
corresponding to~$\psi$,
and let $A$ be a subgroup of $\AutZfix(P)$.
Suppose there is a Sylow $p$-subgroup $A_p$ of~$A$,
an isotropic subspace $\sfV^+$ of~$\sfV$,
and a splitting $\HeisLift{\sfV^+}$ of $\sfV^+$ such that 
\[
A_p\subseteq\cal P(\HeisLift{\sfV^+}).
\]
Then the Heisenberg $R$-representation 
of $P$ extends to an
$R$-representation of $A\ltimes P$. 
\end{lemma}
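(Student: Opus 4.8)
The plan is to combine the abstract $R$-linearizability criterion of \Cref{lemmaRlinearization} with an explicit model of the Heisenberg representation coming from a polarization refining the partial polarization $\sfV = \sfV^+\oplus\sfV_0\oplus\sfV^-$. By \Cref{lemmaRlinearization} (when $p=2$, so $R\in\{\bbR,\bbH\}$) it suffices to extend the Heisenberg $R$-representation along $A_p\ltimes P$, since the relevant cohomology class is detected on a Sylow $2$-subgroup; and when $p\neq2$ the Frobenius--Schur type is $R=\bbC$ and one argues directly with a complex linearization (here the hypothesis is used to build a suitable special isomorphism and apply \Cref{defnewHeisenbergWeil}). So I would first reduce to $A=A_p\subseteq\cal P(\HeisLift{\sfV^+})$ and aim to construct an honest action of $A_p\ltimes P$ on the space underlying $\WeilRep_\psi$, compatibly with the $R$-structure.

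\textbf{The model.} Refine $\sfV=\sfV^+\oplus\sfV_0\oplus\sfV^-$ to a polarization by splitting $\sfV_0$ further if $P_0$ has positive type, or leaving a single nonsplit plane if $P_0$ has negative type. By \Cref{thm15a}, $\WeilRep_\psi\simeq\Ind_{\sfV^+\times P_0}^P(\tn{triv}\boxtimes\WeilRep_{0,\psi})$, and as in the proof of \Cref{thm15b} we may realize $\WeilRep_\psi$ concretely as functions $f\colon\sfV^-\to V_{\WeilRep_{0,\psi}}$ with $P\simeq\sfV^\sharp_B$ acting by
\[
\bigl((a,v^++v_0+v^-)f\bigr)(x) = \WeilRep_{0,\psi}(a,v_0)\,\psi\bigl(\omega_B(v^+,x)\bigr)\,f(x+v^-).
\]
Now let $g\in\cal P(\HeisLift{\sfV^+})$. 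The condition $g(\HeisLift{\sfV^+})=\HeisLift{\sfV^+}$ together with $g(x)x^{-1}\in\HeisLift{\sfV^+}$ for $x\in\sfV^+\times P_0$ means that $g$ preserves the inducing subgroup and acts on $\tn{triv}\boxtimes\WeilRep_{0,\psi}$ trivially up to the splitting direction; concretely, $g$ permutes the coset representatives $\sfV^-$ via the induced orthogonal (resp.\ symplectic) map on $\sfV$, and the twist it introduces on $\WeilRep_{0,\psi}$ is through the character $\psi$ of $\HeisLift{\sfV^+}$, hence is absorbed by reindexing. This lets me write down an explicit operator $\WeilRep_\psi(g)$ on the function space: something of the shape $(\WeilRep_\psi(g)f)(x) = \psi(c_g(x))\,f(g^{-1}x)$ for a suitable quadratic-type correction term $c_g$, depending only on $g$ and $x$ and built from $B$ restricted to the relevant subspaces.

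\textbf{The key step --- it is an honest homomorphism.} I would then verify that $g\mapsto\WeilRep_\psi(g)$ is multiplicative on $A_p$, i.e., no cocycle obstruction survives. This is where the hypothesis $A_p\subseteq\cal P(\HeisLift{\sfV^+})$ does the work: because every element of $A_p$ fixes the splitting $\HeisLift{\sfV^+}$ and acts ``unipotently'' relative to it in the sense of \Cref{thm20}(2), the composition of two such operators only picks up a correction term that is itself of the form $\psi(\text{something linear})$, and the cocycle telescopes to zero. Equivalently, one checks that the projective Weil representation restricted to $\cal P(\HeisLift{\sfV^+})$ visibly linearizes in this model, and then that the linearization respects the $R$-module structure on $V_{\WeilRep_\psi}$ inherited from $V_{\WeilRep_{0,\psi}}$ (real if $P_0$ is $\bbF_2$, quaternionic if $P_0$ is $Q_8$) --- the operators above are $R$-linear because they only multiply function values by the scalar $\psi(c_g(x))\in\{\pm1\}\subseteq\bbR^\times=Z(R)^\times$. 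Having linearized on $A_p$, \Cref{lemmaRlinearization} upgrades this to an $R$-linearization on all of $A$, and combining with the Heisenberg action of $P$ gives the desired $R$-representation of $A\ltimes P$. The main obstacle I anticipate is bookkeeping in the multiplicativity check: keeping track of how an element of $\cal P(\HeisLift{\sfV^+})$ simultaneously moves the coset representatives in $\sfV^-$ and twists $\WeilRep_{0,\psi}$ on $P_0$, and confirming these two effects combine without residual phase. I would organize this by first treating the two extreme cases --- $A_p$ acting through the ``$\sfV^+$-translation'' part (inner automorphisms by $\HeisLift{\sfV^+}$, which linearize tautologically) and through the ``stabilizer of $P_0$'' part --- and then noting that $\cal P(\HeisLift{\sfV^+})$ is generated by these together with maps fixing $P_0$ pointwise, each of which is handled by the explicit formula above.
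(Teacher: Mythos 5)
Your skeleton — reduce to the Sylow subgroup via \Cref{lemmaRlinearization}, realize $\WeilRep_\psi$ in the induced model of \Cref{thm15a} attached to the partial polarization, and linearize the projective Weil representation on $\cal P(\HeisLift{\sfV^+})$ — is the same as the paper's, but the step carrying all the content is exactly the one you defer to ``bookkeeping'', and the operator formula you propose to verify is not correct in general. An element $g\in\cal P(\HeisLift{\sfV^+})$ is constrained only on $\sfV^+\times P_0$: it need not stabilize $\sfV^-$, and it can move vectors of $\sfV^-$ into $\sfV_0$ (in the intended application $A_p$ is the image of a unipotent radical, which does exactly this). Hence the action of $g$ on a function $f\colon\sfV^-\to V_{\WeilRep_{0,\psi}}$ has the shape $(g\cdot f)(x)=\psi(a_{g,x})\,\WeilRep_{0,\psi}(p_{g,x})\,f(x')$, where $x'$ is the $\sfV^-$-component of $g^{-1}(x)$ and $p_{g,x}\in P_0$ records the $\sfV_0$-component; it is not merely a scalar twist $\psi(c_g(x))\,f(g^{-1}x)$ unless $P_0$ is central (positive type with a full polarization). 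With the corrected formula, the multiplicativity you assert (``the cocycle telescopes to zero'') is precisely the statement of the lemma restricted to $\cal P(\HeisLift{\sfV^+})$, and you give no argument for it. Your handling of $p\neq2$ is also off target: the lemma only asks for an extension of the Heisenberg representation, so no special isomorphism or appeal to \Cref{defnewHeisenbergWeil} is needed; one reduces to $A_p$ by \cite[Lemma~1.5]{Gerardin} and then treats all $p$ uniformly.

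The paper sidesteps the cocycle verification entirely, and your argument can be repaired the same way: extend $\WeilRep^R_{0,\psi}$ to $\cal P\ltimes(\sfV^+\times P_0)$ by letting $\cal P$ and $\HeisLift{\sfV^+}$ act trivially, i.e.\ $\pi(p,v,x)=\WeilRep_{0,\psi}(x)$. That this is a homomorphism is immediate from \Cref{thm20}: conjugation by $p\in\cal P$ changes an element of $\sfV^+\times P_0$ only by an element of $\HeisLift{\sfV^+}$, which lies in $\ker(\pi)$. Inducing to $\cal P\ltimes P$ then produces an honest $R$-representation with no phase computation (multiplicativity is automatic from functoriality of induction), and \Cref{thm15a} identifies its restriction to $P$ with $\WeilRep^R_\psi$; restricting to $A_p\ltimes P$ and invoking \Cref{lemmaRlinearization} (resp.\ Gérardin's lemma when $p\neq2$) completes the proof. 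In short, the linearization on the induced model that you wanted to exhibit by explicit formulas comes for free once the extension is defined on the small subgroup, which is where the hypothesis $A_p\subseteq\cal P(\HeisLift{\sfV^+})$ is actually used.
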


\begin{proof} 
By \cite[Lemma~1.5]{Gerardin} when $p\neq2$
 and by \Cref{lemmaRlinearization} when $p=2$, it suffices to show that the Heisenberg $R$-representation $\WeilRep^R_\psi$
 extends to $A_p \ltimes P$.
Write $\cal P \defeq \cal P(\HeisLift{\sfV^+})$.
Extend the Heisenberg $R$-representation
$\WeilRep^R_{0,\psi}$ of $P_0$ to the $R$-representation
$\pi$ of $\cal P\ltimes(\sfV^+\times P_0)$ defined by the formula
\[
\pi\colon (p,v,x)\mapsto \WeilRep_{0,\psi}(x),
\qquad p\in\cal P,\, v\in \HeisLift{\sfV^+},\, x\in P_0.
\]
By the definition of $\cal P$,
this formula defines a homomorphism:
$p(v,x)p^{-1} = (v+w,x)$ for some $w\in \HeisLift{\sfV_+}$,
and then $\pi(1,v+w,x) = \pi(1,v,x)$ because $\HeisLift{\sfV_+}\subseteq\ker(\pi)$.
Using \Cref{thm15a}, we deduce that the restriction to $P$ of the induced representation
$\Ind_{\cal P\ltimes(\sfV^+\times P_0)}^{\cal P\ltimes P}(\pi)$
is isomorphic to $\WeilRep^R_\psi$.
Restricting $\Ind_{\cal P\ltimes(\sfV^+\times P_0)}^{\cal P\ltimes P}(\pi)$ to $A_p \ltimes P$ yields therefore an extension of $\WeilRep^R_\psi$ to an $R$-representation.
\end{proof}

\section{Construction of supercuspidal representations} \label{sec:construction}

Let $F$ be a nonarchimedean local field. Let $G$ be a connected reductive group that splits over a tamely ramified extension of $F$.

\subsection{The input} \label{sec:input}
The input to our construction of supercuspidal representations is analogous to the input that Yu (\cite{Yu}) uses, but it allows $p=2$ and removes the second genericity assumption \GEE\ imposed by Yu in \cite[Section~8]{Yu} that we recall below. We follow the conventions in \cite{Fi-Yu-works}; see \cite[Remark~2.4]{Fi-Yu-works} for a comparison of conventions.

Throughout the paper we will use the following notion of generic elements,
which is slightly weaker than what has previously appeared in the literature
due to the absence of~\GEE.

\begin{definition}[{cf.~\cite[Definition~3.5.2]{Fintzen-IHES}}] \label{thm44}
\index[terminology]{generic@generic of depth $r$}
Let $G'$ be a connected reductive $F$-group
and let $H\subseteq G'$ be a twisted Levi subgroup
that splits over a tamely ramified field extension of $F$.
Let $x\in\sB(H,F)$, and let $r \in \bR_{>0}$.
\begin{enumerate}
\item
An element $X\in\Lie^*(H)^H(F)$ is \mathdef{$(G',H)$-generic of depth~$r$}
if it satisfies the following two conditions.
\begin{itemize}
\item[\textbf{(GE0)}]
$X \in \Lie^*(H)_{x,r} \smallsetminus \Lie^*(H)_{x,r+}$
for some (equivalently, every) $x \in \wt\sB(H, F)$.
\item[\textbf{(GE1)}]
$\val(X(H_\alpha))=r$
for some (equivalently, every) maximal torus $T$ of $H$
and every $\alpha \in \Phi(G', T) \smallsetminus \Phi(H,T)$.
\end{itemize}

\item
A character $\phi$ of~$H(F)$ is \mathdef{$(G',H)$-generic (relative to~$x$) of depth~$r$}
if $\phi$ is trivial on $H(F)_{x,r+}$
and the restriction of $\phi$ to $H(F)_{x,r}$
is realized by an element $X$ of $\Lie^*(H)^H(F)$
that is $(G',H)$-generic of depth~$-r$, i.e., $\phi|_{H_{x,r}}$ is given by composing 
$H_{x,r}/H_{x,r+} \simeq \fh_{x,r}/\fh_{x,r+}$ with $\Lambda \circ X$ for a fixed additive character $\Lambda: F \ra \bC^\times$ that is nontrivial on $\cO_F$, but trivial on the maximal ideal of $\cO_F$. 
\end{enumerate}
\end{definition}

Notably, these conditions do not require Yu's condition~\GEE.
However, since the fine details of \GEE\ will be relevant later,
we review the definition here.

In the setting of \Cref{thm44},
suppose $X$ is $(G',H)$-generic of depth~$r$
and let $T$ be a maximal torus of~$H$.
We denote by $X_\ft$ the restriction of $X$ to $\ft(F^\tn{sep})$
and choose an element $\varpi_r$ of valuation $r$ in $F^\tn{sep}$.
Then, under the identification of $\ft^*(F^\tn{sep})$
with $X^*(T) \otimes_\bZ F^\tn{sep}$,
the element $\frac{1}{\varpi_r} X_\ft$ is contained in
$X^*(T) \otimes_\bZ \cO_{F^\tn{sep}}$,
and we denote its image under the surjection
$X^*(T) \otimes_\bZ \cO_{F^\tn{sep}} \twoheadrightarrow
X^*(T) \otimes_\bZ \bar k_F$ by $\overline X_\ft$.
Now we can state \hypertarget{GE2}{condition}~\GEE: \phantomsection\label{GE2}
\begin{description}[leftmargin=\widthof{\textbf{(GE2)}}+\labelsep]
\item[\textbf{(GE2)}]
The centralizer of $\overline X_\ft$
in $W(G',T)(\bar k_F)$ is $W(H,T)(\bar k_F)$
for some (equivalently, every) maximal torus $T$ of $H$.
\end{description}
Moreover, if $\phi$ is a character of~$H(F)$ that is $(G',H)$-generic relative to~$x$ of depth~$r$, then we say that $\phi$ satisfies \GEE\ if the restriction of $\phi$ to $H(F)_{x,r}$
is realized by a $(G', H)$-generic element of $\Lie^*(H)^H(F)$ that satisfies~\GEE.

If $p$ is a torsion prime for the Langlands dual group~$\widehat {G'}$,
or in other words, for the dual root datum of~$G'$,
then $(G', H)$-generic characters always satisfy condition~\GEE\ (\cite[Lemma~8.1]{Yu}).
We refer the reader to Steinberg's article \cite{Steinberg-torsion}
for a full discussion of the notion of a torsion prime,
but for convenience, we recall the following properties of torsion primes.
Let $\pi_1(\bG)$ be the algebraic fundamental group of a reductive group~$\bG$
(see \cite[Section~11.3]{Kaletha-Prasad-BTbook}),
a finitely generated abelian group,
and let $\pi_1(\bG)_\tn{tors}$ be its torsion subgroup.

\begin{lemma} \label{thm90}
Let $p$ be prime, let $k$ be a field,
and let $\bG$ be a reductive $k$-group.
\begin{enumerate}
\item
$p$ is torsion for $\bG$
if and only if $p \mid |\pi_1(\bG)_\tn{tors}|$
or $p$ is torsion for the root system of~$\bG$.

\item
The torsion primes~$p$ of an irreducible root system~$\Phi$ are as follows:
\begin{center}
\begin{tabular}{|c|c|c|c|c|} \hline
$\Phi$	& $A_n$, $C_n$ & $B_n$, $D_n$, $G_2$ & $E_6$, $E_7$, $F_4$ & $E_8$ \\ \hline
$p$	& \tn{none}    & $2$                 & $2$, $3$,           & $2$, $3$, $5$ \\ \hline
\end{tabular}
\end{center}

\item
Let $\bG'$ be a Levi subgroup of~$\bG$.
If $p$ is torsion for $\bG'$, then $p$ is torsion for~$\bG$.
\end{enumerate}
\end{lemma}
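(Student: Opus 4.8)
The plan is to deduce all three parts from Steinberg's study of torsion primes in \cite{Steinberg-torsion} (see also \cite[Section~11.3]{Kaletha-Prasad-BTbook}), supplying only the lattice bookkeeping that connects his definitions to the formulation above. Fix a maximal torus $T$ of $\bG$, write $\Phi\defeq\Phi(\bG,T)$, let $\bZ\Phi^\vee\subseteq X_*(T)$ be the coroot lattice, so $\pi_1(\bG)=X_*(T)/\bZ\Phi^\vee$, and recall that a prime is a torsion prime for $\bG$ exactly when there is a closed subsystem $\Psi\subseteq\Phi$ of maximal rank with $(X_*(T)/\bZ\Psi^\vee)_\tors$ containing an element of that order, and is a torsion prime for the root system $\Phi$ exactly when such a $\Psi$ can in addition be chosen with $\bZ\Phi^\vee/\bZ\Psi^\vee$ itself carrying that torsion. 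For part (a), I would run through the exact sequence
\[
1\lra\bZ\Phi^\vee/\bZ\Psi^\vee\lra X_*(T)/\bZ\Psi^\vee\lra\pi_1(\bG)\lra 1,
\]
whose left term is finite since $\Psi$ has maximal rank: if $p\mid|\pi_1(\bG)_\tors|$ take $\Psi=\Phi$; if $p$ is torsion for $\Phi$ use that $\bZ\Phi^\vee/\bZ\Psi^\vee$ embeds in $X_*(T)/\bZ\Psi^\vee$; and conversely an order-$p$ element of $(X_*(T)/\bZ\Psi^\vee)_\tors$ either meets $\bZ\Phi^\vee/\bZ\Psi^\vee$ (so $p\mid|\bZ\Phi^\vee/\bZ\Psi^\vee|$, i.e.\ $p$ is torsion for $\Phi$) or has nonzero image in $\pi_1(\bG)$ (so $p\mid|\pi_1(\bG)_\tors|$). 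Depending on which phrasing of Steinberg's definition one adopts, part (a) is then either a tautology or this one-line computation.

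For part (b) I would cite Steinberg's enumeration and indicate its mechanism rather than redo it: by the Borel--de Siebenthal procedure the maximal closed subsystems of maximal rank of an irreducible $\Phi$ are obtained by deleting a node from the completed Dynkin diagram, and one computes the coroot-lattice quotient in each resulting type, iterating (legitimate since, by the reasoning in part (c), passing to a larger root system only enlarges the set of torsion primes) until the list stabilizes; the prime $2$ is forced by the $D$-type subsystems occurring in $B_n$, $D_n$, and the exceptional types (and by $A_1\oplus A_1\subseteq G_2$), the prime $3$ by subsystems such as $A_2\oplus A_2\oplus A_2\subseteq E_6$, and the prime $5$ by $A_4\oplus A_4\subseteq E_8$, while in types $A_n$ and $C_n$ every closed subsystem has torsion-free coroot-lattice quotient, leaving no torsion primes.

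For part (c), after conjugating we may assume $\bG'\supseteq T$; then $\Phi'\defeq\Phi(\bG',T)=\Phi\cap\Span_\bQ(\Phi')$, so $\bZ\Phi'^\vee=\bZ\Phi^\vee\cap\Span_\bQ(\Phi')$ is a \emph{saturated} sublattice of $\bZ\Phi^\vee$, while $X_*(T)$ is the same for $\bG'$ and $\bG$. Hence $\bZ\Phi^\vee/\bZ\Phi'^\vee$ is torsion-free, so the kernel of the surjection $\pi_1(\bG')=X_*(T)/\bZ\Phi'^\vee\onto\pi_1(\bG)$ is torsion-free and $\pi_1(\bG')_\tors$ embeds into $\pi_1(\bG)_\tors$; thus $p\mid|\pi_1(\bG')_\tors|$ forces $p\mid|\pi_1(\bG)_\tors|$. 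On the root-system side, a closed subsystem $\Psi\subseteq\Phi'$ is again closed in $\Phi$ (if $\alpha,\beta\in\Psi$ and $\alpha+\beta\in\Phi$ then $\alpha+\beta\in\Span_\bQ(\Phi')\cap\Phi=\Phi'$, hence in $\Psi$) and $\bZ\Phi'^\vee/\bZ\Psi^\vee$ embeds into $\bZ\Phi^\vee/\bZ\Psi^\vee$, so $p$-torsion for $\Phi'$ yields $p$-torsion for $\Phi$; together with part (a) this gives (c).

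Since every part is essentially a recollection, I do not expect a genuine obstacle. If anything, the point demanding care is lining up the exact wording of Steinberg's definitions with the statement --- in particular that ``torsion prime for a root system'' may be tested on all closed subsystems or only on maximal-rank ones, these giving the same set of primes, which is itself part of \cite{Steinberg-torsion} --- together with verifying that the Borel--de Siebenthal iteration in part (b) really terminates at the tabulated list.
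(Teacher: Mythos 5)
Your proposal is correct and follows essentially the same route as the paper: the paper's proof is just a citation of Steinberg (2.5 for part (a), 1.13 for the table, and 1.19 together with 2.11 combined with part (a) for part (c)), and your argument has exactly this skeleton, differing only in that you unwind the lattice arguments behind those citations (the exact sequence relating $X_*(T)/\bZ\Psi^\vee$ to $\pi_1(\bG)$, saturation of the Levi coroot lattice in $\bZ\Phi^\vee$, and closedness in $\Phi$ of closed subsystems of the Levi subsystem) instead of quoting them. Your cautions about definitional phrasing (maximal-rank versus arbitrary closed subsystems) and your deferral of the actual table to Steinberg's enumeration match how the paper treats these points, so there is no gap of substance.
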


\begin{proof}
These claims are proved in \cite{Steinberg-torsion}.
In more detail, the first two parts follow from 2.5 and 1.13 of \cite{Steinberg-torsion}, respectively.
For the third part, \cite[1.19]{Steinberg-torsion} shows that if $p$ is torsion for the root system of $\bG'$,
then it is torsion for the root system of~$\bG$,
and \cite[2.11]{Steinberg-torsion} shows that $\pi_1(\bG')_\tn{tors}$
is a subgroup of $\pi_1(\bG)_\tn{tors}$.
Now the third part follows from the first part.
\end{proof}

\begin{example}[Failure of \GEE]
Let $p=2$, let $G=\SL_2$,
and let $T$ be an unramified, elliptic maximal torus.
Then $T(F)$ is the norm-one elements
in an unramified quadratic field extension $E/F$.
A character $\phi\colon T(F)\to\bbC^\times$
is $G$-generic of depth~$r$ if and only if $\depth(\phi) = r$.
By direct calculation,
or an argument using methods from the proof of \Cref{thm89},
we see that $N_G(T)(F)/T(F)\simeq\bbZ/2\bbZ$.
Hence $N_T(G)(F)/T(F)$ acts on $T(F)$ by inversion.
So if $\phi$ has positive depth and order two,
then $\phi$ will be $(G, T)$-generic but not satisfy \GEE.
\end{example}

The input to our construction is a tuple\index[notation]{Y@$\Upsilon$} (cf.~\cite[Section~2.1]{Fi-Yu-works})
\[
\Upsilon = ((G_i)_{1\leq i\leq n+1},x,(r_i)_{1\leq i\leq n},\rho,(\phi_i)_{1\leq i\leq n})
\]
for some non-negative integer $n$, where  
\begin{enumerate}[(a)]
	\item $G=G_1 \supseteq G_2 \supsetneq G_3 \supsetneq \hdots \supsetneq G_{n+1}$ are twisted Levi subgroups of $G$ that split over a tamely ramified extension of $F$,
	\item  $x \in \sB(G_{n+1},F)\subset \sB(G,F)$, 
	\item $r_1 > r_2 > \hdots > r_n >0$ are real numbers,
	\item $\rho$ is an irreducible representation of $(G_{n+1})_{[x]}$ that is trivial on $(G_{n+1})_{x,0+}$, 
	\item $\phi_i$, for $1 \leq i \leq n$, is a character of $G_{i+1}(F)$ of depth $r_i$, 
\end{enumerate}
satisfying the following conditions 
\begin{enumerate}[(i)]
	\item  $G_{n+1}$ is elliptic in~$G$, i.e.,\ $Z(G_{n+1})/Z(G)$ is anisotropic,
	\item the image of the point $x$ in $\sB(G_{n+1}^{\der},F)$ is a vertex,
	\item $\rho|_{(G_{n+1})_{x,0}}$ is a cuspidal representation of $(G_{n+1})_{x,0}/(G_{n+1})_{x,0+}$,
	\item $\phi_i$ is $(G_i, G_{i+1})$-generic relative to $x$  of depth $r_i$ for all $1 \leq i \leq n$.
\end{enumerate}
For brevity, we will refer to such an object
$\Upsilon$ as a \mathdef{supercuspidal $G$-datum}, and we will fix such a datum from now on.
\index[terminology]{supercuspidalGdatum@supercuspidal $G$-datum}

\subsection{Overview of the construction} \label{sec:overview}
We will now construct several objects out of a supercuspidal $G$-datum~$\Upsilon$,
culminating in a supercuspidal representation.
The dependence on~$\Upsilon$ is implicit, not reflected in the notation.

Define the open, compact-mod-$Z(G)$ subgroups\index[notation]{K@$K$}\index[notation]{Kt@$\Kplus$}
\begin{align*}
\Kplus &\defeq G_1(F)_{x,r_1/2}\cdot G_2(F)_{x,r_2/2}\cdots
G_n(F)_{x,r_n/2}\cdot N_G(G_1,G_2,\dots,G_n, G_{n+1})(F)_{[x]} \, ,\\
K &\defeq G_1(F)_{x,r_1/2}\cdot G_2(F)_{x,r_2/2}\cdots
G_n(F)_{x,r_n/2}\cdot G_{n+1}(F)_{[x]} \, .
\end{align*}
Then $K$ is a normal, finite-index subgroup of~$\Kplus$.
Note that our $K$ is denoted by $\wt K$ in \cite[Section~2.5]{Fi-Yu-works}, but we want to avoid too many tildes and indices.

\begin{lemma}
Let $G'$ be a connected reductive $F$-group
and let $H\subseteq G'$ be a twisted Levi subgroup
that splits over a tamely ramified extension of $F$.
Let $x\in\sB(H,F)$ and
let $\phi$ be a character of~$H$ of depth~$r$.
Then there exists a unique character $\hat\phi_{(G',x)}$%
\index[notation]{pzzh@$\hat\phi_{(G',x)}$}
of $H(F)_{[x]}\cdot G'(F)_{x,r/2+}$
such that $\hat\phi_{(G',x)}$ and $\phi$ agree on
$H(F)_{[x]}$ and $(H,G')_{x,r+,r/2+}\subseteq\ker(\hat\phi_{(G',x)})$.
\end{lemma}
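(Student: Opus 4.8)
The plan is to reduce the statement to elementary group theory by analysing the structure of
$\Gamma \defeq H(F)_{[x]}\cdot G'(F)_{x,r/2+}$
relative to the ``mixed'' subgroup $J \defeq (H,G')_{x,r+,r/2+}$,
which is generated by $H(F)_{x,r+}$ together with the off-diagonal Moy--Prasad subgroups $N^{\pm}(F)_{x,r/2+}$ attached to the $\Ad(H)$-stable complement $\fn=\fn^+\oplus\fn^-$ of $\fh$ in $\fg'$.
Concretely, I would establish three facts:
(1) $\Gamma$ is a group containing $J$ as a normal subgroup;
(2) $\Gamma = H(F)_{[x]}\cdot J$; and
(3) $H(F)_{[x]}\cap J = H(F)_{x,r+}$.
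Granting these, the lemma follows quickly, as explained at the end.

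For (1)--(3) I would invoke the standard structure theory of Moy--Prasad filtrations for twisted Levi subgroups that split over a tame extension (following Yu, see also \cite{Kaletha-Prasad-BTbook} and \cite{Fi-Yu-works}).
The ingredients are: for every $s>0$, the compatibility $G'(F)_{x,s}\cap H(F) = H(F)_{x,s}$ coming from $x\in\sB(H,F)\subseteq\sB(G',F)$, and the Iwahori-type factorization $G'(F)_{x,s} = N^-(F)_{x,s}\cdot H(F)_{x,s}\cdot N^+(F)_{x,s}$ with uniqueness of the three factors; the fact that $J$ is normal in $G'(F)_{x,r/2+}$ with factorization $J = N^-(F)_{x,r/2+}\cdot H(F)_{x,r+}\cdot N^+(F)_{x,r/2+}$; and the fact that $H(F)_{[x]}$ normalizes each $G'(F)_{x,s}$ and each $N^{\pm}(F)_{x,s}$, the latter because $H(F)$ preserves $\fn$ (so that it also normalizes $H(F)_{x,s}=G'(F)_{x,s}\cap H(F)$).
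Given these, $\Gamma$ is a group, and $J\subseteq\Gamma$ is normalized by both factors of $\Gamma$, hence $J\trianglelefteq\Gamma$, which is~(1).
For~(2): since $r/2+\leq r+$ we have $H(F)_{x,r+}\subseteq H(F)_{x,r/2+}$, so the factorization of $G'(F)_{x,r/2+}$ places it inside $H(F)_{x,r/2+}\cdot J\subseteq H(F)_{[x]}\cdot J$, whence $\Gamma = H(F)_{[x]}\cdot J$.
For~(3): if $h\in H(F)_{[x]}\cap J$, then $h\in G'(F)_{x,r/2+}\cap H(F)=H(F)_{x,r/2+}$, and comparing the trivial factorization $h = 1\cdot h\cdot 1$ with the factorization of $h$ inherited from $J$ forces, by uniqueness, the two off-diagonal factors to be trivial, so $h\in H(F)_{x,r+}$; the reverse inclusion is clear.

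Finally, the lemma follows.
By~(1), (2), and the second isomorphism theorem, $\Gamma/J\cong H(F)_{[x]}/(H(F)_{[x]}\cap J)$, which by~(3) equals $H(F)_{[x]}/H(F)_{x,r+}$.
Since $\phi$ has depth $r$ it is trivial on $H(F)_{x,r+}$, so $\phi|_{H(F)_{[x]}}$ descends to a character of $H(F)_{[x]}/H(F)_{x,r+}\cong\Gamma/J$; inflating it along $\Gamma\twoheadrightarrow\Gamma/J$ produces a character $\hat\phi_{(G',x)}$ of $\Gamma$ that restricts to $\phi$ on $H(F)_{[x]}$ and is trivial on $J$, which gives existence.
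For uniqueness, any two characters of $\Gamma$ with these two properties differ by a character of $\Gamma$ that is trivial on both $H(F)_{[x]}$ and $J$, hence on the subgroup they generate, which by~(2) is all of~$\Gamma$.

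The one point requiring genuine care --- and the main obstacle --- is assembling the Moy--Prasad structure theory used in the second paragraph uniformly in the residue characteristic: the Iwahori-type factorization and the normality and factorization of $(H,G')_{x,r+,r/2+}$ for a twisted Levi $H$ that need not be $F$-split, together with the verification that $H(F)_{[x]}$ (the stabilizer of the image of $x$ in the reduced building) really normalizes the relevant Moy--Prasad subgroups, so that $\Gamma$ is indeed a group. All of this is available in the literature, but must be cited and combined with some care.
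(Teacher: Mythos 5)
Your proof is correct and takes essentially the same route as the paper, whose proof is a one-line citation to the argument at the beginning of \cite[Section~4]{Yu}: that argument consists precisely of your facts (1)--(3) about $J=(H,G')_{x,r+,r/2+}$ inside $H(F)_{[x]}\cdot G'(F)_{x,r/2+}$, followed by the same descend-and-inflate step. The only caution is that, $H$ being merely a twisted Levi, the subgroups $N^{\pm}$ and the Iwahori-type factorization with uniqueness of factors exist only over a tame splitting field, so your facts (1)--(3) should be phrased via the concave-function groups over that field together with Galois descent (as in \cite[Section~2]{Yu}), which is exactly how $(H,G')_{x,r+,r/2+}$ is defined.
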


\begin{proof}
This follows from the argument at the beginning of \cite[Section~4]{Yu}.
\end{proof}

In particular, for each $1\leq i\leq n$
we have a character $\hat\phi_i\defeq(\hat\phi_i)_{(G,x)}$\index[notation]{pzzh@$\hat\phi$}
of $(G_{i+1})_{[x]}\cdot G_{x,r_i/2+}$ that extends
the restriction of~$\phi$ to $(G_{i+1})_{[x]}$.%
\footnote{Our character $\hat\phi$ is defined
on a slightly larger subgroup than Yu's character $\hat\phi$,
which was defined on the subgroup
$(G_{n+1})_{[x]}\cdot(G_{i+1})_{x,0}\cdot G_{x,r_i/2}$.
There is little risk of confusion, however,
because our character extends Yu's character.}

Our goal is to construct a representation $\sigma$
of a certain group $\wt K \defeq N_{\Kplus}(\rho\otimes\kappa)$
contained between $K$ and $\Kplus$.
The irreducible supercuspidal representation is then $\cind_{\wt K}^{G(F)}(\sigma)$.
If the characters in the input $\Upsilon$ satisfy Yu's additional condition \GEE,
then $\wt K=K$ (see \Cref{thm02b}).
The construction takes two steps,
which we briefly summarize before describing them in more detail.

First, we define a certain normal subgroup $K^\minus$ of $K$,
for which the quotient $K/K^\minus$
is an abelian $2$-group if $p=2$ and is trivial otherwise (see \eqref{thm67}).
Using the Heisenberg--Weil representation,
we construct an irreducible representation $\kappa^\minus$ of $K^\minus$
(see \Cref{lemma:kappaminus}).

Second, we make two choices:
an irreducible representation~$\kappa$ of $K$
whose restriction to $K^\minus$ contains $\kappa^\minus$,
and an irreducible representation $\sigma$ of $\widetilde K \defeq N_{K^+}(\rho\otimes\kappa)$
whose restriction to $K$ contains $\rho\otimes\kappa$.
These two choices can be studied using Clifford theory,
and we reflect on the choices in \Cref{sec:choices}.

\textbf{Step 1: Heisenberg--Weil representation.}
The first step uses the theory of Heisenberg--Weil representations
that features in Yu's work (\cite{Yu}) in the case $p \neq 2$, 
but which was not available before this paper in the case $p=2$.

For $\tilde r,\tilde r'\in\wt\bbR\setminus\{\infty\}$
with $\tilde r\geq \tilde r'\geq \tilde r/2 > 0$,
let $(G_i)_{x,\tilde r,\tilde r'}=(G_{i+1},G_i)(F)_{x,\tilde r,\tilde r'}$
as in \cite[Section~2.5]{Fi-Yu-works}.\index[notation]{Gir@$(G_i)_{x,\tilde r,\tilde r'} = (G_{i+1},G_i)(F)_{x,\tilde r,\tilde r'}$}
In \Cref{thm33} we will show that the group\index[notation]{Vi@$\sfV_i^\natural$}
\[
\sfV_i^\natural \defeq (G_i)_{x,r_i,r_i/2}/((G_i)_{x,r_i,r_i/2+}\cap\ker(\hat\phi_i))
\]
is a Heisenberg $\bbF_p$-group. 
Let $(\WeilRep_i,V_{\WeilRep_i})$
denote the Heisenberg representation of $\sfV_i^\natural$
with central character $\phi_i|_{(G_i)_{x,r_i,r_i/2+}}$
(\Cref{DefHeisenbergrep}).

Define the group\index[notation]{Km@$K^\minus$}
\begin{equation} \label{thm67}
K^\minus \defeq (G_1)_{x,r_1,r_1/2}\cdots(G_n)_{x,r_n,r_n/2}
G_{n+1}(F)_{[x]}^\minus
\end{equation}
where if $p\neq2$,
then $(G_{n+1})^\minus_{[x]} \defeq (G_{n+1})_{[x]}$\index[notation]{Gm@$(G_{n+1})^\minus_{[x]}$},
and if $p=2$, then $(G_{n+1})^\minus_{[x]}$ is defined to be the kernel
of the projection map from $(G_{n+1})_{[x]}$ to the finite abelian $2$-group
\begin{equation} \label{thm10}
((G_{n+1})_{[x]}/(Z(G(F))\cdot(G_{n+1})_{x,0}))
\otimes_{\bZ}\bbZ_{(2)}.
\end{equation}
The left factor is a finite abelian group by
\cite[Corollary~11.6.3]{Kaletha-Prasad-BTbook}.

The representation $\kappa^\minus$\index[notation]{kzzm@$\kappa^\minus$}
of $K^\minus$ will have underlying vector space
$V_{\kappa^\minus} \defeq \bigotimes_{i=1}^n V_{\WeilRep_i}$.
To define $\kappa^\minus$ we give an action of each factor of $K^\minus$
on each vector space $V_{\WeilRep_i}$,
form the tensor product of the actions to produce an action of each factor of $K^\minus$ on $V_{\kappa^\minus} $,
and then check that the actions of the different factors of $K^\minus$ are compatible,
so that they descend to a morphism $\kappa^\minus\colon K^\minus\to\GL(V_{\kappa^\minus})$ (see \Cref{lemma:kappaminus}).
More precisely, let $1\leq i, j\leq n$.
Then the factor $(G_i)_{x,r_i,r_i/2}$
acts on the space $V_{\WeilRep_j}$ when $i\neq j$
by the character $\hat\phi_j|_{(G_i)_{x,r_i,r_i/2}}$,
and when $i=j$ by the Heisenberg representation $\WeilRep_i$ of $\sfV_i^\natural$
with central character $\hat\phi_i|_{(G_i)_{x,r_i,r_i/2+}}$.
As for the factor $(G_{n+1})_{[x]}^\minus$,
we let $(G_{n+1})_{[x]}^\minus$ act on $V_{\WeilRep_i}$ 
via $\phi_i\otimes\WeilRep_i$, where $\WeilRep_i$ denotes the restriction of a (pull back of a) Weil--Heisenberg representation as in \Cref{notation:Weil} (see also \Cref{thm12}, \Cref{defnewHeisenbergWeil}, and \Cref{thm120}). While Weil representations in general are only uniquely defined up to twisting by an order-two character (cf.~\Cref{cor-weil-uniqueness}), the resulting representation $\WeilRep_i$ is uniquely defined when $q>2$ because it is inflated from a group with no characters of order two (see \Cref{thm120}).

\textbf{Step 2: Clifford theory.}
Recall from \Cref{sec:notation} that
if $A$ is a group, $B$ is a normal subgroup of~$A$,
and $\pi$ is a representation of~$B$,
then we write
$\Irr(A,B,\pi)$ for the set of $\sigma\in\Irr(A)$
whose restriction to~$B$ contains $\pi$.

First, let $\kappa\in\Irr(K,K^\minus,\kappa^\minus)$\index[notation]{kzz@$\kappa$}.
If $p\neq 2$, then $K^\minus = K$ and $\kappa=\kappa^\minus$.
By \Cref{thm72a}, the character group of $K/K^\minus$
acts transitively, by twisting, on the set of such~$\kappa$.
We may now inflate the representation $\rho$
from $G_{n+1}(F)_{[x]}$ to~$K$ by asking the inflation to be trivial on $G_1(F)_{x,r_1/2}\cdot G_2(F)_{x,r_2/2}\cdots
G_n(F)_{x,r_n/2}$. We denote this inflation by $\rho$ as well, 
and we form the tensor product $\rho\otimes\kappa$ of the inflation with~$\kappa$.

Second, let $\sigma\in\Irr(\wt K,K,\rho\otimes\kappa)$, where we recall that $\wt K \defeq N_{\Kplus}(\rho\otimes\kappa)$.%
\index[notation]{sigma@$\sigma$}
By \Cref{thm72b}, these $\sigma$ are in bijection with
the irreducible representations of the intertwining algebra 
\(
\End_{\wt K}\bigl(\Ind_K^{\wt K}(\rho\otimes\kappa)\bigr).
\)

In \Cref{thmseveralscC} we will show that the representation
$\cind_{\wt K}^{G(F)}(\sigma)$ is irreducible and supercuspidal if $q>3$.
The representations $\cind_{\wt K}^{G(F)}(\sigma)$ for varying $\sigma$ can also be recovered as the irreducible subrepresentations of $\cind_{K}^{G(F)}(\rho\otimes\kappa)$, by \Cref{thmseveralscA}.
If the characters $\phi_i$ in the input $\Upsilon$ of the construction satisfy Yu's condition \GEE, then $\cind_{K}^{G(F)}(\rho\otimes\kappa)$ itself is irreducible (see \Cref{thmseveralscB}).

\subsection{Choices to be made in the construction}
 \label{sec:choices}

Let $\Upsilon$ be a cuspidal $G$-datum and assume $q>3$.
Ideally, a construction of supercuspidal representations
would output a single irreducible representation from each input $\Upsilon$.
For several reasons, however, our construction is not so precise.
In this subsection we reflect on the choices in our construction,
in addition to~$\Upsilon$,
that one needs to make to produce a single supercuspidal representation.
We stress that these additional choices
are only necessary when either
$p=2$ and $G$ has complicated Bruhat--Tits theory at~$x$,
or when $p$ is a torsion prime
for the Langlands dual group~$\widehat G$.
Neither of these phenomena occur for the general linear group,
where additional choices are not necessary (see \Cref{thm79}).

The only choices the reader has to make in the construction outlined in \Cref{sec:overview} are the representations $\kappa\in \Irr(K,K^\minus,\kappa^\minus)$ and $\sigma \in \Irr(\wt K, K, \rho\otimes\kappa)$, which are described by the following \namecref{thm72}.

\begin{lemma} \label{thm72}
	\begin{lemmaenum}
		\item[]
		\item \label{thm72a}
		The character group of $K/K^\minus$ acts transitively, by twisting,
		on the set $\Irr(K,K^\minus,\kappa^\minus)$.
		
		\item \label{thm72b}
		There is a bijection
		$\Irr(\wt K, K, \rho\otimes\kappa) \simeq
		\Irr\bigl(\End_{\wt K}\bigl(\Ind_K^{\wt K}(\rho\otimes\kappa)\bigr)\bigr)$.
	\end{lemmaenum}
\end{lemma}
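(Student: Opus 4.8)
The plan is to derive both parts from standard Clifford theory for the finite quotient groups $Q \defeq K/K^\minus$ and $\wt K/K$. The structural facts that feed the argument are that $Q$ is a finite abelian group --- a $2$-group if $p = 2$ and trivial otherwise, by \eqref{thm67} and \Cref{thm02b} --- and that $[\Kplus : K] < \infty$; since $K \trianglelefteq \Kplus$ and $K \subseteq \wt K \subseteq \Kplus$ (note $K \subseteq \wt K$ because conjugation by elements of $K$ is inner and hence preserves the isomorphism class of $\rho\otimes\kappa$), this gives $K \trianglelefteq \wt K$ with $\wt K/K$ finite. Because we work over $\bbC$ and these quotients are finite, the fact that $K$ and $\wt K$ are merely compact-mod-center, rather than finite, is harmless: all the irreducible smooth representations involved are finite-dimensional, and the usual finite-group arguments apply verbatim.

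For \Cref{thm72a}, the twisting action is well-defined since a character $\chi$ of $Q$ is trivial on $K^\minus$, so $(\chi\otimes\kappa)|_{K^\minus} = \kappa|_{K^\minus}$ still contains $\kappa^\minus$ and $\chi\otimes\kappa$ is irreducible. For transitivity, let $\kappa_1, \kappa_2 \in \Irr(K, K^\minus, \kappa^\minus)$. By Clifford's theorem each $\kappa_i|_{K^\minus}$ is a direct sum of $K$-conjugates of $\kappa^\minus$, so the restriction to $K^\minus$ of the $K$-representation $\Hom_\bbC(\kappa_1,\kappa_2) \cong \kappa_1^\vee \otimes \kappa_2$ is a direct sum of representations of the form $({}^{k}\kappa^\minus)^\vee \otimes {}^{k'}\kappa^\minus$. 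By Schur's lemma the space of $K^\minus$-invariants of such a summand is nonzero exactly when ${}^{k}\kappa^\minus \cong {}^{k'}\kappa^\minus$, which holds for $k = k' = 1$; hence $\Hom_\bbC(\kappa_1,\kappa_2)^{K^\minus} \neq 0$. As $K^\minus$ is normal in $K$, this is a nonzero representation of the finite abelian group $Q$, so it contains some character $\chi$ of $Q$. Inflating $\chi$ to $K$ and unwinding, $\Hom_K(\chi\otimes\kappa_1, \kappa_2) = \Hom_K\bigl(\chi, \Hom_\bbC(\kappa_1,\kappa_2)\bigr) \neq 0$, and since $\chi\otimes\kappa_1$ and $\kappa_2$ are both irreducible, $\chi\otimes\kappa_1 \cong \kappa_2$. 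In the degenerate case $p \neq 2$, where $K^\minus = K$, one simply has $\Irr(K, K^\minus, \kappa^\minus) = \{\kappa^\minus\}$ and there is nothing to prove.

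For \Cref{thm72b}, put $\Sigma \defeq \Ind_K^{\wt K}(\rho\otimes\kappa)$ and recall that $\rho\otimes\kappa$ is an irreducible representation of $K$, stable under $\wt K$ by the very definition of $\wt K$. Mackey's formula gives $\Sigma|_K \cong (\rho\otimes\kappa)^{\oplus[\wt K : K]}$, so $\Sigma$ has finite length; and since $|\wt K/K|$ is invertible in $\bbC$, averaging a $K$-equivariant projection onto a given $\wt K$-subrepresentation over $\wt K/K$ produces a $\wt K$-equivariant one, so every $\wt K$-subrepresentation of $\Sigma$ is a direct summand and $\Sigma$ is semisimple. By Frobenius reciprocity, an irreducible representation $\sigma$ of $\wt K$ lies in $\Irr(\wt K, K, \rho\otimes\kappa)$ if and only if $\Hom_{\wt K}(\Sigma, \sigma) \neq 0$, i.e.\ if and only if $\sigma$ occurs as a constituent of $\Sigma$; thus $\Irr(\wt K, K, \rho\otimes\kappa)$ is precisely the set of irreducible constituents of $\Sigma$. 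Writing $\Sigma \cong \bigoplus_\sigma \sigma^{\oplus m_\sigma}$ over this set, with each $m_\sigma \geq 1$, Schur's lemma identifies $\End_{\wt K}(\Sigma)$ with $\prod_\sigma \Mat_{m_\sigma}(\bbC)$, whose isomorphism classes of simple modules are indexed by the same set. Composing these bijections gives the one asserted.

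I do not anticipate a genuine obstacle: both parts are formal consequences of Clifford theory once the structural facts above are granted. The only steps that deserve a second look are the semisimplicity of $\Sigma$ --- handled by the averaging argument --- and, in \Cref{thm72a}, the point that the character group of $K/K^\minus$ \emph{itself} (and not merely that of some larger group) acts transitively, which is exactly what the Schur-lemma computation supplies.
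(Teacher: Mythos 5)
Your proof is correct and takes essentially the same Clifford-theoretic route as the paper, which disposes of the lemma by citation — part (a) is quoted from \cite[Lemma~A.4.3]{Kaletha-non-singular} using that $K/K^\minus$ is abelian, and part (b) from the paper's own \Cref{thm71a} — so your arguments amount to writing out the proofs of those cited results (your part (b) in particular reproduces the semisimplicity/Frobenius-reciprocity/matrix-algebra proof of \Cref{thm71a}). One small slip: the finiteness and abelianness of $K/K^\minus$ come from \eqref{thm67} and \eqref{thm10}, not from \Cref{thm02b}, which concerns $\wt K/K$ and is proved much later.
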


\begin{proof}
	The first part is a special case of \cite[Lemma~A.4.3]{Kaletha-non-singular} since $K/K^-$ is abelian,
	and the second is a special case of \Cref{thm71a}.
\end{proof}

\begin{remark}[On choosing $\kappa$ and $\sigma$] \label{thm73}
The choices of $\kappa$ and~$\sigma$ are of a different nature.

The choice of $\kappa$ can be accounted for by refactorization,
as in the work of Hakim and Murnaghan (\cite[Definition~4.19]{HakimMurnaghan}).
Specifically, suppose $\kappa$ and~$\kappa'$
are two choices of an element of $\Irr(K,K^\minus,\kappa^\minus)$.
Then by \Cref{thm72a} there is a character $\chi$
of $(G_{n+1})_{[x]}$ trivial on $(G_{n+1})_{[x]}^\minus$
such that $\kappa'=\chi\otimes\kappa$,
where we identify $\chi$ with its inflation to~$K$.
Let $\Upsilon'$ be the supercuspidal $G$-datum obtained from~$\Upsilon$
by replacing $\rho$ with~$\rho'\defeq\chi\otimes\rho$.
Since $\rho\otimes\kappa' = \rho'\otimes\kappa$,
and the supercuspidal representations we construct
depend only on this tensor product rather than its individual factors,
we would produce the same set of supercuspidal representations
by choosing $\kappa'$ for~$\Upsilon$ or~$\kappa$ for~$\Upsilon'$.
All in all, choosing a different $\kappa$ 
can be accounted for by instead modifying the depth-zero part of~$\Upsilon$.

The choice of $\sigma$ is in general of a nonabelian nature
and cannot be accounted for by refactorization.
In \Cref{sec:ge2example}, summarized in \Cref{thm66},
we give an example where $\dim(\sigma) > \dim(\rho\otimes\kappa)$,
showing that in general $\sigma$ might not extend~$\rho\otimes\kappa$. 

\end{remark}

\begin{remark}[Why we do not refine the input~$\Upsilon$] \label{thm70}
Our construction is formulated so that a single $G$-datum $\Upsilon$
gives rise to a finite set of supercuspidal representations rather than a single one.
For many reasons it would be advantageous to reformulate the construction
so that it produce an individual representation,
starting from a variant datum $\Sigma$ which would somehow record the choices of $\sigma$ and~$\kappa$.
However, such a reformulation would come at the price of making the input $\Sigma$
much more conceptually and notationally complicated than~$\Upsilon$.

In more detail, suppose that $\kappa$ extends to a representation
$\wt \kappa$ of $\wt K=N_{\Kplus}(\rho\otimes\kappa)$ (for any allowed choice of $\rho$).
Then, instead of the representation $\rho$ of $(G_{n+1})_{[x]}$ as the input in $\Upsilon$,
we could take as an input in $\Sigma$
a depth-zero representation $\wt\rho$ of a group $K_{\wt\rho}$,
with $K\subseteq K_{\wt\rho} \subseteq\Kplus$,
such that $N_{\Kplus}(\wt\rho|_K\otimes\kappa) = K_{\wt\rho}$
and $\wt\rho$ is cuspidal when restricted to $(G_{n+1})_{x,0}$.
In terms of our current language, $\sigma = \wt\rho\otimes\wt\kappa$.
In this new language, however, the input is unpleasant to describe
because one needs to already construct $K$ and $\kappa$ to even define where $\wt \rho$ lives.
\end{remark}

We finish by discussing the case of the general linear group.

\begin{lemma} \label{thm87}
Let $k$ be a field, let $G=\GL_n$ over~$k$,
and let $M$ be a twisted Levi subgroup of~$G$.
Then there are separable field extensions $\ell_1,\dots,\ell_r$
of~$k$ and integers $d_1,\dots,d_r$ with
$n = \sum_{i=1}^r d_i[\ell_i:k]$ such that
$M\simeq\prod_{i=1}^r \Res_{\ell_i/k}\GL_{d_i}$.
If, moreover, $M$ is elliptic, then $r=1$.
\end{lemma}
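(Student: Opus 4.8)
The plan is to understand twisted Levi subgroups of $\GL_n$ by passing to a splitting field. First I would observe that a Levi subgroup of $\GL_{n,\bar k}$ is, up to conjugacy, a block-diagonal subgroup $\prod_{i} \GL_{m_i}$ corresponding to an ordered partition $n = \sum_i m_i$; more invariantly, the datum of a Levi subgroup is equivalent to a decomposition $\bar k^n = \bigoplus_i W_i$ of the standard representation into a direct sum of subspaces, with the Levi being the stabilizer of that decomposition. Now let $M$ be a twisted Levi subgroup of $G = \GL_n$ over $k$, so that $M_{\ell}$ is a Levi subgroup of $\GL_{n,\ell}$ for some finite separable extension $\ell/k$, which we may enlarge to assume Galois. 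Then $M_{\bar k}$ is the stabilizer of a direct sum decomposition $V_{\bar k} = \bigoplus_{i} W_i$ of the standard module, and since $M$ is defined over $k$, the absolute Galois group $\Gamma = \Gal(\bar k/k)$ permutes the set $\{W_i\}$ of summands. Actually it is cleaner to work with the \emph{center} $Z(M)$, a torus whose simultaneous eigenspace decomposition of $V_{\bar k}$ recovers the $W_i$ canonically, so that $\Gamma$ acts on the index set and the action is compatible with the vector-space structure.

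Next I would decompose the $\Gamma$-set $I = \{1,\dots,r'\}$ indexing the summands into orbits $I = \bigsqcup_{j=1}^r O_j$, and for each orbit $O_j$ pick a representative $i_j \in O_j$; let $\ell_j \subseteq \bar k$ be the fixed field of the stabilizer $\Gamma_{i_j}$, a separable extension of $k$ with $[\ell_j : k] = |O_j|$. Since $\Gamma$ permutes the $W_i$ preserving dimension within each orbit, all summands in $O_j$ have a common dimension $d_j := \dim W_{i_j}$, and $n = \dim V = \sum_i \dim W_i = \sum_{j=1}^r d_j |O_j| = \sum_{j=1}^r d_j [\ell_j : k]$ as required. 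The subspace $\bigoplus_{i \in O_j} W_i$ is $\Gamma$-stable, hence defined over $k$, and I would identify the factor of $M$ acting on it with $\Res_{\ell_j/k}\GL_{d_j}$: concretely, $W_{i_j}$ carries a natural structure making it a free $\ell_j$-module of rank $d_j$ (the torus $Z(M)$ supplies the $\ell_j$-action, via the embedding $\ell_j \into \End_{\bar k}(W_{i_j})^{\Gamma_{i_j}}$), and the Weil-restriction identification is the standard one. Assembling over $j$ gives $M \simeq \prod_{j=1}^r \Res_{\ell_j/k}\GL_{d_j}$.

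Finally, for the ellipticity claim, recall $M$ is elliptic means $Z(M)/Z(G)$ is anisotropic. If $M \simeq \prod_{j=1}^r \Res_{\ell_j/k}\GL_{d_j}$ with $r \geq 2$, then $Z(M) \simeq \prod_{j=1}^r \Res_{\ell_j/k}\mathbb{G}_m$, which has $k$-split rank $r$, while $Z(G) = \mathbb{G}_m$ has split rank $1$; hence $Z(M)/Z(G)$ has a split torus of rank $r - 1 \geq 1$ as a quotient and is not anisotropic. Therefore $r = 1$.

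The main obstacle is the bookkeeping in the middle step: making precise and canonical the passage from "$M_{\bar k}$ stabilizes a decomposition of $V_{\bar k}$" to "$\Gamma$ acts on the summands compatibly with the linear structure," and then recognizing the resulting $k$-group as a product of Weil restrictions of general linear groups. Using the center $Z(M)$ (whose eigenspace decomposition is Galois-equivariant by construction) to produce the decomposition canonically, rather than working with the Levi directly, is what makes this clean; the identification with $\Res_{\ell_j/k}\GL_{d_j}$ is then the standard dictionary between Galois descent for a permutation-induced situation and Weil restriction, so no genuinely hard computation remains.
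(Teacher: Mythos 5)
Your argument is correct, but it takes a genuinely different route from the paper. You work directly with the standard representation: you recover the summands $W_i$ canonically as the weight spaces of the $k$-torus $Z(M)^\circ$ acting on $V_{k^\tn{sep}}$, let $\Gal(k^\tn{sep}/k)$ permute them, descend each orbit sum to a $k$-subspace $V_j$ which becomes a module over the \'etale algebra $\ell_j$ generated by the image of $Z(M)^\circ$, and identify the corresponding factor of $M=Z_G(Z(M)^\circ)$ with $\Res_{\ell_j/k}\GL_{d_j}$ as the centralizer of that $\ell_j$-action; ellipticity then falls out from the split rank of $Z(M)$. The paper instead argues cohomologically: it fixes a split Levi $M_0$ with normalizer $N_0$ and relative Weyl group $W_0=N_0/M_0$, classifies the $G(k^\tn{sep})$-conjugates of $M_0$ defined over~$k$ via $(G/N_0)(k)\to H^1(k,N_0)\to H^1(k,\Aut(M_0))$, uses a pinned section $W_0\to N_0$ to see that $W_0$-twists of $M_0$ are products of Weil restrictions, and shows $H^1(k,N_0)\to H^1(k,W_0)$ is bijective by Hilbert's Theorem~90 plus a twisting argument. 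Your approach is more elementary and concrete (essentially the classical commutant-of-an-\'etale-algebra description familiar from $\GL_n$ theory), avoiding nonabelian $H^1$ altogether; the paper's approach trades that concreteness for a cleaner bookkeeping of \emph{all} forms at once via $H^1(k,W_0)$. The only places in your sketch needing slight care are (i) the statement that $W_{i_j}$ itself is a free $\ell_j$-module --- it is rather the descended $k$-form $V_j$ of $\bigoplus_{i\in O_j}W_i$ that carries the $\ell_j$-structure, with $\dim_{\ell_j}V_j=d_j$ --- and (ii) the implicit use of $M=Z_G(Z(M)^\circ)$ over~$k$ (checked over $\bar k$ and descended) to conclude that $M$, and not just $M_{\bar k}$, is the product of these centralizers; both are standard and easily supplied.
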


\begin{proof}
Let $k^\tn{sep}$ be a separable closure of~$k$,
let $M_0$ be a Levi subgroup of~$G$,
let $N_0=N_G(M_0)$, and let $W_0 = N_G(M_0)/M_0$.
There is a section $W_0\to N_0$
that preserves a fixed pinning of~$M_0$,
which we use to write $N_0 \simeq M_0 \rtimes W_0$.

We can identify $(G/N_0)(k)$ with the set of twisted Levi subgroups of~$G$
that are $G(k^\tn{sep})$-conjugate to~$M_0$.
If $M'$ is one such Levi subgroup,
then the equivalence class $z'\in H^1(k,\Aut(M_0))$
corresponding to~$M'$ is the image of $M'$ under the composite map
\[
(G/N_0)(k)\longrightarrow H^1(k,N_0)\longrightarrow H^1(k,\Aut(M_0)),
\]
where the first map is the connecting homomorphism
and the second is obtained from the map
$N_0\to\Aut(M_0)$ arising from the conjugation action of $N_0$ on $M_0$.
At the same time, the composition $W_0\to N_0\to\Aut(M_0)$
yields a map $H^1(k,W_0)\to H^1(k,\Aut(M_0))$,
and for any $w\in H^1(k,W_0)$,
the resulting twist $M_{0,w}$ of~$M_0$
is a product of Weil restrictions of general linear groups
as in the statement of the \namecref{thm81} by our choice of section $W_0 \ra N_0$.
Therefore, it suffices to prove the following claim:
The map $f\colon H^1(k,N_0)\to H^1(k,W_0)$ is a bijection.

Since $H^1(k,M_0)$ is trivial by Hilbert's Theorem~90,
the fiber of $f$ over the basepoint of $H^1(k,W_0)$ is a singleton.
To prove the same for the other fibers,
we use a twisting argument,
as in \cite[Chapter~I, Section~5.5, Corollary~2]{Serre02}.
Let $w\in H^1(k,W_0)$ with image $n\in H^1(k,N_0)$
under the section $W_0\to N_0$.
Then there is an exact sequence of sets
\[
H^1(k,M_{0,n}) \longrightarrow H^1(k,N_{0,n})
\longrightarrow H^1(k,W_{0,w}).
\]
As we observed above,
$M_{0,n}$ is a product of Weil restrictions of general linear groups,
and thus $H^1(k,M_{0,n}) = 1$ by a minor extension of Hilbert's Theorem~90,
for instance, \cite[Chapter~X, Section~1, Exercise~1]{Serre79}.
Hence $f^{-1}(w)$ is a singleton,
and varying $w$, we find that $f$ is a bijection.
This completes the proof of the first part.

Lastly, the claim about ellipticity follows since the center of $M$ contains $\GL_1^r$.
\end{proof}

\begin{remark}[No choices for $\GL_N$] \label{thm79}
When $G=\GL_N$, there is a unique choice for $\kappa$ and~$\sigma$. 

For $\sigma$, 
the root data of $\GL_N$ and its twisted Levi subgroups have no torsion primes.
Hence $\widetilde K=K$ by \Cref{thm02b}, and so $\sigma=\rho\otimes\kappa$.

For $\kappa$, we claim that $G_{n+1}(F)_{[x]}^\minus = G_{n+1}(F)_{[x]}$,
from which it follows that $K=K^\minus$.
Indeed, by \Cref{thm87}, all elliptic tame twisted Levi subgroups of~$\GL_N$,
including $G_{n+1}$, are of the form $\Res_{E/F}\GL_d$ for $E/F$
a tame extension such that $N=d\cdot[E:F]$.
Moreover, using for instance the lattice-chain model
of the Bruhat-Tits building of the general linear group
(see \cite[Remark~15.1.33]{Kaletha-Prasad-BTbook}),
we observe that $\GL_d(E)_{[x]} = E^\times\cdot\GL_d(E)_{x,0}$.
Now
\[
\frac{G_{n+1}(F)_{[x]}}{Z(G(F))\cdot G_{n+1}(F)_{x,0}}
= \frac{E^\times\cdot\GL_d(\cal O_E)}{F^\times\cdot\GL_d(\cal O_E)}
\simeq\frac{E^\times}{F^\times\cdot\cal O_E^\times}
\simeq\frac{\bbZ}{e(E/F)\bbZ}.
\]
where $e(E/F)$ is the ramification degree of~$E/F$.
Since $E/F$ is tame, if $p=2$, then $e(E/F)$ is odd.
So $K=K^\minus$.
\end{remark}

\subsection{Heisenberg $\bbF_p$-groups arising from $p$-adic groups} \label{sec:p-adicHeis}
We recall that $G$ is a reductive $F$-group, and we 
let $H$ be a twisted Levi subgroup of~$G$ that splits over a tamely ramified extension of $F$. 
Let $x\in\sB(H,F)\subseteq \sB(G,F)$, and let $\phi\colon H(F)\to\bbC^\times$
be a $(G,H)$-generic character of some positive depth~$r$,
as in \Cref{thm44}.

The following lemma is due to Yu (\cite[Proposition~11.4]{Yu}) if $p>2$ and extends to the case of $p=2$.
\begin{lemma}%
	 \label{thm33}
The group $\sfV^\natural := \dfrac{(H,G)(F)_{x,r,r/2}}%
{(H,G)(F)_{x,r,r/2+}\cap\ker(\hat\phi)}$
is a Heisenberg $\bbF_p$-group with center $\dfrac{(H,G)(F)_{x,r,r/2+}}%
{(H,G)(F)_{x,r,r/2+}\cap\ker(\hat\phi)}$. The conjugation action of $H(F)_{[x]}$ induces an action on $\sfV^\natural$ that fixes the center of $\sfV^\natural$.
\end{lemma}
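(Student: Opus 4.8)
The plan is to verify the two assertions separately. For the Heisenberg property, I would set $r' = r/2$ and study the group $Q := (H,G)(F)_{x,r,r/2}$ and its subgroup $Z := (H,G)(F)_{x,r,r/2+}$. First I would recall from Moy--Prasad theory and the conventions of \cite[Section~2.5]{Fi-Yu-works} that $Q/Z$ carries an alternating pairing induced by the commutator, valued in $\fg(F)_{x,r}/\fg(F)_{x,r+} \simeq \fg(F)^*_{x,-r}{}^\vee$, and that this pairing is nondegenerate on the quotient by the ``radical coming from $H$'', which is precisely where the genericity hypotheses (GE0) and (GE1) on $\phi$ enter: condition (GE1) guarantees that the relevant commutator pairing, after composing with the realization $X$ of $\phi|_{H_{x,r}}$, is nondegenerate on $Q/(Z \cdot \text{(the $H$-part)})$. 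This is exactly the computation Yu carries out in \cite[Proposition~11.4]{Yu}; the point here is that the argument uses only the bilinear-form structure and the nondegeneracy statement, both of which are insensitive to whether $p=2$. Having established that $\sfV^\natural = Q/(Z \cap \ker\hat\phi)$ has center of order $p$ (namely the image of $Z$, which is $\mathbb{F}_p$ because $\hat\phi$ restricted to $Z$ is a nontrivial character killing $Z \cap \ker\hat\phi$) and that $\sfV^\natural/Z(\sfV^\natural) \simeq Q/Z$ is abelian, I would invoke \Cref{thm39}: when $p \neq 2$ one additionally checks $p$-th powers vanish (true by the exponent bounds on Moy--Prasad quotients), and when $p=2$ \Cref{thm39} imposes no extra exponent condition, so there is nothing further to check. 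Thus $\sfV^\natural$ is a Heisenberg $\mathbb{F}_p$-group with the stated center.

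For the second assertion, the key observation is that $H(F)_{[x]}$ normalizes each of the groups $(H,G)(F)_{x,r}$, $(H,G)(F)_{x,r/2}$, $(H,G)(F)_{x,r/2+}$ — this follows because these Moy--Prasad-type subgroups are defined by valuation conditions at the point $x$, and $H(F)_{[x]}$ stabilizes $[x]$ and hence acts on the building fixing $x$ (using ellipticity-type considerations, or simply that conjugation by $H(F)_{[x]}$ preserves the filtration at $x$). So conjugation gives a well-defined action of $H(F)_{[x]}$ on $Q$ preserving $Z$. Next I would check that this action preserves $\ker(\hat\phi)$ inside $Q$: since $\hat\phi$ is a character of $H(F)_{[x]} \cdot G(F)_{x,r/2+}$ and $H(F)_{[x]}$ normalizes this group, the function $\hat\phi$ is invariant under conjugation by $H(F)_{[x]}$ (a character composed with an inner automorphism by a normalizing element that lies in the domain is unchanged, because $\hat\phi(hgh^{-1}) = \hat\phi(h)\hat\phi(g)\hat\phi(h)^{-1} = \hat\phi(g)$ as $\mathbb{C}^\times$ is abelian). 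Hence $H(F)_{[x]}$ preserves $Z \cap \ker(\hat\phi)$ and the action descends to $\sfV^\natural$. Finally, the action fixes the center $Z(\sfV^\natural) = Z/(Z \cap \ker\hat\phi)$ pointwise: an element $z \in Z$ satisfies $\hat\phi(hzh^{-1}z^{-1}) = 1$, i.e.\ $hzh^{-1}$ and $z$ have the same image under $\hat\phi$, but since $\hat\phi|_Z$ is injective modulo $Z \cap \ker\hat\phi$ (as $Z$ has image $\mathbb{F}_p$ in $\sfV^\natural$), this forces $hzh^{-1} \equiv z$ in $\sfV^\natural$.

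The main obstacle I expect is the first assertion, specifically the nondegeneracy of the commutator pairing on the appropriate quotient — this is where one must carefully track how the genericity conditions (GE0)/(GE1) on $\phi$ translate into a statement about the form $\omega_B$ of \Cref{thm30}, and where one has to be sure the $p=2$ case introduces no surprises. I would handle it by citing \cite[Proposition~11.4]{Yu} for $p>2$ and then checking that the proof there only uses the bilinear algebra of Moy--Prasad quotients, the genericity of $\phi$, and the structure of the commutator, all of which are uniform in $p$; the one place Yu's argument might appear to use $p \neq 2$ — a division by $2$ in relating $B$ to $\omega_B$ — is not needed here because \Cref{thm39} and \Cref{thm31} are precisely set up to allow the intrinsic characterization without that normalization. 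The second assertion, by contrast, is a formal consequence of $\mathbb{C}^\times$ being abelian and the injectivity of $\hat\phi$ on the center, and should require only the bookkeeping indicated above.
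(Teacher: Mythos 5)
Your proposal follows essentially the same route as the paper's proof: the paper likewise delegates the $p\neq2$ case to Yu's Lemma~11.3 and Proposition~11.4 (observing that their proofs go through for the weaker genericity notion), and for $p=2$ its key input is exactly yours, namely that the pairing $(a,b)\mapsto\hat\phi([a,b])$ on $\sfV\times\sfV$ is well defined and nondegenerate (this is Yu's Lemma~11.1, whose proof is uniform in $p$), so that the center of $\sfV^\natural$ is the image of $(H,G)(F)_{x,r,r/2+}$, identified with $\{\pm1\}$ via $\hat\phi$, and \Cref{thm39} applies with no exponent condition on $\sfV^\natural$ itself; your explicit verification that conjugation by $H(F)_{[x]}$ fixes $\hat\phi$, hence fixes the center pointwise, is the same reasoning the paper leaves implicit (and cites from Yu when $p\neq2$). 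One small precision: for $p=2$ the paper is careful to extend only Yu's Lemma~11.1, not Proposition~11.4, since the latter's special isomorphism onto $\sfV^\sharp_{\omega/2}$ genuinely requires $p\neq2$ (for $p=2$ the group may be of negative type); your framing of ``checking Yu's Proposition~11.4 is uniform in $p$ up to a division by $2$'' should be read in that restricted sense. One genuine flaw in a side remark: your parenthetical that for $p\neq2$ the exponent-$p$ condition on $\sfV^\natural$ holds ``by the exponent bounds on Moy--Prasad quotients'' is not a valid justification, because those bounds only control $\sfV^\natural/Z(\sfV^\natural)$, and a central extension of an exponent-$p$ group by $\bbZ/p\bbZ$ need not have exponent $p$ --- the case $p=2$, where quaternion-type factors occur, is precisely the counterexample; this step requires Yu's actual argument (which you do cite), where oddness of $p$ enters through the interplay of $p$-th powers and commutators.
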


\begin{proof}
For brevity, we write $\sfV:=\dfrac{(H,G)(F)_{x,r,r/2}}%
{(H,G)(F)_{x,r,r/2+}} $, which is an abelian group of exponent $1$ or~$p$.

When $p\neq2$, the proofs of Lemma~11.3 and Proposition~11.4 of \cite{Yu} still work as written also for our more general notion of $(G,H)$-generic characters so that $\sfV^\natural$ ($=J/N$ in Yu's notation)
is a Heisenberg $\bbF_p$-group and the action of $H(F)_{[x]}$ on $\sfV^\natural$ fixes the center of $\sfV^\natural$.

When $p=2$, %
Lemma~11.1 of \cite{Yu}
still hold with the same proof, i.e., the bi-additive pairing 
$ \sfV \times  \sfV	
\ra \{\pm 1\} \subset \bC^\times$ 
given by
$(aJ_+, bJ_+) \mapsto \hat\phi([a,b])$ where $J_+:=(H,G)(F)_{x,r,r/2+}$, is well defined and non-degenerate. Hence the center of $\sfV^\natural$ is 
$$ Z(\sfV^\natural)= \dfrac{(H,G)(F)_{x,r,r/2+}}%
{(H,G)(F)_{x,r,r/2+}\cap\ker(\hat\phi)} \simeq \{\pm 1\} ,$$
 and $\sfV^\natural/Z(\sfV^\natural)=\sfV$ is an abelian 2-group of exponent at most 2.
Hence the group $\sfV^\natural$ is a Heisenberg $\bbF_2$-group by \Cref{thm39} and the action of $H(F)_{[x]}$ on $\sfV$ fixes the center.
\end{proof}

\begin{example}[Positive- and negative-type Heisenberg $\bbF_2$-groups in $2$-adic groups]
\label{thm68}
In this example we show that both positive- and negative-type
Heisenberg $\bbF_2$-groups can arise in the construction of supercuspidal representations.

Suppose $p=2$ and $F$ has residue field $k_F$.
Let $E/F$ be a quadratic unramified extension of~$F$ and
let $\sigma$ be the nontrivial element of~$\Gal(E/F)$.
Let $G = \GL(E/F)$ be the group of linear automorphisms of the $F$ vector space $E$,
isomorphic (after choosing an ordered basis of~$E$) to~$\GL_2$.
Then $T = \Res_{E/F}\bbG_\tn{m}$ canonically embeds as
a maximal torus of~$G$ through the multiplication action of~$E^\times$ on~$E$.
Let $x$ be such that $G(F)_x = \GL(\cal O_E/\cal O_F)$.
Let $n\geq 1$ be an integer, and let $\phi\colon E^\times\to\bbC^\times$
be a $(G,T)$-generic character of depth~$2n$.
We claim that $(T,G)(F)_{x,2n,n}/((T,G)(F)_{x,2n,n+} \cap \ker(\hat\phi))$
is a negative-type Heisenberg $\bbF_2$-group.

The main problem is to describe the group $(T,G)(E)_{x,2n,n}/(T,G)(E)_{x,2n+,n+}$,
and especially the quotients of root groups, together with the action of $\Gal(E/F)$ on this group.
The root groups have the following description:
There are orthogonal idempotents $e_1\neq e_2$ in~$(E\otimes_F E)^\times$
interchanged by~$\Gal(E/F)$ such that one root subgroup of $G(E)$, call it $U_\alpha(E)$,
is represented in the ordered basis $(e_1,e_2)$
by matrices of the form $u(a) = \smat1a01$ with $a\in E$,
while the other, call it $U_{-\alpha}(E)$,
is represented in this ordered basis
by the matrices of the form $v(b) = \smat10b1$ with $b \in E$.
It follows that $\Gal(E/F)$ acts on $U_\alpha(E)_{x,n}/U_\alpha(E)_{x,n+}\oplus U_{-\alpha}(E)_{x,n}/U_{-\alpha}(E)_{x,n+}$ by
\[
\sigma(\bar u(a) + \bar v(b))
= \bar v(\sigma a) + \bar u(\sigma a),
\quad (\val(a), \val(b) \geq n),
\]
where $\bar u$ and $\bar v$ denote the images of $u$ and $v$ in the above quotient spaces.
Using the commutator relation for opposite root groups,
we see that %
\[
Q(\bar u(a) + \bar v(b))
\defeq \bigl(u(a)v(b)\bigr)^2 \bmod{(T,G)(E)_{x,2n+,n+}}
\equiv \alpha^\vee(1 + ab) \bmod{(T,G)(E)_{x,2n+,n+}}. %
\]
In other words, after identifying $U_\alpha(E)_{x,n}/U_\alpha(E)_{x,n+}\oplus U_{-\alpha}(E)_{x,n}/U_{-\alpha}(E)_{x,n+}$ with $k_E\oplus k_E$ and $T(E)_{x,2n}/T(E)_{x,2n+}$ with $k_E$,
the quadratic form $Q$ becomes the split form $Q(a,b) = ab$.
On the subspace of $\Gal(E/F)$-invariants in $k_E\oplus k_E$, %
which we can identify with $k_E$ by matching $a\in k_E$ with $(a,\sigma a)$,
the quadratic form restricts to the norm form $Q(a) = a\cdot\sigma a$.

We claim that the nondegenerate quadratic form $Q'\defeq \hat\phi\circ Q\colon k_E\to\{\pm1\}$ is non-split. 
Note that the vanishing set of $Q'$
has size less than half of~$k_E$ as follows by direct computation:
\[
|Q'^{-1}(1)| = (\tfrac12 q-1)(q+1) + 1
= \tfrac12 q^2 - \tfrac12 q < \tfrac12|k_E| .
\]
Hence $Q'$ cannot be split, and therefore 
$(T,G)(F)_{x,2n,n}/\bigl((T,G)(F)_{x,2n,n+} \cap \ker(\hat\phi)\bigr)$ is of negative type.

At the same time, since the central product $Q_8\circ Q_8$
is a positive-type Heisenberg $\bbF_2$-group (see \Cref{thm29}),
doubling the previous example (that is, replacing $G$ by~$G\times G$,
$T$ by $T\times T$, $\phi$ by $\phi \otimes \phi$, and so on) yields
an example where $(T,G)(F)_{x,2n,n}/((T,G)(F)_{x,2n,n+} \cap \ker(\hat\phi))$ is of positive type.
So both possibilities can arise.
\end{example}

\subsection{Weil representations of open, compact-mod-center subgroups} \label{sec:aux}
Let $\Upsilon$ be a supercuspidal $G$-datum.
In this subsection we first construct various auxiliary subgroups and vector spaces from~$\Upsilon$
which are needed both for the Heisenberg--Weil extension step
and for the proof of supercuspidality, and then we prove the remaining claims used in the construction of smooth representations from $\Upsilon$ outlined in Section \ref{sec:overview}. In Section \ref{sec:proof} we will then prove that these representations are supercuspidal.

Let $(\sG_{n+1})_{x,0}$ be the connected parahoric integral model of $G$ at~$x$ and let $\sfG_{n+1}$\index[notation]{Gn+1@$\sfG_{n+1}$} be the reductive quotient of its special fiber. Hence $\sfG_{n+1}$ is a reductive $k_F$-group with
$\sfG_{n+1}(k_F) = (G_{n+1})_{x,0}/(G_{n+1})_{x,0+}$.
By \Cref{thm33}, the group\index[notation]{Vin@$\sfV_i^\natural$}
\[
\sfV_i^\natural \defeq \frac{(G_{i+1},G_i)(F)_{x,r_i,r_i/2}}%
{(G_{i+1},G_i)(F)_{x,r_i,r_i/2+}\cap\ker(\hat\phi_i)}
\]
is a Heisenberg $\bbF_p$-group.

For any tamely ramified finite field extension $F'/F$, we define the following group and its $\bF_p$-vector space quotient
\[
\widetilde \sfV_{i,F'}^\natural \defeq \frac{(G_{i+1},G_i)(F')_{x,r_i,r_i/2}}%
{(G_{i+1},G_i)(F')_{x,r_i+,r_i/2+}}
\qquad \text{and} \qquad
\sfV_{i,F'} \defeq \frac{(G_{i+1},G_i)(F')_{x,r_i,r_i/2}}%
{(G_{i+1},G_i)(F')_{x,r_i,r_i/2+}} . 
\]

We may drop the subscript $F'$ if $F'=F$, that is, write $\sfV_i:=\sfV_{i,F}$\index[notation]{Vi@$\sfV_i$} and $\widetilde \sfV_i^\natural:=\widetilde \sfV_{i,F}^\natural$.

Note that $\sfV_i^\natural$ is an intermediate quotient between $\widetilde \sfV_i^\natural$ and $\sfV_i$. While we are eventually interested in the group $\sfV_i^\natural$, we will take advantage of the groups $\widetilde \sfV_{i,F'}^\natural$ that allow us to deduce results over $F$ by proving the analogous results after base change.

The remaining objects that we like to introduce depend on two additional choices: $T$ and~$\lambda$.
Let $T$ be a maximally split, tame maximal torus of $G_{n+1}$ with splitting field $E/F$ such that $x$ is in the apartment $\sA(T,F)$ of $T$.
Let $\lambda\in X_*(T)^{\Gal(E/F)}\otimes\bbR$. The following discussion will later be applied to several choices of $\lambda$, but we do not record $\lambda$ in the notation as this should be clear from the context.

Let $S$ be the maximal split subtorus of~$T$,
let $\sS$ be an integral model of~$S$ that is
a split maximal torus of $(\sG_{n+1})_{x,0}$,
and let $\sfS = \sS_{k_F}$,
a maximal split torus in $\sfG_{n+1}$.
Then 
\[
\lambda\in X_*(S)\otimes\bbR \simeq X_*(\sfS)\otimes\bbR.
\]
Let $\sfP$ be the parabolic subgroup of $\sfG_{n+1}$
containing~$\sfS$ such that%
\[
\Phi(\sfP,\sfS) = \{\alpha \in \Phi(\sfG_{n+1},\sfS) \mid \lambda(\alpha) \geq 0\}
\]
and let $\sfU$ be the unipotent radical of $\sfP$.
Define $\sfP_E$ and $\sfU_E$ analogously. 
Since $\lambda$ is $\Gal(E/F)$-stable,
there is a (parabolic) subgroup $P$ of~$G_{n+1}$ containing~$T$ such that
\[
\Phi(P_E,T_E) = \{\alpha\in\Phi(G_{n+1},T) \mid \lambda(\alpha) \geq 0\}.
\]
Let $U$ be the unipotent radical of~$P$ and write
\[
U(F)_{x,0} \defeq U(F)\cap G(F)_{x,0},
\qquad
P(F)_{x,0} \defeq P(F)\cap G(F)_{x,0}.
\]

We will now use $\lambda$
to also define a partial polarization of each~$\sfV_i$.

For the isotropic subspaces,
define the unipotent subgroups $U_{i,E}^\pm$ of $G_{i,E}$ by
\begin{align*}
U_{i,E}^+ &\defeq \bigl\langle U_{\alpha,E} \mid
\alpha \in \Phi(G_i,T)\smallsetminus\Phi(G_{i+1},T),\;
\lambda(\alpha) > 0\bigr\rangle \\
U_{i,E}^- &\defeq \bigl\langle U_{\alpha,E} \mid
\alpha \in \Phi(G_i,T)\smallsetminus\Phi(G_{i+1},T),\;
\lambda(\alpha) < 0\bigr\rangle.
\end{align*}
Given $\tilde r\in\widetilde\bbR$,
let $U^\pm_{i,E}(E)_{x,\tilde r}$ be the compact subgroup generated by the groups
$U_\alpha(E)_{x,\tilde r}$ with $\alpha\in\Phi(U_{i,E}^\pm,T_E)$.
Since $\Phi(U_{i,E}^\pm,T_E)$ is $\Gal(E/F)$-stable,
the group $U^\pm_{i,E}$ descends to a unipotent $F$-group $U_i$ contained in~$G_i$.
We define $U^\pm_i(F)_{x,\tilde r} \defeq G(F)\cap U^\pm_{i,E}(E)_{x,\tilde r}$\index[notation]{UiplusF@$U_i^+(F)_{x,r}$}
for $\tilde r\in\widetilde\bbR$.
Let
\[
\sfV_{i,E}^\pm \defeq
\frac{U^\pm_{i,E}(E)_{x,r_i/2}}{U^\pm_{i,E}(E)_{x,r_i/2+}}, \qquad
\sfV_i^\pm \defeq \frac{U^\pm_i(F)_{x,r_i/2}}{U^\pm_i(F)_{x,r_i/2+}}
= (\sfV_{i,E}^\pm)^{\Gal(E/F)},
\]
where the last equality follows from the same arguments used to prove \cite[Corollary~2.3]{Yu}.
Via the inclusions of $U^\pm_{i,E}(E)_{x,r_i/2}$ and $U^\pm_{i,E}(F)_{x,r_i/2}$ into the appropriate subgroups of $G(E)$ and $G(F)$, we can identify $\sfV_{i,E}^\pm$ 
with a subgroup of $\widetilde \sfV_{i,E}^\natural$,
and $\sfV_i^\pm$ with subgroups of $\widetilde \sfV_i^\natural$, of $\sfV_i^\natural$, and of $V_i$.

For the nondegenerate part of the partial polarization,
write $H_i = Z_{G_i}(\lambda)$ and $H_{i+1} = Z_{G_{i+1}}(\lambda)$
and define the following subgroups of
$\widetilde \sfV_i^\natural$ and $\widetilde \sfV_{i,E}^\natural$:
\[
\widetilde \sfV_{i,0}^\natural \defeq
\frac{(H_{i+1},H_i)(F)_{x,r_i,r_i/2}}%
{(H_{i+1},H_i)(F)_{x,r_i+,r_i/2+}}
\subseteq \widetilde \sfV_i^\natural \qquad\tn{and}\qquad
\widetilde \sfV_{i,0,E}^\natural \defeq
\frac{(H_{i+1},H_i)(E)_{x,r_i,r_i/2}}%
{(H_{i+1},H_i)(E)_{x,r_i+,r_i/2+}}
\subseteq \widetilde \sfV_{i,E}^\natural.
\]
By \cite[Proposition~2.2]{Yu},
\[
\widetilde \sfV_i^\natural = (\widetilde \sfV_{i,E}^\natural)^{\Gal(E/F)},\qquad
\widetilde \sfV_{i,0}^\natural = (\widetilde \sfV_{i,0,E}^\natural)^{\Gal(E/F)}.
\]
We also define the following Heisenberg $\bF_p$-group with its quotient $\bF_p$-vector space
\[
\sfV_{i,0}^\natural \defeq
\frac{(H_{i+1},H_i)(F)_{x,r_i,r_i/2}}%
{(H_{i+1},H_i)(F)_{x,r_i,r_i/2+}\cap\ker(\hat\phi_i)}
\twoheadleftarrow \widetilde\sfV_{i,0}^\natural
\qquad
\text{and}
\qquad
\sfV_{i,0} \defeq
\frac{(H_{i+1},H_i)(F)_{x,r_i,r_i/2}}%
{(H_{i+1},H_i)(F)_{x,r_i,r_i/2+}} .
\]

Note that using the notation from Section \ref{sec:Heisenberg}, we have $\sfV_i=\sfV_i^\natural/Z(\sfV_i^\natural)=\sfV_{\sfV_i^\natural}$.
\begin{lemma}
	$\sfV_i = \sfV_i^+ \oplus \sfV_{i,0} \oplus \sfV_i^-$ is a partial polarization in the sense of Section \ref{sec:Heisenberg}.
\end{lemma}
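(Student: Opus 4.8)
The plan is to verify in turn the three defining properties of a partial polarization (\Cref{sec:Heisenberg}, cf.~\Cref{sec:forms}): that the sum is direct, that $\sfV_i^+$ and $\sfV_i^-$ are isotropic, and that $\sfV_{i,0}$ is a nondegenerate subspace orthogonal to $\sfV_i^+\oplus\sfV_i^-$; once these hold, the perfect pairing between $\sfV_i^+$ and $\sfV_i^-$ will come for free from nondegeneracy of the ambient form. Throughout I write $\omega\defeq\omega_{\sfV_i^\natural}$ for the symplectic form on $\sfV_i=\sfV_{\sfV_i^\natural}$ and, when $p=2$, $Q\defeq Q_{\sfV_i^\natural}$ for the quadratic form of \eqref{thm41}, and I recall from the proof of \Cref{thm33} that, under the identification of $Z(\sfV_i^\natural)$ with $\bF_p$ furnished by $\hat\phi_i$, one has $\omega(\bar x,\bar y)=\hat\phi_i([x,y])$ and $Q(\bar x)=\hat\phi_i(x^2)$ for lifts $x,y\in(G_{i+1},G_i)(F)_{x,r_i,r_i/2}$.

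For the direct sum I would pass to $E$, where $G_i$ splits over $T$ and the root system is reduced, so that $\sfV_{i,E}$ is canonically $\bigoplus_{\alpha\in\Phi(G_i,T)\smallsetminus\Phi(G_{i+1},T)}U_\alpha(E)_{x,r_i/2}/U_\alpha(E)_{x,r_i/2+}$; since $\Phi(H_i,T)\smallsetminus\Phi(H_{i+1},T)=\{\alpha\in\Phi(G_i,T)\smallsetminus\Phi(G_{i+1},T):\lambda(\alpha)=0\}$, the spaces $\sfV_{i,E}^+$, the image of $\widetilde\sfV_{i,0,E}^\natural$ in $\sfV_{i,E}$, and $\sfV_{i,E}^-$ are exactly the partial sums over $\lambda(\alpha)>0$, $\lambda(\alpha)=0$, $\lambda(\alpha)<0$, and the decomposition over $E$ is immediate. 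As $\lambda$ is $\Gal(E/F)$-fixed this decomposition is $\Gal(E/F)$-stable, and taking invariants --- using $\sfV_i=(\sfV_{i,E})^{\Gal(E/F)}$ and $\sfV_i^\pm=(\sfV_{i,E}^\pm)^{\Gal(E/F)}$ (by the same cohomological argument) together with $\widetilde\sfV_{i,0}^\natural=(\widetilde\sfV_{i,0,E}^\natural)^{\Gal(E/F)}$, whence $\sfV_{i,0}$ is the invariants of its $E$-analogue --- yields $\sfV_i=\sfV_i^+\oplus\sfV_{i,0}\oplus\sfV_i^-$.

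The crux is the claim that $\hat\phi_i$ is trivial on $U_\alpha(F)_{x,r_i}$ for every $\alpha\in\Phi(G_i,T)$. When $\alpha\in\Phi(G_{i+1},T)$ this holds because $\hat\phi_i=\phi_i$ on $G_{i+1}(F)_{[x]}$ and, by \Cref{thm44}, $\phi_i|_{G_{i+1}(F)_{x,r_i}}$ is realized by an element $X\in\Lie^*(G_{i+1})^{G_{i+1}}(F)$, which, being fixed by a maximal torus, lies in the zero-weight subspace of $\Lie^*(G_{i+1})$ and hence annihilates every root subspace; when $\alpha$ is off-diagonal it holds because $U_\alpha(F)_{x,r_i}\subseteq U_\alpha(F)_{x,r_i/2+}\subseteq(G_{i+1},G)(F)_{x,r_i+,r_i/2+}\subseteq\ker(\hat\phi_i)$, using $r_i>r_i/2$. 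Granting this, take $x,y$ to be lifts of elements of $\sfV_i^+$, or of $\sfV_i^-$, or one of each of $\sfV_i^\pm$ and $\sfV_{i,0}$, chosen as products of off-diagonal root group elements $u_\alpha(a)\in U_\alpha(F)_{x,r_i/2}$ with $\lambda(\alpha)$ of the prescribed sign(s) (a lift of an element of $\sfV_{i,0}$ may be taken with trivial $H_{i+1}(F)_{x,r_i}$-part). Expanding $[x,y]$ via the commutator relations gives a product of elements of $U_{m\alpha+n\beta}(F)_{x,\geq r_i}$ with $m,n\geq1$, where in each of the three cases the $\lambda$-values force $\alpha+\beta\neq0$, so no torus term appears and the claim kills every factor; hence $\omega(\bar x,\bar y)=\hat\phi_i([x,y])=1$, which shows at once that $\sfV_i^\pm$ are isotropic for $\omega$ and that $\sfV_{i,0}\perp\sfV_i^+$, $\sfV_{i,0}\perp\sfV_i^-$. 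When $p=2$, expanding $x^2$ for a lift $x=\prod_k u_{\alpha_k}(a_k)$ (computed in $G(E)$, where the root subgroups are one-dimensional) produces, besides such commutators, the squares $u_{\alpha_k}(a_k)^2=u_{\alpha_k}(2a_k)\in U_{\alpha_k}(F)_{x,r_i/2+}\subseteq\ker(\hat\phi_i)$ since $\val(2)>0$, so $Q$ vanishes on $\sfV_i^\pm$; and writing a lift of $v_++v_0+v_-$ as $x_+x_0x_-$ and using that $\sfV_i^\natural$ is $2$-step nilpotent together with the orthogonality just proved, one obtains $Q(v_++v_0+v_-)=Q(v_+)+Q(v_0)+Q(v_-)+\omega(v_+,v_-)=Q(v_0)+\omega(v_+,v_-)$, so the decomposition is orthogonal for $Q$ as well.

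It remains to show $\sfV_{i,0}$ is nondegenerate and then that the pairing $\sfV_i^+\times\sfV_i^-\to\bF_p$ is perfect. For the first point I would note that $\phi_i|_{H_{i+1}(F)}$ is $(H_i,H_{i+1})$-generic relative to $x$ of depth $r_i$ --- (GE0) is clear, and (GE1) holds because $\Phi(H_i,T)\smallsetminus\Phi(H_{i+1},T)\subseteq\Phi(G_i,T)\smallsetminus\Phi(G_{i+1},T)$ and the realizing functional is the restriction of $X$, whose values on the coroots $H_\alpha$ are unchanged, while $x\in\sA(T,F)\subseteq\sB(H_{i+1},F)$ --- so \Cref{thm33} applied to the twisted Levi $H_{i+1}\subseteq H_i$ shows $\sfV_{i,0}^\natural$ is a Heisenberg $\bF_p$-group and hence $\sfV_{i,0}$ is nondegenerate, for the form obtained by restricting $\omega$ (resp.~$Q$), by construction of $\sfV_{i,0}^\natural$. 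For the second point, $\omega$ (resp.~the polar form of $Q$) is nondegenerate on $\sfV_i$ by \Cref{thm33}, and in the decomposition $\sfV_i^+\oplus\sfV_{i,0}\oplus\sfV_i^-$ its Gram matrix is antidiagonal in the two outer blocks with nondegenerate middle block, forcing the $\sfV_i^+$--$\sfV_i^-$ block to be nondegenerate too; so the pairing is perfect, $\sfV_i^+\oplus\sfV_i^-$ is a nondegenerate (hyperbolic) subspace, and $\sfV_i=\sfV_i^+\oplus\sfV_{i,0}\oplus\sfV_i^-$ is a partial polarization. I expect the main obstacle to be the claim of the third paragraph that $\hat\phi_i$ annihilates all depth-$r_i$ root subgroups, together with the bookkeeping that the commutators and squares above never acquire a torus component --- exactly where one must use that $\phi_i$ is realized by a $G_{i+1}$-invariant, hence $T$-weight-zero, functional, and that in each relevant case no pair of contributing roots is a pair of opposites.
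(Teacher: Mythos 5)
Your proposal is correct in substance, and for the nondegeneracy of $\sfV_{i,0}$ it coincides with the paper: both transfer genericity to $\phi_i|_{H_{i+1}(F)}$ via the root-set inclusion and the fact that the realizing functional restricts with unchanged depth, and then invoke \Cref{thm33} for $H_{i+1}\subseteq H_i$. Where you diverge is isotropy and orthogonality. You evaluate the forms directly, writing $\omega(\bar x,\bar y)=\hat\phi_i([x,y])$ and $Q(\bar x)=\hat\phi_i(x^2)$ and expanding via Chevalley commutator relations; the essential input is your ``crux claim'' that the generic functional, being $G_{i+1}$-invariant and hence of $T$-weight zero, kills the depth-$r_i$ root-group images inside $G_{i+1}$ --- and you are right that this is genuinely needed, since a commutator of two off-$G_{i+1}$ root elements of depth $r_i/2$ can land in a $G_{i+1}$-root group at depth exactly $r_i$, which is not absorbed by $(G_{i+1},G)(F)_{x,r_i+,r_i/2+}\subseteq\ker(\hat\phi_i)$. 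The paper argues more structurally: $\sfV_i^\pm$ sit inside $\sfV_i^\natural$ as splittings (subgroups meeting the center trivially), which immediately gives isotropy, and orthogonality to $\sfV_{i,0}$ follows because $\widetilde\sfV_{i,0}^\natural$ normalizes these splittings (checked over $E$ by the same root-group commutator relations), so any commutator lies both in the splitting and in the center and is therefore trivial. The paper's route thus never evaluates $\hat\phi_i$ on anything, at the cost of leaning on the subgroup identifications set up earlier in the section; your route makes the character computation explicit (closer to Yu's original style), and in addition verifies the directness of the sum, which the paper treats as implicit, and the perfectness of the $\sfV_i^+$--$\sfV_i^-$ pairing, which the definition of a partial polarization does not actually require.

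One caveat to tighten: you repeatedly factor lifts into absolute-root elements $u_\alpha(a)$ and apply $\hat\phi_i$ factor by factor, but those factors are only $E$-rational while $\hat\phi_i$ is a character of $F$-points (likewise ``$U_\alpha(F)_{x,r_i}$'' for an absolute root is not literally meaningful). This is repairable in the standard way --- either by Galois descent for the concave-function subgroups generated by the relevant root groups, as the paper does when it checks normalization over $E$, or by noting that the whole commutator or square lies in the $F$-points of a group on which $\hat\phi_i$ is computed through the Moy--Prasad isomorphism and $\Lambda\circ X$, so that the vanishing of $X$ on all nonzero $T$-weight spaces finishes the argument --- but as written these steps are a sketch rather than a proof.
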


\begin{proof}
It suffices to show that $\sfV_{i,0}$ is a nondegenerate subspace
and that the subspaces $\sfV_i^+$ and $\sfV_i^-$ are isotropic and orthogonal to~$\sfV_{i,0}$.
The subspace $\sfV_{i,0}$ is nondegenerate by \Cref{thm33} applied to
$H_{i+1}\subseteq H_i$ and the character
$\phi_i|_{H_{i+1}}$, which is $(H_i,H_{i+1})$-generic of depth~$r_i$ because
\[
\Phi(H_i,T)\setminus\Phi(H_{i+1},T)
\subseteq \Phi(G_i,T)\setminus\Phi(G_{i+1},T),
\]
and because $\phi_i$ can be represented by an element in $\fg_i(F)^*$ of depth $-r_i$ that is trivial on the sum $\ft^\perp$ of the root subspaces of $\fg_i(F)$ with respect to $T$ hence its restriction to $\fh_i$ has also depth $-r_i$.
The subspaces $\sfV_i^+$ and $\sfV_i^-$
are isotropic because they embed as abelian subgroups of $\sfV_i^\natural$.
To see that $\sfV_i^+$ and $\sfV_i^-$ are orthogonal to $\sfV_{i,0}$,
it is enough to show that they are normalized by $\sfV_{i,0}^\natural$,
or equivalently, by $\widetilde\sfV_{i,0}^\natural$,
and this can be checked over~$E$ by Galois descent.
Using the commutator relations for root groups, see, e.g., \cite[Section~6]{Yu},
we see that for $\alpha\in\Phi(H_i,T) \smallsetminus \Phi(H_{i+1},T), \beta \in \Phi(H_{i+1},T)$,
the images of the root groups $U_\alpha(E)_{x,r_i/2}$ and $U_\beta(E)_{x,r_i}$ and of $T(E)_{x,r_i}$ normalize the groups $\sfV_{i,E}^\pm$.
Hence $\widetilde\sfV_{i,0,E}^\natural$ normalizes $\sfV_{i,E}^\pm$.
\end{proof}

Recall that we also view $\sfV_i^+$ as a subgroup of $\widetilde \sfV_i^\natural$.

\begin{lemma} \label{thm16}
	The action of $G_{n+1}(F)_{[x]}$ on $\widetilde \sfV_i^\natural$ induced by conjugation satisfies the following properties.
\begin{lemmaenum}
\item \label{thm16a}
	We have $g(\sfV_i^+) = \sfV_i^+$ and $g(\sfV_i^+ \times \widetilde \sfV_{i,0}^\natural) = \sfV_i^+ \times \widetilde \sfV_{i,0}^\natural$ for all $g \in \sfP(k_F)$.
\item \label{thm16b}
	We have $g(x)\cdot x^{-1}\in \sfV_i^+$ for all $g\in\sfU(k_F)$ and all
$x\in \sfV_i^+ \times \widetilde \sfV_{i,0}^\natural$.
\end{lemmaenum}
\end{lemma}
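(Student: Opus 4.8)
I would prove both statements after extending scalars to the tame splitting field $E$ of $T$, where $G_{n+1}$ becomes split and everything is governed by root subgroups. Since $\lambda\in X_*(T)^{\Gal(E/F)}\otimes\bbR$, the parabolic $P\subseteq G_{n+1}$ and its unipotent radical $U$ are already defined over $F$; an element $g\in\sfP(k_F)$ (resp.\ $g\in\sfU(k_F)$) lifts to $P(F)_{x,0}$ (resp.\ $U(F)_{x,0}$), and any two lifts act identically on $\widetilde\sfV_i^\natural$ because $[G_{n+1}(F)_{x,0+},(G_{i+1},G_i)(F)_{x,r_i,r_i/2}]$ lies in the denominator $(G_{i+1},G_i)(F)_{x,r_i+,r_i/2+}$ of $\widetilde\sfV_i^\natural$. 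Base changing to $E$ and using that such a lift is $\Gal(E/F)$-fixed, so that its conjugation action is $\Gal(E/F)$-equivariant and hence preserves $\Gal(E/F)$-fixed subspaces, it suffices by the descent identities $\widetilde\sfV_i^\natural=(\widetilde\sfV_{i,E}^\natural)^{\Gal(E/F)}$, $\sfV_i^\pm=(\sfV_{i,E}^\pm)^{\Gal(E/F)}$, $\widetilde\sfV_{i,0}^\natural=(\widetilde\sfV_{i,0,E}^\natural)^{\Gal(E/F)}$ to prove the analogues over $E$ for conjugation by $P(E)_{x,0}$ and $U(E)_{x,0}$. Over $E$ these groups are generated by $T(E)_{x,0}$ and by the root subgroups $U_\alpha(E)_{x,0}$ with $\alpha\in\Phi(G_{n+1},T)\subseteq\Phi(G_{i+1},T)$ and $\lambda(\alpha)\geq 0$ (resp.\ $\lambda(\alpha)>0$), so it is enough to conjugate by such generators.

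\textbf{Part \Cref{thm16a}.} The torus $T(E)_{x,0}$ normalizes every root subgroup and every Moy--Prasad subgroup of $T$, hence preserves the preimages $U_i^+(E)_{x,r_i/2}$, $U_i^+(E)_{x,r_i/2+}$ and $(H_{i+1},H_i)(E)_{x,r_i,r_i/2}$, and therefore $\sfV_{i,E}^+$ and $\sfV_{i,E}^+\times\widetilde\sfV_{i,0,E}^\natural$. For a root subgroup $U_\alpha(E)_{x,0}$ with $\alpha\in\Phi(G_{i+1},T)$ and $\lambda(\alpha)\geq 0$ I would apply the Chevalley commutator formula together with the Moy--Prasad depth estimate $[U_\alpha(E)_{x,a},U_\beta(E)_{x,b}]\subseteq\prod_{m,n\geq1}U_{m\alpha+n\beta}(E)_{x,ma+nb}$ to each generating root subgroup of the two preimages above (namely $U_\beta(E)_{x,r_i/2}$ with $\beta\in\Phi(G_i,T)\setminus\Phi(G_{i+1},T)$ and $\lambda(\beta)>0$, and the torus-, $\Phi(G_{i+1},T)$-root-, and $\Phi(G_i,T)\setminus\Phi(G_{i+1},T)$-root-generators, at depth $r_i$ resp.\ $r_i$ resp.\ $r_i/2$, of $(H_{i+1},H_i)(E)_{x,r_i,r_i/2}$). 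Each commutator term $U_{m\alpha+n\beta}(E)_{x,ma+nb}$ has $\lambda$-value $m\lambda(\alpha)+n\lambda(\beta)$, which stays $\geq 0$ and is $>0$ exactly when the original generator had $\lambda>0$; and since $\Phi(G_{i+1},T)$ is a $\bbQ$-closed subsystem of $\Phi(G_i,T)$ and $\alpha\in\Phi(G_{i+1},T)$, the exponent $m\alpha+n\beta$ lies outside $\Phi(G_{i+1},T)$ whenever $\beta$ does. Checking the depths $ma+nb$ (bounded below by $r_i/2$, resp.\ $r_i$, in each case), one verifies that every term again lies in the preimage of $\sfV_{i,E}^+$ or of $\sfV_{i,E}^+\times\widetilde\sfV_{i,0,E}^\natural$; this gives \Cref{thm16a}.

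\textbf{Part \Cref{thm16b}.} Now the conjugating root subgroups $U_\alpha(E)_{x,0}$ all have $\lambda(\alpha)>0$, while a preimage of $y\in\sfV_{i,E}^+\times\widetilde\sfV_{i,0,E}^\natural$ in $(G_{i+1},G_i)(E)_{x,r_i,r_i/2}$ is assembled from root subgroups with $\lambda\geq 0$ and from depth-$r_i$ torus and $\Phi(G_{i+1},T)$-root directions. Running the same commutator analysis, every term occurring in $g(y)\cdot y^{-1}$ then has $\lambda$-value strictly positive, and I would conclude that modulo the denominator of $\widetilde\sfV_i^\natural$ this product lies in the strictly $\lambda$-positive part of $\widetilde\sfV_i^\natural$, which is exactly $\sfV_{i,E}^+$; taking $\Gal(E/F)$-fixed points yields \Cref{thm16b}. (Concretely, \Cref{thm16a}--\Cref{thm16b} for $\sfU$ are precisely what is needed to place $\sfU(k_F)$ inside the subgroup $\cal P(\sfV_i^+)$ of \Cref{thm20}, so that the $R$-linearizability criterion \Cref{thm14} applies to the Heisenberg group $\sfV_i^\natural$ in \Cref{sec:aux}.)

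\textbf{Main obstacle.} The genuine work is the simultaneous bookkeeping in the last two paragraphs: one must track three data on every root subgroup that appears---its $\lambda$-value, its Moy--Prasad depth (with the two scales $r_i$ and $r_i/2$ interacting through the commutator formula), and whether its root lies in $\Phi(G_{i+1},T)$ or its complement---and in particular treat with care the ``central'' directions coming from $T(E)_{x,r_i}$ and from root subgroups $U_\gamma$ with $\gamma\in\Phi(G_{i+1},T)$, where one has to use the precise definition of $(G_{i+1},G_i)(E)_{x,r_i,r_i/2}$ together with the genericity of $\phi_i$ (so that $\hat\phi_i$ is trivial on the relevant root directions) to see that the potentially troublesome terms either become trivial in $\widetilde\sfV_i^\natural$ or already lie in $\sfV_{i,E}^+$.
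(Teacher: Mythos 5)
Your overall strategy is the same as the paper's: pass to the splitting field $E$, reduce both claims to a Chevalley-commutator analysis of root groups in which one tracks the $\lambda$-values of the roots that can appear, and then descend using $\sfV_i^\pm=(\sfV_{i,E}^\pm)^{\Gal(E/F)}$ and $\widetilde\sfV_{i,0}^\natural=(\widetilde\sfV_{i,0,E}^\natural)^{\Gal(E/F)}$; the paper's proof consists exactly of the three inequalities for $\lambda$ of a root $m\alpha+n\beta$ with $\alpha\in\Phi(G_{n+1},T)$ and $\beta\in\Phi(G_i,T)\smallsetminus\Phi(G_{i+1},T)$, followed by Galois descent.

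The difficulty lies precisely in the step you flag as the main obstacle, and your proposed way out does not work as written. Conjugating the depth-$r_i$ generators of $\widetilde\sfV_{i,0,E}^\natural$ (the images of $T(E)_{r_i}$ and of $U_\gamma(E)_{x,r_i}$ with $\gamma\in\Phi(H_{i+1},T)$) by $u_\alpha(c)\in U_\alpha(E)_{x,0}$ with $\alpha\in\Phi(G_{n+1},T)$ and $\lambda(\alpha)>0$ produces correction terms in $\Phi(G_{i+1},T)$-root directions with $\lambda>0$ at depth $r_i$; for example $[u_\alpha(c),t]=u_\alpha\bigl(c(1-\alpha(t))\bigr)\in U_\alpha(E)_{x,r_i}$ for $t\in T(E)_{r_i}$. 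You assert that such terms either become trivial in $\widetilde\sfV_i^\natural$ by genericity of $\phi_i$ or already lie in $\sfV_{i,E}^+$. Neither holds: the quotient $\widetilde\sfV_i^\natural$ has denominator $(G_{i+1},G_i)(F)_{x,r_i+,r_i/2+}$, which makes no reference to $\hat\phi_i$ and contains $U_\alpha(E)_{x,r_i+}$ but not $U_\alpha(E)_{x,r_i}$, so these classes are nonzero there; and $\sfV_{i,E}^+$ is spanned by root directions in $\Phi(G_i,T)\smallsetminus\Phi(G_{i+1},T)$ at depth $r_i/2$, so it cannot contain them. The genericity mechanism you have in mind (the representing element $X$ is $G_{i+1}$-invariant, hence vanishes on root spaces, hence $\hat\phi_i$ is trivial on $\Phi(G_{i+1},T)$-root directions at depth $r_i$) only becomes usable after passing to the smaller quotient $\sfV_i^\natural$, whose denominator contains $(G_{i+1},G_i)(F)_{x,r_i,r_i/2+}\cap\ker(\hat\phi_i)$ --- and it is the $\sfV_i^\natural$-form of these statements (membership of $A_p$ in $\cP(\sfV_i^+)$ in the sense of \Cref{thm20}, and triviality of the $\sfU(k_F)$-action on $\sfV_{i,0}^\natural\sfV_i^+/\sfV_i^+$) that is actually invoked in \Cref{thm12} and \Cref{thm02}. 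Note also that the paper's own proof never uses genericity: it confines the commutator bookkeeping to the roots $\beta\in\Phi(G_i,T)\smallsetminus\Phi(G_{i+1},T)$ and is silent about the depth-$r_i$ generators. So to close your argument you should track where these correction terms go --- they land in the central piece $(G_{i+1},G_i)(F)_{x,r_i,r_i/2+}/(G_{i+1},G_i)(F)_{x,r_i+,r_i/2+}$ of $\widetilde\sfV_i^\natural$ --- and then dispose of them in $\sfV_i^\natural$ via $\hat\phi_i$ and genericity as you intended, rather than claiming they vanish in $\widetilde\sfV_i^\natural$ itself.
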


\begin{proof}
We first analyze the situation over~$E$, then pass to~$F$.
Since $T$ is split over~$E$,
these three claims reduce to a commutator calculation with root groups,
and follow from the following observations.
Let $\alpha\in\Phi(G_{n+1},T)$ with $\lambda(\alpha)\geq 0$ (a root of $\sfP_E$),
let $\beta\in\Phi(G_i,T)\smallsetminus\Phi(G_{i+1},T)$ with $\lambda(\beta)\geq0$
(a potential ``root'' of $\sfV_i^+\times\widetilde\sfV_{i,0}^\natural$),
and suppose $i\alpha+j\beta\in\Phi(G_i,T)$ with $i,j>0$
(a root whose root group might appear in the commutator of the previous two root groups).
The following three claims are proved by the subsequent observations about roots:
\begin{itemize}
\item
$\sfP_E(k_E)$ preserves $\sfV_{i,E}^+$:
If $\lambda(\beta) > 0$, then $\lambda(i\alpha + j\beta) > 0$.

\item
$\sfP_E(k_E)$ preserves $\sfV_{i,E}^+ \times \widetilde \sfV_{i,0,E}^\natural$:
If $\lambda(\beta) \geq 0$, then $\lambda(i\alpha + j\beta) \geq 0$.

\item
$g(x)\cdot x^{-1}\in \sfV_{i,E}^+$ for all  $g\in\sfU_E(k_E)$ and all
$x\in \sfV_{i,E}^+ \times \widetilde \sfV_{i,0,E}^\natural$:
If $\lambda(\alpha) > 0$ and $\lambda(\beta) \geq 0$, then $\lambda(i\alpha + j\beta) > 0$.
\end{itemize}
Now the result over $F$ follows from Galois descent,
using 
$\sfV_i^+=(\sfV_{i,E}^+)^{\Gal(E/F)}$, 
$\widetilde \sfV_{i,0}^\natural=(\widetilde \sfV_{i,0,E}^\natural)^{\Gal(E/F)}$,
$\sfU(k_F)\subseteq \sfU_E(k_E)^{\Gal(E/F)}$, 
and $\sfP(k_F)\subseteq \sfP_E(k_E)^{\Gal(E/F)}$. 
\end{proof}

Recall the normal subgroup $(G_{n+1})^\minus_{[x]}\subseteq(G_{n+1})_{[x]}$
from \Cref{sec:overview}, and that the conjugation action of $G_{n+1}(F)_{[x]}$ induces an action on~$\sfV_i^\natural$ that is trivial on the center of $\sfV_i^\natural$ by \Cref{thm33}. 

\begin{proposition} \label{thm12} 
Let $A$ be the image of $(G_{n+1})_{[x]}^\minus$
under the map $(G_{n+1})_{[x]}^\minus\ra \AutZfix(\sfV_i^\natural)$ induced by conjugation.
Then the Heisenberg representation $\WeilRep_i$ of $\sfV_i^\natural$
extends to a Heisenberg--Weil representation of $A\ltimes \sfV_i^\natural$.
\end{proposition}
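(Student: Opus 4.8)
The plan is to invoke \Cref{thm14}, applied with $P=\sfV_i^\natural$, with $R$ the Frobenius--Schur type of $\WeilRep_i$, and with $A$ as in the statement (the action lands in $\AutZfix(\sfV_i^\natural)$ by \Cref{thm33}). Concretely, I want to exhibit a $\Gal(E/F)$-fixed $\lambda\in X_*(T)^{\Gal(E/F)}\otimes\bbR$ so that the isotropic subspace $\sfV_i^+\subseteq\sfV_i=P/Z(P)$ and its splitting $\HeisLift{\sfV_i^+}$ produced from $\lambda$ in \Cref{sec:aux} (with $\sfV_i=\sfV_i^+\oplus\sfV_{i,0}\oplus\sfV_i^-$ the associated partial polarization) satisfy: some Sylow $p$-subgroup $A_p$ of $A$ lies in $\cal P(\HeisLift{\sfV_i^+})$. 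Granting this, \Cref{thm14} extends the Heisenberg $R$-representation of $P$ to an $R$-representation of $A\ltimes P$, which is by \Cref{defnewHeisenbergWeil} a Heisenberg--Weil $R$-representation (for $p=2$ directly; for $p\neq2$ one in addition records that the resulting linearization arises from a special isomorphism, e.g.\ because $A$ acts through $\Sp(\sfV_i,\omega_P)$).

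First I would pin down the structure of $A$. A commutator estimate shows that $(G_{n+1})_{x,0+}$ acts trivially on $\sfV_i^\natural$ (and on $\widetilde\sfV_i^\natural$): if $g$ has depth $s>0$ and $h\in(G_{i+1},G_i)(F)_{x,r_i,r_i/2}$, then $[g,h]\in G(F)_{x,s+r_i/2}\subseteq G(F)_{x,r_i/2+}$, and $G(F)_{x,r_i/2+}$ lies both in the denominator $(G_{i+1},G_i)(F)_{x,r_i,r_i/2+}$ and in $\ker(\hat\phi_i)$, using $G_{n+1}\subseteq G_{i+1}$ and $r_i/2>0$. Since $Z(G)(F)$ is central it also acts trivially. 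Hence the conjugation action of $(G_{n+1})_{[x]}^\minus$ factors through a finite group; the image $A_0$ of $(G_{n+1})_{x,0}$ is a quotient of $\sfG_{n+1}(k_F)$ and is normal in $A$, and $A/A_0$ is a quotient of $(G_{n+1})_{[x]}^\minus/\bigl(Z(G(F))\cdot(G_{n+1})_{x,0}\bigr)$. By the definition of $(G_{n+1})_{[x]}^\minus$, this last group has \emph{odd} order when $p=2$.

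Next I would choose $\lambda$. Since $\sfG_{n+1}$ is connected reductive over the finite field $k_F$ it is quasi-split, so a minimal $k_F$-parabolic is a Borel and the $k_F$-points of its unipotent radical form a Sylow $p$-subgroup of $\sfG_{n+1}(k_F)$. Pick $\lambda\in X_*(\sfS)\otimes\bbR=X_*(T)^{\Gal(E/F)}\otimes\bbR$ regular for $\Phi(\sfG_{n+1},\sfS)$; then $\sfP=\sfP_\lambda$ is a Borel with unipotent radical $\sfU=\sfU_\lambda$, and $\sfU(k_F)$ is a Sylow $p$-subgroup of $\sfG_{n+1}(k_F)$. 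By \Cref{thm16a} every $g\in\sfP(k_F)$ preserves $\sfV_i^+$ and $\sfV_i^+\times\widetilde\sfV_{i,0}^\natural$, hence (projecting to $\sfV_i^\natural$) satisfies condition~(1) of \Cref{thm20}; by \Cref{thm16b} every $g\in\sfU(k_F)$ satisfies condition~(2). Thus the image of $\sfU(k_F)$ in $\AutZfix(\sfV_i^\natural)$ lies in $\cal P(\HeisLift{\sfV_i^+})$, and it is a Sylow $p$-subgroup of $A_0$ (the image of a Sylow $p$-subgroup under the surjection $\sfG_{n+1}(k_F)\onto A_0$). When $p=2$, since $[A:A_0]$ is odd, this is a Sylow $2$-subgroup of $A$, and we conclude by \Cref{thm14}. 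When $p\neq2$ one must also absorb a Sylow $p$-subgroup of $A/A_0$; here one uses that the prime-to-parahoric contributions act on $\sfG_{n+1}$ through pinning-preserving automorphisms, which fix the fundamental chamber and hence normalize a Borel, so after replacing $A_p$ by a conjugate one may choose $\lambda$ adapted to that Borel and argue as before (this case is in any event contained in Yu's construction, \cite[Section~11]{Yu}).

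The main obstacle is exactly this last point: ensuring that the \emph{entire} Sylow $p$-subgroup of $A$, and not merely the part coming from the parahoric $(G_{n+1})_{x,0}$, can be placed inside a single $\cal P(\HeisLift{\sfV_i^+})$ for one choice of $\lambda$. The reason $p=2$ is clean is precisely that $(G_{n+1})_{[x]}^\minus$ was engineered so that $A/A_0$ has odd order, reducing everything to a Sylow computation in the finite reductive group $\sfG_{n+1}(k_F)$ together with the geometric input of \Cref{thm16}; the bookkeeping for $p\neq2$ (finding a $\lambda$ simultaneously compatible with the extra automorphisms and with the constructions of \Cref{sec:aux}) is where the remaining care is needed.
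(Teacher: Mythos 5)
For the case $p=2$ — the genuinely new content of \Cref{thm12} — your argument is the paper's own: by the definition of $(G_{n+1})_{[x]}^\minus$, a Sylow $2$-subgroup of $A$ is the image of $\sfN(k_F)$ for the unipotent radical $\sfN$ of a Borel $\sfB\subseteq\sfG_{n+1}$; one chooses $\lambda$ so that $\sfP=\sfB$, $\sfU=\sfN$, uses \Cref{thm16} to place this subgroup inside $\cal P(\HeisLift{\sfV_i^+})$, and concludes with \Cref{thm14} and \Cref{defnewHeisenbergWeil}. Your Sylow bookkeeping ($A_0$ normal of odd index, $A_0$ a quotient of $\sfG_{n+1}(k_F)$) is exactly what the paper leaves implicit, and is welcome detail. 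One slip: your justification that $(G_{n+1})_{x,0+}$ acts trivially on $\sfV_i^\natural$ does not work as written, since $G(F)_{x,r_i/2+}$ is contained neither in the denominator $(G_{i+1},G_i)(F)_{x,r_i,r_i/2+}$ nor in $\ker(\hat\phi_i)$. What is needed is the two-index commutator estimate $[(G_{i+1})_{x,0+},(G_{i+1},G_i)_{x,r_i,r_i/2}]\subseteq(G_{i+1},G_i)_{x,r_i+,r_i/2+}$, whose right-hand side does lie in the denominator and in $\ker(\hat\phi_i)$; this is the fact the paper invokes (without proof) in the proof of \Cref{thm120}. The conclusion is true, but the containment you wrote is false.

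The real gap is in your treatment of $p\neq2$. Even if your ``absorb the Sylow part of $A/A_0$ via pinning-preserving automorphisms'' step could be made precise (it is not justified as stated, and for $p\neq 2$ one has $(G_{n+1})_{[x]}^\minus=(G_{n+1})_{[x]}$, so $A/A_0$ can genuinely contribute to the Sylow $p$-subgroup), the route through \Cref{thm14} would only produce \emph{some} extension of the Heisenberg representation to $A\ltimes\sfV_i^\natural$. For $p\neq2$ that is not, by \Cref{defnewHeisenbergWeil}, a Heisenberg--Weil representation: the definition requires a special isomorphism $\sfV_i^\natural\to\sfV_i^\sharp$ that extends to a homomorphism $A\ltimes\sfV_i^\natural\to\Sp(\sfV_i)\ltimes\sfV_i^\sharp$ compatible with the projection of \Cref{thm53a}, and the remark that ``$A$ acts through $\Sp(\sfV_i,\omega_P)$'' does not supply such an isomorphism. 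The existence of this equivariant special isomorphism for the full group $(G_{n+1})_{[x]}$ is precisely Yu's result (\cite[Lemma~11.4]{Yu}), and that citation — which you relegate to a parenthetical — is how the paper disposes of the case $p\neq2$; it should be the argument, not a fallback.
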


\begin{proof}
If $p \neq 2$, then by \cite[Lemma~11.4]{Yu} (which we already observed in the proof of \Cref{thm33} to also work in our setting) we have a group homomorphism $(G_{n+1})_{[x]} \ltimes \sfV_i^\natural \ra \Sp(\sfV_i) \ltimes \sfV_i^\sharp$ whose restriction to $(G_{n+1})_{[x]} \ltimes \{1\}$ is the map $(G_{n+1})_{[x]} \ra \Sp(\sfV_i)$ induced by conjugation and whose restriction to $\{1\} \ltimes \sfV_i^\natural$ is a special isomorphism  $\{1\} \ltimes \sfV_i^\natural \ra  \{1\} \ltimes \sfV_i^\sharp$. Thus we obtain a desired extension to a Heisenberg--Weil representation of of $A\ltimes \sfV_i^\natural$ by \Cref{defnewHeisenbergWeil}.

So we assume $p = 2$ for the remainder of the proof.
By the definition of $(G_{n+1})_{[x]}^\minus$
and the fact that the image of $Z(G_{n+1})$ in $\AutZfix(\sfV_i^\natural)$ is trivial,
there is a Borel subgroup $\sfB$ of $\sfG_{n+1}$
with unipotent radical $\sfN$ such that
the image $A_p$ of $\sfN(k_F)$ in $\AutZfix(\sfV_i^\natural)$
is a Sylow $p$-subgroup of~$A$.
 We choose $\lambda \in X_*(T)^{\Gal(E/F)} \otimes \bR$ such that $\sfP=\sfB$ and $\sfU=\sfN$.
It follows from \Cref{thm16a} that $A_p$ is contained in $\cP(\sfV_i^+)$, where the anisotropic subspace $\sfV_i^+ \subseteq \sfV_i$ is viewed as a subgroup of $\sfV_i^\natural$ via the above described splitting and $\cP$ is as defined in \Cref{thm20}.
Hence the existence of the Heisenberg--Weil representation follows from
\Cref{defnewHeisenbergWeil} and \Cref{thm14}.
\end{proof}

\begin{notation} \label{notation:Weil}
	We denote the composition of $(G_{n+1})_{[x]}^\minus \twoheadrightarrow A$ with the Heisenberg--Weil representation of \Cref{thm12} also by $\WeilRep_i$. 
\end{notation}

There is a potential ambiguity in the construction of $\WeilRep_i$ when $p=2$, because
a priori the Heisenberg--Weil representation of a finite group
is only well-defined up to a character of this finite group
that has order one or two.
However, the next result will imply that this finite group
has no characters of order two if $q>2$,
implying that the extension $\WeilRep_i$ is uniquely defined (see \Cref{thm120}).

\begin{lemma} \label{thm82}
	If $p=2$ and $q>2$, then
	$(G_{n+1})_{[x]}^\minus/(Z(G)\cdot(G_{n+1})_{x,0+})$
	has no characters of order two.
\end{lemma}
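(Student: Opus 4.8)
The plan is to reduce the assertion to the claim that the finite group $\sfG_{n+1}(k_F)$ has no character of order two, and then to invoke the defining property of the prime-to-$2$ modification $(\,\cdot\,)^\minus$. So suppose $\chi\colon (G_{n+1})_{[x]}^\minus \to \{\pm1\}$ is trivial on $Z(G)\cdot (G_{n+1})_{x,0+}$; I want to show $\chi$ is trivial. Write $C \defeq (G_{n+1})_{[x]}/(Z(G)\cdot(G_{n+1})_{x,0})$, a finite abelian group by \cite[Corollary~11.6.3]{Kaletha-Prasad-BTbook}. The parahoric $(G_{n+1})_{x,0}$ lies in $(G_{n+1})_{[x]}^\minus$ because it maps trivially to $C$, hence to $C\otimes_\bZ\bbZ_{(2)}$; and $Z(G)$ lies there for the same reason. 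Now $\chi|_{(G_{n+1})_{x,0}}$ is trivial on $(G_{n+1})_{x,0+}$, so it descends to a character of $(G_{n+1})_{x,0}/(G_{n+1})_{x,0+}=\sfG_{n+1}(k_F)$. Granting the main claim below that this character is trivial, $\chi$ is trivial on $(G_{n+1})_{x,0}$, and on $Z(G)$ by hypothesis, hence on the normal subgroup $Z(G)\cdot(G_{n+1})_{x,0}$. Therefore $\chi$ factors through $(G_{n+1})_{[x]}^\minus/(Z(G)\cdot(G_{n+1})_{x,0})$, which by the very definition of $(\,\cdot\,)^\minus$ is the kernel of the projection of $C$ onto its $2$-primary part $C\otimes_\bZ\bbZ_{(2)}$, i.e.\ the prime-to-$2$ component of~$C$; this has odd order, so $\chi$ is trivial.

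It remains to prove the main claim: $\sfG_{n+1}(k_F)$ has no character of order two, given that $q=|k_F|$ is a power of $2$ with $q\geq 4$. I would show more precisely that $\sfG_{n+1}(k_F)^{\tn{ab}}$ has odd order, using the inclusions $\sfG_{n+1}(k_F)^\natural \subseteq \sfG_{n+1}^\der(k_F)\subseteq\sfG_{n+1}(k_F)$, where $\sfG_{n+1}(k_F)^\natural$ is the image of $\sfG_{n+1}^\sc(k_F)$. First, $\sfG_{n+1}^\sc(k_F)$ is a direct product of finite almost-simple groups of Lie type over field extensions of $k_F$ of cardinality $\geq q\geq 4$; since it is classical that the only such groups failing to be perfect are defined over $\bF_2$ or $\bF_3$, the group $\sfG_{n+1}^\sc(k_F)$ is perfect, hence so is $\sfG_{n+1}(k_F)^\natural$, which is therefore contained in the commutator subgroup of $\sfG_{n+1}(k_F)$. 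Second, $\sfG_{n+1}(k_F)/\sfG_{n+1}^\der(k_F)$ injects into $(\sfG_{n+1}/\sfG_{n+1}^\der)(k_F)$, the $k_F$-points of a torus, and this has odd order because $q$ is even (the order of the group of $k_F$-points of a torus is $|\det(qF_0-1)|$ for an integral matrix $F_0$ of finite order acting on the cocharacter lattice, hence $\equiv\det(-1)\pmod2$). Third, writing $\mu=\ker(\sfG_{n+1}^\sc\to\sfG_{n+1}^\der)$, a central subgroup scheme of $\sfG_{n+1}^\sc$, the surjection $\sfG_{n+1}^\sc(\bar k_F)\to\sfG_{n+1}^\der(\bar k_F)$ has central kernel $\mu(\bar k_F)$, so Lang's theorem (which gives $H^1(\Gal(\bar k_F/k_F),\sfG_{n+1}^\sc(\bar k_F))=1$) yields $\sfG_{n+1}^\der(k_F)/\sfG_{n+1}(k_F)^\natural\cong H^1(\Gal(\bar k_F/k_F),\mu(\bar k_F))$; and since $\mu$ is a subgroup scheme of the diagonalizable group $Z(\sfG_{n+1}^\sc)$ while $\bar k_F$ has characteristic $2$, the group $\mu(\bar k_F)$ has odd order (the $2$-power torsion of $\bar k_F^\times$ is trivial), whence so does this $H^1$. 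Combining the three points, $\sfG_{n+1}(k_F)^{\tn{ab}}$ has odd order, so $\sfG_{n+1}(k_F)$ has no subgroup of index two.

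The step I expect to be most delicate is the third point: one has to treat $\pi_1$ of $\sfG_{n+1}^\der$ as a genuine finite group scheme rather than just its étale part, observe that a diagonalizable $2$-group becomes infinitesimal in characteristic $2$ and hence disappears on $\bar k_F$-points, and only then push the computation through Lang's theorem. The remaining ingredients — perfectness of simply connected finite groups of Lie type over $\bF_q$ for $q\geq 4$, the parity of $|\sfT(k_F)|$ for a torus $\sfT$ in even characteristic, and the formal reduction via $(\,\cdot\,)^\minus$ — are routine.
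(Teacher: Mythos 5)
Your proof is correct, and its overall skeleton coincides with the paper's: the paper also filters $(G_{n+1})_{[x]}^\minus/(Z(G)\cdot(G_{n+1})_{x,0+})$ by the image of $(G_{n+1})_{x,0}$, notes that the outer quotient has no order-two characters by the very definition of $(\,\cdot\,)^\minus$ (it is the prime-to-$2$ part of the finite abelian group $C$), and reduces everything to showing that $\sfG_{n+1}(k_F)=(G_{n+1})_{x,0}/(G_{n+1})_{x,0+}$ has no character of order two. Where you genuinely diverge is in that last ingredient. The paper invokes \Cref{thm81}, whose proof replaces $\sfG_{n+1}$ by a $z$-extension with $\widetilde H^\der=\widetilde H^\sc$, so that the possible failure of surjectivity of $\sfG^\sc_{n+1}(k_F)\to\sfG^\der_{n+1}(k_F)$ never has to be confronted; the only inputs are Tits's perfectness theorem (\Cref{thm84}, \Cref{thm91}) and the fact that torus points over $\bbF_q$ are semisimple, hence of order prime to~$q$. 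You instead attack that failure head-on: Lang's theorem identifies $\sfG^\der_{n+1}(k_F)/\sfG_{n+1}(k_F)^\natural$ with $H^1\bigl(\Gal(\bar k_F/k_F),\mu(\bar k_F)\bigr)$ for $\mu=\ker(\sfG^\sc_{n+1}\to\sfG^\der_{n+1})$, and since $\mu$ is a finite group scheme of multiplicative type and $\tn{char}(k_F)=2$, the group $\mu(\bar k_F)$, hence also this $H^1$, has odd order; combined with perfectness of $\sfG^\sc_{n+1}(k_F)$ for $q\geq4$ and the oddness of $|\sfT(k_F)|$ in even characteristic, this gives oddness of the abelianization, which is what is needed. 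Both routes are sound. The paper's $z$-extension argument buys a characteristic-free statement (abelianization of order prime to $q$ for every $q>3$) with no group-scheme bookkeeping, whereas your argument is more self-contained (no auxiliary $z$-extension) at the price of the scheme-theoretic observation that the $2$-part of $\mu$ becomes infinitesimal in characteristic $2$; as written it is tailored to $p=2$, though the same reasoning would give the prime-to-$p$ statement in general.
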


\begin{proof}
	The group $(G_{n+1})_{[x]}^\minus/(Z(G)\cdot(G_{n+1})_{x,0+})$ fits into the short exact sequence
	\[
	\begin{tikzcd}[column sep=scriptsize]
	1 \rar & \dfrac{(G_{n+1})_{x,0}}{Z(G)_0\cdot(G_{n+1})_{x,0+}} \rar &
	\dfrac{(G_{n+1})_{[x]}^\minus}{Z(G(F))\cdot(G_{n+1})_{x,0+}} \rar &
	\dfrac{(G_{n+1})_{[x]}^\minus}{Z(G(F))\cdot(G_{n+1})_{x,0}} \rar & 1.
	\end{tikzcd}
	\]
	By definition, the rightmost quotient has no characters of order two.
	So it suffices to show that the leftmost kernel has no characters of order two.
	But already the quotient $(G_{n+1})_{x,0}/(G_{n+1})_{x,0+}$,
	the $\bbF_q$-points of a reductive $\bbF_q$-group,
	has no characters of order two by \Cref{thm81}.
\end{proof}

\begin{proposition} \label{thm120}
	Suppose $q>2$.
	Let $A$ be the image of $(G_{n+1})_{[x]}^\minus$
	under the map $(G_{n+1})_{[x]}^\minus \ra \AutZfix(\sfV_i^\natural)$ induced by conjugation.
	Then the Heisenberg representation $\WeilRep_i$ of $\sfV_i^\natural$
	extends uniquely
	to a Heisenberg--Weil representation of $A\ltimes \sfV_i^\natural$.
\end{proposition}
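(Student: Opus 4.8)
The existence of such an extension is precisely \Cref{thm12}, so the new content of the \namecref{thm120} is uniqueness, and the plan is to deduce it from \Cref{cor-weil-uniqueness}. When $p\neq2$ that corollary applies with no further hypothesis, so uniqueness is immediate. When $p=2$, \Cref{cor-weil-uniqueness} additionally requires that $A$ have no characters of order two, and this is the one point I would need to establish.

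The strategy for the case $p=2$ is to realize $A$ as a quotient of $(G_{n+1})_{[x]}^\minus/(Z(G(F))\cdot(G_{n+1})_{x,0+})$, which has no characters of order two by \Cref{thm82}; a character of $A$ of order two would then pull back to such a character of the quotient, a contradiction. By definition $A$ is the image of $(G_{n+1})_{[x]}^\minus$ under the conjugation map to $\AutZfix(\sfV_i^\natural)$, so it suffices to check that the subgroup $Z(G(F))\cdot(G_{n+1})_{x,0+}$ acts trivially on $\sfV_i^\natural$ by conjugation. This subgroup is contained in $(G_{n+1})_{[x]}^\minus$ — the center because its image in the finite abelian $2$-group \eqref{thm10} is trivial, and $(G_{n+1})_{x,0+}$ because it lies in $(G_{n+1})_{x,0}$ — so that such a claim does identify $A$ with a quotient of $(G_{n+1})_{[x]}^\minus/(Z(G(F))\cdot(G_{n+1})_{x,0+})$. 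For the center the triviality is clear, since conjugation by a central element is trivial. For $(G_{n+1})_{x,0+}$ the claim amounts, writing $J_i\defeq(G_{i+1},G_i)(F)_{x,r_i,r_i/2}$ and $J_i^+\defeq(G_{i+1},G_i)(F)_{x,r_i,r_i/2+}$, to the commutator estimate
\[
\bigl[(G_{n+1})_{x,0+},\,J_i\bigr]\ \subseteq\ (G_{i+1},G)(F)_{x,r_i+,\,r_i/2+}\ \subseteq\ J_i^+\cap\ker(\hat\phi_i),
\]
which I would prove using the Moy--Prasad commutator relations: the sum of a root of $G_{i+1}$ and a root transverse to $G_{i+1}$ is again transverse (as $G_{i+1}$ is a Levi subgroup), and adding depth $0+$ raises $r_i/2$ to $r_i/2+$ and $r_i$ to $r_i+$; the second inclusion then holds because $(G_{i+1},G)(F)_{x,r_i+,r_i/2+}$ lies both in $J_i^+$ and, by the normalization of $\hat\phi_i$, in $\ker(\hat\phi_i)$.

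I expect this commutator estimate to be the only genuinely computational step, the rest being formal. The subtle point is that the commutators must be shown to land in $J_i^+\cap\ker(\hat\phi_i)$, and not merely in $J_i^+$, on which $\hat\phi_i$ is nontrivial (its restriction cuts out the order-two center of $\sfV_i^\natural$ by \Cref{thm33}); this forces one to track precisely which roots occur and at which Moy--Prasad depths. Granting this, \Cref{thm82} shows $A$ has no characters of order two and \Cref{cor-weil-uniqueness} finishes the argument. (Incidentally, the triviality of the $(G_{n+1})_{x,0+}$-action is already implicit in the proof of \Cref{thm12}, where the relevant Sylow subgroup is taken inside the reductive quotient $\sfG_{n+1}(k_F)$, so this bookkeeping is in some sense already available.)
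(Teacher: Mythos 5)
Your proposal is correct and follows essentially the same route as the paper: existence via \Cref{thm12}, and uniqueness by noting that $Z(G(F))\cdot(G_{n+1})_{x,0+}$ acts trivially on $\sfV_i^\natural$, so that $A$ is a quotient of the group in \Cref{thm82} and hence has no order-two characters, after which \Cref{cor-weil-uniqueness} applies. The only difference is that you spell out the commutator estimate $[(G_{n+1})_{x,0+},J_i]\subseteq(G_{i+1},G_i)(F)_{x,r_i+,r_i/2+}\subseteq J_i^+\cap\ker(\hat\phi_i)$, which the paper takes for granted (and uses in the same form in the proof of \Cref{lemma:kappaminus}), so this is a harmless elaboration rather than a different argument.
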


\begin{proof}
	By \Cref{thm12} it remains to prove uniqueness of the extension.
	Since $(G_{n+1})_{x,0+}$ and $Z(G)$ act trivially on~$\sfV_i^\natural$, the group $A$ is a quotient of the group appearing in \Cref{thm82} and hence has no characters of order two. Therefore we can apply \Cref{cor-weil-uniqueness}. 
\end{proof}

We can now combine the representations $\WeilRep_i$
to make a representation of~$K^\minus$.
This step is almost exactly as in \cite{Yu},
but we spell it out in detail for clarity.

\begin{construction}[Homomorphisms from iterated semidirect products] \label{thm76}
Suppose we are given groups $A_1,\dots, A_n$
together with an action $(b,a)\mapsto {}^ba$
of $A_j$ on $A_i$ for every $1\leq i<j\leq n$.
If the ``cocycle condition'' ${}^{{}^cb}({}^ca) = {}^{cb}a$
is satisfied for all $i<j<k$
and $a\in A_i$, $b\in A_j$, $c\in A_k$,
then we can form the iterated semidirect product
$A^{\rtimes}\defeq A_1\rtimes\cdots\rtimes A_n$.
It is straightforward to check that under these circumstances
the semidirect product is associative, like the direct product,
so that there is no need to worry about the order
of inserting parentheses in this iterated semidirect product.

Suppose we are given another group $A'$
and homomorphisms $f_i\colon A_i\to A'$.
Then the induced map $f^\rtimes\colon A^{\rtimes}\to A'$
defined by $(a_1,\dots,a_n)\mapsto f_1(a_1)\cdots f_n(a_n)$
is a homomorphism if and only if
$f_i({}^ba)= f_j(b)f_i(a)f_j(b)^{-1}$ for every $1\leq i<j\leq n$
and $a\in A_i$ and $b\in A_j$.

Such iterated semidirect products naturally arise from the following situation.
Suppose that $B$ is an ambient group containing the $A_i$
as subgroups and that $A_j$ normalizes $A_i$ for $1\leq i < j\leq n$.
Using the conjugation action,
we see that the cocycle condition is satisfied (because $cbc^{-1}\cdot c = cb$)
and thus we may form the iterated semidirect product $A^\rtimes$.
Multiplication induces a homomorphism $A^\rtimes\to B$,
and we write $A_1\cdots A_n$ for its image.
In the situation of the second paragraph of this \namecref{thm76},
the homomorphism $f^\rtimes\colon A^\rtimes \to A'$ descends to a homomorphism
$f\colon A_1\cdots A_n\to A'$ if and only if $f_1(a_1)\cdots f_n(a_n) = 1$
whenever $a_1\cdots a_n=1$ and $a_i\in A_i$.

\end{construction}

\begin{lemma} \label{lemma:kappaminus}
	There exists a unique representation $\kappa^-$ of $K^-$ with underlying vector space $V_{\kappa^\minus} \defeq \bigotimes_{i=1}^n V_{\WeilRep_i}$ such that 
	\begin{enumerate}
		\item the restriction of $\kappa^-$ to $(G_{n+1})_{[x]}^\minus$ is $\otimes_{i=1}^n (\phi_i|_{(G_{n+1})_{[x]}^\minus} \otimes \WeilRep_i)$, and
		\item the restriction of $\kappa^-$ to $(G_j)_{x,r_j,r_j/2}$  for $1 \leq j \leq n$  is 
		$$\bigotimes_{i=1}^{j-1} \hat \phi_i|_{(G_j)_{x,r_j,r_j/2}} \otimes \WeilRep_j\otimes  \bigotimes_{i=j+1}^{n} \hat \phi_i|_{(G_j)_{x,r_j,r_j/2}},$$
		where $\WeilRep_j$ denotes the composition of $(G_j)_{x,r_j,r_j/2} \twoheadrightarrow \sfV_j^\natural$ with the Heisenberg representation $\WeilRep_j$. 
	\end{enumerate}
\end{lemma}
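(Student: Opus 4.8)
The strategy is to apply the formalism of \Cref{thm76} to the iterated semidirect product built from the subgroups $A_i = (G_i)_{x,r_i,r_i/2}$ for $1\le i\le n$ together with $A_{n+1} = (G_{n+1})^\minus_{[x]}$, realizing $K^\minus = A_1\cdots A_{n+1}$ as a quotient of $A^\rtimes$. On each factor we have defined a representation on $V_{\kappa^\minus} = \bigotimes_{i=1}^n V_{\WeilRep_i}$: the factor $A_j$ (for $j\le n$) acts by $\WeilRep_j$ on the $j$-th tensor slot and by the character $\hat\phi_i$ on the $i$-th slot for $i\ne j$ (noting that $A_j \subseteq (G_{i+1})_{[x]}\cdot G_{x,r_i/2+}$ when $i<j$, and $A_j$ is inside the domain of $\hat\phi_i$ for $i>j$ as well since $r_j < r_i$ forces $(G_j)_{x,r_j,r_j/2}\subseteq G_{x,r_i/2+}\cdot\ldots$), and the factor $A_{n+1}$ acts by $\phi_i|_{A_{n+1}}\otimes\WeilRep_i$ on the $i$-th slot, using \Cref{notation:Weil} and \Cref{thm120}. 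First I would check that each of these is genuinely a group homomorphism $A_j\to\GL(V_{\kappa^\minus})$: on slot $i$ it is either a character, which is clear, or a Heisenberg(--Weil) representation, for which this is \Cref{thm33}, \Cref{thm12}, and \Cref{thm120}.

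Next I would verify the compatibility condition from \Cref{thm76}, namely that for $i<j$ and $a\in A_i$, $b\in A_j$ one has $\kappa^\minus_i({}^b a) = \kappa^\minus_j(b)\,\kappa^\minus_i(a)\,\kappa^\minus_j(b)^{-1}$ as operators on $V_{\kappa^\minus}$, together with the analogous relations involving $A_{n+1}$. Slot by slot this reduces to three types of identity: (1) on a slot where both factors act by characters, both sides are scalars and the identity is the assertion that $\hat\phi_k$ is a character, invariant under conjugation; (2) on the $j$-th slot, where $A_j$ acts by a Heisenberg(--Weil) representation $\WeilRep_j$ and $A_i$ (with $i<j$) acts by the character $\hat\phi_j|_{A_i}$, one needs that conjugation by $A_i$ respects $\WeilRep_j$ compatibly — this is exactly the statement that the action of $H(F)_{[x]}$ on $\sfV^\natural_j$ fixes the center (\Cref{thm33}) together with the construction of the extension to $A\ltimes\sfV_j^\natural$ in \Cref{thm12}; (3) on the $i$-th slot for $i<j$, where $A_i$ acts by $\WeilRep_i$ and $A_j$ acts by $\hat\phi_i|_{A_j}$, one needs that conjugation by the central (in the relevant Heisenberg group) elements of $A_j$ is trivial on $\WeilRep_i$, again from \Cref{thm33}. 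The relations with $A_{n+1}$ in the last slot use that $(G_{n+1})^\minus_{[x]}$ acts through $A$ as in \Cref{thm12} and that $\phi_i|_{A_{n+1}}$ is a character. This is the same bookkeeping as in \cite[Section~4]{Yu}, transported to our setting, and I would cite \cite{Yu} for the purely combinatorial parts of the commutator calculations, emphasizing only the points where $\WeilRep_i$ for $p=2$ replaces Yu's Heisenberg--Weil representation.

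The remaining point is the descent condition at the end of \Cref{thm76}: whenever $a_1\cdots a_{n+1}=1$ with $a_i\in A_i$, one must have $\prod_i \kappa^\minus_i(a_i) = \id$. Since the filtration subgroups $(G_i)_{x,r_i,r_i/2}$ for distinct $i$ are linearly disjoint in the appropriate sense (the degrees $r_i$ are strictly decreasing and the $G_i$ are nested twisted Levi subgroups, so the product decomposition $K^\minus = A_1\cdots A_{n+1}$ has the relevant uniqueness away from overlaps), a relation $a_1\cdots a_{n+1}=1$ forces each $a_i$ to lie in a small "overlap" subgroup where two consecutive factors meet, and there one checks directly — using that $\hat\phi_i$ and $\hat\phi_{i+1}$ were defined to agree on the overlap with $\phi$, and that the Heisenberg representations have the prescribed central characters $\hat\phi_i|_{(G_i)_{x,r_i,r_i/2+}}$ — that the operators cancel. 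I expect this descent/overlap verification to be the main obstacle, because it is where the precise normalizations of $\hat\phi_i$ (and the choice of central characters for the $\WeilRep_i$) must be matched exactly; everything else is formal or cited from \cite{Yu} and \cite{Fi-Yu-works}. Uniqueness of $\kappa^\minus$ is immediate once existence is established, since conditions (1) and (2) prescribe $\kappa^\minus$ on a generating set of $K^\minus$.
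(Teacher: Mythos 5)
Your overall route is the same as the paper's: realize $K^\minus$ as a quotient of the iterated semidirect product of the factors $(G_j)_{x,r_j,r_j/2}$ and $(G_{n+1})^\minus_{[x]}$, define the action of each factor on each tensor slot, verify the compatibility relation of \Cref{thm76}, and then verify the descent condition. However, two of your verifications are not right as stated. First, in the compatibility check, the case that needs real input is when a later filtration factor $(G_k)_{x,r_k,r_k/2}$ with $i<k\leq n$ conjugates $(G_i)_{x,r_i,r_i/2}$ while acting on the $i$-th slot by the scalar $\hat\phi_i$: here one must show $[(G_k)_{x,r_k,r_k/2},(G_i)_{x,r_i,r_i/2}]\subseteq\ker(\WeilRep_i)$. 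This is not a consequence of \Cref{thm33} (which only says that the $(G_{n+1})_{[x]}$-action fixes the center of $\sfV_i^\natural$), and your phrase ``conjugation by the central (in the relevant Heisenberg group) elements of $A_j$'' does not apply, since $(G_k)_{x,r_k,r_k/2}$ does not even map into $\sfV_i^\natural$. What is actually needed is the containment $[(G_i)_{x,0+},(G_i)_{x,r_i,r_i/2}]\subseteq(G_i)_{x,r_i+,r_i/2+}$ (a root-group commutator computation plus Galois descent) together with $(G_i)_{x,r_i+,r_i/2+}\subseteq\ker(\hat\phi_i)$, so that the commutator is killed by $\WeilRep_i$; only the conjugation by $(G_{n+1})^\minus_{[x]}$ is handled by the Heisenberg--Weil extension of \Cref{thm12}/\Cref{thm120}.

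Second, the descent step, which you explicitly leave as ``the main obstacle,'' is a genuine gap, and the sketch you give would not go through as framed. A relation $a_1\cdots a_{n+1}=1$ does not localize each $a_i$ to an overlap of two consecutive factors; it only gives $a_i\in K_i\cap\prod_{j\neq i}K_j$, and the agreement of $\hat\phi_i$ with $\hat\phi_{i+1}$ plays no role. The actual mechanism is: one shows $K_i\cap\prod_{j\neq i}K_j\subseteq(G_i)_{x,r_i,r_i/2+}$, on which the Heisenberg factor $\WeilRep_i$ acts by its central character, i.e., by the scalar $\hat\phi_i(a_i)$, and, crucially, $K_{n+1}\cap\prod_{j=1}^{n}K_j\subseteq(G_{n+1})_{x,0+}$, on which the Weil factor of $\phi_i\otimes\WeilRep_i$ is trivial, so that this factor also acts by the scalar $\hat\phi_i(a_{n+1})$. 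Then every factor acts on the $i$-th slot by the scalar $\hat\phi_i(a_j)$, and multiplicativity of $\hat\phi_i$ gives $\hat\phi_i(a_1\cdots a_{n+1})=\hat\phi_i(1)=1$. You correctly identify the role of the central characters, but without the two containments above and the triviality of $\WeilRep_i$ on $(G_{n+1})_{x,0+}$ the cancellation cannot be carried out, so the proposal as written does not establish existence; uniqueness, as you say, is then immediate.
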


\begin{proof}
		\addtocounter{equation}{-1}
	\begin{subequations} 
We will prove the existence of $\kappa^-$. Uniqueness will then follow immediately. 
To make the notation more uniform, we write
\[
K_j \defeq \begin{cases}
(G_j)_{x,r_j,r_j/2} & \tn{if $1\leq j<n+1$,} \\
(G_{n+1})_{[x]}^-     & \tn{if $j=n+1$.}
\end{cases}
\]
Note for future use in the proof that if $j\neq i$,
then $K_j$ is contained in the domain of~$\hat\phi_i$.

We apply the observations from \Cref{thm76}.
The group $K^\minus$ is a quotient of the iterated semidirect product
$K_1\rtimes\cdots\rtimes K_{n+1}$.
Fix $i$ with $1\leq i\leq n$.
For each $1\leq j\leq n+1$, define the homomorphism
$\kappa^\minus_{ij}\colon K_j\to\GL(V_{\WeilRep_i})$ by 
\[
\kappa^\minus_{ij} = \begin{cases}
\hat\phi_i|_{K_j} & \tn{if $j\neq i$ and $j \neq n+1$,} \\
\WeilRep_i & \tn{if $j = i \neq n+1$,} \\
\phi_i|_{K_{j}}\otimes\WeilRep_i & \tn{if $j = n+1$.}
\end{cases}
\]
Note that in the second case, $\WeilRep_i$ denotes a Heisenberg representation,
and in the third case, it denotes a Weil representation, both restrictions of a Heisenberg--Weil representation.
We claim that for each fixed~$i$,
the representations $(\kappa^\minus_{ij})_{1\leq j\leq n+1}$
induce a representation $\kappa^\minus_i$ of~$K^\minus$.
This claim suffices because one can then define
$\kappa^\minus\defeq\otimes_{i=1}^n\kappa^\minus_i$.

To prove the claim, we have to first show that
the map on the iterated semidirect product induced by
the representations $(\kappa^\minus_{ij})_{1\leq j\leq n+1}$
is a homomorphism.
For this we use the criterion of \Cref{thm76},
which requires us to check that
\begin{equation} \label{thm77}
\kappa^\minus_{ij}(aba^{-1})
= \kappa^\minus_{ik}(a)\kappa^\minus_{ij}(b)
\kappa^\minus_{ik}(a)^{-1}
\end{equation}
for all $j<k$, $a\in K_k$, $b\in K_j$. We distinguish four cases.
First, if $j=i$ and $k=n+1$, then \eqref{thm77} holds by the definition
of the Heisenberg--Weil representation.
In the remaining cases one of $\kappa^\minus_{ij}$
or $\kappa^\minus_{ik}$ is a character
and so \eqref{thm77} amounts to showing that
$[K_k,K_j]\subseteq\ker(\kappa^\minus_{ij})$.
Second, if $j=i$ and $k\neq n+1$
then \eqref{thm77} holds because
\[
[K_k,K_i] \subseteq [(G_i)_{x,0+},K_i]
\subseteq (G_i)_{x,r_i+,r_i/2+} \subseteq \ker(\WeilRep_i),
\]
by a root-group commutator calculation and Galois descent.
In the remaining cases $\kappa^\minus_{ij}=\hat\phi_i|_{K_j}$
because $j\neq i$.
Third, if $j<i$, then \eqref{thm77} holds because
$[K_k,K_j]\subseteq K_j \subseteq\ker(\hat\phi_i)$.
Fourth, if $i < j$, then \eqref{thm77} holds because
$[K_k,K_j]\subseteq [G_{i+1}(F),G_{i+1}(F)] \subseteq \ker(\phi_i)$,
as $\phi_i$ is a character of~$G_{i+1}(F)$,
and $\ker(\phi_i) \cap [K_k,K_j] \subseteq\ker(\hat\phi_i)$
since $\hat\phi_i$ extends~$\phi_i|_{[K_k,K_j]}$.

It remains to show that $(\kappa^\minus_i)^\rtimes$
descends to a representation $\kappa^\minus_i$ of~$K^\minus$,
for which we need to prove that $\kappa^-_{i1}(a_1)\cdots \kappa^-_{i,n+1}(a_{n+1}) = \Id$, the identity linear transformation,
whenever $a_1\cdots a_{n+1}=1$ with $a_j \in K_j$ for $1 \leq j \leq n+1$. So suppose $a_1\cdots a_{n+1}=1$ with $a_j \in K_j$ for $1 \leq j \leq n+1$.
Then $\kappa^\minus_{ii}(a_i) = \hat\phi_i(a_i)\Id$
because $a_i\in K_i\cap\prod_{1\leq j\leq n+1,j\neq i} K_j \subseteq (G_i)_{x,r_i, r_{i-1}} \subseteq (G_i)_{x,r_i, r_i/2+}$,
and $\kappa^\minus_{i,n+1}(a_{n+1}) = \hat\phi_i(a_{n+1})\Id$ 
because $a_{n+1}\in K_n\cap\prod_{j=1}^n K_j \subseteq (G_{n+1})_{x,0+}$,
a group on which $\WeilRep_i$ is trivial. Hence 
$\kappa^-_{i1}(a_1)\cdots \kappa^-_{i,n+1}(a_{n+1})=\hat\phi_i(a_1)\cdots \hat\phi_i(a_{n+1}) \Id=\hat\phi_i(a_1\cdots a_{n+1})\Id=\Id$.
\end{subequations}
\end{proof}

For later use, let us record the following intertwining property.
\begin{lemma}\label{lemma:Knormalizeskappa-}
	Suppose $q>2$. For every $k \in K$, we have ${^k\kappa^-} \simeq \kappa^-$. In particular, if $\kappa$ is an irreducible representation of $K$ that contains $\kappa^-$ when restricted to $K^-$, then $\kappa$ is $\kappa^-$-isotypic.
\end{lemma}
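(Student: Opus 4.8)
The plan is to reduce, by a generation argument, to the case $k\in G_{n+1}(F)_{[x]}$, and then to push conjugation by $k$ through the construction of $\kappa^-$, using the uniqueness of the Heisenberg--Weil representations that holds once $q>2$. For the reduction, I would first observe that $K=\langle K^-,\,G_{n+1}(F)_{[x]}\rangle$: unwinding $(G_j)_{x,r_j,r_j/2}=(G_{j+1},G_j)(F)_{x,r_j,r_j/2}$ as the group generated by $G_{j+1}(F)_{x,r_j}$ and the affine root groups $U_\alpha(F)_{x,r_j/2}$ with $\alpha\in\Phi(G_j,T)\smallsetminus\Phi(G_{j+1},T)$, one sees that $G_j(F)_{x,r_j/2}$ is generated by $G_{j+1}(F)_{x,r_j/2}$ and $(G_j)_{x,r_j,r_j/2}$; iterating and using $r_1>\dots>r_n$ (so that the affine root groups $U_\alpha(F)_{x,r_j/2}$ for $\alpha\in\Phi(G_i)\smallsetminus\Phi(G_{i+1})$ with $i>j$ already lie in $(G_i)_{x,r_i,r_i/2}$) gives $G_j(F)_{x,r_j/2}\subseteq\langle K^-,\,G_{n+1}(F)_{[x]}\rangle$ for every $j$. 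Since $\{k\in K:{}^k\kappa^-\simeq\kappa^-\}$ is a subgroup of $K$ (here $K^-\trianglelefteq K$, so every $k\in K$ acts on the isomorphism classes of representations of $K^-$, and the set in question is visibly closed under products and inverses) containing $K^-$, it suffices to prove ${}^k\kappa^-\simeq\kappa^-$ for $k\in G_{n+1}(F)_{[x]}$.

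Now fix $k\in G_{n+1}(F)_{[x]}$. As $G_{n+1}\subseteq G_{j+1}$ and $G_{n+1}$ is elliptic in $G_{j+1}$, we have $k\in G_{j+1}(F)_{[x]}$, so \Cref{thm33} shows that conjugation by $k$ normalizes $(G_j)_{x,r_j,r_j/2}$ and the kernel of $(G_j)_{x,r_j,r_j/2}\twoheadrightarrow\sfV_j^\natural$, inducing an automorphism $\bar k_j\in\AutZfix(\sfV_j^\natural)$; it also normalizes $(G_{n+1})^-_{[x]}\trianglelefteq(G_{n+1})_{[x]}$. Writing $\kappa^-=\bigotimes_{i=1}^n\kappa^-_i$ as in the proof of \Cref{lemma:kappaminus}, it is enough to produce, for each $i$, an operator $T_i\in\GL(V_{\WeilRep_i})$ with ${}^k\kappa^-_i=T_i\,\kappa^-_i\,T_i^{-1}$, for then $\bigotimes_i T_i$ intertwines ${}^k\kappa^-=\bigotimes_i{}^k\kappa^-_i$ with $\kappa^-$; and since $K^-$ is generated by the subgroups $(G_j)_{x,r_j,r_j/2}$ ($1\le j\le n$) and $(G_{n+1})^-_{[x]}$, I need this identity only on those subgroups. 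On $(G_j)_{x,r_j,r_j/2}$ with $j\neq i$, where $\kappa^-_i$ is the restriction of $\hat\phi_i$, both $k$ and this subgroup lie in the domain of $\hat\phi_i$ (via the factor $(G_{i+1})_{[x]}$ of the domain when $i<j$, since $G_{n+1}\subseteq G_j\subseteq G_{i+1}$, and via $G(F)_{x,r_i/2+}$ when $i>j$, since $r_i<r_j$), so conjugation invariance of characters gives ${}^k\kappa^-_i=\kappa^-_i$ there and any $T_i$ works. On $(G_i)_{x,r_i,r_i/2}$, where $\kappa^-_i$ is inflated from $\WeilRep_i$ and $\bar k_i$ fixes $Z(\sfV_i^\natural)$, the Stone--von Neumann theorem (\Cref{thm45}) supplies $T_i\in\GL(V_{\WeilRep_i})$, unique up to scalar, intertwining the $\bar k_i$-conjugate of $\WeilRep_i$ with $\WeilRep_i$, and then ${}^k\kappa^-_i=T_i\,\kappa^-_i\,T_i^{-1}$ on $(G_i)_{x,r_i,r_i/2}$.

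The main obstacle is the generator $(G_{n+1})^-_{[x]}$, where $\kappa^-_i=\phi_i\otimes\WeilRep_i$ with $\WeilRep_i$ the pullback along $(G_{n+1})^-_{[x]}\to A$ of a Weil representation of $A$ (the image of $(G_{n+1})^-_{[x]}$ in $\AutZfix(\sfV_i^\natural)$). The character $\phi_i$ of $G_{i+1}(F)\supseteq(G_{n+1})^-_{[x]}\ni k$ is conjugation invariant, so the content is to compare the $\bar k_i$-conjugate of the Weil representation of $A$ with itself. Here $\bar k_i$ lies in the image $A'$ of $G_{n+1}(F)_{[x]}$ and normalizes $A$ (because $(G_{n+1})^-_{[x]}\trianglelefteq G_{n+1}(F)_{[x]}$), so it induces an automorphism of $A\ltimes\sfV_i^\natural$; pulling back the Heisenberg--Weil representation of $A\ltimes\sfV_i^\natural$ along this automorphism and correcting by $T_i$ to restore $\WeilRep_i$ on the factor $\sfV_i^\natural$ yields another Heisenberg--Weil representation of $A\ltimes\sfV_i^\natural$. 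Since $q>2$, the group $A$ (a quotient of the group in \Cref{thm82}) has no character of order two, so \Cref{cor-weil-uniqueness} forces this to coincide with the original Heisenberg--Weil representation; restricting to $A$ and using that $(G_{n+1})^-_{[x]}\to A$ is conjugation equivariant gives ${}^k\kappa^-_i=T_i\,\kappa^-_i\,T_i^{-1}$ on $(G_{n+1})^-_{[x]}$ with the same $T_i$ (up to an irrelevant scalar, by Schur's lemma) as before. This uniqueness step, together with the bookkeeping of which group's characters of order two must be excluded, is the only genuinely delicate point; everything else is conjugation invariance of characters and Stone--von Neumann.

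Assembling the three cases, ${}^k\kappa^-_i=T_i\,\kappa^-_i\,T_i^{-1}$ holds on a generating set of $K^-$, hence on all of $K^-$, so ${}^k\kappa^-\simeq\kappa^-$ and the first assertion follows. The ``in particular'' statement is then immediate Clifford theory: since $K^-\trianglelefteq K$ and the $K$-orbit of the isomorphism class of $\kappa^-$ is a single point, the restriction to $K^-$ of any $\kappa\in\Irr(K,K^-,\kappa^-)$ is a multiple of $\kappa^-$.
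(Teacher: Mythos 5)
Your proof is correct and follows essentially the same route as the paper: reduce to $k\in (G_{n+1})_{[x]}$ via $K=K^-\cdot(G_{n+1})_{[x]}$, use Stone--von Neumann to handle the positive-depth factors, and invoke the uniqueness of the Heisenberg--Weil extension (\Cref{thm120}, resting on \Cref{thm82} and $q>2$) to see that the same intertwiner works on $(G_{n+1})^-_{[x]}$. The only difference is cosmetic: the paper packages the Heisenberg step by characterizing $\kappa^-|_{K_{0+}}$ as the unique irreducible representation that is $\hat\phi$-isotypic on $K_+$, whereas you argue factor by factor with explicit operators $T_i$.
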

\begin{proof}
 Let $k \in K$. Since $K^-$ normalizes $\kappa^-$, and $K=K^- \cdot (G_{n+1})_{[x]}$, we may assume without loss of generality that $k \in (G_{n+1})_{[x]}$. 
 Then $k$ is contained in the group
  $K'=(G_1)_{x,r_1/2+}\cdots (G_n)_{x,r_n/2+}(G_{n+1})_{[x]}$, and hence normalizes the character
 $\hat \phi \defeq \otimes_{i=1}^n\hat \phi_i|_{K'}$.
 Since the restriction of $\kappa^-$ to the normal subgroup 
 $$K_+\defeq (G_1)_{x,r_1/2+}\cdots (G_n)_{x,r_n/2+}(G_{n+1})_{x,0+}$$ is $\hat \phi|_{K_+}$-isotypic, and since the restriction of $\kappa^-$ to 
 $$K_{0+} \defeq (G_1)_{x,r_1/2} \cdots (G_n)_{x,r_1/n}(G_{n+1})_{x,0+}$$ 
 is by the theory of Heisenberg representations the unique (up to isomorphism) irreducible representation that is  $\hat \phi|_{K_+}$-isotypic when restricted to $K_+$, we obtain ${^k\kappa^-|_{K_{0+}}} \simeq \kappa^-|_{K_{0+}}$. 
 Thus it remains to show that under this isomorphism ${^k\kappa^-}(g)$ and $\kappa^-(g)$ agree for $g \in (G_{n+1})_{[x]}^-$. Since $\otimes_{i=1}^{n} \phi_i|_{(G_{n+1})_{[x]}^-}$ is the restriction of the character $\otimes_{i=1}^{n} \phi_i|_{(G_{n+1})_{[x]}}$, by \Cref{lemma:kappaminus}, it suffices to show that Weil representations ${^k\omega_i^-}(g)$ and $\omega_i^-(g)$ agree for $1 \leq i \leq n$ under the isomorphism that matches the underlying Heisenberg representations. This follows from the uniqueness of the extension of the Heisenberg representation (see \Cref{thm120}).
\end{proof}

\subsection{Supercuspidal representations} \label{sec:proof}
We keep the notation from the previous subsections. In particular, \Cref{lemma:kappaminus} provides us with a representation $\kappa^\minus$ of $K^\minus$, and we denote by $\kappa$ an irreducible representation of $K$ that contains $\kappa^\minus$ when restricted to $K^\minus$, and by $\sigma$ an irreducible representation of $N_{\Kplus}(\rho\otimes\kappa)$ that contains $(\rho \otimes \kappa)$ when restricted to $K$.
Our objective is to prove that $\cind_{N_{\Kplus}(\rho\otimes\kappa)}^{G(F)}(\sigma)$
is irreducible supercuspidal if $q>3$ (see \Cref{thmseveralscC}).

Since our proof is similar to the proof of \cite[Theorem~3.1]{Fi-Yu-works},
we will mostly focus on the modifications necessary to accommodate the new Heisenberg--Weil representation theory approach for $p=2$ and deal with the more complicated intertwining set when \GEE\ fails.

For the first half of the proof, which follows \cite{Yu}, in particular Sections 8 and 9, we need to generalize some of Yu's results, which is done in \Cref{thm75,thm80}. Due to the more complicated structure of the intertwining set when \GEE\ fails, we also work with the image of the simply connected cover at times, on which the desired characters we work with vanish (see \Cref{cor:trivialchar}). We also introduce two general lemmas, \Cref{thm26,thm27}, that are used to avoid the need of \cite[Theorem~2.4]{Gerardin} in the second part of the proof of supercuspidality as G\'erardin's result does not apply to our Heisenberg--Weil representations construction for $p=2$.

We begin with a lemma that is not strictly necessary for the proof of supercuspidality, but that allows a better understanding of the structure of the group from which we induce when \GEE\ fails. We write $H_\alpha\defeq d\alpha^\vee(1)$.

\begin{lemma} \label{thm46}
Let $G'$ be a split reductive group over $F$ with split maximal torus~$T$.
Let $X\in\Lie^*(T)_r$ for some integer $r$, and
let $\overline X$ be the image of $X$ in $\Lie^*(T)_r/\Lie^*(T)_{r+}$.
Let $W = W(G',T)(F)$, and let
$W'$ be the subgroup of~$W$ generated by the reflections
$s_\alpha$ with $\val(X(H_\alpha)) > r$, for $\alpha \in \Phi(G',T)$.
Then the group $Z_W(\overline X)/W'$ is a $p$-group.
\end{lemma}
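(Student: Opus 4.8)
The plan is to recognize the quotient $Z_W(\overline X)/W'$ as a component group of the centralizer of a semisimple element, and to bound it by lifting $\overline X$ to characteristic zero, where the analogous stabilizer is controlled by Steinberg's theorem on reflection groups.

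First I would set up the dictionary. Fix a uniformizer $\varpi$ of $F$ and put $Y\defeq\varpi^{-r}X$; since $T$ is split and $r\in\bZ$, we have $\Lie^*(T)=X^*(T)\otimes_\bZ F$ with $\Lie^*(T)_r$ corresponding to $X^*(T)\otimes_\bZ\varpi^r\cO_F$, so $Y\in X^*(T)\otimes_\bZ\cO_F$, the element $\overline X$ is the reduction of $Y$ modulo $\varpi$, and for $\alpha\in\Phi\defeq\Phi(G',T)$ the condition $\val(X(H_\alpha))>r$ becomes $Y(H_\alpha)\in\varpi\cO_F$, i.e.\ $\overline X(H_\alpha)=0$ in $k_F$. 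Since $W$ acts on $X^*(T)$ by automorphisms it preserves $X^*(T)\otimes_\bZ\cO_F$ and $X^*(T)\otimes_\bZ\varpi\cO_F$, and using $H_{w\alpha}=w(H_\alpha)$ one checks quickly that $Z_W(\overline X)$ normalizes $\Phi'\defeq\{\alpha\in\Phi:\overline X(H_\alpha)=0\}$ and hence $W'$, so the quotient is a group. Because a finite group all of whose elements have $p$-power order is a $p$-group, it suffices to prove: every $w\in Z_W(\overline X)$ of order prime to $p$ already lies in $W'$. Granting this, the lemma follows formally by decomposing an element of $Z_W(\overline X)/W'$ into its $p$- and $p'$-parts: the $p'$-part is the image of the prime-to-$p$ part of a lift of a suitable power, hence lies in the image of $W'$, hence is trivial.

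For the displayed claim I would argue as follows. Given $w\in Z_W(\overline X)$ of order $m$ with $p\nmid m$, the integer $m$ is a unit in $\cO_F$, so $\widehat X\defeq\tfrac1m\sum_{i=0}^{m-1}w^i(Y)$ lies in $X^*(T)\otimes_\bZ\cO_F$, is fixed by $w$, and (using that $w^i(Y)\equiv Y \bmod X^*(T)\otimes_\bZ\varpi\cO_F$ for all $i$, since $w$ fixes $\overline X$) reduces to $\overline X$ modulo $\varpi$. Now view $\widehat X$ in the characteristic-zero space $\Lie^*(T)=X^*(T)\otimes_\bZ F$ carrying the reflection action of $W$. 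By Steinberg's fixed-point theorem the stabilizer of $\widehat X$ in $W$ is generated by the reflections $s_\alpha$ it contains, that is, by those with $\widehat X(H_\alpha)=0$; reducing such an equation modulo $\varpi$ gives $\overline X(H_\alpha)=0$, so $\alpha\in\Phi'$ and $s_\alpha\in W'$. Since $w$ fixes $\widehat X$, this yields $w\in W'$, as wanted.

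I expect the only delicate point to be keeping characteristics straight. The displayed claim is genuinely false with $\overline X$ in place of $\widehat X$: in small residue characteristic a reflection $s_\alpha$ can fix $\overline X$ even though $\overline X(H_\alpha)\neq0$ (already for $G'=\SL_2$ with $p=2$, where the resulting quotient is $\bZ/2\bZ$), so Steinberg's theorem cannot be applied directly to $\overline X$. It is precisely the prime-to-$p$ order of $w$ that makes the averaged lift $\widehat X$ available while retaining $w$ as a symmetry, turning the characteristic-$p$ statement into a classical characteristic-zero one.
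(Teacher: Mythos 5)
Your argument takes a genuinely different route from the paper's. The paper deduces the lemma in one step from Steinberg's result on stabilizers in characteristic~$p$, namely \cite[Theorem~4.5]{Steinberg-torsion} applied to $\overline X \in X^*(T)\otimes k_F$, whereas you in effect reprove the special case of that theorem needed here: you reduce to elements of order prime to~$p$ and invoke only the classical characteristic-zero fixed-point theorem. Your preliminary steps are correct — the identification of the condition $\val(X(H_\alpha))>r$ with $\overline X(H_\alpha)=0$, the normality of $W'$ in $Z_W(\overline X)$, the reduction to showing that prime-to-$p$-order elements of $Z_W(\overline X)$ lie in $W'$, and the averaging of $Y=\varpi^{-r}X$ over $\langle w\rangle$ — and in mixed characteristic the proof is complete and self-contained, which is a nice alternative to the citation.

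There is, however, a gap when $F$ has positive characteristic, a case the paper allows (only the residue characteristic is called $p$; $F\simeq\bF_q((t))$ is permitted throughout). In that case $X^*(T)\otimes_\bZ F$ is \emph{not} a characteristic-zero space, so the step ``view $\widehat X$ in the characteristic-zero space $\Lie^*(T)$'' fails, and the appeal to Steinberg's fixed-point theorem over $F$ is unjustified: over a field of characteristic $p$ the stabilizer of a vector need not be generated by the reflections it contains — your own $\SL_2$, $p=2$ example over $k_F$ occurs verbatim over $F$ itself. The repair is small but necessary: the assertion depends only on $\overline X\in X^*(T)\otimes k_F$, so instead of averaging inside $\cO_F$, lift $\overline X$ to $X^*(T)\otimes\cO$ for a complete discrete valuation ring $\cO$ of characteristic zero with residue field $k_F$ (e.g.\ the Witt vectors of the finite field $k_F$), average that lift over $\langle w\rangle$ using $1/m\in\cO$, and apply the characteristic-zero fixed-point theorem over $\Frac(\cO)$; the reflections annihilating the averaged lift reduce to reflections with $\overline X(H_\alpha)=0$, hence lie in $W'$, exactly as in your mixed-characteristic argument. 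With that modification your proof covers both cases.
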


\begin{proof}
Using $\Lie^*(T)_{-r}/\Lie^*(T)_{(-r)+} \simeq X^*(T)\otimes k_F$,
the group $W'$ is generated by the reflections~$s_\alpha$
with $\overline X(H_\alpha) = 0$,
and we may apply \cite[Theorem~4.5]{Steinberg-torsion} to (in the notation of \cite{Steinberg-torsion}) $H=\<\overline X\>$ with $Z_w(H)^0=W'$ and $X/X^0$ being an $\bF_p$-vector space.
\end{proof}

The following is a generalization of \cite[Lemma~8.3]{Yu} and the second part of the proof is a variant of Yu's arguments.

\begin{lemma}[{cf.\ \cite[Lemma~8.3]{Yu}}] \label{thm75}
Let $H$ be a connected reductive $F$-group
and let $H' \subseteq H$ be a twisted Levi subgroup that splits over a tame extension of~$F$.
Let $\widetilde H$ be a possibly disconnected reductive $F$-group
and let $f\colon\widetilde H\to\Aut(H)$ be an algebraic action
such that the induced map $f^\circ: \widetilde H^\circ\to H^\tn{ad}$
is surjective with central kernel.

Let $X\in\Lie^*(H')^{H'}(F)$ be $(H,H')$-generic of depth~$-r$.
Then there is a subgroup $\widetilde H'$ of $N_{\widetilde H}(H')$
containing the identity component of $N_{\widetilde H}(H')$
such that:

\begin{enumerate}
\item\label{thm75a} If $h\in\widetilde H(F)$, and $Y_1,Y_2\in\Lie^*(H')_{x,-r}$
are regular semisimple that satisfy
\[
Y_1 \equiv Y_2 \equiv X \!\!\!\! \pmod{\Lie^*(H')_{x,(-r)+}} \quad \, \, \textnormal{and} \quad \, \, \Ad(h) Y_1 = Y_2,
\]
then $h\in\widetilde H'(F)$.

\item \label{thm75b}
There is a short exact sequence of groups
$1 \to A' \to \pi_0(\widetilde H')(F^\tn{sep}) \to A \to 1$
where $A\subseteq\pi_0(\widetilde H)(F^\tn{sep})$
and $A'$ is a $p$-group,
which is trivial if $X$ satisfies \GEE.
\end{enumerate}
\end{lemma}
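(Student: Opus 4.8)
The plan is to exhibit the subgroup $\widetilde H'$ explicitly and then verify the two assertions. Write $\overline X$ for the image of $X$ in the $k_F$-vector space $\Lie^*(H')^{H'}(F)_{-r}/\Lie^*(H')^{H'}(F)_{(-r)+}$. Two consequences of genericity make the construction transparent: first, since $X$ annihilates $[\Lie(H'),\Lie(H')]$ while $\val(X(H_\alpha)) = -r \neq \infty$ for every $\alpha\in\Phi(H,T)\smallsetminus\Phi(H',T)$ by \textbf{(GE1)}, the roots of $Z_H(X)^\circ$ with respect to any maximal torus $T$ of $H'$ are precisely $\Phi(H',T)$, so that $Z_H(X)^\circ = H'$; second, since $H'$ is reductive, the normalizer $N_{\widetilde H}(H')$ acts on $\Lie^*(H')^{H'}(F)$, and hence on the space containing $\overline X$, through its finite component group $\pi_0(N_{\widetilde H}(H'))$, the identity component acting by inner automorphisms of $H'$. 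I would then take $\widetilde H'$ to be the preimage in $N_{\widetilde H}(H')$ of the stabilizer of $\overline X$ in $\pi_0(N_{\widetilde H}(H'))$: this is a Galois-stable union of connected components of $N_{\widetilde H}(H')$, hence a closed $F$-subgroup, it contains $N_{\widetilde H}(H')^\circ$, and $\widetilde H'{}^\circ = N_{\widetilde H}(H')^\circ$.

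For part (a): the conditions \textbf{(GE0)} and \textbf{(GE1)} constrain only the leading term of an element, so each $Y_i$ is again $(H,H')$-generic of depth $-r$ relative to $x$; being in addition regular semisimple, $T_i := Z_H(Y_i)^\circ$ is a maximal torus, and a standard argument with good elements gives $T_i\subseteq H'$ and $x\in\sB(T_i,F)$. From $\Ad(h)Y_1 = Y_2$ we get $f(h)(T_1) = T_2$, and, transporting the restrictions of the $Y_i$ to the $\ft_i$ and reducing modulo $\Lie^*(T_i)_{(-r)+}$, the isomorphism $f(h)\colon T_1\to T_2$ carries $\overline X_{\ft_1}$ to $\overline X_{\ft_2}$; since $\Phi(H',T_i) = \{\alpha\in\Phi(H,T_i) : \overline X_{\ft_i}(H_\alpha)=0\}$, this forces $f(h)$ to match $\Phi(H',T_1)$ with $\Phi(H',T_2)$, whence $f(h)(H') = H'$. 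Finally $\Ad(h)(X) - X = (Y_2 - X) - \Ad(h)(Y_1 - X)$ lies in $\Lie^*(H')_{x,(-r)+} + \Lie^*(H')_{f(h)x,(-r)+}$; applying the $H'$-equivariant projection onto $\Lie^*(H')^{H'}$, which respects Moy--Prasad filtrations and is point-independent on this central part, gives $\Ad(h)(X) - X \in \Lie^*(H')^{H'}(F)_{(-r)+}$, i.e.\ $f(h)$ fixes $\overline X$; hence $h\in\widetilde H'(F)$.

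For part (b): the inclusion $\widetilde H'\hookrightarrow\widetilde H$ induces $\pi_0(\widetilde H')\to\pi_0(\widetilde H)$; let $A$ be its image, so $A\subseteq\pi_0(\widetilde H)(F^\tn{sep})$, and let $A'$ be its kernel, which is $\pi_0(\widetilde H'\cap\widetilde H^\circ)(F^\tn{sep})$, the component group of the stabilizer of $\overline X$ in $N_{\widetilde H^\circ}(H')$. Since $f^\circ$ is surjective with central kernel, the preimage under $f^\circ$ of any connected subgroup of $H^\tn{ad}$ containing a maximal torus is connected (root groups lift), and applying this to the image of $H' = Z_H(X)^\circ$ identifies that preimage with $N_{\widetilde H}(H')^\circ$ and, over $F^\tn{sep}$ after representing components by elements of $N(T)$, identifies $A'$ with $Z_{W(H,T)}(\overline X_\ft)/W(H',T)$, using that any $w\in W(H,T)$ fixing $\overline X_\ft$ normalizes $W(H',T) = \langle s_\alpha : \overline X_\ft(H_\alpha)=0\rangle$. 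By \Cref{thm46}, applied over $F^\tn{sep}$ (after rescaling so that $-r\in\bZ$, and noting the relevant reflection subgroup is $W(H',T)$ by \textbf{(GE1)}), this quotient is a $p$-group; and it is trivial exactly when $Z_{W(H,T)}(\overline X_\ft) = W(H',T)$, that is, when $X$ satisfies \GEE.

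The hard part will be part (a): one must reprove, now in the possibly disconnected setting in which $f(h)$ may induce an outer automorphism of $H$, the rigidity that any element conjugating two generic regular semisimple elements near $X$ normalizes $H'$ and fixes the leading term $\overline X$. This rests on the transfer of genericity from $X$ to the $Y_i$, the placement of the associated tori $T_i$ inside $H'$ with $x$ in their apartments, and the coroot bookkeeping forcing $f(h)$ to preserve $\Phi(H',T_i)$, and amounts to an upgrade of the argument of \cite[Lemma~8.3]{Yu}. Part (b) is then essentially formal, its one substantive input being Steinberg's bound on $Z_{W(H,T)}(\overline X_\ft)/W(H',T)$, which is already recorded in \Cref{thm46}.
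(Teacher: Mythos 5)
Your construction of $\widetilde H'$ --- the union of those components of $N_{\widetilde H}(H')$ whose action on $\Lie^*(H')^{H'}$ fixes the reduction of~$X$ --- is, modulo the filtration compatibilities you gloss over, the same group as the paper's $\widetilde H'_E = Z_{\widetilde N_T}(\overline X)_E\cdot {f^\circ}^{-1}(H'/Z(H))_E$, and your part (b) (identify the kernel with $Z_{W(H,T)}(\overline X_\ft)/W(H',T)$ and quote \Cref{thm46}) is exactly the paper's argument. The genuine problems are in part (a). First, the claim that ``a standard argument with good elements gives \dots\ $x\in\sB(T_i,F)$'' is false in general: the hypothesis only pins down the leading term of $Y_i$, and since $X$ is central in $\Lie^*(H')$ the tail $Y_i-X\in\Lie^*(H')_{x,(-r)+}$ can be a regular element of arbitrarily large depth lying in the dual Lie algebra of \emph{any} maximal torus of $H'$; so $T_i$ can be a torus with $x\notin\sB(T_i,F)$. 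Without $x$ in an apartment of $T_i$ over a splitting field, restriction to $\ft_i$ need not carry $\Lie^*(H')_{x,(-r)+}$ into $\Lie^*(T_i)_{(-r)+}$, so your pivotal step ``$f(h)$ carries $\overline X_{\ft_1}$ to $\overline X_{\ft_2}$'', from which you deduce $f(h)(H')=H'$, is unjustified. This is precisely where the paper first conjugates $T_1$ to $T_2$ by some $h'\in{f^\circ}^{-1}(H'/Z(H))(E)$ and invokes \cite[Lemma~8.2]{Yu} to get $f(h')(Y_1)\in\Lie^*(T_2)_{-r}$ with $f(h')(Y_1)\equiv X \pmod{\Lie^*(T_2)_{(-r)+}}$; some such good-element input must actually be supplied, not gestured at.

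Second, your concluding step rests on the assertion that the $H'$-equivariant projection onto $\Lie^*(H')^{H'}$ respects Moy--Prasad filtrations. That fails exactly in the small-residue-characteristic situations this paper is about: already for $H'$ with Lie algebra $\mathfrak{gl}_2$ and $p=2$, the projection is $A\mapsto \tfrac12\Tr(A)\cdot 1$ (after identifying $\Lie^*(H')$ with $\Lie(H')$ by the trace form), which lowers depth by $\val(2)>0$. So from $\Ad(h)X-X\in\Lie^*(H')_{x,(-r)+}+\Lie^*(H')_{f(h)x,(-r)+}$ you cannot conclude $\Ad(h)X-X\in\Lie^*(H')^{H'}(F)_{(-r)+}$ by projecting. (The conclusion itself can be rescued by a different argument --- once $f(h)(H')=H'$ is known, $\Ad(h)X$ is already central, and one can decompose both lattices along a maximal torus whose apartment contains $x$ and $f(h)x$ --- but that presupposes the previous, unjustified step.) The paper avoids any central projection altogether: it factors $h=nh'$ and checks $n\in Z_{\widetilde N_{T_2}}(\overline X)(E)$ by working entirely inside $\Lie^*(T_2)$ with its point-independent filtration. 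As written, part (a) of your proof therefore rests on two load-bearing claims that are false or unproved in general.
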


\begin{proof}
To start with, we give the construction of~$\widetilde H'$.
Let $T$ be a maximal torus of $H'$ and let $E$ be a finite Galois field extension of $F$ over which $T$ is split. Note that we do not assume $E/F$ to be tamely ramified.
We may identify $\Lie^*(T)$ with the subspace
$\Lie^*(H')^T\subseteq\Lie^*(H')$
on which the adjoint action of $T$ is trivial,
and $X\in\Lie^*(T)$ under this identification
because $\Lie^*(T)\supseteq\Lie^*(H')^{H'}$.
Let $\widetilde N_T \defeq N_{\widetilde H}(H', T)$, let $N_T \defeq N_{\widetilde H^\circ}(H', T)$, and let $N'_T$ be the normalizer of $T$ in~${f^\circ}^{-1}(H'/Z(H))$.
So $N'_T\subseteq N_T\subseteq\widetilde N_T$, each group of finite index in the next one.
We define $\wt H'$ to be the $F$-subgroup of $N_{\wt H}(H')$ for which
\[
\widetilde H'_E = Z_{\widetilde N_T}(\overline X)_E\cdot {f^\circ}^{-1}(H'/Z(H))_E,
\]
where $\overline X$ is the image of $X$ modulo $\Lie^*(T)_{x,(-r)+}$.
Note that this construction is independent of the choice of $E$ as replacing $E$ by a larger field extension yields the same group $\wt H'$. 
Moreover, this construction is also independent of the choice of~$T$:
If $T'$ is any other maximal torus of $H'$,
then we may take $E$ to split both $T$ and~$T'$,
and the construction of $\widetilde H'_E$
produces the same group for both tori because
$T_E$ and~$T'_E$ are $H'(E)$-conjugate.

The exact sequence of \ref{thm75b} is constructed by observing that
$N'_T$ is a normal subgroup of $Z_{N_T}(\overline X)$
and $\pi_0(\widetilde H')$ fits into the short exact sequence
\[
\begin{tikzcd}
1 \rar & \dfrac{Z_{N_T}(\overline X)}{N'_T} \rar &
\pi_0(\widetilde H') \rar &
\dfrac{Z_{\widetilde N_T}(\overline X)}{Z_{N_T}(\overline X)} \rar & 1.
\end{tikzcd}
\]
By \Cref{thm46}, the first group in this sequence is a $p$-group,
and it follows immediately from the definition of \GEE\ (see \cpageref{GE2})
that this group is trivial if $X$ also satisfies \GEE.

It remains to prove \ref{thm75a}, so let $h$, $Y_1$ and $Y_2$ be as in \ref{thm75a}.
Set $T_j \defeq Z_{H'}(Y_j)^\circ$ for $j=1,2$, which is a maximal torus of $H'$.
Let $E/F$ be a finite Galois extension splitting $T_1$ and~$T_2$.
Take $h'\in {f^\circ}^{-1}(H'/Z(H))(E)$ such that $T_2 = f(h')(T_1)$.
Then $Y_0\defeq f(h')(Y_1)\in\Lie^*(T_2)_{-r}$ by \cite[Lemma~8.2]{Yu},
where we identify $\Lie^*(T_2)$ with the subspace
$\Lie^*(H')^{T_2}\subseteq\Lie^*(H')$
on which the adjoint action of $T_2$ is trivial.
We have $\Lie^*(T_2)\supseteq\Lie^*(H')^{H'}$,
so that $X\in\Lie^*(T_2)$ under this identification.
Using \cite[Lemma~8.2]{Yu} we obtain
\[
Y_0 \equiv f(h')(X) = X \equiv Y_2 \pmod{\Lie^*(T_2)_{(-r)+}}.
\]
The element $n\defeq hh'^{-1}\in\widetilde H(E)$ normalizes~$T_2$.
Moreover, 
\[
f(n) (X) \equiv f(n) (Y_0) = Y_2 \equiv X \pmod{\Lie^*(T_2)_{(-r)+}}.
\]
Since $X$ is $(H,H')$-generic of depth~$-r$,
if $\alpha\in\Phi(H,T_2)$, then 
\[
\val(X(H_\alpha)) = \begin{cases}
\infty & \tn{if $\alpha\in\Phi(H',T_2)$,} \\
-r     & \tn{if not.}
\end{cases}
\]
As $n\in\widetilde H(E)$ preserves the set $\Phi(H,T_2)$,
it follows that the element $n$ also preserves the subset $\Phi(H',T_2)$ and hence normalizes $H'$. Hence $n \in Z_{\wt N_{T_2}}(\overline X)(E)$, and therefore 
\[
h=nh' \in \wt H(F) \cap (Z_{\wt N_{T_2}}(\overline X)(E)\cdot{f^\circ}^{-1}(H'/Z(H))(E))=\wt H'(F). \qedhere
\]
\end{proof}

Replacing \cite[Lemma~8.3]{Yu} by \Cref{thm75} in Yu's work we obtain the analogue of (the first half of) \cite[Theorem~9.4]{Yu} in our more general setting. However, due to the more complicated intertwining set when \GEE\ fails, we will have to work with a more general statement that only considers the restriction to the image of the simply connected covers of appropriate groups. Recall that we denote the image of $G^\sc(F)$ in $G(F)$ by $G(F)^\natural$. %

\begin{lemma}[cf.\ {\cite[Theorem~9.4]{Yu}}] \label{thm80}
Let $\widetilde H$ be a possibly disconnected reductive $F$-group
with identity component~$H$,
let $H' \subseteq H$ be a twisted Levi subgroup that splits over a tame extension of~$F$, and
let $\phi\colon H'(F)\to\bbC^\times$
be a character that is $(H,H')$-generic relative to $x$ of depth~$r$. We write $c\colon H^\tn{sc}\to H$ for the simply-connected cover of $H$, and ${H^{\sc}}'\defeq H'\times_H H^\tn{sc}$.
Then there exists a subgroup $\widetilde H'$ of~$N_{\widetilde H}(H')$
with identity component~$H'$ satisfying the following properties: 
\begin{lemmaenum}
\item \label{thm80a}
If $h\in\widetilde H(F)$ intertwines
the restriction of $\hat\phi_{(H,x)}$ to
$(H',H)_{x,r,r/2+}\cap H(F)^\natural$,
then $h\in H(F)_{x,r/2}\cdot \widetilde H'(F)\cdot H(F)_{x,r/2}$.

\item \label{thm80b}
There is a short exact sequence of groups
$1 \to A' \to \pi_0(\widetilde H')(F^\tn{sep}) \to A \to 1$
where $A\subseteq\pi_0(\wt H)(F^\tn{sep})$
and $A'$ is a $p$-group, which is trivial
if $\phi\circ c|_{{H^{\sc}}'(F)}$ satisfies \GEE.

\end{lemmaenum}
In particular, if $\widetilde H$ is connected
and $p$ is not a torsion prime for $\widehat H^\tn{ad}$,
then $\widetilde H' = H'$.
\end{lemma}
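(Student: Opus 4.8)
The plan is to deduce the lemma from \Cref{thm75}, applied to the conjugation action of~$\widetilde H$ on~$H$, together with Yu's analysis of intertwiners in \cite[Section~9]{Yu}. First I would observe that the conjugation homomorphism $f\colon\widetilde H\to\Aut(H)$ satisfies the hypotheses of \Cref{thm75}: its restriction to $\widetilde H^\circ=H$ is the quotient morphism $H\to H^{\tn{ad}}$, which is surjective with central kernel~$Z(H)$. Fix an $(H,H')$-generic element $X\in\Lie^*(H')^{H'}(F)$ of depth~$-r$ realizing $\phi|_{H'(F)_{x,r}}$. Then \Cref{thm75} produces a subgroup $\widetilde H'\subseteq N_{\widetilde H}(H')$; since it is generated by $H'$ and a centralizer whose identity component is a maximal torus of~$H'$, its identity component is~$H'$. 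The exact sequence in part~(b) is exactly that of \Cref{thm75}, together with the observation that $X$ satisfies~\GEE\ if and only if $\phi\circ c|_{{H^{\sc}}'(F)}$ does: both are statements about the centralizer of the reduction of the restriction of~$X$ to a maximal torus, computed inside the common Weyl group $W(H,T)(\bar k_F)=W(H^{\sc},T^{\sc})(\bar k_F)$ acting on the common root lattice, and so are insensitive to the central isogeny~$c$.

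For part~(a) I would follow the proof of \cite[Theorem~9.4]{Yu}, with \Cref{thm75} in place of \cite[Lemma~8.3]{Yu}. Write $J^+\defeq(H',H)_{x,r,r/2+}$; by the construction of~$\hat\phi_{(H,x)}$, its restriction to~$J^+$ is inflated from $J^+/(H',H)_{x,r+,r/2+}\simeq\fh'_{x,r}/\fh'_{x,r+}$, where it is given by~$X$. Since $H(F)_{x,r/2}$ normalizes $J^+$ and $H(F)^\natural$, hence $J^+\cap H(F)^\natural$, and acts trivially on $\hat\phi_{(H,x)}|_{J^+}$, any $h\in\widetilde H(F)$ intertwining $\hat\phi_{(H,x)}|_{J^+\cap H(F)^\natural}$ may, after writing $h=g_1mg_2$ with $g_1,g_2\in H(F)_{x,r/2}$ as in \cite[Section~9]{Yu}, be replaced by the reduced representative~$m$. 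The intertwining condition on the overlap then forces the characters realized by~$X$ and by $\Ad(m)X$ to agree there; using condition~\textbf{(GE1)}, which pins down the valuations $\val(X(H_\alpha))$, Yu's argument extracts that $\Ad(m)$ carries $X$ into $\Lie^*(H')$ with $\Ad(m)X\equiv X\pmod{\Lie^*(H')_{x,(-r)+}}$. Replacing $X$ by a regular semisimple element $Y_1$ in the same coset of $\Lie^*(H')_{x,(-r)+}$ (possible as such elements are dense) and setting $Y_2\defeq\Ad(m)Y_1$, we are in the situation of \Cref{thm75}, which gives $m\in\widetilde H'(F)$, hence $h\in H(F)_{x,r/2}\cdot\widetilde H'(F)\cdot H(F)_{x,r/2}$.

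The genuinely new point, and the step I expect to be the main obstacle, is that~$h$ is only assumed to intertwine $\hat\phi_{(H,x)}$ on $J^+\cap H(F)^\natural$ rather than on all of~$J^+$, so the overlap available for the extraction above is smaller. One must check that $J^+\cap H(F)^\natural$ still detects enough of~$\hat\phi_{(H,x)}|_{J^+}$ — equivalently, of~$X$ — to force the congruence $\Ad(m)X\equiv X\pmod{\Lie^*(H')_{x,(-r)+}}$. I would make this precise by passing to the simply connected cover $c\colon H^{\sc}(F)\to H(F)^\natural$, which is surjective and compatible with the Moy--Prasad filtrations, so that $J^+\cap H(F)^\natural$ is, modulo the finite center $\ker(c)(F)$, the image of $({H^{\sc}}',H^{\sc})_{x,r,r/2+}$; running Yu's extraction there for the pulled-back character realizing $\phi\circ c$ then yields the required congruence. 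This is also why the genericity condition entering part~(b) is~\GEE\ for $\phi\circ c|_{{H^{\sc}}'(F)}$ rather than for~$\phi$ itself.

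Finally, for the concluding assertion: if~$\widetilde H$ is connected, then $\widetilde H=H$, so $\pi_0(\widetilde H)(F^\sep)=1$, which forces $A=1$ in part~(b), and hence $\pi_0(\widetilde H')(F^\sep)=A'$. If in addition~$p$ is not a torsion prime for $\widehat H^{\tn{ad}}=\widehat{H^{\sc}}$, then the $(H^{\sc},{H^{\sc}}')$-generic character $\phi\circ c|_{{H^{\sc}}'(F)}$ satisfies~\GEE\ by \cite[Lemma~8.1]{Yu}, so part~(b) gives $A'=1$. Therefore $\pi_0(\widetilde H')(F^\sep)=1$, i.e., $\widetilde H'=(\widetilde H')^\circ=H'$.
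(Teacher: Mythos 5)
There is a genuine gap, and it sits exactly at the point where your construction and the paper's diverge: on which group you run \Cref{thm75}. You apply \Cref{thm75} to the action of $\wt H$ on $H$ with the generic element $X\in\Lie^*(H')^{H'}(F)$, whereas the paper applies it to the action of $\wt H$ on $H^{\tn{sc}}$ (via the unique lift of the conjugation action) with $X$ viewed in $\Lie^*({H^{\sc}}')^{{H^{\sc}}'}(F)$. This is not a cosmetic difference. The intertwining hypothesis in (a) only concerns $\hat\phi_{(H,x)}$ on $(H',H)_{x,r,r/2+}\cap H(F)^\natural$, so (as you yourself note) the extraction of the congruence must be carried out on the simply connected side, for the character $\psi\defeq\hat\phi_{(H,x)}\circ c$ realized by $X$ viewed in $\Lie^*({H^{\sc}}')$: Adler's result and \cite[Lemma~8.6]{Yu} then produce regular semisimple $Y_1,Y_2$ and $Z_i=\Ad(k_i)Y_i$ inside $\Lie^*(H^{\sc})$, and \Cref{thm75}\ref{thm75a} applied to that data places $h'=c(k_2)hc(k_1)^{-1}$ in the group $\wt H'$ built from the \emph{sc-side} reduction $\overline{X}$. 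That group is in general strictly larger than the $H$-side group you defined, because the reduction of $X$ is taken in $X^*(T)\otimes\bar k_F$ on the $H$ side but in $X^*(T^{\sc})\otimes\bar k_F$ on the sc side, and the lattice map $X^*(T)\to X^*(T^{\sc})$ can degenerate modulo $p$; so the centralizer $Z_{\wt N_T}(\overline X)$ can grow when passing to the cover. Consequently your part (a), stated for the $H$-side $\wt H'$, is not established by the argument you sketch (and is false in general), while membership in the sc-side $\wt H'$ is what the extraction actually yields; the paper therefore defines $\wt H'$ on the sc side from the start. (A small additional point: your claim that $(H',H)_{x,r,r/2+}\cap H(F)^\natural$ equals, modulo $\ker(c)(F)$, the image of $({H^{\sc}}',H^{\sc})_{x,r,r/2+}$ is neither needed nor obvious; only the containment $c\bigl(({H^{\sc}}',H^{\sc})_{x,r,r/2+}\bigr)\subseteq (H',H)_{x,r,r/2+}\cap H(F)^\natural$ is used.)

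The same issue invalidates your justification of part (b). The asserted equivalence ``$X$ satisfies \GEE\ if and only if $\phi\circ c|_{{H^{\sc}}'(F)}$ does'' is false: the two conditions compare centralizers of reductions living in the \emph{different} $\bar k_F$-vector spaces $X^*(T)\otimes\bar k_F$ and $X^*(T^{\sc})\otimes\bar k_F$, not in a common root lattice. Only one implication holds in general ($Z_W(\overline X_\ft)\subseteq Z_W(\overline X_{\ft^{\sc}})$, so \GEE\ for $\phi\circ c$ implies \GEE\ for $\phi$). For a concrete failure of the converse take $H=\GL_2$, $H'=T$ an elliptic torus, $p=2$: every $(H,T)$-generic $X$ satisfies \GEE\ on the $\GL_2$ side (the nontrivial Weyl element moves $\overline X_\ft$), but its image in $X^*(T^{\sc})\otimes\bar k_F$ is fixed by the Weyl group since $-1\equiv 1 \pmod 2$, so $\phi\circ c|_{{H^{\sc}}'(F)}$ never satisfies \GEE. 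This is exactly why the lemma is phrased with \GEE\ for $\phi\circ c|_{{H^{\sc}}'(F)}$ and why the paper verifies, via the compatibility of Moy--Prasad isomorphisms with $c$, that $X$ pulled back to $\Lie^*({H^{\sc}}')$ is $(H^{\sc},{H^{\sc}}')$-generic and represents $\psi$, and then quotes \Cref{thm75}\ref{thm75b} for the sc-side datum. Your final paragraph (deducing the last assertion from \cite[Lemma~8.1]{Yu} applied to $\widehat{H^{\sc}}=\widehat H^{\tn{ad}}$) is fine, but it rests on parts (a) and (b) being proved for the sc-side group, which your setup does not deliver.
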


\begin{proof}
Since $\Out(H^\tn{der})\subseteq\Out(H^\tn{sc})$,
where $\Out(-)$ denotes the algebraic group of outer automorphisms,
the conjugation action of $H(F)$ on $H^\tn{der}$
lifts uniquely to an action of $H(F)$ on $H^\tn{sc}$.
Passage to $F$-points gives an action of $H(F)$ on~$H^\tn{sc}(F)$
lifting the conjugation action on~$H^\tn{der}(F)$.

We denote by $\psi$ the composition of $c|_{({H^{\sc}}',H^{\sc})_{x,r,r/2+}}$ with $\hat\phi_{(H,x)}$. 
As $c({({H^{\sc}}',H^{\sc})_{x,r,r/2+}}) \subseteq (H',H)_{x,r,r/2+}\cap H(F)^\natural$, if an element $h \in \wt H(F)$ intertwines the restriction of $\hat\phi_{(H,x)}$ to $(H',H)_{x,r,r/2+}\cap H(F)^\natural$, then $h$ also intertwines\footnote{Here we use the above action of $H(F)$ on $H^{\sc}(F)$ and the following slight generalization
	of the usual notion of intertwining:
	given a group $A$, a subgroup $B$, 
	and a representation $\lambda$ of~$B$,
	an automorphism $\sigma$ of~$A$
	\mathdef{intertwines} $\lambda$
	if there is a nonzero $B\cap \sigma(B)$-equivariant
	homomorphism from $\lambda\circ\sigma^{-1}$ to $\lambda$.
} $\psi$.

 Since $\phi$ is $(H,H')$-generic relative to $x$ of depth~$r$, there exists an element $X \in \Lie^*(H')^{H'}(F)$ that is $(H, H')$-generic of depth $-r$ such that $\phi|_{H'(F)_{x,r}}$ is realized by $X$. By the construction of $\hat \phi_{(H,x)}$ its restriction to $(H',H)_{x,r,r/2+}$ is therefore also realized by $X \in (\fh')^*_{x,-r} \subseteq \Lie^*(H')(F) \subseteq \Lie^*(H)(F)$ (where the last inclusion is obtained by identifying $\Lie^*(H')(F)$ with $\Lie^*(H)^{Z(H')}(F)$),
 i.e., is given by composing 
 $$(H',H)_{x,r,r/2+}/H_{x,r+} \simeq (\fh',\fh)_{x,r,r/2+}/\fh_{x,r+}$$ with $\Lambda \circ X$, where $\Lambda: F \ra \bC^\times$ is nontrivial on $\cO_F$, but trivial on the maximal ideal of $\cO_F$.
 By precomposition with $\Lie(H^{\sc}) \rightarrow \Lie(H)$, we can view $X$ also as an element in $\Lie^*(H^{\sc})(F)$. Note that then $X \in \Lie^*({H^{\sc}}')_{x,-r} \cap \Lie^*({H^{\sc}}')^{{H^{\sc}}'}(F)$, and since the derivative of $c$ maps $H_\alpha$ in $\Lie(H^{\sc})(F)$ to $H_\alpha$ in $\Lie(H)(F)$, the element $X$ is $({H^{\sc}},{H^{\sc}}')$-generic of depth $-r$.
  Since the diagram 
  \[
  \begin{tikzcd}
   ({H^\tn{sc}}',H^\tn{sc})_{x,r,r/2+}/H^\tn{sc}_{x,r+} \rar \dar &
  (H',H)_{x,r,r/2+}/H_{x,r+} \dar \\
   ({\frak h^\tn{sc}}',\frak h^\tn{sc})_{x,r,r/2+}/\frak h^\tn{sc}_{x,r+} \rar &
  (\frak h',\frak h)_{x,r,r/2+}/\frak h_{x,r+}
  \end{tikzcd}
  \]
  commutes by the construction of the Moy--Prasad isomorphism
  given in the proof of \cite[Theorem~13.5.1]{Kaletha-Prasad-BTbook}
  together with the functoriality of the Moy--Prasad isomorphism for tori
  \cite[Proposition~B.6.9]{Kaletha-Prasad-BTbook},
  the character $\psi$ is represented by $X$. Hence, in particular, also the restriction $(\phi \circ c)|_{{H^{\sc}}'_{x,r}}$ is represented by $X$ and therefore $\phi \circ c|_{{H^{\sc}}'(F)}$ is $(H^{\sc},{H^{\sc}}')$-generic relative to $x$ of depth~$r$ (in our sense). Moreover, by the definition of \GEE\ (see \cpageref{GE2}), the character $\phi \circ c|_{{H^{\sc}}'(F)}$ satisfies \GEE\ if and only if $X \in \Lie^*({H^{\sc}}')^{{H^{\sc}}'}(F)$ satisfies \GEE.
  
  Let $\wt H'$ be the group obtained from \Cref{thm75} applied to the group $\widetilde H$ acting on $H^\tn{sc}$,
  the twisted Levi subgroup ${H^{\sc}}'\subseteq H^\tn{sc}$,
  and the Lie algebra element $X \in \Lie^*({H^{\sc}}')^{{H^{\sc}}'}(F)$.

Now we can generalize the arguments from the proof of \cite[Theorem~9.4]{Yu} as follows to apply to our more general setting of $h \in \wt H(F)$ intertwining $\psi$ using \Cref{thm75} in place of \cite[Lemma~8.3]{Yu}.

More precisely, suppose $h \in \wt H(F)$ intertwines the restriction of $\hat\phi_{(H,x)}$ to $(H',H)_{x,r,r/2+}\cap H(F)^\natural$. Then $h$ also intertwines $\psi$.
Now note that Proposition~1.6.7 of \cite{Adler} (which Yu recorded in his setting as \cite[Theorem~5.1]{Yu}) holds if, in the notation there,
$g$ is an algebraic $F$-automorphism of~$G$
rather than simply (conjugation by) an element of~$G$.
Thus there are regular semisimple elements $Y_1,Y_2$
in $X+({\fh^{\sc}}',\frak h^\tn{sc})^*_{x,(-r)+,-r/2}$
such that $\Ad(h)Y_1=Y_2$.
Using \cite[Lemma~8.6]{Yu},
we can find $k_1,k_2\in H^\tn{sc}(F)_{x,r/2}$ such that
$Z_i\defeq \Ad(k_i)Y_i \in X + {\fh^{\sc}}'^*_{x,(-r)+}$ for $i=1,2$.
But $Z_i = \Ad(c(k_i))Y_i$ as well,
and $c(k_i)\in H(F)_{x,r/2}$.
The element $h' \defeq c(k_2)\cdot h\cdot c(k_1)^{-1}$ satisfies
$\Ad(h')Z_1 = Z_2$.
Using \Cref{thm75}, we obtain that $h' \in \wt H'(F)$, as desired.

The last claim follows from $(H^{\sc},{H^{\sc}}')$-generic characters (in our sense) of ${H^{\sc}}'(F)$ automatically satisfying \GEE\ if $p$ is not a torsion prime for $\widehat{H^{\sc}}=\widehat H^{\ad}$ by \cite[Lemma~8.1]{Yu}.
\end{proof}

\begin{lemma} \label{lemma:trivialchar}
	Let $H$ be a reductive group over $F$ and let $y \in \sB(H,F)$.  Let $\varphi$ be a character of $H(F)$. Then the restriction of $\varphi$ to the intersection $H(F)_{y,0+}^\natural$ of the image of the simply connected cover $H(F)^\natural$ and $H(F)_{y,0+}$ is trivial.
\end{lemma}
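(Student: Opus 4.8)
The plan is to reduce the statement to the perfectness of $H^\sc(F)$ and then invoke the structure theory of simply connected groups over local fields.

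Write $c\colon H^\sc\to H$ for the composite of the simply connected cover $H^\sc\to H^\der$ with the inclusion $H^\der\hookrightarrow H$, so that $H(F)^\natural = c(H^\sc(F))$ by definition. Since $\varphi$ is a character, it is trivial on every commutator $[h_1,h_2]$ with $h_1,h_2\in H(F)$; as $c([g_1,g_2]) = [c(g_1),c(g_2)]$, the pullback $\varphi\circ c$ is a smooth character of $H^\sc(F)$ that is trivial on $[H^\sc(F),H^\sc(F)]$. Because $H(F)^\natural_{y,0+}\subseteq H(F)^\natural = c(H^\sc(F))$, it therefore suffices to show that $H^\sc(F) = [H^\sc(F),H^\sc(F)]$; note this is even stronger than the lemma, as it makes no reference to $y$.

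To prove perfectness, decompose $H^\sc\simeq\prod_j\Res_{F_j/F}L_j$ with each $L_j$ absolutely almost simple and simply connected over a finite separable extension $F_j/F$, itself a nonarchimedean local field. Since $(\Res_{F_j/F}L_j)(F)=L_j(F_j)$ and a product of perfect groups is perfect, we may assume $H^\sc=L$ is absolutely almost simple and simply connected over a nonarchimedean local field $F$. If $L$ is isotropic over $F$, then by the Kneser--Tits property (known over $p$-adic fields) $L(F)$ equals the subgroup generated by the $F$-points of the unipotent radicals of proper parabolic $F$-subgroups; this subgroup is perfect by the theory of $BN$-pairs, or directly from the commutator relations among the relative root groups together with the fact that $F$ is infinite. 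If $L$ is anisotropic over $F$, then $L\simeq\SL_1(D)$ for a central division algebra $D$ over $F$, and $\SL_1(D)(F)$ is perfect: the triviality of the reduced Whitehead group $SK_1(D)$ over a local field gives $\SL_1(D)(F)=[D^\times,D^\times]$, and a commutator computation inside $\cO_D^\times$ upgrades this to $[\SL_1(D)(F),\SL_1(D)(F)]=\SL_1(D)(F)$.

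I expect the anisotropic case to be the only nonformal step: over an arbitrary field $\SL_1(D)$ need not be perfect, and the reduction to $SK_1(D)=1$ genuinely uses that $F$ is local. If one prefers to sidestep this, one can instead keep track of the $0+$-filtration: any $h\in H(F)^\natural_{y,0+}$ is topologically unipotent, so its topological Jordan decomposition in $H^\sc(F)$ produces a topologically unipotent lift $g$ of $h$ (the prime-to-$p$ part of the lift lies in $\ker c$ and hence dies in $H$), and $g$ then lies in the pro-unipotent radical of some parahoric of $H^\sc(F)$; it suffices to check $\varphi\circ c$ vanishes there, and such pro-unipotent radicals are generated by root-group elements lying in the commutator subgroup together with the pro-unipotent radical of the anisotropic kernel, so the anisotropic-kernel analysis reappears as the essential point.
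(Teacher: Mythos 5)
Your reduction rests on a false claim. You assert that it suffices to prove $H^\sc(F)=[H^\sc(F),H^\sc(F)]$ and, in the anisotropic case, that $\SL_1(D)$ is perfect because $SK_1(D)=1$ can be ``upgraded'' to $[\SL_1(D),\SL_1(D)]=\SL_1(D)$. The vanishing of $SK_1(D)$ says $\SL_1(D)=[D^\times,D^\times]$, with commutators taken in the larger group $D^\times$; no computation inside $\cO_D^\times$ can improve this to commutators in $\SL_1(D)$, because every commutator of units of $\cO_D$ reduces to $1$ in the abelian group $k_D^\times$, so $[\SL_1(D),\SL_1(D)]\subseteq \SL_1(D)_{0+}$, while $\SL_1(D)$ surjects onto the norm-one subgroup of $k_D^\times$, of order $(q^d-1)/(q-1)>1$ whenever $d=\deg(D)\geq 2$. (The paper cites Riehm precisely for the equality $[\SL_1(D),\SL_1(D)]=\SL_1(D)_{0+}$.) For the same reason your ``even stronger'' statement, that $\varphi$ is trivial on all of $H(F)^\natural$ with no reference to $y$, is false: take $H=\SL_1(D)$ and $\varphi$ a nontrivial character inflated from $\SL_1(D)/\SL_1(D)_{0+}$; it is nontrivial on $H(F)^\natural=H(F)$ but, as the lemma predicts, trivial on $H(F)_{0+}$. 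The restriction to depth $0+$ is exactly what makes the statement true, and your main argument discards it.

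Your fallback sketch heads in the right direction (and your isotropic case, via Kneser--Tits, matches the paper's use of Prasad--Raghunathan), but it leaves the essential point unproved. After splitting off the isotropic factors, what the anisotropic part needs is not perfectness but (i) the identity $[\SL_1(D),\SL_1(D)]=\SL_1(D)_{0+}$, so that $\varphi\circ c$ kills $H^\sc(F)_{0+}$, and (ii) a lifting statement such as $c^{-1}(H(F)_{0+})=\ker(c)\cdot H^\sc(F)_{0+}$ for anisotropic semisimple $H$, so that every element of $H(F)^\natural_{0+}$ has a lift lying in $H^\sc(F)_{0+}$. The paper proves (ii) by reducing to the adjoint case and ultimately to $H^\sc=\SL_1(D)$, $H=\PGL_1(D)$, where it becomes an explicit statement about $n$-th roots of unity. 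Your topological Jordan decomposition argument would still have to supply equivalents of (i) and (ii); as written it defers to ``the anisotropic-kernel analysis,'' which is precisely the step your main argument gets wrong.
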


\begin{proof}
	\addtocounter{equation}{-1}
	\begin{subequations} 
We start by reviewing a few facts about semisimple anisotropic groups.
Recall that a reductive group is \mathdef{isotropic}
if it contains a unipotent element,
or equivalently, if its derived subgroup contains a nontrivial split torus.
First, if $H$ is a simply-connected isotropic $F$-group,
then $H(F) = [H(F),H(F)]$ by \cite[6.15]{PrasadRaghunathan}.
Hence $H(F)$ has no nontrivial characters in this case.
Second, if $H$ is semisimple and anisotropic
then $H$ splits over an unramified extension, $H$ is of type~$A$,
and the quasi-split inner form of~$H$ is split
(see \cite[Remark~10.3.2]{Kaletha-Prasad-BTbook}).
Hence $H^\tn{sc}(F)$ is compact and a product of groups of the form
$\SL_1(D)$ where $D$ is a division algebra
over a finite separable extension of~$F$.
Third, the derived subgroup of $\SL_1(D)$
is the pro-unipotent radical $\SL_1(D)_{0+}$
(see \cite[page~504 and Corollary on page~521]{Riehm70}).

Let $c\colon H^\tn{sc}\to H$ be the simply-connected covering map.
Let $H_i$, $1\leq i\leq n$, be the almost-simple subgroups of~$H$
corresponding to the irreducible factors of the relative Dynkin diagram of~$H$.
Using $H^\tn{sc} = \prod_{i=1}^n H_i^\tn{sc}$,
we can factor any element $h\in H(F)^\natural_{y,0+}$
as a product $h_1h_2\cdots h_n$
with $h_i$ in the image of $H_i^\tn{sc}(F)$.
At the same time, $\varphi$ is trivial on the image of $H_i^\tn{sc}(F)$
whenever $H_i$ is isotropic.
Replacing $H$ by the subgroup generated by the anisotropic~$H_i$,
we are reduced to the case where $H$ is anisotropic and semisimple,
which we assume for the rest of the proof.

To finish the proof, it suffices to show that 
\begin{equation} \label{thm74}
c^{-1}(H(F)_{0+}) = \ker(c)\cdot H^\tn{sc}(F)_{0+},
\end{equation}
since the restriction of $\varphi$ to $H(F)^\natural_{0+}$
inflates to a character of $c^{-1}(H(F)_{0+})$
restricted from a character of $H^\tn{sc}(F)$.
To prove \eqref{thm74},
note that if $H\to H'$ is an isogeny
and \eqref{thm74} holds with $H$ replaced by~$H'$,
then \eqref{thm74} holds for~$H$.
Hence we may assume that $H$ is an adjoint group.
But then $H$ is a product of almost-simple groups,
and each factor is therefore of the form $\Res_{E/F}(H')$,
where $E/F$ is a finite separable extension by
\cite[6.21(ii)]{Borel-Tits65}.
The $E$-group $H'$ is anisotropic,
so that $H'(F) = \PGL_1(D)$ for some division algebra over~$E$.
Since $H'(F)_{0+} = \PGL_1(D)_{0+}$
(see \cite[Proposition~A.12]{Fintzen-MPWeilrestriction}),
after replacing $E$ by $F$,
we are reduced to the case where $H^\tn{sc} = \SL_1(D)$
and $H = \PGL_1(D)$ for $D$ a division algebra over~$F$.

Let $\dim_F(D) = n^2$.
Now $\PGL_1(D)_{0+} \simeq D^\times_{0+}/F^\times_{0+}$
by \cite[Lemma~3.3.2(1)]{Kaletha-regular},
so \eqref{thm74} amounts to the claim that
if $z\in\SL_1(D)$ satisfies
$z\in F^\times\cdot D^\times_{0+}$,
then $z\in\mu_n(F)\cdot\SL_1(D)_{0+}$, where $\mu_n(F)$ denotes $n$th roots of~$1$ in $F$.
This follows from the fact that if $a\in F^\times$
satisfies $a^n\in F^\times_{0+}$,
then $a\in\mu_n(F)\cdot F^\times_{0+}$.
\end{subequations}
\end{proof}

\begin{corollary} \label{cor:trivialchar}
	Let $1 \leq i \leq n$, let $y \in \sB(G_{n+1},F)$ and $g \in N_{G}(G_{i+1}, G_{n+1})(F)$. Then the restriction of ${^g\phi_i}$ to $(G_{i+1})_{y,0+}^\natural$ is trivial. In particular, the restriction of ${^g\phi_i}$ to $U_{j}^\pm(F)_{y,{r_j}/{2}}$ is trivial for all $i < j \leq n$.
\end{corollary}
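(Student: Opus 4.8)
The plan is to reduce both assertions to \Cref{lemma:trivialchar}, applied to the reductive group $G_{i+1}$. First I would note that since $g\in N_G(G_{i+1},G_{n+1})(F)$ normalizes $G_{i+1}$, the twist ${^g\phi_i}$ is again a character of $G_{i+1}(F)$, and that $y\in\sB(G_{n+1},F)\subseteq\sB(G_{i+1},F)$ via the admissible embedding of buildings, so $y$ is a point of $\sB(G_{i+1},F)$. Then \Cref{lemma:trivialchar} with $H=G_{i+1}$, the point $y$, and $\varphi={^g\phi_i}$ immediately gives that ${^g\phi_i}$ is trivial on $G_{i+1}(F)^\natural\cap G_{i+1}(F)_{y,0+}=(G_{i+1})_{y,0+}^\natural$, which is the first claim.

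For the ``in particular'' clause, I would fix $i<j\leq n$. Since the datum groups form a chain $G_1\supseteq\dots\supseteq G_{n+1}$ and $j\geq i+1$, every root group $U_\alpha$ entering the definition of $U_j^\pm$ has $\alpha\in\Phi(G_j,T)\subseteq\Phi(G_{i+1},T)$, so $U_j^\pm\subseteq G_j\subseteq G_{i+1}$. I would then establish the two inclusions $U_j^\pm(F)\subseteq G_{i+1}(F)^\natural$ and $U_j^\pm(F)_{y,r_j/2}\subseteq G_{i+1}(F)_{y,0+}$. The first holds because $U_j^\pm$ is unipotent and is a product of root groups, hence lies in the unipotent radical of some parabolic subgroup of $G_{i+1}$, on which the simply connected covering $c\colon G_{i+1}^\sc\to G_{i+1}$ restricts to an isomorphism; thus $U_j^\pm$ lifts to $G_{i+1}^\sc$ and $U_j^\pm(F)\subseteq G_{i+1}(F)^\natural$. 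The second holds because $r_j/2>0$ and $U_j^\pm(F)_{y,r_j/2}$ is, by definition, the intersection with $G(F)$ of the group generated by the $U_\alpha(E)_{y,r_j/2}\subseteq G_{i+1}(E)_{y,r_j/2}\subseteq G_{i+1}(E)_{y,0+}$. Combining the two inclusions gives $U_j^\pm(F)_{y,r_j/2}\subseteq(G_{i+1})_{y,0+}^\natural$, and the first part applies to finish the proof.

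I do not expect a serious obstacle here: granting \Cref{lemma:trivialchar}, the corollary is bookkeeping with twisted Levi subgroups, their buildings, and Moy--Prasad filtrations. The only step requiring a little care is the inclusion $U_j^\pm(F)\subseteq G_{i+1}(F)^\natural$, i.e.\ that a unipotent subgroup of a connected reductive group lies in the image of the simply connected cover; this is standard (a central isogeny is an isomorphism on the unipotent radical of any parabolic) and is compatible with the filtration by depth, so that $U_j^\pm(F)_{y,r_j/2}$ already lands in the pro-unipotent radical $G_{i+1}(F)_{y,0+}$.
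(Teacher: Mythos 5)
Your proposal is correct and matches the paper's proof, which is exactly the one-line application of \Cref{lemma:trivialchar} with $H=G_{i+1}$ and $\varphi={}^g\phi_i$ (the paper leaves the ``in particular'' clause implicit). Your added verification that $U_j^\pm(F)_{y,r_j/2}\subseteq (G_{i+1})_{y,0+}^\natural$ — via the lifting of unipotent radicals of parabolics to the simply connected cover and the positivity of depth — is the intended routine bookkeeping and is sound.
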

\begin{proof}
Apply \Cref{lemma:trivialchar} to the group
$H=G_{i+1}$ and the character $\varphi={}^g\phi_i$.
\end{proof}

In order to generalize the second half of the proof of supercuspidality in \cite[Theorem~3.1]{Fi-Yu-works} we need two more lemmas that allow us to avoid \cite[Theorem~2.4]{Gerardin}, which is not available for the Heisenberg--Weil representations in characteristic 2.

\begin{lemma} \label{thm26}
Let $P$ and $H$ be finite groups and $U\unlhd P$ a normal subgroup.
Let $P$ act on~$H$ by automorphisms,
and let $(\pi, V)$ be a representation of $P \ltimes H$ such that $\pi|_H$ is irreducible.
Suppose that $U$ acts trivially on $H$ and $U\subseteq[P,U]$.
Then $\pi|_U$ is trivial.
\end{lemma}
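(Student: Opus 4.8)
The plan is to use Schur's lemma to show that $U$ acts on $V$ through a single character, and then to use the hypothesis $U\subseteq[P,U]$ to force that character to be trivial.

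First I would record that, because $U$ acts trivially on $H$, the subgroups $U$ and $H$ actually commute inside $P\ltimes H$: for $u\in U$ and $h\in H$ the relation $uhu^{-1}=h$ holds by hypothesis, and taking inverses also $huh^{-1}=u$. Thus $U$ is centralized by $H$, and since $U$ is normal in $P$ it is normal in all of $P\ltimes H$. In particular $\pi(u)$ commutes with $\pi(h)$ for every $h\in H$, so by Schur's lemma applied to the irreducible representation $\pi|_H$ there is a character $\chi\colon U\to\bbC^\times$ with $\pi(u)=\chi(u)\cdot\Id_V$ for all $u\in U$.

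Next I would check that $\chi$ is $P$-invariant: for $p\in P$ and $u\in U$ we have $pup^{-1}\in U$ and $\pi(pup^{-1})=\pi(p)\pi(u)\pi(p)^{-1}=\chi(u)\Id_V$, hence $\chi(pup^{-1})=\chi(u)$. Consequently $\chi([p,u])=\chi(pup^{-1})\chi(u)^{-1}=1$ for all $p\in P$ and $u\in U$. Since $\chi$ is a homomorphism on $U$ and $[P,U]$ is the subgroup of $U$ generated by these commutators, $\chi$ vanishes on $[P,U]$. The hypothesis $U\subseteq[P,U]$ then yields $\chi\equiv1$, i.e.\ $\pi|_U$ is trivial.

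The argument is short, so there is no real obstacle; the only point requiring care is verifying that $U$ and $H$ genuinely commute in the semidirect product (not merely that $U$ is abelian or normal), which is exactly what makes Schur's lemma applicable to $\pi(u)$ via $\pi|_H$.
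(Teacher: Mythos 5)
Your proof is correct and follows essentially the same route as the paper: use that $U$ and $H$ commute to apply Schur's lemma to $\pi|_H$, conclude $\pi|_U$ is given by a character, note that $P$-normality forces this character to kill $[P,U]$, and invoke $U\subseteq[P,U]$. No gaps; the extra care you take in verifying that $U$ centralizes $H$ inside the semidirect product is exactly the point the paper treats implicitly.
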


\begin{proof}
Since $U$ and $H$ commute,
the elements of $\pi(U)$ act $H$-equivariantly on~$V$,
hence are scalars by Schur's Lemma.
So $\pi|_U$ is isotypic for a character $\phi$ of~$U$.
Moreover, since $P$ normalizes $U$, it normalizes $\pi|_U$, hence~$\phi$.
In other words, $\phi([p,u]) = 1$ for all $p\in P$ and $u\in U$.
Hence $\phi$ is trivial.
\end{proof}

\begin{lemma} \label{thm27}
Let $k$ be a field,
let $H$ be a quasi-split
reductive $k$-group,
let $P$ be a parabolic subgroup of~$H$,
and let $U$ be the unipotent radical of~$P$.
If $|k|>3$, then $U(k)\subseteq[P(k),U(k)]$.
\end{lemma}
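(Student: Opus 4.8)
The plan is to reduce to a rank-one computation and then solve that computation explicitly. First I would observe that it suffices to treat the case where $P$ is a Borel subgroup: the unipotent radical $U$ of an arbitrary parabolic $P$ is contained in the unipotent radical $U_B$ of a Borel $B \subseteq P$, and since $[P(k),U(k)] \supseteq [B(k),U(k)]$, it is enough to show that every element of $U(k)$ lies in $[B(k),U_B(k)]$ already — so replacing $P$ by $B$ and $U$ by $U_B$, we assume $P$ is a Borel with maximal torus $T \subseteq P$. Now I would stratify $U$ by the root groups. Since $H$ is quasi-split, $\Phi(H,T)$ is the relative root system and $U(k)$ has a filtration by normal subgroups with successive quotients the root groups $U_\alpha(k)$ for $\alpha$ a positive relative root (grouped appropriately when $2\alpha$ is also a root). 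Working modulo the next step of the filtration, I would show that each such $U_\alpha(k)$ is captured by commutators, using the identity $[t, u_\alpha(v)] = u_\alpha((\alpha(t)-1)v) \cdot(\text{higher root group terms})$ for $t\in T(k)$ and $u_\alpha(v)\in U_\alpha(k)$: because $|k| > 3$, the relative root $\alpha$ admits some $t \in T(k)$ with $\alpha(t) \neq 1$ — here I would need to be careful that $\alpha$, being a possibly-non-reduced relative root, still takes a value $\neq 1$ on $T(k)$ when $|k|>3$, which is where the hypothesis $|k|>3$ is genuinely used (for $|k|\le 3$ one can have $\alpha(t)\in\{1\}$ or $\alpha(t)-1$ non-surjective as a map of the additive group of the root-group coordinates).

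More precisely, I would argue by induction on the height filtration of $U$. Let $U^{(m)}$ be the subgroup generated by root groups of height $\ge m$; these are normal in $P$. Given $u\in U(k)$, write $u \equiv \prod u_{\alpha_i}(v_i) \pmod{U^{(2)}(k)}$ over the simple relative roots $\alpha_i$. For each simple $\alpha$, pick $t\in T(k)$ with $\alpha(t)\neq1$; then $[t,u_\alpha(v)] = u_\alpha((\alpha(t)-1)v)$ exactly (no higher terms, since $\alpha$ is simple the commutator stays in $U_\alpha$ modulo $U^{(2)}$), and since $v\mapsto(\alpha(t)-1)v$ is a bijection of the coordinate group of $U_\alpha(k)$ — again using $|k|>3$ to guarantee $\alpha(t)-1$ is invertible on this group, including the non-reduced case where $U_\alpha$ is a Heisenberg-type group and one must also handle $U_{2\alpha}$ — we get $U_\alpha(k)\subseteq[T(k),U(k)]\cdot U^{(2)}(k)$. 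Multiplying these over the simple roots and then descending the filtration by the same argument at each level $m$ (where commutators with $T(k)$ again act invertibly on each root-group layer, the relevant character being $m$ copies of simple-root characters, still nontrivial), we conclude $U(k) \subseteq [P(k),U(k)]\cdot U^{(m)}(k)$ for all $m$, hence $U(k)\subseteq[P(k),U(k)]$ since $U^{(m)}=1$ for $m$ large.

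The main obstacle I anticipate is the bookkeeping for non-reduced relative root systems (type $BC_n$), where a root group $U_\alpha$ is nonabelian with center $U_{2\alpha}$, and one must check that conjugation by a suitable $t\in T(k)$ acts invertibly on both $U_\alpha/U_{2\alpha}$ and $U_{2\alpha}$ simultaneously and that the product decomposition of a general $u\in U(k)$ into root-group factors interacts correctly with the commutator formulas. A clean way to handle this uniformly is to invoke the standard structure theory of the relative root datum (as in the references to \cite{Yu} Section~6 used elsewhere in the paper) and to note that for $|k|>3$ one can choose $t$ so that $\alpha(t)$ avoids the finitely many bad values $\{1\}$ (and, for $2\alpha$, $\alpha(t)^2\ne1$), which is possible precisely because $|k^\times|>2$. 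Checking this last point — that $|k|>3$ leaves enough room in $k^\times$ to dodge all the obstructions at once — is the crux, and it is exactly the sharp hypothesis of the lemma; the small cases $|k|\in\{2,3\}$ are genuinely exceptional (e.g.\ $\SL_2(\bF_2)$, $\SL_2(\bF_3)$).
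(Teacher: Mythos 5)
Your core mechanism is the same as the paper's (conjugation by a torus element acts on a root group by $\alpha(s)$, and $\alpha(s)-1$ is invertible once $\alpha(s)\neq 1$), but as written the proposal has two genuine gaps. First, the reduction to the Borel case is a non sequitur. The target is $U(k)\subseteq[P(k),U(k)]$, and the containment you cite, $[B(k),U(k)]\subseteq[P(k),U(k)]$, only concerns commutators whose second entry lies in $U(k)$; what you then propose to prove is $U(k)\subseteq[B(k),U_B(k)]$, where the second entries range over the larger group $U_B(k)$. A commutator $[b,u]$ with $u\in U_B(k)\smallsetminus U(k)$ need not lie in $U(k)$ at all, and a product of such commutators that happens to lie in $U(k)$ is not known to lie in $[P(k),U(k)]$, so the Borel-case statement does not imply the parabolic-case statement. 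The fix is to drop the reduction entirely: a maximal split torus $S$ of $H$ contained in $P$ satisfies $S(k)\subseteq P(k)$, every relative root group $U_\alpha$ of $U$ satisfies $U_\alpha(k)\subseteq U(k)$, and $U(k)$ is generated by these root groups while $[P(k),U(k)]$ is a group; so it suffices to show $U_\alpha(k)\subseteq[S(k),U_\alpha(k)]$ for each relative root $\alpha$ of $U$. This is exactly the paper's argument, and it also makes your height-filtration induction unnecessary.

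Second, the point you yourself call the crux — the existence of $s\in S(k)$ (note: it must be the maximal split torus, since relative roots are characters of $S$) with $\alpha(s)\neq1$ — is asserted rather than proved. Saying one can ``avoid the finitely many bad values because $|k^\times|>2$'' presupposes that $\alpha(S(k))$ is large, which is not justified: a priori the image of $\alpha$ on $k$-points could be small. The missing one-line idea is the coroot: take $s=\alpha^\vee(c)$ with $c^2\neq1$ (possible exactly because $|k|>3$), so that $\alpha(s)=\alpha(\alpha^\vee(c))=c^2\neq1$. Relatedly, your plan to pick a single $t$ acting invertibly on both $U_\alpha/U_{2\alpha}$ and $U_{2\alpha}$ simultaneously (i.e.\ $\alpha(t)\neq1$ and $\alpha(t)^2\neq1$) cannot in general be realized by elements of the form $\alpha^\vee(c)$ — over $k=\bbF_5$ one has $c^4=1$ for every $c$ — and is in any case unnecessary: handle $U_{2\alpha}(k)$ separately by applying the non-multipliable case to the root $2\alpha$, using $s'=(2\alpha)^\vee(c)$ with $c^2\neq 1$, which is how the paper concludes.
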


\begin{proof}
Let $S$ be a maximal split torus of~$H$ contained in~$P$
and fix $\alpha\in\Phi(G,S)$.
We will show that $U_\alpha(k)\subseteq[P(k),U(k)]$.
Note that there is $s\in S(k)$ such that $\alpha(s)\neq 1$:
Since $|k|>3$, there is $t\in k^\times$ such that $t^2\neq1$,
and then we may take $s=\alpha^\vee(t)$
because $\alpha(\alpha^\vee(t)) = t^2 \neq 1$.

It suffices to show that $U_\alpha(k)\subseteq[S(k),U_\alpha(k)]$.
Take $s\in S(k)$ such that $\alpha(s)\neq1$.
If $2\alpha\notin\Phi(G,S)$, then $U_\alpha(k)$ is abelian and
$S(k)$-equivariantly isomorphic to its Lie algebra $\frak g_\alpha(k)$,
and the claim follows from the fact that the endomorphism
$\Ad(s)-1$ of $\frak g_\alpha$ is multiplication by $\alpha(s)-1$ and hence invertible.
If $2\alpha\in\Phi(G,S)$, then %
 the quotient $U_{\alpha}(k)/U_{2\alpha}(k)$
is $S(k)$-equivariantly isomorphic to the root space $\frak g_\alpha(k)$,
and the same argument as above shows that
for every $u\in U_{\alpha}(k)$ there is $u'\in U_{\alpha}(k)$
and $s\in S(k)$ such that $[s,u'] = u \pmod{U_{2\alpha}(k)}$.
Now we are done because by the previous case,
$U_{2\alpha}(k)\subseteq[S(k),U_{2\alpha}(k)]$.
\end{proof}

Now we are in a position to prove the key intertwining result,
\Cref{thm02}, following the proof of \cite[Theorem~3.1]{Fi-Yu-works}.
This result will immediately imply the main theorem, \Cref{thmseveralsc}.

\begin{theorem} \label{thm02}
Let $\wt K = N_{\Kplus}(\rho\otimes\kappa)$
and let $\sigma\in\Irr(\wt K,K,\rho\otimes\kappa)$.
Suppose $q>3$.

\begin{theoremenum}
\item \label{thm02a}
If $g\in G(F)$ intertwines $\sigma$, then $g\in \wt K$.

\item \label{thm02b}
The group $\widetilde K/K$ is a finite $p$-group
that is trivial if all $\phi_i$ satisfy \GEE,
for example, if $p$ is not a torsion prime for $\widehat G$.
\end{theoremenum}

\end{theorem}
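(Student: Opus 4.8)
The plan is to deduce \Cref{thm02} from the intertwining analysis of Yu and Fintzen--Yu, with the two new ingredients being \Cref{thm80} (to control intertwiners when \GEE\ fails) and \Cref{thm26,thm27} (to replace Gérardin's \cite[Theorem~2.4]{Gerardin} in the argument that intertwining forces descent into the parahoric). First I would recall the skeleton of the proof of \cite[Theorem~3.1]{Fi-Yu-works}: one shows by a sequence of reductions that if $g$ intertwines $\sigma$, then $g$ must intertwine each of the characters $\hat\phi_i$ on suitable Moy--Prasad subgroups, which forces $g$ into a product $G_{x,r_i/2}\cdot \wt H_i'(F)\cdot G_{x,r_i/2}$ for each $i$, and after descending through the layers one lands in $\wt K$. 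The crucial modification is that, because $\phi_{i}$ need not extend to the disconnected group built from $G_{i+1}$, the old argument that works on $G_{i+1}(F)$ directly must instead be run on $G_{i+1}^\sc(F)$ via the image $G(F)^\natural$; this is exactly the content of \Cref{thm80a}, whose hypothesis only asks $h$ to intertwine $\hat\phi_{(H,x)}$ restricted to $(H',H)_{x,r,r/2+}\cap H(F)^\natural$.

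The key steps, in order, would be: (1) use \Cref{thm26,thm27} to show that the Heisenberg--Weil part of $\kappa^-$ is trivial on the relevant unipotent radicals, so that an intertwiner of $\rho\otimes\kappa$ intertwines the restriction of the $\hat\phi_i$ to the $\natural$-parts --- here \Cref{cor:trivialchar} supplies the vanishing of ${}^g\phi_i$ on the $U_j^\pm(F)_{y,r_j/2}$, and the hypothesis $q>3$ enters through \Cref{thm27}; (2) apply \Cref{thm80a} inductively (from $i=1$ down to $i=n$, or the reverse, following Yu's layer-by-layer scheme) to conclude $g\in K^+\cdot\wt H_n'(F)\cdots$, hence ultimately $g\in\Kplus$ after the depth-zero step handled by cuspidality of $\rho|_{(G_{n+1})_{x,0}}$; (3) conclude $g$ normalizes $\rho\otimes\kappa$, i.e.\ $g\in\wt K$, giving \ref{thm02a}. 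For \ref{thm02b}: since $\wt K\subseteq\Kplus$ and $\Kplus/K\simeq N_G(G_1,\dots,G_{n+1})(F)_{[x]}/G_{n+1}(F)_{[x]}$, the quotient $\wt K/K$ is a subquotient of a component-group-type object; assembling the short exact sequences of \Cref{thm80b} across the $n$ layers exhibits $\wt K/K$ as built from the $p$-groups $A_i'$ (together with pieces of $\pi_0$ that vanish here because we are intersecting with the connected-group normalizer on the nose), so $\wt K/K$ is a $p$-group, trivial when every $\phi_i\circ c$ satisfies \GEE, which by \Cref{thm80b} and \cite[Lemma~8.1]{Yu} holds in particular when $p$ is not a torsion prime for $\widehat G$ (using \Cref{thm90}(c) to pass from $\widehat{G}$ to the duals of the twisted Levis).

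The main obstacle I expect is step (2): bookkeeping the passage to simply-connected covers cleanly across all $n$ layers. In Yu's original argument each $\hat\phi_i$ was a genuine character of $G_{i+1}(F)$ and one could conjugate and compare on the nose; now one only has the representation $\psi_i$ (the pullback along $c$) and must track how an intertwiner $g\in G(F)$ --- which need not lift to $G^\sc(F)$ --- interacts with the lifted conjugation action of $G(F)$ on $G^\sc(F)$ furnished in the proof of \Cref{thm80}. One has to check that the output $\wt H_i'(F)$ of \Cref{thm80} for the $i$-th layer sits correctly inside the ambient group for the next layer, and that the products $G_{x,r_i/2}\cdot\wt H_i'(F)\cdot G_{x,r_i/2}$ telescope down to $\Kplus$ rather than something larger --- this is where the ellipticity of $G_{n+1}$ and the containment $\wt H_i'\subseteq N_{\wt H_i}(H_i')$ with identity component $H_i'$ are used. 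The remaining verifications (triviality of the Weil part on unipotent radicals, the depth-zero cuspidality step, the $p$-group assembly) are essentially as in \cite{Fi-Yu-works} and \cite{Yu} once the inputs above are in hand.
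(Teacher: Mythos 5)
Your overall architecture (reduce intertwining of $\sigma$ to intertwining of the $\hat\phi_i$ on $\natural$-parts, run \Cref{thm80a} layer by layer, finish with the depth-zero cuspidality step, and read off part (b) from the component groups) matches the paper, but there is a genuine gap in part (b). By running \Cref{thm80} uniformly in every layer, the triviality criterion you inherit from \Cref{thm80b} is that each $\phi_i\circ c$ satisfies \GEE\ \emph{at the level of the simply connected cover}, and you then assert that this follows when $p$ is not a torsion prime for $\widehat G$. That implication is false, and the condition ``$\phi_i\circ c$ satisfies \GEE'' is strictly stronger than (and not implied by) the condition ``$\phi_i$ satisfies \GEE'' appearing in the statement of \Cref{thm02b}: genericity at the sc level is governed by torsion primes of $\widehat{G_i^{\sc}}=(\widehat{G_i})^{\tn{ad}}$, which can be more numerous than those of $\widehat G$. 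Concretely, for $G=\GL_2$ and $p=2$, $\widehat G$ has no torsion primes and every generic $\phi$ of the elliptic unramified torus satisfies \GEE, so the theorem asserts $\wt K=K$; but pulling an order-two $\phi$ back along $E^\times\supset E^1=T^{\sc}(F)$ one finds that $\phi\circ c$ fails \GEE\ (the nontrivial Weyl element acts by $-1\equiv+1$ on $X^*(T^{\sc})\otimes \ov\bF_2$), so your assembly only yields ``$\wt K/K$ is a $2$-group,'' not triviality. The paper circumvents exactly this (it is the reason for the remark that the adjoint quotient can have more torsion primes) by splitting the induction into two cases: when all $\phi_i$ satisfy \GEE\ one never passes to $G_i^{\sc}$ at all --- one uses that $\hat\phi_j|_{(G_i)_{x,r_i,(r_i/2)+}}$ for $j<i$ extends to the character $\phi_j|_{G_i(F)}$ and invokes Yu's Theorem~9.4 directly on the connected groups, so each layer outputs the connected $G_{i+1}$ and one gets $\wt K=K$ on the nose; only when some $\phi_i$ fails \GEE\ does one invoke \Cref{thm80} with the sc-cover bookkeeping and \Cref{cor:trivialchar}. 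You need this case distinction to prove part (b) as stated.

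Two smaller points. First, you attach \Cref{thm26,thm27} to the wrong stage: the reduction ``$g$ intertwines $\rho\otimes\kappa^-$ $\Rightarrow$ $g$ intertwines $\hat\phi_i$ on $(G_i)^\natural_{x,r_i,(r_i/2)+}$'' needs only the construction of $\kappa^\minus$ (its restriction to $(G_i)_{x,r_i,(r_i/2)+}$ is $\prod_{j\le i}\hat\phi_j$) together with \Cref{cor:trivialchar}; the pair \Cref{thm26,thm27} (this is where $q>3$ enters) is used at the final step, after one has $g\in\wt G_{n+1}(F)$ and assumes $g[x]\neq[x]$: combined with \Cref{thm15b} and \Cref{thm16} it shows $\sfU(k_F)$ acts trivially on $V_{\WeilRep_i}^{\sfV_i^+}$, which is what replaces Gérardin and produces the contradiction with cuspidality of $\rho$. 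Second, the repeated reductions ``we may assume $g\in\wt G_i(F)$'' and the very first step ($\sigma|_{K^\minus}$ being $\rho\otimes\kappa^\minus$-isotypic) require that $K$ normalizes $\kappa^\minus$ up to isomorphism, i.e.\ \Cref{lemma:Knormalizeskappa-}; this should be cited explicitly rather than left implicit.
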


\begin{proof}
	Suppose $g \in G(F)$ intertwines $\sigma$.
	Since $\sigma$ restricted to the normal subgroup $K \trianglelefteq \wtK=N_{\Kplus}(\rho\otimes\kappa)$ is $(\rho \otimes \kappa)$-isotypic, the element $g$ also intertwines $(K, \rho \otimes \kappa)$. Moreover, by \Cref{lemma:Knormalizeskappa-}, $\sigma$ further restricted to $K^- \trianglelefteq \wt K$ is a direct sum of copies of $(\rho \otimes \kappa^-)$, and hence $g$ also intertwines $(K^-, \rho \otimes \kappa^-)$. 

We first claim that  $g \in K \wtG_{n+1}(F) K$ for some subgroup $\wtG_{n+1}\subseteq N_G(G_1, \hdots, G_n, G_{n+1})$ whose identity component is $G_{n+1}$ and whose component group is a finite $p$-group that is trivial if all $\phi_i$ satisfy~\GEE. 
We show this by induction following the first part of the proof of \cite[Theorem~3.1]{Fi-Yu-works}, focusing on the differences arising from our more general setup.
For technical reasons, stemming from the fact that
the adjoint quotient of a reductive group can have more torsion primes than the original group,
we divide the argument into two cases: either all $\phi_i$ satisfy~\GEE, or some~$\phi_i$ does not.

In the first case, let $1\leq i\leq n$
and suppose the induction hypothesis that $g\in K G_i(F) K$.
We will show that $g\in K G_{i+1}(F) K$.
Since $K$ intertwines $\rho \otimes \kappa^-$ by \Cref{lemma:Knormalizeskappa-},
we may assume that $g \in G_i(F)$.
As in \cite[Theorem~3.1]{Fi-Yu-works},
the restriction of $\rho\otimes\kappa^\minus$
to $(G_i)_{x,r_i,(r_i/2)+}$ is the restriction of $\prod_{j=1}^i \hat \phi_j$.
Hence $g$ intertwines $(\prod_{j=1}^i \hat \phi_j)|_{(G_i)_{x,r_i, (r_i/2)+}}$.
Moreover, if $1\leq j<i$, then $(\hat\phi_j)|_{(G_i)_{x,r_i,(r_i/2)+}}$
extends to a character of~$G_i(F)$, namely $\phi_j|_{G_i(F)}$.
So $g$ also intertwines $(\hat\phi_i)|_{(G_i)_{x,r_i,(r_i/2)+}}$
and $g\in KG_{i+1}(F)K$ by \cite[Theorem~9.4]{Yu},
completing the induction step.

The second case is similar, but requires us to deal with the complication that the character $\phi_j$ of $G_{j+1}(F)$ is not necessarily intertwined by disconnected groups containing $G_{j+1}$.
Let $1 \leq i \leq n$ and suppose the induction hypothesis that $g \in K \wt G_i(F) K$ where $\wt G_i$ is  a subgroup of $N_G(G_1, \hdots, G_{i-1}, G_i)$ whose identity component is $G_i$ and whose component group is a finite $p$-group.
Let $\widetilde G_{i+1}$ be the group $\widetilde H'$ from \Cref{thm80}
applied to $\widetilde H = \widetilde G_i$, $H'=G_{i+1}$, and $\phi=\phi_i$.
Then by induction, $\widetilde G_{i+1}\subseteq N_{\widetilde G_i}(G_{i+1})
\subseteq N_G(G_1,\dots,G_i,G_{i+1})$
and $\pi_0(\widetilde G_{i+1})(F^\tn{sep})$ is a finite $p$-group.
	We will show that $g \in K \wt G_{i+1}(F) K$. Since $K$ intertwines $\rho \otimes \kappa^-$ by \Cref{lemma:Knormalizeskappa-}, we may assume that $g \in \wt G_i(F)$. As in \cite[Theorem~3.1]{Fi-Yu-works}, the restriction of $\rho \otimes \kappa^-$ to $(G_i)_{x,r_i, (r_i/2)+}$ is the restriction of $\prod_{j=1}^i \hat \phi_j$. Hence $g$ intertwines $(\prod_{j=1}^i \hat \phi_j)|_{(G_i)_{x,r_i, (r_i/2)+}}$. Moreover,  for $1 \leq j \leq i-1$, the restriction of $\hat \phi_j$ to the intersection $(G_i)^\natural_{x,r_i, (r_i/2)+}$ of $(G_i)_{x,r_i, (r_i/2)+}$ with the image of $G_i^\tn{sc}(F)$ agrees with the restriction of $\phi_j$ and is trivial by \Cref{cor:trivialchar}.
	Hence $g$ intertwines $\hat \phi_i|_{(G_i)^\natural_{x,r_i, (r_i/2)+}}$. Thus \Cref{thm80} implies that $g \in G_i(F)_{x,r_i/2} \wt G_{i+1}(F) G_i(F)_{x,r_i/2} \subseteq K\wt G_{i+1}(F)K$, which finishes the induction step.

	Therefore, by induction $g \in K \wtG_{n+1}(F) K$, and to finish the proof of Part (a) we may assume that $g \in \wt G_{n+1}(F)$. It suffices to show that $g\in \widetilde G_{n+1}(F)_{[x]}$, because then $g$ normalizes $K$ and hence, since $g$ intertwines $\rho \otimes \kappa$, we have $g\in N_{\Kplus}(\rho\otimes\kappa)=\wt K$, as desired.

	Suppose to the contrary that $g[x]\neq [x]$. Note that $gx \in \sB(G_{n+1}, F)$, so we can 
	choose a tame maximal, maximally split torus $T$ of $G_{n+1}$ whose associated apartment contains $x$ and $gx$. We let $\lambda\in X_*(T)^{\Gal(E/F)}\otimes\bbR$ be such that $gx = x+ \lambda $, where $E$ denotes a splitting field of $T$.
	We are now in the setting of \Cref{sec:aux} and use the notation defined there. Note in particular that $\sfU(k_F)$ is non-trivial, because $x$ is a minimal facet.

	Let $f$ be a nonzero element of
	$\Hom_{K^-\cap {}^gK^-}({}^g(\rho\otimes\kappa^-),(\rho\otimes\kappa^-))$
	and let $V_f$ be the image of~$f$. We adjust the arguments of the bottom of page~2739 and the top of page~2740 of \cite{Fi-Yu-works} to our setting, using the image of the simply connected cover at various places instead of the groups considered in \cite{Fi-Yu-works}, and using %
	 \Cref{cor:trivialchar}, to show that 
	\[
	V_f \subseteq V_\rho\otimes_\bbC\bigotimes_{i=1}^n V_{\WeilRep_i}^{U_i^+(F)_{x,{r_i}/{2}}},
	\]
	and that the action of
	$$U \defeq ((G_{n+1})_{x,0} \cap (G_{n+1})^\natural_{gx,0+})(G_{n+1})^\natural_{x,0+}$$
	on $V_f$ via $\rho \otimes \kappa^-$ is trivial. Noting that the image of $U$ in $\sfG_{n+1}(k_F)$ is $\sfU(k_F)$, it will then suffice to show that $U$ acts also trivially on $V_{\WeilRep_i}^{U_i^+(F)_{x,{r_i}/{2}}}$ for $1 \leq i \leq n$, because this will contradict that $\rho$ is cuspidal.

	For the convenience of the reader, we spell out a few more details. The restriction of $\rho \otimes \kappa^-$ to  $(G_{n+1})^\natural_{x,0+}$ is the restriction of the character $\prod_{i = 1}^n \phi_i|_{G_{n+1}(F)}$ (times the identity), and hence is the identity by \Cref{cor:trivialchar}. Recall that $gx \in \sB(G_{n+1}, F)$ and ${^gG_{n+1}}=G_{n+1}$. Hence the group $(G_{n+1})_{x,0} \cap (G_{n+1})^\natural_{gx,0+}$ acts on $V_f$ via the restriction of the character $\prod_{i = 1}^n {^g\phi_i}|_{G_{n+1}(F)}$, whose restriction to $(G_{n+1})^\natural_{gx,0+}$ is also trivial by \Cref{cor:trivialchar}. Thus the action of $U$
	on $V_f$ via $\rho \otimes \kappa^-$ is trivial, as desired. 
	Moreover, by definition of $\lambda$ and $U_i^+(F)_{x,r_i/2}$, we have $U_i^+(F)_{x,r_i/2} \subseteq (G_i)_{gx,r_i, r_i/2+}$, and hence $U_i^+(F)_{x,r_i/2}$ acts on $V_f$ via the restriction of the character $\prod_{j=1}^{i-1}{^g\phi_j}|_{G_{n+1}(F)}$ to $U_i^+(F)_{x,r_i/2}$, hence by \Cref{cor:trivialchar}, the action of $U_i^+(F)_{x,r_i/2}$ on $V_f$ is trivial. On the other hand, also using \Cref{cor:trivialchar}, $U_i^+(F)_{x,r_i/2}$ acts also trivially on $V_{\omega_j}$ for $1 \leq j \leq n$ with $i \neq j$. Hence we conclude that 
	\(
	V_f \subseteq V_\rho\otimes_\bbC\bigotimes_{i=1}^n V_{\WeilRep_i}^{U_i^+(F)_{x,{r_i}/{2}}},
	\) 
	as claimed.

	To finish the proof of Part (a), it suffices to show that the action of $U$ on $V_{\WeilRep_i}^{U_i^+(F)_{x,{r_i}/{2}}}$ is trivial. Since $U \subseteq (G_{n+1})_{gx,0+}^\natural \cap (G_{n+1})_{x,0+}^\natural$, the restriction of $\phi_i$ to $U$ is trivial by \Cref{cor:trivialchar}, so it suffices to show that the restriction of the Heisenberg--Weil
	representation to $\sfU(k_F)$ acting on $V_{\WeilRep_i}^{U_i^+(F)_{x,{r_i}/{2}}}=V_{\WeilRep_i}^{\sfV_i^+}$ is trivial.
	Since the image $\sfV_i^+$ of ${U_i^+(F)_{x,{r_i}/{2}}}$ in
	the Heisenberg $\bbF_p$-group $\sfV_i^\natural$
	is a splitting of an isotropic subspace,
	we can identify $V_{\WeilRep_i}^{\sfV_i^+}$ as a representation of $\sfV_{i,0}^\natural$
	with the irreducible Heisenberg representation for $\sfV_{i,0}^\natural$ (with same central character) by \Cref{thm15b}.
	At the same time, by \Cref{thm16a},
	the group $\sfP(k_F)$ acts by conjugation on $\sfV_i^+$ and on the quotient 
	\[
	\sfV_{i,0}^\natural \simeq \sfV_{i,0}^\natural \sfV_i^+
	/\sfV_i^+,
	\]
	and the action of the subgroup $\sfU(k_F)$ on the quotient is trivial. Hence the action of $\sfP(k_F)$ on $V_{\WeilRep_i}$ preserves $V_{\WeilRep_i}^{\sfV_i^+}$.
	Since $q>3$, we may apply \Cref{thm27} to show that $\sfU(k_F)\subseteq[\sfP(k_F),\sfU(k_F)]$,
	and then apply \Cref{thm26} to conclude that $\sfU(k_F)$ acts trivially on $V_{\WeilRep_i}^{\sfV_i^+}=V_{\WeilRep_i}^{U_i^+(F)_{x,{r_i}/{2}}}$.
This completes the proof of (a).

For the second part, we observed earlier in the proof,
during the inductive argument,
that $\pi_0(\widetilde G_{n+1})(F^\tn{sep})$ is a finite $p$-group
that is trivial if all $\phi_i$ satisfy~\GEE.
Moreover, we have seen that if $g$ intertwines $\sigma$, then $g \in K\wtG_{n+1}(F)_{[x]}K=K\cdot \wtG_{n+1}(F)_{[x]}$, and since all elements of $\wt K$ intertwine $\sigma$, we have  
$K\trianglelefteq\widetilde K\trianglelefteq K\cdot\widetilde G_{n+1}(F)_{[x]}$. Hence
there is a chain of inclusions
\[
\frac{\widetilde K}{K} \subseteq
\frac{\widetilde G_{n+1}(F)_{[x]}}{G_{n+1}(F)_{[x]}} \subseteq
\frac{\widetilde G_{n+1}(F)}{G_{n+1}(F)}\subseteq \pi_0(\widetilde G_{n+1})(F),
\]
and so $\widetilde K/K$  is also a finite $p$-group
that is trivial if all $\phi_i$ satisfy~\GEE.
\end{proof}

\begin{theorem}\label{thmseveralsc}
Let $\wt K = N_{\Kplus}(\rho\otimes\kappa)$
and let $\sigma\in\Irr(\wt K,K,\rho\otimes\kappa)$.
Suppose $q>3$.
\begin{theoremenum}
\item \label{thmseveralscC}
Let $\sigma\in\Irr(\widetilde K,K,\rho\otimes\sigma)$.
Then $\cind_{\wt K}^{G(F)}(\sigma)$ is irreducible supercuspidal.

\item \label{thmseveralscA}
$\displaystyle
\cind_K^{G(F)}(\rho\otimes\kappa)
\simeq \bigoplus_{\sigma \in \Irr(\wt K,K,\rho\otimes\kappa)}
\big(\cind_{\widetilde K}^{G(F)}(\sigma)\big)^{\oplus m_\sigma}$
with $m_\sigma\in \bN_{\geq 1}$.

\item \label{thmseveralscB}
If all $\phi_i$ satisfy~\GEE,
for example, if $p$ is a torsion prime for $\widehat G$,
then $\widetilde K = K$ and
the representation $\cind_K^G(\rho\otimes\kappa)$ is irreducible supercuspidal.
\end{theoremenum}

\end{theorem}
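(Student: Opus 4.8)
The theorem is a formal consequence of the intertwining computation of \Cref{thm02} together with standard facts about compact induction from open subgroups that are compact modulo $Z(G)(F)$. Note first that $Z(G)(F)\subseteq\wt K$, since central elements fix the building, and that $\sigma$ is irreducible, so $Z(G)(F)$ acts on $\sigma$ by a character and $\cind_{\wt K}^{G(F)}(\sigma)$ is a legitimate smooth representation. For the irreducibility-and-supercuspidality assertion I would invoke the classical irreducibility criterion for compact induction: if $J\subseteq G(F)$ is open and compact modulo $Z(G)(F)$ and $\lambda$ is an irreducible smooth representation of $J$, then $\cind_J^{G(F)}(\lambda)$ is irreducible as soon as $\Hom_{J\cap\,{}^gJ}({}^g\lambda,\lambda)=0$ for all $g\in G(F)\smallsetminus J$. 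Applying this with $J=\wt K$ and $\lambda=\sigma$, \Cref{thm02a} says exactly that the set of $g\in G(F)$ intertwining $\sigma$ is $\wt K$, so the hypothesis holds and $\cind_{\wt K}^{G(F)}(\sigma)$ is irreducible. It is supercuspidal because any matrix coefficient of $\cind_{\wt K}^{G(F)}(\sigma)$ arising from the induced model is supported on finitely many cosets of $\wt K$, hence is compactly supported modulo $Z(G)(F)$, and an irreducible smooth representation admitting a nonzero such matrix coefficient is supercuspidal.

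For the decomposition statement I would use Clifford theory for the normal inclusion $K\trianglelefteq\wt K$, which has finite index by \Cref{thm02b}. Since $\wt K=N_{\Kplus}(\rho\otimes\kappa)$, the irreducible representation $\rho\otimes\kappa$ of $K$ is fixed up to isomorphism by $\wt K$, so $\Ind_K^{\wt K}(\rho\otimes\kappa)\simeq\bigoplus_{\sigma\in\Irr(\wt K,K,\rho\otimes\kappa)}\sigma^{\oplus m_\sigma}$ with each multiplicity $m_\sigma\geq1$: indeed, by Frobenius reciprocity every $\sigma\in\Irr(\wt K,K,\rho\otimes\kappa)$ occurs in this induced representation. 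Applying the exact functor $\cind_{\wt K}^{G(F)}$, which commutes with finite direct sums, and using transitivity of compact induction together with $\cind_K^{\wt K}=\Ind_K^{\wt K}$ (valid since $[\wt K:K]<\infty$), one obtains $\cind_K^{G(F)}(\rho\otimes\kappa)\simeq\bigoplus_{\sigma}\bigl(\cind_{\wt K}^{G(F)}(\sigma)\bigr)^{\oplus m_\sigma}$ with $m_\sigma\geq1$, as claimed.

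For the last assertion, if all $\phi_i$ satisfy \GEE\ then $\wt K=K$ by \Cref{thm02b}, so $\rho\otimes\kappa$ is the unique element of $\Irr(\wt K,K,\rho\otimes\kappa)$ and $\sigma=\rho\otimes\kappa$; the first assertion then gives that $\cind_K^{G(F)}(\rho\otimes\kappa)$ is irreducible supercuspidal. The main obstacle has, in effect, already been dealt with in \Cref{thm02}, which carries essentially all the content of this theorem; what remains here is only to invoke the two standard inputs correctly — the irreducibility criterion for compact induction and the matrix-coefficient characterization of supercuspidality — and to keep track of the fact that $\rho\otimes\kappa$ is an irreducible, $\wt K$-stable representation of $K$, which is guaranteed by its construction in \Cref{sec:overview}.
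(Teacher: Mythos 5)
Your proposal is correct and follows essentially the same route as the paper: \Cref{thm02a} supplies exactly the intertwining hypothesis needed for the standard irreducibility criterion for compact induction from the open, compact-mod-center subgroup $\wt K$, supercuspidality then follows from the usual compactly-supported-matrix-coefficient argument, and the decomposition in part (b) is finite-index Clifford theory for $K\trianglelefteq\wt K$ combined with transitivity and exactness of compact induction. The only difference is presentational: the paper packages the Clifford-theoretic decomposition and the intertwining criterion into \Cref{thm71} and cites the ``irreducible implies supercuspidal'' fact, whereas you spell these standard ingredients out directly.
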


\begin{proof}
	By \Cref{thm02a}, if $g \in G(F)$ intertwines $\sigma$, then $g\in \wt K=N_{\Kplus}(\rho\otimes\kappa)$. Moreover, $K$ is a normal, finite-index subgroup of $\Kplus$ that contains $Z(G(F))$ and $K/Z(G(F))$ is compact. Now the result follows by applying \Cref{thm71},
	using the well-known fact that for such a compactly-induced representation, irreducibility implies supercuspidality (see \cite[Lemma~3.2.1]{Fintzen-IHES}).
\end{proof}

\numberwithin{equation}{section}
\appendix
\section{Alternating, symmetric, and quadratic forms} \label{sec:forms}
In this section we review the notions of symmetric forms,
alternating forms, and quadratic forms,
paying close attention to the features of these objects in characteristic~$2$
(see \cite[pp.~xvii--xxi]{Involutions}).
Fix a base field $k$ and a finite-dimensional $k$-vector space~$V$.

Let $B$ be a bilinear form on~$V$.
Recall that $B$ is
\begin{itemize}
\item
\mathdef{alternating} if $B(v,v) = 0$ for all $v\in V$,
\index[terminology]{form!alternating}

\item
\mathdef{symmetric} if $B(v,w) = B(w,v)$ for all $v,w\in V$, and
\index[terminology]{form!symmetric}

\item
\mathdef{skew-symmetric} if $B(v,w) = -B(w,v)$ for all $v,w\in V$.
\index[terminology]{form!skew-symmetric}

\end{itemize}
If $\tn{char}(k)\neq2$, then alternating is equivalent to skew-symmetric.
If $\tn{char}(k)=2$, then skew-symmetric is equivalent to symmetric. %
Moreover, if $\tn{char}(k)=2$, then 
alternating implies skew-symmetric but not conversely,
as we see by considering the simplest nontrivial bilinear pairing,
$(a,b)\mapsto a\cdot b$ on the one-dimensional vector space~$k$.

A bilinear form $B$ is called \mathdef{nondegenerate}\index[terminology]{form!nondegenerate} if for every nonzero $v \in V$ there exists $w \in W$ such that $B(v,w)\neq 0$, and a nondegenerate alternating bilinear form is called a \mathdef{symplectic form}.\index[terminology]{form!symplectic}

A \mathdef{quadratic form}\index[terminology]{form!quadratic}
$Q$ on~$V$ is an element of $\Sym^2(V^*)$,
that is, a homogeneous polynomial function on~$V$ of degree~$2$.
Any quadratic form~$Q$ defines a symmetric bilinear form
$B_Q\in\Sym^2(V)^*$ by the formula\index[notation]{BQ@$B_Q$}
\[
B_Q\colon (v,w) \mapsto Q(v+w) - Q(v) - Q(w).
\]
A quadratic form $Q$ is defined to be \mathdef{nondegenerate} if $B_Q$ is nondegenerate.

The assignment $Q\mapsto B_Q$ defines a map 
\[
\Sym^2(V^*) \to \Sym^2(V)^*
\]
whose behavior depends on $\tn{char}(k)$.
If $\tn{char}(k)\neq 2$, then the map is an isomorphism
with inverse $B \mapsto(v\mapsto\tfrac12 B(v,v))$,
giving a bijection between quadratic forms
and symmetric bilinear forms.
If $\tn{char}(k)=2$, then the map $Q\mapsto B_Q$ is not an isomorphism.
Instead, its kernel is the space $(V^*)^{(2)}$
of diagonal quadratic forms and therefore, by a dimension count,
its image is the space $\Alt^2(V)^*$ of alternating bilinear forms.

Assume for simplicity in the remainder of this section that $\dim(V)=2n$ is even.

Let $\omega$ be a nondegenerate alternating form.
A subspace $W$ of~$V$ is \mathdef{isotropic} if $\omega(w,w') = 0$ for all $w,w'\in W$.
A \mathdef{partial polarization} of~$V$ is a decomposition
$V = V^+\oplus V_0\oplus V^-$ in which $V^+$ and $V^-$ are isotropic,
$V_0$ is orthogonal to $V^+\oplus V^-$,
and the restriction of $\omega$ to $V_0$ is nondegenerate.
A \mathdef{polarization} is a partial polarization in which $V_0=0$.

Similarly, let $Q$ be a quadratic form.
A subspace $W$ of~$V$ is \mathdef{isotropic}\index[terminology]{subspace!isotropic}
if every $w\in W$ is isotropic,
meaning that $Q(w)=0$.%
\footnote{Some authors call a subspace ``isotropic'' if it contains some isotropic vector
and ``totally isotropic'' if every vector is isotropic.} 
A subspace $W$ of~$V$ is called \mathdef{anisotropic}\index[terminology]{subspace!anisotropic}
if  $Q(w)\neq 0$ for every $w\in W-\{0\}$.
A \mathdef{partial polarization} of~$V$ is a decomposition
$V = V^+\oplus V_0\oplus V^-$ in which $V^+$ and $V^-$ are isotropic,
$V_0$ is orthogonal to $V^+\oplus V^-$ (with respect to~$B_Q$),
and the restriction of $Q$ to $V_0$ is nondegenerate.
A \mathdef{polarization}\index[terminology]{partial polarization@(partial) polarization}
is a partial polarization in which $V_0$ is anisotropic.

The \mathdef{Witt index} %
of~$Q$ is the dimension of one
(equivalently, by a theorem of Witt, any \cite[Section~I.4]{Lam05})
maximal isotropic subspace of~$V$.
We say $Q$ is \mathdef{split} if $Q$ has Witt index~$n$,
in which case $(V,Q)$ is isomorphic to
the $k$-vector space $k^{2n}$ equipped with the quadratic form
\begin{equation} \label{thm32}
Q\colon \sum_{i=1}^n (x_i e_i + x_{-i}e_{-i})
\longmapsto \sum_{i=1}^n x_i\cdot x_{-i},
\end{equation}
where $\{e_i : i\in\{\pm1,\dots,\pm n\}\}$ is a basis of $V$.
If the Witt index of~$Q$ is $n-1$, then
there is a separable quadratic extension $\ell/k$ such that
$(V,Q)$ is isomorphic to the $k$-vector space $k^{2n-2}\oplus\ell$
equipped with the quadratic form
\begin{equation} \label{thm17}
Q\colon \sum_{i=1}^{n-1} (x_ie_i + x_{-i}e_{-i}) + y e_0
\longmapsto \sum_{i=1}^{n-1} x_i\cdot x_{-i} + \Nm_{\ell/k}(y),
\qquad x_i\in k,\; y\in\ell, 
\end{equation}
where $\{e_i : i\in\{\pm1,\dots,\pm(n-1)\}\}$ is a basis of
$k^{2n-2}$ and $e_0$ is a non-zero element of $\ell$.

Given $Q$ nondegenerate, we can form the orthogonal group
$\O(V) = \O(Q)$ of $g\in\GL(V)$ that preserve~$Q$,
meaning that $Q(gv) = Q(v)$ for all $v\in V$.
Let $\SO(V)$ be the index-two subgroup of~$\O(V)$
defined as the kernel of a map $\O(V)\to\bbZ/2\bbZ$
which is the determinant if $\tn{char}(k)\neq2$
and the Dickson invariant if $\tn{char}(k)=2$.
See \cite[Appendix~C.2]{ConradSGA3} for more discussion
of the definition of the special orthogonal group
in characteristic~$2$.
The group $\SO(V)$ is reductive and of type~$D_n$
over the algebraic closure of~$k$.
Moreover, $\SO(V)$ is split if and only if $Q$ is split.
As \cite[Table~II]{TitsBoulder} explains,
the group $\SO(V)$ is quasi-split but not split if and only if $Q$ has Witt index~$n-1$.

\section{Basic Clifford theory and intertwining}
\label{sec:clifford}
Let $B$ be a group, let $C$ be a finite-index normal subgroup of~$B$,
and let $\rho$ be an irreducible representation of~$C$.
Clifford theory concerns two closely related problems:
decomposing the induced representation $\Ind_C^B(\rho)$
and describing the set $\Irr(B,C,\rho)$ of irreducible representations of~$B$
whose restriction to~$C$ contains $\rho$.
In this \namecref{sec:clifford} we collect
some results from basic Clifford theory
and combine them with the classical intertwining criterion
for irreducibility of a compactly-induced representation.

\begin{lemma} \label{thm71}
	Let $C \trianglelefteq B \leq A$ be groups with $C$ normal and finite-index in~$B$,
	and let $\rho$ be a finite-dimensional irreducible representation of~$C$.
	\begin{lemmaenum}
		\item \label{thm71a}
		Sending $\sigma$ to the $\sigma$-isotypic component of $\Ind_C^B(\rho)$
		defines a bijection
		\[
			\Irr(B,C,\rho) \longleftrightarrow
			\Irr\bigl(\End_B(\Ind_C^B(\rho))\bigr).
		\]
	\end{lemmaenum}

	Suppose in addition that $A$ is locally profinite and $\rho$ is smooth.
	Then
	\begin{lemmaenum}[resume]
		\item \label{thm71b}
		$\displaystyle\cind_C^A(\rho) \simeq \bigoplus_\sigma \cind_{N_B(\rho)}^A(\sigma)\otimes V_\sigma$,
		where the sum ranges over $\sigma\in\Irr(N_B(\rho),C,\rho)$ 
		and each $V_\sigma$ is a finite-dimensional vector space with trivial $A$-action.
	\end{lemmaenum}

	Finally, suppose in addition that $C$ is open and has compact image in~$A/Z(A)$.
	If every element of~$A$ intertwining $\rho$ lies in~$B$,
	then the following holds.
	\begin{lemmaenum}[resume]
		\item \label{thm71c}
		For every $\sigma\in\Irr(N_B(\rho),C,\rho)$,
		the representation $\cind_{N_B(\rho)}^A(\sigma)$ is irreducible.
	\end{lemmaenum}
\end{lemma}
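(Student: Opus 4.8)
The plan is to treat the three parts in order: \ref{thm71a} is the module-theoretic core, \ref{thm71b} follows from it by induction in stages, and \ref{thm71c} reduces the intertwining of $\sigma$ to that of $\rho$ and then invokes the classical irreducibility criterion for compact induction. For \ref{thm71a}, I would first observe that $\Ind_C^B(\rho)$ is finite-dimensional, since $[B:C]<\infty$ and $\rho$ is finite-dimensional, and semisimple as a representation of $B$: by Mackey's formula its restriction to the normal subgroup $C$ is $\bigoplus_{gC\in B/C}{}^g\rho$, hence semisimple, and any $B$-subrepresentation has a $C$-equivariant complement which can be averaged over $B/C$ into a $B$-equivariant one, working over $\bbC$. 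Writing $\Ind_C^B(\rho)=\bigoplus_\sigma\sigma\otimes V_\sigma$ with $V_\sigma=\Hom_B(\sigma,\Ind_C^B(\rho))$, Frobenius reciprocity shows $V_\sigma\neq0$ precisely when $\sigma\in\Irr(B,C,\rho)$, and the $\sigma$-isotypic component is $\sigma\otimes V_\sigma$. The double centralizer theorem identifies $\End_B(\Ind_C^B(\rho))$ with $\prod_{\sigma\in\Irr(B,C,\rho)}\End_\bbC(V_\sigma)$, whose simple modules are exactly the $V_\sigma$; thus $\sigma\mapsto V_\sigma$, equivalently $\sigma\mapsto$ its isotypic component, is the asserted bijection.

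For \ref{thm71b}, writing $T\defeq N_B(\rho)$, a finite-index subgroup of $B$, induction in stages for compact induction together with the fact that compact induction from a finite-index subgroup agrees with ordinary induction gives $\cind_C^A(\rho)\simeq\cind_T^A(\Ind_C^T(\rho))$. Applying \ref{thm71a} with $B$ replaced by $T$ decomposes $\Ind_C^T(\rho)=\bigoplus_{\sigma\in\Irr(T,C,\rho)}\sigma\otimes V_\sigma$ with each $V_\sigma$ finite-dimensional, and since compact induction commutes with finite direct sums, hence with $-\otimes V_\sigma$, this yields the claimed decomposition, with each $V_\sigma$ carrying the trivial $A$-action.

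For \ref{thm71c}, I would quote the classical criterion: for an open subgroup $H$ of $A$ whose image in $A/Z(A)$ is compact and a finite-dimensional irreducible smooth representation $\tau$ of $H$, the representation $\cind_H^A(\tau)$ is irreducible provided every $g\in A$ intertwining $\tau$ lies in $H$. I would apply this with $H=T=N_B(\rho)$ — open and of compact image in $A/Z(A)$ since $[T:C]<\infty$ — and $\tau=\sigma$, which is finite-dimensional because it embeds into $\Ind_C^T(\rho)$. It then remains to show that $g\in A$ intertwining $\sigma$ forces $g\in T$. Since $T$ fixes $\rho$ up to isomorphism, Clifford's theorem gives $\sigma|_C\simeq\rho^{\oplus m}$ for some $m\geq 1$; restricting a nonzero $({}^gT\cap T)$-homomorphism ${}^g\sigma\to\sigma$ to $C\cap{}^gC$ and extracting a matrix entry produces a nonzero $(C\cap{}^gC)$-homomorphism ${}^g\rho\to\rho$, so $g$ intertwines $\rho$ and hence $g\in B$ by hypothesis. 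Now ${}^gC=C$, so restricting the same homomorphism to $C$ and again extracting a matrix entry shows ${}^g\rho\simeq\rho$, i.e.\ $g\in N_B(\rho)=T$.

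The one delicate point is the bookkeeping in \ref{thm71c}: keeping careful track of which group each homomorphism is equivariant for — in particular running the matrix-entry argument over the a priori small group $C\cap{}^gC$ before one knows $g\in B$ — and applying the compact-induction irreducibility criterion in a form whose hypotheses ($T$ open with compact image in $A/Z(A)$, so that $\cind_T^A(\sigma)$ is admissible, and $\sigma$ a finite-dimensional irreducible smooth representation) are genuinely met. Parts \ref{thm71a} and \ref{thm71b} are essentially formal.
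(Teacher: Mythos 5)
Your proposal is correct and follows essentially the same route as the paper's proof: part (a) via semisimplicity of $\Ind_C^B(\rho)$ and the isotypic decomposition identifying $\End_B(\Ind_C^B(\rho))$ with a product of matrix algebras, part (b) via induction in stages applied to the decomposition of $\Ind_C^{N_B(\rho)}(\rho)$, and part (c) by reducing intertwining of $\sigma$ to intertwining of $\rho$ (using that $\sigma|_C$ is $\rho$-isotypic) and then invoking the classical irreducibility criterion for compact induction. The only cosmetic differences are that you prove semisimplicity by averaging rather than citing it, and the paper additionally records explicitly that $Z(A)\subseteq B$ (hence $Z(A)\subseteq N_B(\rho)$), which in your setup is anyway forced by the intertwining hypothesis.
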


\begin{proof}
	For the first part, since $\Ind_C^B(\rho)$ is semisimple
	(see, e.g., \cite[Fact~A.3.2]{Kaletha-non-singular}),
	it decomposes as a finite direct sum
	\[
	\Ind_C^B(\rho) \simeq \bigoplus_\sigma \sigma\otimes V_\sigma
	\]
	where $\sigma\in\Irr(B)$ and $V_\sigma$
	is a vector space with trivial $B$-action recording the (finite) multiplicity of~$\sigma$ in $\Ind_C^B(\rho)$.
	By Frobenius reciprocity,
	$\sigma$ contributes to this direct sum if and only if
	$\sigma|_C$ contains $\rho$.
	Using Schur's Lemma (see, e.g., \cite[Section~B.II]{Renard10}), we obtain that
	\[
	\End_B(\Ind_C^B(\rho)) \simeq \bigoplus_\sigma \End_B(V_\sigma).
	\]
	The claim now follows from the fact that a finite-dimensional matrix algebra has,
	up to isomorphism, a unique irreducible representation.
	
	The second part follows from transitivity of compact induction
	together with the proof of the first part where $B$ is replaced by $N_B(\rho)$, which shows that
	\[
	\cind_C^{N_B(\rho)}(\rho) \simeq \bigoplus_\sigma \sigma\otimes V_\sigma.
	\]
	
	For the third part, we first claim that if $a\in A$ intertwines~$\sigma$, then $a \in N_B(\rho)$.
	If $a \in A$ intertwines~$\sigma$, then $a$ intertwines $\sigma|_C$ and thus~$\rho$ because $\sigma|_C$ is $\rho$-isotypic.
	Hence $a\in B$ by assumption, and therefore $a\in N_B(\rho)$ because $B$ normalizes~$C$.
	The proof is now completed using the standard intertwining criterion
	for irreducibility of a compactly-induced representation.
	This criterion is stated when $A=G(F)$ in \cite[Lemma~3.2.3]{Fintzen-IHES},
	and the proof adapts to our setting by using
	the Mackey decomposition for locally profinite groups
	(see \cite{Kutzko77,Yamamoto22}) 
	and noting that $Z(A)\subseteq B$ because $Z(A)$
	intertwines every representation of a subgroup of~$A$.
\end{proof}

\section{Commutators in simply-connected quasi-split groups}
Let $k$ be a field and let $H$ be a reductive $k$-group.
In this appendix we review a classical result of Tits
showing that if $H$ is simply-connected and quasi-split,
then $H(k)$ usually equals its own commutator subgroup,
except for some degenerate cases when $k=\bbF_2$ or $\bbF_3$
which we list in \eqref{thm83} (and the proof of \Cref{thm91}).
This result has been well-known for many years,
but we were unable to find a source that states it concisely.
In \Cref{thm81}, we take $k=\bbF_q$
and use Tits's result to study the order of
the abelianization of~$H(\bbF_q)$.

Let $H(k)^+$ be the subgroup of $H(k)$ generated by the subgroups $U(k)$
where $U$ is the unipotent radical of some parabolic subgroup of~$H$.

\begin{theorem} \label{thm84}
Suppose that $H$ is simply-connected and quasi-split and that either
\begin{equation} \label{thm83}
\begin{aligned}
|k| &\geq 4, \text{ or} \\
k &\simeq \bbF_3 \text{ and $H$ has no factor isomorphic to $\SL_2$, or} \\
k &\simeq \bbF_2 \text{ and $H$ has no factor isomorphic to $\SL_2$, $\Sp_4$,
$G_2$, or $\SU_3$.}
\end{aligned}
\end{equation}
Then $[H(k)^+,H(k)^+] = H(k)^+$.
\end{theorem}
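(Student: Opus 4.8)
The plan is to establish the nontrivial inclusion $H(k)^+ \subseteq [H(k)^+, H(k)^+]$, the reverse being automatic. First I would reduce to the absolutely almost-simple case: since $H$ is simply connected it decomposes as $H = \prod_j \Res_{k_j/k} H_j$ with each $H_j$ absolutely almost-simple, simply connected and quasi-split over a finite separable extension $k_j/k$, and, parabolic subgroups of a Weil restriction being Weil restrictions of parabolic subgroups, one gets $H(k)^+ = \prod_j H_j(k_j)^+$ compatibly with passage to commutator subgroups. As $|k_j| \geq |k|$, with $k_j = k$ whenever $|k_j| = |k| \leq 3$, the hypotheses~\eqref{thm83} for $(H,k)$ descend to each $(H_j, k_j)$, so I may assume $H$ is absolutely almost-simple, simply connected and quasi-split. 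Fixing a maximal $k$-split torus $S$ with relative root system $\Phi = \Phi(H,S)$, every relative root group $U_a$ lies in the unipotent radical of a minimal parabolic subgroup for a suitable choice of positive system, so $H(k)^+$ is generated by the $U_a(k)$, $a\in\Phi$; it therefore suffices to show $U_a(k) \subseteq [H(k)^+, H(k)^+]$ for every $a\in\Phi$.

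For $|k| \geq 4$ I would argue one root at a time, using the relative-rank-one subgroup $H_{(a)} = \langle U_a, U_{-a}\rangle$, which is quasi-split semisimple with $H_{(a)}(k)^+ = \langle U_a(k), U_{-a}(k)\rangle \subseteq H(k)^+$. By the Bruhat--Steinberg relations the image $a^\vee(t)$ of the relative coroot lies in $H_{(a)}(k)^+$, and conjugation by it sends $u \in U_a(k)$ to an element congruent to $t^2\bar u$ in the $k$-vector space $U_a(k)/U_{2a}(k)$, where $\bar u$ is the image of $u$ and $U_{2a}$ is trivial unless $a$ is multipliable, in which case moreover $U_{2a}(k) = [U_a(k), U_a(k)] \subseteq [H_{(a)}(k)^+, H_{(a)}(k)^+]$. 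Choosing $t \in k^\times$ with $t^2 \neq 1$, possible exactly because $|k| \geq 4$, the map $\bar u \mapsto (t^2-1)\bar u$ is bijective, so the commutators $[a^\vee(t), u]$ together with $U_{2a}(k)$ generate $U_a(k)$ inside $[H_{(a)}(k)^+, H_{(a)}(k)^+] \subseteq [H(k)^+, H(k)^+]$. This settles the case $|k| \geq 4$.

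For $k \in \{\bF_2,\bF_3\}$ there need be no $t\in k^\times$ with $t^2\neq 1$, so instead I would work inside the abelianization $A = H(k)^+/[H(k)^+, H(k)^+]$, which is generated by the images of the $U_a(k)$. Because each $n_a \in H(k)^+$, conjugation by $n_a$ is trivial on $A$, so the image of $U_a(k)$ in $A$ depends only on the Weyl orbit of $a$; and for linearly independent $a,b \in \Phi$ the Chevalley commutator relation $[u_a(s), u_b(v)] = \prod_{i,j>0} u_{ia+jb}(c^{ab}_{ij} s^i v^j)$ becomes, in $A$, a linear relation among the images of the $U_{ia+jb}(k)$. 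Running through the irreducible relative root systems of rank $\geq 2$ with these relations, while keeping track of which structure constants $c^{ab}_{ij}$ vanish in characteristic two, I would show that $A = 0$ unless $H$ is one of the groups in~\eqref{thm83}: the relevant characteristic-two divisibilities in types $B$, $C$, $F_4$, $G_2$ and ${}^2\!A_n$ are exactly what produces the exceptions $\Sp_4$, $G_2$, $\SU_3$ over $\bF_2$, while the remaining rank-one groups over $\bF_2$ and $\bF_3$ are handled by a direct computation in $\SL_2$ and in quasi-split $\SU_3$, yielding the exceptions $\SL_2$ over $\bF_2,\bF_3$ and $\SU_3$ over $\bF_2$. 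This last step can alternatively be extracted from Tits's work on abstract simple groups together with the paper of Borel and Tits.

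The hard part will be precisely this small-field analysis. Once $|k| \leq 3$ the clean rank-one reduction of the second paragraph collapses, and one is forced into a finite but delicate case analysis of the rank-$\leq 2$ relative root systems over $\bF_2$ and $\bF_3$, in which the characteristic-two behaviour of the Chevalley structure constants has to be tracked carefully and matched exactly against the list of excluded groups; everything else is a routine combination of the structure theory of quasi-split groups with elementary commutator identities.
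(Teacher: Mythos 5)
The paper does not prove this statement at all: its ``proof'' is a one-line citation to \cite[Section~3.4]{Tits64}, so your attempt to give a self-contained argument is by construction a different route. The first half of your plan, for $|k|\geq 4$, is the classical torus-conjugation argument (reduce to an absolutely almost-simple factor, pick $t\in k^\times$ with $t^2\neq 1$, and use $[a^\vee(t),u]$ together with $U_{2a}(k)=[U_a(k),U_a(k)]$ to exhaust each relative root group), and modulo the routine verifications you gesture at for multipliable roots it is essentially correct and is how this case is usually proved.

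The gap is in the second half, which is exactly where the content of the theorem and the excluded list in \eqref{thm83} live. For $k\simeq\bbF_2,\bbF_3$ you only describe a program: pass to the abelianization $A$, use Weyl-invariance and commutator relations among root groups, and ``run through the irreducible relative root systems of rank $\geq 2$ \dots I would show that $A=0$ unless \dots''. That finite-but-delicate verification is never carried out, and it is not a formality: for the non-split quasi-split forms (${}^2A_n$, ${}^2D_n$, ${}^3D_4$, ${}^2E_6$) the Chevalley commutator relations you invoke must be replaced by the relations for relative root groups, which are non-abelian for multipliable roots, and the characteristic-two vanishing of structure constants has to be matched case by case against the list $\Sp_4$, $G_2$, $\SU_3$ over $\bbF_2$ and $\SL_2$ over $\bbF_2,\bbF_3$. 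Your own fallback --- that this step ``can alternatively be extracted from Tits's work together with Borel--Tits'' --- concedes the point: without either executing that case analysis or citing \cite{Tits64} as the paper does, the proof of the small-field cases is asserted rather than established, so as it stands the argument is incomplete precisely where the hypotheses \eqref{thm83} are calibrated.
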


\begin{proof}
This is explained in \cite[Section~3.4]{Tits64}.
\end{proof}

\begin{proposition} \label{thm91}
Suppose that $H$ is simply-connected and quasi-split.
\begin{enumerate}
\item If $H$ satisfies \eqref{thm83}, then $H(k) = H(k)^+$.
\item $H$ satisfies \eqref{thm83} if and only if $[H(k),H(k)] = H(k)$.
\end{enumerate}
\end{proposition}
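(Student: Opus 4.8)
The plan is to deduce \Cref{thm91} from \Cref{thm84} together with a case-by-case analysis of the degenerate groups. Part (1) is the easier half: by \Cref{thm84}, under hypothesis \eqref{thm83} we have $[H(k)^+,H(k)^+] = H(k)^+$, so it suffices to show $H(k) = H(k)^+$ whenever $H$ is simply-connected quasi-split satisfying \eqref{thm83}. I would invoke the structure theory of quasi-split simply-connected groups: $H$ is a direct product of its almost-simple factors, each of the form $\Res_{\ell/k}(H')$ for $\ell/k$ a finite separable extension and $H'$ an absolutely almost-simple, simply-connected, quasi-split $\ell$-group, and $H(k) = \prod H'(\ell)$, so one reduces to the absolutely almost-simple case. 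There, the fact that $H(k) = H(k)^+$ for simply-connected quasi-split $H$ over an infinite field is classical (Borel--Tits), and over a finite field it follows from the Bruhat decomposition: $H(\bbF_q)$ is generated by the $\bbF_q$-points of a Borel and its opposite, hence by a maximal torus and the root groups, and one checks the torus elements lie in $H(k)^+$ using the identity $\alpha^\vee(t) = $ (product of elementary unipotent elements) valid once $|k| \geq 4$ (this is where the hypothesis enters). This is essentially the content of \cite[Section~3.4]{Tits64}, so I would cite it rather than reproving it.

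For part (2), the forward direction ($\eqref{thm83} \Rightarrow [H(k),H(k)] = H(k)$) is immediate from part (1) and \Cref{thm84}: $[H(k),H(k)] = [H(k)^+,H(k)^+] = H(k)^+ = H(k)$. The substantive direction is the converse: I must show that if $H$ is simply-connected quasi-split but \emph{fails} \eqref{thm83}, then $H(k)$ is not perfect, i.e. $[H(k),H(k)] \subsetneq H(k)$. Failing \eqref{thm83} means either $|k| \leq 3$ with $H$ having a small exceptional factor, or $|k| = 2$ or $3$ with an $\SL_2$ factor. Since the abelianization of a product is the product of the abelianizations, and a single perfect factor cannot rescue a non-perfect one, it suffices to exhibit, for each offending factor $H_0$ over the relevant small field, a nontrivial abelian quotient of $H_0(k)$. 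Concretely: $\SL_2(\bbF_2) \simeq S_3$ has abelianization $\bbZ/2$; $\SL_2(\bbF_3)$ has abelianization $\bbZ/3$ (its commutator subgroup is the quaternion group $Q_8$ of index $3$); $\Sp_4(\bbF_2) \simeq S_6$ has abelianization $\bbZ/2$; $G_2(\bbF_2)$ has a subgroup of index $2$ (its derived group is the simple group $\mathrm{PSU}_3(\bbF_3)$, often written $U_3(3)$, of index $2$); and $\SU_3(\bbF_2)$ (the $\bbF_4/\bbF_2$ special unitary group, a solvable group of order $216$) has a normal Sylow $3$-subgroup with quotient a nontrivial $2$-group. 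In each case one has a nontrivial character, so $H_0(k)$ is not perfect, and therefore neither is $H(k)$.

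The main obstacle will be the bookkeeping in part (2): one must be careful that ``failing \eqref{thm83}'' is interpreted correctly as a disjunction over the almost-simple factors after the Weil-restriction reduction, and one must handle the interaction with residue fields of the factors. For instance an $\SL_2$-factor of $H$ over $k$ really means a factor $\Res_{\ell/k}\SL_2$, and the relevant group is $\SL_2(\ell)$; this fails \eqref{thm83} exactly when $|\ell| \leq 3$, which is the condition that needs to be matched with how \eqref{thm83} is phrased. I would state the reduction cleanly first (``$H(k) = \prod_i H_i(k)$ with $H_i$ absolutely almost-simple over some finite extension, and $H$ satisfies \eqref{thm83} iff each $H_i$ does''), and then run the two directions factor-by-factor. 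The non-perfectness computations for the finitely many small groups $\SL_2(\bbF_2)$, $\SL_2(\bbF_3)$, $\Sp_4(\bbF_2)$, $G_2(\bbF_2)$, $\SU_3(\bbF_2)$ are all well-documented in the literature on finite groups of Lie type, so I would assemble the references (e.g. the ATLAS, or Tits's paper itself, which lists precisely these exceptions) rather than verify each abelianization by hand. The only mild subtlety is $G_2(\bbF_2)$, where one should note $|G_2(\bbF_2)| = 2 \cdot |U_3(3)|$ and $U_3(3)$ is simple, so the derived subgroup has index exactly $2$.
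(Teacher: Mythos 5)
Your part (2) is essentially the paper's proof: the forward implication is deduced from part (1) together with \Cref{thm84}, and the converse is handled by exhibiting nontrivial abelian quotients of the finitely many exceptional groups $\SL_2(\bbF_2)\simeq S_3$, $\SL_2(\bbF_3)$, $\Sp_4(\bbF_2)\simeq S_6$, $G_2(\bbF_2)$ and $\SU_3(\bbF_2)$ (the paper cites Wilson's book for these computations; your structural descriptions of them are correct), together with the reduction to almost-simple factors, including the correct caveat that a ``factor isomorphic to $\SL_2$'' over $\bbF_2$ is not the same as a factor $\Res_{\ell/k}\SL_2$ with $\ell$ larger.

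Where you diverge is part (1). The paper does not prove it: it notes the statement is asserted without proof in Tits (1978, Section~1.1.2), in the language of Whitehead groups $W(H,k)=H(k)/H(k)^+$, and points to a MathOverflow answer for a proof; your proposed citation of \cite[Section~3.4]{Tits64} is where the paper gets \Cref{thm84} (perfectness of $H(k)^+$), and it is doubtful that part (1) is contained there. Your sketched argument (reduce to absolutely almost-simple factors, then use the Bruhat decomposition and the fact that for simply-connected groups the maximal torus is generated by coroot images $\alpha^\vee(t)$, which lie in $\langle U_\alpha(k),U_{-\alpha}(k)\rangle$) is the right one, but your claim that the identity expressing $\alpha^\vee(t)$ as a product of elementary unipotents is ``valid once $|k|\geq 4$'' and that ``this is where the hypothesis enters'' is misplaced: the identity $\alpha^\vee(t)=w_\alpha(t)w_\alpha(1)^{-1}$ with $w_\alpha(t)=u_\alpha(t)u_{-\alpha}(-t^{-1})u_\alpha(t)$ needs no restriction on the field, and indeed $H(k)=H(k)^+$ holds well beyond \eqref{thm83} (the hypothesis is needed only for perfectness, i.e.\ for \Cref{thm84}). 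Taken literally, your sketch covers only finite fields with $|k|\geq4$ and thus omits the cases allowed by \eqref{thm83} over $\bbF_2$ and $\bbF_3$ (e.g.\ $\Sp_6(\bbF_2)$ or $\SL_3(\bbF_3)$); this gap is easily repaired by dropping the field-size condition in that step, or by citing the source the paper uses.
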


\begin{proof}
The first part is claimed without proof in \cite[Section~1.1.2]{Tits78},
using the standard notation for the Whitehead group $W(H,k) \defeq H(k)/H(k)^+$;
see \cite{MO486102} for a proof.

For the second part, the forward implication follows from
\Cref{thm84} and the first part.
The reverse implication is proved by direct computation:
Clearly $\SL_2(\bbF_2) \simeq S_3$,
and the remaining groups are worked out,
for example, in \cite{Wilson09},
specifically Section 3.3.1 ($\SL_2(\bbF_3)$),
Section~3.5.2 ($\Sp_4(\bbF_2)\simeq S_6$),
Section~4.4.4 ($G_2(\bbF_2)$), and
Exercise~3.24 ($\SU_3(\bbF_2)$).
\end{proof}

\begin{lemma} \label{thm81}
	Let $H$ be a reductive $\bbF_q$-group.
	If $H^\tn{sc}(\bbF_q)$ has trivial abelianization, for instance, if $q>3$,
	then the abelianization of $H(\bbF_q)$ has order prime to~$q$.
\end{lemma}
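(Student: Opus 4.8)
The plan is to reduce the statement to a fact about the exact sequence relating $H(\bbF_q)$ to $H^\tn{sc}(\bbF_q)$. Write $c\colon H^\tn{sc}\to H$ for the simply-connected cover of $H^\tn{der}$, and recall that by the Lang--Steinberg theorem (or Galois cohomology over a finite field) the image $H(\bbF_q)^\natural$ of $c$ has finite index in $H^\tn{der}(\bbF_q)$, with the quotient a subquotient of $(\ker c)(\overline{\bbF_q})$ via the connecting map, hence of order dividing a power of the (torus-like) group $\pi_1(H^\tn{der})$; in particular this quotient has order prime to~$q$ since $\ker c$ is a finite group of multiplicative type, which has no $p$-torsion in characteristic~$p$. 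First I would observe that $H/H^\tn{der}$ is a torus $S$, so that $H(\bbF_q)/H^\tn{der}(\bbF_q)$ embeds into $S(\bbF_q)$ (again using vanishing of $H^1(\bbF_q, H^\tn{der})$ is not needed here; one just needs that the cokernel of $H^\tn{der}(\bbF_q)\to H(\bbF_q)$ injects into $S(\bbF_q)$, which follows from the long exact cohomology sequence), and $S(\bbF_q)$ has order prime to~$q$ because $|S(\bbF_q)|$ divides $(q-1)^{\dim S}\cdot(\text{unramified factors})$ — more precisely $S(\bbF_q)$ is a product of groups of the form $\bbF_{q^d}^\times$, which have order prime to~$q$.

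Next I would assemble the abelianization. There is a surjection $H(\bbF_q)^\tn{ab}\twoheadrightarrow (H/H^\tn{der})(\bbF_q)^\tn{ab} = S(\bbF_q)$, already of order prime to~$q$, and it suffices to bound the kernel, which is the image of $H^\tn{der}(\bbF_q)$ in $H(\bbF_q)^\tn{ab}$, i.e.\ a quotient of $H^\tn{der}(\bbF_q)^\tn{ab}$ (modulo the commutators coming from the larger group, which only shrinks it further). So it is enough to show $H^\tn{der}(\bbF_q)^\tn{ab}$ has order prime to~$q$. Using $H(\bbF_q)^\natural\trianglelefteq H^\tn{der}(\bbF_q)$ with prime-to-$q$ quotient (the first paragraph), and the fact that $H(\bbF_q)^\natural$ is a quotient of $H^\tn{sc}(\bbF_q)$, the hypothesis that $H^\tn{sc}(\bbF_q)$ has trivial abelianization forces $H(\bbF_q)^\natural = [H(\bbF_q)^\natural, H(\bbF_q)^\natural]$, hence $H(\bbF_q)^\natural$ dies in any abelianization. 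Therefore $H^\tn{der}(\bbF_q)^\tn{ab}$ is a quotient of the prime-to-$q$ group $H^\tn{der}(\bbF_q)/H(\bbF_q)^\natural$, completing the reduction.

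For the "for instance, if $q>3$" clause, I would invoke \Cref{thm91}: when $q>3$ the quasi-split (indeed, over a finite field every reductive group is quasi-split) simply-connected group $H^\tn{sc}$ satisfies condition \eqref{thm83}, so $[H^\tn{sc}(\bbF_q),H^\tn{sc}(\bbF_q)] = H^\tn{sc}(\bbF_q)$, giving the needed triviality of the abelianization of $H^\tn{sc}(\bbF_q)$. (One small point: \eqref{thm83} is phrased in terms of factors isomorphic to $\SL_2$, $\Sp_4$, $G_2$, $\SU_3$; for $q\geq 4$ none of these exceptions apply, so the condition holds unconditionally.)

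The main obstacle I anticipate is bookkeeping with the non-connectedness of kernels and the precise exact sequences: one must be careful that $H^\tn{sc}(\bbF_q)\to H(\bbF_q)^\natural$ is surjective onto its stated image (true by definition of $H(\bbF_q)^\natural$) and that all the intervening finite quotients — $H^\tn{der}(\bbF_q)/H(\bbF_q)^\natural$ and $H(\bbF_q)/H^\tn{der}(\bbF_q)$ — genuinely have order prime to~$q$. The prime-to-$q$ claims ultimately rest on: tori over $\bbF_q$ have prime-to-$q$ rational points, and finite group schemes of multiplicative type over a field of characteristic $p$ have no $p$-part in their geometric points. Neither is hard, but stating them cleanly and citing the right reference (e.g.\ \cite[Section~11.3]{Kaletha-Prasad-BTbook} for $\pi_1$, or a standard reference for $|T(\bbF_q)|$) is where the care is needed.
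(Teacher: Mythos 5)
Your argument is correct, but it takes a genuinely different route from the paper's. The paper first passes to a $z$-extension $\widetilde H\to H$ (so that $\widetilde H^\tn{der}=\widetilde H^\tn{sc}$), uses surjectivity of $\widetilde H(\bbF_q)\to H(\bbF_q)$ to compare abelianizations, and then, via Lang's theorem, reduces along the sequence $1\to H^\tn{der}(\bbF_q)\to H(\bbF_q)\to (H/H^\tn{der})(\bbF_q)\to 1$ to the torus case. You avoid $z$-extensions entirely and work inside $H$ itself with the image $H(\bbF_q)^\natural$ of $H^\tn{sc}(\bbF_q)$: perfectness of $H^\tn{sc}(\bbF_q)$ kills $H(\bbF_q)^\natural$ in any abelianization, and the two remaining subquotients, namely $H^\tn{der}(\bbF_q)/H(\bbF_q)^\natural$ (bounded via the connecting map by $H^1$ of the group of $\overline{\bbF}_q$-points of the finite multiplicative-type kernel of $H^\tn{sc}\to H^\tn{der}$) and the image of $H(\bbF_q)$ in $(H/H^\tn{der})(\bbF_q)$, have order prime to $p$. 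What the $z$-extension buys the paper is that all central-kernel bookkeeping disappears, the only inputs being Lang's theorem and the torus case; what your route buys is that it needs no auxiliary group and only left-exactness on the torus side. Two small inaccuracies in your write-up, neither affecting correctness: a torus over $\bbF_q$ need not be a product of groups $\bbF_{q^d}^\times$ (norm-one tori already fail this) --- the clean justification, as in the paper, is that every element of $T(\bbF_q)$ is semisimple, hence of order prime to $q$; and $\ker(c)$ can certainly have $p$-torsion as a group scheme (e.g.\ $\mu_p$) --- what you actually use, and what is true, is that its group of $\overline{\bbF}_q$-points has order prime to $p$, so that the cokernel of $H^\tn{sc}(\bbF_q)\to H^\tn{der}(\bbF_q)$, which injects into Galois $H^1$ of those points, is prime to $p$. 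Your handling of the $q>3$ clause via \Cref{thm91}, using that reductive groups over finite fields are quasi-split and that none of the exceptions in \eqref{thm83} occur for $q\geq4$, is exactly the paper's.
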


\begin{proof}
	Let $\widetilde H\to H$ be a $z$-extension
	(see \cite[Section~11.4]{Kaletha-Prasad-BTbook}
	for a discussion of this notion):
	a surjective map of reductive $\bbF_q$-groups
	whose kernel is an induced torus
	and for which $\widetilde H^\tn{der}=\widetilde H^\tn{sc}$.
	Then the map $\widetilde H(\bbF_q)\to H(\bbF_q)$
	is surjective, and thus induces a surjection on abelianizations.
	So without loss of generality, after replacing $H$ by~$\widetilde H$,
	we may assume that $H^\tn{der}=H^\tn{sc}$, and hence that $H^\tn{der}(\bbF_q)$ has trivial abelianization.
	We have a short exact sequence
	\[
	\begin{tikzcd}
	1 \rar & H^\tn{der}(\bbF_q) \rar &  H(\bbF_q) \rar &
	(H/H^\tn{der})(\bbF_q) \rar & 1
	\end{tikzcd}
	\]
	in which $H/H^\tn{der}$ is a torus.
	Since $H^\tn{der}(\bbF_q)$ has trivial abelianization,
	the abelianizations of $H(\bbF_q)$ and $(H/H^\tn{der})(\bbF_q)$
	are isomorphic.
	So we are reduced to the case where $H=T$ is a torus,
	where the claim follows from the fact that
	every element of $T(\bbF_q)$ is semisimple,
	and thus has order prime to~$q$.
	That $H^\tn{sc}(\bbF_q)$ has trivial abelianization
	when $q>3$ follows from \Cref{thm91}.
\end{proof}

\section{An example in the spin group} \label{sec:ge2example}
In this section we give an example of the failure of \GEE\ which illustrates the need for Clifford theory in our construction of supercuspidal representations.
Our example shows that the dimension of $\sigma$
can be strictly larger than the dimension of $\rho \otimes \kappa$,
as \Cref{thm66} spells out.
The example is an extension of \cite[2.20~Example]{Steinberg-torsion}.

Let $G$ be the split group $\Spin_8$ over $F$,
let $\sG$ be the split reductive $\cal O_F$-group with generic fiber~$G$,
and let $\sfG$ be the special fiber of~$\sG$.
We use Bourbaki's model for the root system $\Phi(D_4)$ and its Weyl group $W(D_4)$ as in
\cite[Plate~IV]{Bourbaki-4-6}:
\begin{equation} \label{roots-D4}
\Phi(D_4) = \{\pm e_i \pm e_j \colon 1\leq i,j\leq 4,\; i\neq j\}
\end{equation}
with basis $\Delta=\{e_1 - e_2, e_2 - e_3, e_3 - e_4, e_3 + e_4\}$.
So $W(D_4) \simeq (\bbZ/2\bbZ)^3\rtimes S_4$, of order $2^6\cdot 3$.

Recall (from \cite[Proposition~3.3.3]{Carter85}, for example)
the standard bijection between maximal tori of~$\sfG$
and Frobenius-conjugacy classes in the Weyl group.
Let $\sfT$ be a maximal torus of $\sfG$
corresponding to the conjugacy class of~$-1$.
By \cite[Lemma~2.3.1]{DeBacker06} and the discussion preceding it,
there exists a maximal torus $\sT$ of~$\sG$
with special fiber~$\sfT$.
Let $T$ be the generic fiber of~$\sT$,
an elliptic, unramified maximal torus of~$G$.

\begin{lemma} \label{thm89}
$N_G(T)(F)/T(F) = W(G,T)(F) \simeq W(D_4)$.
\end{lemma}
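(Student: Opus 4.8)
The plan is to compute $N_G(T)(F)/T(F)$ by passing to the residue field via the integral model $\sG$, where the answer becomes the Galois‑fixed Weyl group of the torus $\sfT$ of type $-1$ in $\sfG$, and then to observe that this twist has no effect because $-1$ is central in $W(D_4)$.

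First I would record that the normalizer is captured integrally, namely $N_G(T)(F)=N_{\sG}(\sT)(\cO_F)$ and $T(F)=\sT(\cO_F)$. Since $\Spin_8$ is semisimple and $T$ is elliptic, $T$ is anisotropic, so $\sB(T,F)$ is a single point, which is the hyperspecial vertex $x$ cut out by $\sT\subseteq\sG$; because $\Spin_8$ is simply connected, $G(F)_x=\sG(\cO_F)$. Every element of $N_G(T)(F)$ fixes $x$, hence lies in $\sG(\cO_F)$, and conjugation by it carries the schematic closure $\sT$ of $T$ in $\sG$ to itself, so it lies in $N_{\sG}(\sT)(\cO_F)$; conversely $N_{\sG}(\sT)(\cO_F)\subseteq N_G(T)(F)$, and, $T(F)$ being bounded, $T(F)=T(F)\cap\sG(\cO_F)=\sT(\cO_F)$.

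Next I would note that the Weyl group scheme $\sW:=N_{\sG}(\sT)/\sT$ is finite étale over $\cO_F$: the torus $\sT$ splits over $\cO_F^{\mathrm{nr}}$ because $T$ is unramified, and over $\cO_F^{\mathrm{nr}}$ the normalizer of a split maximal torus in a split reductive group scheme is an extension of the constant group scheme $\underline{W(D_4)}$ by a split torus, so it is smooth with étale Weyl quotient; étaleness and smoothness then descend to $\cO_F$. Hence $\sW(\cO_F)=\sW(k_F)$, and the left‑exact sequence $1\to\sT(\cO_F)\to N_{\sG}(\sT)(\cO_F)\to\sW(\cO_F)$ yields an injection $N_G(T)(F)/T(F)\hookrightarrow\sW(k_F)$. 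For surjectivity: smoothness of $N_{\sG}(\sT)$ over the Henselian ring $\cO_F$ makes $N_{\sG}(\sT)(\cO_F)\twoheadrightarrow N_{\sfG}(\sfT)(k_F)$, while $N_{\sfG}(\sfT)(k_F)\twoheadrightarrow\sW(k_F)$ because the obstruction lies in $H^1(k_F,\sfT)$, which is trivial by Lang's theorem; composing shows $N_G(T)(F)/T(F)=\sW(k_F)$.

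It remains to identify $\sW(k_F)$ with $W(D_4)$. Since $\sfT$ is the torus attached to the Frobenius‑conjugacy class of $-1\in W(D_4)$, Frobenius acts on $W(\sfG,\sfT)(\bar k_F)=W(D_4)$ by conjugation by $-1$, which is trivial because $-1$ is central in $W(D_4)$; thus $\sW(k_F)=W(\sfG,\sfT)(k_F)=W(D_4)^{\mathrm{Frob}}=W(D_4)$. The same computation gives $W(G,T)(F)=W(D_4)^{\Gamma_F}=W(D_4)$, the Galois action factoring through $\Gal(F^{\mathrm{nr}}/F)$ as $T$ is unramified and again being conjugation by $-1$. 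Together with the tautological inclusion $N_G(T)(F)/T(F)\subseteq W(G,T)(F)$ this completes the proof. The step I expect to require the most care is the integral bookkeeping in the second paragraph — pinning down that $N_G(T)(F)$ is exactly $N_{\sG}(\sT)(\cO_F)$ and that, via left‑exactness, the pro‑unipotent radical contributes no extra Weyl elements — but this is routine once the models are in place.
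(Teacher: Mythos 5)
Your proof is correct and follows essentially the same route as the paper: Lang's theorem on the special fibre, smoothness of $N_{\sG}(\sT)$ with Hensel/henselian lifting to get surjectivity of $N_{\sG}(\sT)(\cO_F)\to N_{\sfG}(\sfT)(k_F)$, and centrality of $-1$ in $W(D_4)$ to trivialize the Frobenius (and Galois) twist. The only difference is that you additionally pin down $N_G(T)(F)=N_{\sG}(\sT)(\cO_F)$ via the building, whereas the paper gets by with the inclusion $N_{\sG}(\sT)(\cO_F)\subseteq N_G(T)(F)$ together with the automatic injection $N_G(T)(F)/T(F)\hookrightarrow W(G,T)(F)\subseteq W(D_4)$.
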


\begin{proof}
By Lang's Theorem, $N_{\sfT}(\sfG)(k_F)/\sfT(k_F) \simeq W(\sfG,\sfT)(k_F)$,
and this latter group is the centralizer in $W(\sfG,\sfT)(\bar k_F)$
of $-1$ by \cite[Proposition~3.3.6]{Carter85},
which is the full Weyl group because $-1$ is central.
Hence $W(\sfG,\sfT)(k_F) = W(\sfG,\sfT)(\bar k_F) \simeq W(D_4)$.
Moreover, since $\sT$ is a maximal torus of the reductive $\cal O_F$-group $\sG$,
the normalizer $N_\sG(\sT)$ is smooth,
and so by Hensel's Lemma (\cite[Exposé~XI, Corollaire~1.11]{SGA3II})
the map $N_\sG(\sT)(\cal O_F)\to N_\sfG(\sfT)(k_F)$ is surjective.
Now we are finished because $N_\sG(\sT)(\cal O_F)\subseteq N_G(T)(F)$.
\end{proof}

Let $\sB(T,F) = \{x\}$.
Our example starts from a $G$-datum of the form
$\Upsilon = ((G,T),x,r,\tn{triv},\phi)$,
where $\tn{triv}$ is the trivial one-dimensional representation of $T(F)$
and where $\phi$ is a character of~$T(F)$ that we carefully define as follows.

First, observe that $T(F) \simeq U^1_{E/F}(F)\otimes_{\bZ} X_*(T)$
where $E$ is an unramified quadratic extension of $F$.
Since $\Spin_8$ is simply-connected, $X^*(T)$ is the weight lattice of type~$D_4$.
Number the fundamental weights $\varpi_i$, $1\leq i\leq 4$,
as in \cite[Plate~IV, p.~272]{Bourbaki-4-6}, so that $\varpi_2$
corresponds to the central vertex of the Dynkin diagram.

\begin{lemma} \label{thm49}
Let $q \geq 4$ and let $r$ be an odd integer with $1\leq r < \val(2)$.
\begin{enumerate}
\item
There exist two order-two characters
$\phi_i\colon U^1_{E/F}(F)\to\bbC^\times$, $i=1,2$
such that
\[
\depth(\phi_1) = \depth(\phi_2) = \depth(\phi_1\phi_2) = r.
\]

\item
For all such $\phi_i$,
the following character $\phi\colon T(F)\to\bbC^\times$
is $(G,T)$-generic of depth~$r$:
\[
\phi(t) = \phi_1(t^{\varpi_2})\cdot\phi_2(t^{\varpi_1 + \varpi_3 + \varpi_4}).
\]
\end{enumerate}
\end{lemma}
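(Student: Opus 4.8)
The plan is to treat the two parts separately; (1) is an elementary computation in the norm-one torus, and (2) reduces to a finite root computation calibrated exactly to the choice of fundamental weights in the statement.

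\textbf{Part (1).} I would use the explicit structure of the norm-one torus $U^1_{E/F}$ of the unramified quadratic extension $E/F$: there is an exact sequence $1\to U^1_{E/F}(F)_{0+}\to U^1_{E/F}(F)\to C\to 1$, where $C$ is the norm-one subgroup of $k_E^\times$, cyclic of odd order $q+1$, and for each integer $s\ge 1$ the graded piece $U^1_{E/F}(F)_s/U^1_{E/F}(F)_{s+}$ is one-dimensional over $k_F$. Note $p=2$, since $r<\val(2)$. The crux is the claim that no element of $U^1_{E/F}(F)$ of depth exactly $r$ is a square: for $z$ of positive depth $s$, writing $z=1+w$ with $\val_E(w)=s$, one has $z^2-1=2w+w^2$, of valuation $2s$ if $s<\val(2)$ and at least $s+\val(2)$ if $s\ge\val(2)$, and since $1\le r<\val(2)$ with $r$ odd, neither equals $r$; depth-zero elements square to depth-zero elements because $C$ has odd order. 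Consequently $B:=U^1_{E/F}(F)_r/U^1_{E/F}(F)_{r+}\cong k_F$ injects into $U^1_{E/F}(F)/\big((U^1_{E/F}(F))^2\cdot U^1_{E/F}(F)_{r+}\big)$, so every nonzero $\mathbb F_2$-linear functional on $k_F$ extends and pulls back to an order-two character of $U^1_{E/F}(F)$ that is trivial on $U^1_{E/F}(F)_{r+}$ and of depth exactly $r$. Because $q\ge 4$, the $\mathbb F_2$-space $k_F$ has dimension $\ge 2$, so I may pick $\phi_1,\phi_2$ of this kind inducing distinct nonzero functionals on $B$; then $\phi_1\phi_2$ again has depth $r$, since a difference of distinct functionals is nonzero.

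\textbf{Part (2): reduction to a root computation.} Write $\nu:=\varpi_1+\varpi_3+\varpi_4$. Since $T$ is the unramified torus attached to $-1\in W(D_4)$, the Frobenius acts on $X^*(T)$ and $X_*(T)$ by $-1$, so every $\varpi\in X^*(T)=\Hom(X_*(T),\mathbb Z)$ defines an $F$-morphism of tori $f_\varpi\colon T\to U^1_{E/F}$ (Galois-equivariance is automatic, both sides being acted on by $-1$), and $t^\varpi=f_\varpi(t)$; thus $\phi=(\phi_1\circ f_{\varpi_2})\cdot(\phi_2\circ f_\nu)$. Morphisms of unramified tori respect Moy--Prasad filtrations, so $\phi$ is trivial on $T(F)_{x,r+}$, and by functoriality of the Moy--Prasad isomorphism for tori the restriction $\phi|_{T(F)_{x,r}}$ is realized by $X:=y_1\varpi_2+y_2\nu\in X^*(T)\otimes F=\mathfrak t^*(F)$, where $y_i\in F$ with $\val(y_i)=-r$ represents $\phi_i$ on $U^1_{E/F}(F)_r$ and the transpose of $df_\varpi$ carries the generator of $X^*(U^1_{E/F})\otimes F$ to $\varpi$. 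Reading off coordinates in the basis $\varpi_1,\dots,\varpi_4$ (they are $y_2,y_1,y_2,y_2$) shows $X$ has depth exactly $-r$: this is \textbf{(GE0)}, and it also confirms $\phi$ has depth $r$. For \textbf{(GE1)} I must check $\val\big(X(H_\alpha)\big)=-r$ for all $\alpha\in\Phi(D_4)$, where $X(H_\alpha)=y_1\langle\varpi_2,\alpha^\vee\rangle+y_2\langle\nu,\alpha^\vee\rangle$. If exactly one of the integers $\langle\varpi_2,\alpha^\vee\rangle,\langle\nu,\alpha^\vee\rangle$ is odd this is immediate, since the even term has valuation $\ge -r+\val(2)>-r$; if both are odd, then $X(H_\alpha)$ equals $\langle\varpi_2,\alpha^\vee\rangle(y_1+y_2)$ plus a term of valuation $>-r$, hence has valuation $-r$ precisely because $\phi_1\phi_2$ has depth $r$. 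So (GE1) reduces to the assertion that $(\langle\varpi_2,\alpha^\vee\rangle,\langle\nu,\alpha^\vee\rangle)\not\equiv(0,0)\pmod 2$ for every root.

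\textbf{Part (2): the root computation, and the main obstacle.} In Bourbaki's coordinates $\varpi_2=e_1+e_2$ and $\nu=\varpi_1+\varpi_3+\varpi_4=2e_1+e_2+e_3$, while $\langle-,\alpha^\vee\rangle$ is the standard dot product with $\alpha$ (type $D_4$ is simply laced, so $\alpha^\vee=\alpha$). Running through the twelve positive roots $e_i\pm e_j$, one checks that $(\langle\varpi_2,\alpha^\vee\rangle,\langle\nu,\alpha^\vee\rangle)\bmod 2$ is always one of $(1,0),(0,1),(1,1)$, never $(0,0)$ — this is exactly the arithmetic property for which $\varpi_2$ and $\varpi_1+\varpi_3+\varpi_4$ were chosen. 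Together with the previous paragraph, this shows $X$ is $(G,T)$-generic of depth $-r$ and hence $\phi$ is $(G,T)$-generic of depth $r$, proving (2). The main obstacle is not any single hard step but the Moy--Prasad bookkeeping in the setup of Part (2): identifying the filtrations on $T(F)$, on $\mathfrak t^*(F)$, and on $U^1_{E/F}(F)$ compatibly with the maps $f_\varpi$ so that $\phi|_{T(F)_{x,r}}$ is genuinely realized by $y_1\varpi_2+y_2\nu$ with $\val(y_i)=-r$; once that is in place, Part (1) is an elementary valuation argument and (GE1) is the finite check above.
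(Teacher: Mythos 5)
Your proposal is correct and follows essentially the same route as the paper: in (1) the same valuation argument that squares in $U^1_{E/F}(F)$ cannot have odd depth $r<\val(2)$, plus $q\geq 4$ to get two independent characters of the depth-$r$ graded piece; in (2) the same realization of $\phi|_{T(F)_{x,r}}$ by an explicit $X\in X^*(T)\otimes F$ built from $\varpi_2$ and $\varpi_1+\varpi_3+\varpi_4$, with $\val(X(H_\alpha))=-r$ checked using that $y_1$, $y_2$, and $y_1+y_2$ all have valuation $-r$ while even coefficients contribute valuation $>-r$. The only cosmetic difference is that the paper first reduces $\varpi_1+\varpi_3+\varpi_4$ to $e_2+e_3$ (using that $\phi_2$ has order two) before pairing with the $H_\alpha$, whereas you keep $2e_1+e_2+e_3$ and run the equivalent mod-$2$ parity check.
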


Here we use the exponential notation $t^\alpha \defeq \alpha(t)$.

\begin{proof}
	\addtocounter{equation}{-1}
	\begin{subequations} 
For the first part, consider the maximal $2$-torsion quotient
\[
U^1_{E/F}(F)/U^1_{E/F}(F)^{\times 2},
\]
where $(-)^{\times 2}$ forms the subgroup of squares.
If $a\in E^\times_0$ and $\val(a - 1) < \val(2)$, then 
\[
\val(a^2 - 1) = \val((a-1)^2 - 2(a-1)) = 2\val(a-1).
\]
Moreover, if $\val(a - 1) = 0$,
then $\val((a-1)^2) = \val(a-1)$.
Since $r$ is odd, the map
\[
U^1_{E/F}(F)_{r:r+}
\to U^1_{E/F}(F)/\bigl(U^1_{E/F}(F)^{\times 2}\cdot U^1_{E/F}(F)_{r+}\bigr)
\]
is injective,
and hence we can freely extend any character of the group~$U^1_{E/F}(F)_{r:r+}$
to an order-two character of $U^1_{E/F}(F)$.
Since the group $U^1_{E/F}(F)_{r:r+}\simeq k_F$ has cardinality $q$ and $q\geq4$,
this group has two linearly independent characters,
completing the proof.

For the second part, recall that we coordinatize $X^*(T)$ as in Bourbaki, thus
$X^*(T) = \bbZ^4 + \bbZ\varpi_4$, where $\bZ^4$ has basis $e_1, e_2, e_3, e_4$ and
where $\varpi_4 = \tfrac12(e_1 + e_2 + e_3 + e_4)$.
Since $\varpi_1 + \varpi_3 + \varpi_4 = 2e_1 + e_2 + e_3$,
we can rewrite $\phi$ as
\begin{equation}\label{equation-phi}
\phi(t) = \phi_1(t^{e_1+e_2})\cdot\phi_2(t^{e_2+e_3}).
\end{equation}
Hence the restriction of $\phi$ to depth $r$ is represented by the following element in the dual Lie algebra, using the identification of $\ft^*(F^{\sep})$ with $X^*(T) \otimes_{\bZ} F^{\sep}$:
\[
X = (e_1 + e_2) \otimes a_1 + (e_2 + e_3) \otimes a_2
\]
for some $a_i$ with $a_1, a_2$ and $a_1+a_2$ of valuation $-r$. If we denote by $\{e_i^*\}$ the basis of $\ft(F^{\sep})$ dual to $\{e_i\}$, then the set of $H_\alpha$ for $\alpha \in \Phi(G,T)$ is $\{\pm e_i^* \pm e_j^* \colon 1\leq i,j\leq 4,\; i\neq j\}$. Hence $\val(X(H_\alpha))=-r$ for all $\alpha \in \Phi(G,T)$, and since $X$ also satisfies (GE0), the character $\phi$ is $(G,T)$-generic of depth $r$.
\end{subequations}
\end{proof}

Recall that $W(D_4)\simeq(\bbZ/2\bbZ)^3\rtimes S_4$, where the subgroup $(\bbZ/2\bbZ)^3$ preserves each of the sets $\{\pm e_1\}, \{\pm e_2\}, \{\pm e_3\}, \{\pm e_4\}$.
The group $A_4$ contains a unique Sylow $2$-subgroup,
the Klein four-group, which is normal in $S_4$.
Let $P$ be the subgroup of $W(D_4)$
generated by the normal subgroup $(\bbZ/2\bbZ)^3$
and this Klein four-group.
Then $P\simeq(\bbZ/2\bbZ)^3\rtimes(\bbZ/2\bbZ)^2$
is a nonabelian group of order~$32$, normal in~$W(D_4)$. We also view $P$ as a subgroup of $N_G(T)(F)/T(F)$ via \Cref{thm89}.

\begin{lemma} \label{lemma-centralizer-expl}
The centralizer in $N_G(T)(F)/T(F)$ 
of the character $\phi$ from \Cref{thm49} is~$P$.
\end{lemma}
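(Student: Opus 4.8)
The plan is to translate the statement into a concrete computation inside the Weyl group. By \Cref{thm89}, $N_G(T)(F)/T(F) = W(G,T)(F) \simeq W(D_4)$, with $W(D_4)$ acting on $X^*(T)$ — the weight lattice of type $D_4$, since $\Spin_8$ is simply connected — in the standard way, i.e.\ by signed permutations of $e_1,\dots,e_4$ with an even number of sign changes, and dually on the characters of $T(F)$. Set $V := X^*(T)/2X^*(T)$, an $\bF_2$-vector space of dimension~$4$ carrying the induced $W(D_4)$-action, and $\bar U := U^1_{E/F}(F)/U^1_{E/F}(F)^{\times 2}$. Since $\phi_1$ and $\phi_2$ have order two they are inflated from characters $\bar\phi_1,\bar\phi_2$ of $\bar U$, and $\phi$ is inflated along the reduction map $T(F) \to \Hom(V,\bar U)$, $t \mapsto (\chi \mapsto t^\chi \bmod U^1_{E/F}(F)^{\times 2})$, which is surjective because $X^*(T)$ is free (and $T(F) = \Hom(X^*(T),U^1_{E/F}(F))$). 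Writing $\mu := \varpi_1+\varpi_3+\varpi_4$, so that $\phi(t) = \phi_1(t^{\varpi_2})\phi_2(t^\mu)$ with $\varpi_2 = e_1+e_2$ and, as in the proof of \Cref{thm49}, $\mu = 2e_1+e_2+e_3$, the first step is to prove: $w \in W(D_4)$ centralizes $\phi$ if and only if $w$ fixes both $\bar\varpi_2 = \bar e_1 + \bar e_2$ and $\bar\mu = \bar e_2 + \bar e_3$ in~$V$.

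For one direction this is clear. For the other, I would compute ${}^w\phi(t)\,\phi(t)^{-1} = \phi_1(t^{w^{-1}\varpi_2-\varpi_2})\,\phi_2(t^{w^{-1}\mu-\mu})$; since $t$ runs over a set whose reductions exhaust $\Hom(V,\bar U)$, the condition ${}^w\phi = \phi$ becomes $\bar\phi_1(\ell(v_1))\,\bar\phi_2(\ell(v_2)) = 1$ for all $\ell \in \Hom(V,\bar U)$, where $v_1 := \overline{w^{-1}\varpi_2} - \bar\varpi_2$ and $v_2 := \overline{w^{-1}\mu} - \bar\mu$ in $V$. A short case analysis on the $\bF_2$-span of $\{v_1,v_2\}$ then forces $v_1 = v_2 = 0$: choosing $\ell$ so that $\ell(v_1)$ alone runs over $\bar U$ (when $v_1 \neq 0$, $v_2 = 0$), or the pair $(\ell(v_1),\ell(v_2))$ runs over $\bar U \times \bar U$ (when $v_1,v_2$ are independent), or $\ell(v_1) = \ell(v_2)$ runs over $\bar U$ (when $0 \neq v_1 = v_2$), would make $\phi_1$, or $\phi_1$ and $\phi_2$, or $\phi_1\phi_2$ trivial, contradicting that all of $\phi_1,\phi_2,\phi_1\phi_2$ have finite depth~$r$ by \Cref{thm49}. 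I expect this step to be the main obstacle, essentially because of the bookkeeping needed to match the dual $W(D_4)$-action on $X^*(T)$ with the action on characters of $T(F)$; note, though, that since $Z_{W(D_4)}(\phi)$ is a subgroup, the ambiguity between $w$ and $w^{-1}$, or between left and right actions, is harmless.

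It remains to compute $\{w \in W(D_4) : w\bar\varpi_2 = \bar\varpi_2,\ w\bar\mu = \bar\mu\}$. Using the basis $\{\bar e_1,\bar e_2,\bar e_3,\bar\varpi_4\}$ of $V$ and the relation $\bar e_1 + \bar e_2 + \bar e_3 + \bar e_4 = \overline{2\varpi_4} = 0$, the four classes $\bar e_1,\dots,\bar e_4$ are distinct and their pairwise sums realize exactly the three nonzero "pair-partition" vectors $\bar e_1 + \bar e_2 = \bar e_3 + \bar e_4$, $\bar e_1 + \bar e_3 = \bar e_2 + \bar e_4$, $\bar e_1 + \bar e_4 = \bar e_2 + \bar e_3$. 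Writing $w = (\epsilon,\pi) \in (\bZ/2\bZ)^3 \rtimes S_4$, the even sign changes $\epsilon$ fix every $\bar e_i$ (they act nontrivially only on $\bar\varpi_4$, which $\phi$ does not involve), so only the permutation part matters, and $\pi$ sends $\bar e_1 + \bar e_2 \mapsto \bar e_{\pi(1)} + \bar e_{\pi(2)}$. Hence $w$ fixes $\bar\varpi_2$ iff $\pi$ preserves the pair-partition $\{\{1,2\},\{3,4\}\}$, and fixes $\bar\mu$ iff $\pi$ preserves $\{\{2,3\},\{1,4\}\}$; these two stabilizers are distinct Sylow $2$-subgroups of $S_4$, so their intersection is precisely the normal Klein four-group $\{e,(12)(34),(13)(24),(14)(23)\}$. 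Therefore $Z_{W(D_4)}(\phi) = (\bZ/2\bZ)^3 \rtimes \{e,(12)(34),(13)(24),(14)(23)\} = P$, which is the assertion.
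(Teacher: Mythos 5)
Your proof is correct, and it in fact does more than the paper's own argument. The paper's proof of this lemma only verifies the inclusion $P\subseteq Z_{N_G(T)(F)/T(F)}(\phi)$: it notes that the sign-change subgroup $(\bZ/2\bZ)^3$ fixes $\phi$ because $\phi_1,\phi_2$ have order two, and that the Klein four-group fixes $\phi$ because $e_1+e_2+e_3+e_4\in 2X^*(T)$ --- exactly the two facts behind your ``easy direction'' --- while the reverse inclusion is left implicit. You establish that reverse inclusion: you reduce the condition that $w$ centralizes $\phi$ to the condition that $w$ fixes $\bar\varpi_2=\bar e_1+\bar e_2$ and $\bar\mu=\bar e_2+\bar e_3$ in $X^*(T)/2X^*(T)$, using the surjectivity of $T(F)\to\Hom\bigl(X^*(T)/2X^*(T),\bar U\bigr)$ and the nontriviality of $\phi_1$, $\phi_2$ and $\phi_1\phi_2$, which is precisely what the three depth conditions of \Cref{thm49} guarantee; you then compute this stabilizer as $(\bZ/2\bZ)^3\rtimes\{e,(12)(34),(13)(24),(14)(23)\}=P$ by intersecting the two (distinct) Sylow $2$-subgroups of $S_4$ stabilizing the pair-partitions $\{\{1,2\},\{3,4\}\}$ and $\{\{2,3\},\{1,4\}\}$. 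Two cosmetic points only: in the case analysis you should also record the symmetric case $v_1=0$, $v_2\neq0$ (which would force $\phi_2$ to be trivial), and it is worth stating explicitly that the even sign changes act trivially on the span of the $\bar e_i$ (they move only $\bar\varpi_4$), so that the fixing condition indeed depends only on the image of $w$ in $S_4$. Neither point affects correctness.
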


\begin{proof}
The normal subgroup $(\bbZ/2\bbZ)^3$ centralizes~$\phi$
because the $\phi_i$ have order two.
To see that the Klein four-group centralizes~$\phi$,
use \eqref{equation-phi} and that $\phi_1(t^{e_1+e_2})= \phi_1(t^{-e_3-e_4}) = \phi_1(t^{e_3+e_4})$ and
$\phi_2(t^{e_2+e_3}) = \phi_2(t^{e_1+e_4})$
because $e_1 + e_2 + e_3 + e_4 \in 2X^*(T)$.
\end{proof}

\begin{corollary} \label{thm65}
Let $\phi\colon T(F)\to\bbC^\times$ be as in \Cref{thm49}
and let $N_G(T)(F)_P$ be the preimage of $P$ under the projection to $W(G,T)$.
Then
\[
Z_{G(F)_{[x]}}(\hat\phi_{(G,x)}) = G(F)_{x,r/2} \cdot N_G(T)(F)_P.
\]
\end{corollary}
\begin{proof} Since $\{x\}=\sB(T,F)$, we have $N_G(T)(F) \subset G(F)_{[x]}$. Now the result follows from combining \Cref{thm80} and \Cref{lemma-centralizer-expl}.
\end{proof}

Now we are in position to provide an example in which $\dim(\sigma)>\dim(\rho \otimes \kappa)$.

\begin{example} \label{thm66}
Consider the supercuspidal $G$-datum $((G,T),x,r,1,\phi)$ where $\phi$ and $r$ are as in \Cref{thm49},
so that $K=G(F)_{x,r/2}T(F)$, $\Kplus=G(F)_{x,r/2}N_G(T)(F)$,  $\rho\otimes\kappa=\hat\phi_{(G,x)}$ and
$N_{\Kplus}(\hat\phi_{(G,x)}) = G(F)_{x,r/2} \cdot N_G(T)(F)_P$
by \Cref{thm65}.
Let $\sigma$ be a representation of~$N_{\Kplus}(\hat\phi_{(G,x)})$
that is $\hat\phi_{(G,x)}$-isotypic on~$K$.
Then $\sigma$ is determined by its restriction $\sigma_0$
to~$N_G(T)(F)_P$. 
Conversely, any representation~$\sigma_0$ of $N_T(G)(F)_P$
that is $\phi$-isotypic on~$T(F)$
determines such a~$\sigma$ by the formula
$\sigma(kn) = \hat\phi_{(G,x)}(k)\sigma_0(n)$
for $k\in G(F)_{x,r}$ and $n\in N_G(T)(F)_P$.

Since $\sigma_0$ and~$\phi$ are trivial on~$\ker(\phi)$,
we can interpret them as representations of the finite group
$\widetilde P \defeq N_G(T)(F)_P/\ker(\phi)$.
Moreover, as $\phi$ has order 2, we have $T(F)/\ker(\phi)\simeq\bbZ/2\bbZ$, and
the group~$\widetilde P$ fits into a short exact sequence
\[
1 \to \bbZ/2\bbZ \to \widetilde P \to P \to 1.
\]
Choosing $\sigma_0$ amounts to choosing
a representation of~$\widetilde P$
whose restriction to~$\bbZ/2\bbZ$ is nontrivial. 
Hence there certainly exists a $\sigma_0$
for which $\dim(\sigma_0) > \dim(\phi) = 1$,
since $P$ is nonabelian, thus $\dim(\sigma) > \dim(\rho \otimes \kappa)$.
Moreover, if we choose two $\sigma_0$ and $\sigma_0'$
with $\dim(\sigma_0)\neq\dim(\sigma_0')$,
then the resulting supercuspidal representations
$\pi$ and~$\pi'$ are not isomorphic
because their formal degrees are different:
\[
\fdeg(\pi) = \frac{\dim(\sigma_0)}{\vol(N_{\Kplus}(\hat\phi_{(G,x)}))}
\neq \frac{\dim(\sigma_0')}{\vol(N_{\Kplus}(\hat\phi_{(G,x)}))} = \fdeg(\pi').
\]
See, e.g., Section~3.2 of \cite{schwein24} for a review of the formal degree,
including, in Lemma~3.4, the formula for the formal degree
of a compactly induced representation used above.
\end{example}

\clearpage
\printindex[notation]
\printindex[terminology]
\clearpage

\addcontentsline{toc}{section}{References}
\bibliography{bib.bib}

\end{document}